\newcommand{\bitem}{\begin{itemize}}
\newcommand{\eitem}{\end{itemize}}
\newcommand{\beq}{\begin{equation}}
\newcommand{\eeq}{\end{equation}}
\newcommand{\beqn}{\begin{eqnarray*}}
\newcommand{\eeqn}{\end{eqnarray*}}
\newcommand{\cE}{{\cal E}}
\newcommand{\cF}{\mathfrak{F}}
\newcommand{\argmin}{\mbox{argmin}}
\newcommand{\Hol}{\text{\sl Höl}}
\def\supp{{\text{\rm supp }}}
\newcommand{\spa}{{\text{\rm span}}}   
\def\cH{\mathcal{H}}
\def\N{\mathbb{N}}
\def\Z{\mathbb{Z}}
\def\R{\mathbb{R}}
\def\RR{\mathbb{R}}
\def\epsilon{\varepsilon}
\newtheorem{theorem}{Theorem}[section]
\newtheorem{lemma}[theorem]{Lemma}
\newtheorem{cor}[theorem]{Corollary}
\newtheorem{definition}[theorem]{Definition}
\newtheorem{prop}[theorem]{Proposition}
\newcommand{\ms}[1]{{\color{black}{#1}}}
\author{Philipp Grohs, Sandra Keiper, Gitta Kutyniok, Martin Sch\"{a}fer
\footnote{
PG was supported in part by Swiss National Fund (SNF) Grant 146356.
SK acknowledges support from the Berlin Mathematical School and the DFG
Collaborative Research Center TRR 109 "Discretization in Geometry and
Dynamics. GK was supported in part by the Einstein Foundation Berlin, by the
Einstein Center for Mathematics Berlin (ECMath), by Deutsche
Forschungsgemeinschaft (DFG) Grant KU 1446/14, by the DFG Collaborative
Research Center TRR 109 "Discretization in Geometry and Dynamics", and by
the DFG Research Center {\sc Matheon} "Mathematics for key technologies"
in Berlin. }}
\title{Cartoon Approximation with $\alpha$-Curvelets\\
}
\begin{document}
\maketitle

\begin{abstract}
It is well-known that curvelets provide optimal approximations for so-called cartoon images
which are defined as piecewise $C^2$-functions, separated by a $C^2$ singularity curve. In this
paper, we consider the more general case of piecewise $C^\beta$-functions, separated by a $C^\beta$
singularity curve for $\beta \in (1,2]$. We first prove a benchmark result for the possibly achievable
best $N$-term approximation rate for this more general signal model. Then we introduce what we call
$\alpha$-curvelets, which are systems that interpolate between wavelet systems on the one hand
($\alpha = 1$) and curvelet systems on the other hand ($\alpha = \frac12$). Our main result
states that those frames achieve this optimal rate for $\alpha = \frac{1}{\beta}$, up to $\log$-factors.
\end{abstract}

\section{Introduction}
In various applications in signal processing it has proven useful to decompose a given signal
in a multiscale dictionary, for instance to achieve compression by coefficient thresholding or to
solve inverse problems. The most popular family of such dictionaries are undoubtedly wavelets, which
have had a tremendous impact in applied mathematics since Daubechies' construction of orthonormal
wavelet bases with compact support in the 1980s. While wavelets are now a well-established tool in
numerical signal processing (for instance the JPEG2000 coding standard is based on a wavelet transform),
it has been recognized in the past decades that they also possess several shortcomings, in particular
with respect to the treatment of multidimensional data where anisotropic structures such as edges in
images are typically present. This deficiency of wavelets has given birth to the research area of
geometric multiscale analysis where frame constructions which are optimally adapted to anisotropic
structures are sought.

\subsection{Geometric Multiscale Analysis}

A milestone in this area has been the construction of curvelet and shearlet frames which are indeed
capable of optimally resolving curved singularities in multidimensional data. To be more precise,
the landmark paper \cite{CD04} has shown that simple coefficient thresholding in a curvelet frame
yields an, up to (negligible) $\log$-factors, optimal $N$-term approximation rate for the class of
so-called \emph{cartoon images} which are roughly defined as compactly supported, bivariate, piecewise
$C^2$ functions, separated by a $C^2$ discontinuity curve. After this breakthrough result new constructions
of anisotropic frame systems have been introduced which achieve the same optimal approximation rates for
cartoon images. Among those we would like to single out \emph{shearlets} which are better suited for
digital implementation than the original curvelets. They were first introduced in \cite{GKL05} and a
comprehensive summary of their properties can be found in the survey \cite{KL12i}. The recent work
\cite{Grohs2011} introduced the framework of \emph{parabolic molecules} which subsumes all the earlier
constructions mentioned above and which established a transference principle of approximation results
between any two systems of parabolic molecules, provided that one of them constitutes a tight frame for
$L^2(\mathbb{R}^2)$.

\subsection{General Image Models}

By now the model of such cartoon-images is widely recognized as a useful mathematical model for image data.
However, for certain applications it might appear still too restrictive. For instance one could imagine
signals which consist of piecewise smooth parts separated by a singularity curve which might not necessarily
be exactly $C^2$ but of higher or lower smoothness.

The approximation of such a generalized signal class by hybrid shearlets has been studied in \cite{Kutyniok2012}
for three-dimensional data. In that paper the best $N$-term approximation results which could be established
are suboptimal by a small margin. In addition, hybrid shearlets are compactly supported, but do not form a tight
frame.

\subsection{Our Contribution}

In the present paper we study optimal approximation schemes for a more general model,  namely bivariate
piecewise $C^\beta$ functions, separated by a $C^\beta$ discontinuity curve, where $\beta \in (1,2]$.
We establish a benchmark result which states that one cannot in general
achieve an $N$-term approximation rate higher than $\frac{\beta}{2}$ for such functions. We then introduce
the notion of $\alpha$-curvelet frames which generalize the construction of \cite{CD04}, where
$\alpha\in [0,1]$ describes the degree of anisotropy in the scaling operation used for the frame construction.
The parameter $\alpha=0$ corresponds to ridgelets \cite{Can98}, the case $\alpha = \frac12$ to second-generation
curvelets as in \cite{CD04}, and the case $\alpha=1$ to wavelets. $\alpha$-curvelets form a tight frame for
any parameter $\alpha$ in contrast to the systems considered in \cite{Kutyniok2012}. Our main result then
states that all intermediate cases $\alpha\in [\frac12,1]$ provide optimal (up to $\log$-factors) $N$-term
approximation rates for the above mentioned signal class with $\beta = \frac{1}{\alpha}$. This is particularly
surprising, since for the result in the 3D situation (\cite{Kutyniok2012}) for intermediate parameters
only a suboptimal rate could be proven. Our proof techniques rely somehow on \cite{CD04}, however
we wish to mention that the technical details are actually quite different from those used there. In particular,
we have to deal with smoothness spaces of fractional orders which forces us to replace estimates for derivatives
in \cite{CD04} by estimates on certain moduli of smoothness.

Having such an approximation result for a tight frame system at hand will allow us to transform our approximation
result to arbitrary systems of frames of \emph{$\alpha$-molecules} (including compactly supported hybrid shearlets
as special case), as first introduced in \cite{alphaSPIE}. This will be the subject of the forthcoming work
\cite{GKKS13}, generalizing the framework of parabolic molecules \cite{Grohs2011}.

\subsection{Outline}

After introducing the general class of image models in Section \ref{sec:cartoon} and deriving the benchmark
result in Theorem \ref{thm:upperbound}, Section \ref{ssec:HyCurve} is devoted to the construction of
$\alpha$-curvelets. Our main result on the optimal approximation properties of $\alpha$-curvelets for
any parameter $\alpha\in[\frac{1}{2},1]$ is then stated in the next section as Theorem \ref{thm:mainappr1}. This section
as well as the appendix do contain the lengthy, quite technical proof of this result.

\ms{\subsection{Preliminaries}

Let us fix some notational conventions used throughout this work.
For $x\in\R^d$ we denote the Euclidean distance by $|x|_2$ and the $\ell_1$-norm by $|x|$. Further,
we put $x^m=x_1^{m_1}\cdots x_d^{m_d}$ for $m\in\N_0^d$.

For a function $f:\R^d\rightarrow\R$ the forward difference operator $\Delta_{(h_1,\ldots,h_d)}$, where $h_1,\ldots,h_d\in\R$, is given by
\[
\Delta_{(h_1,\ldots,h_d)} f(x_1,\ldots,x_d):=f(x_1+h_1,\ldots,x_d+h_d) - f(x_1,\ldots,x_d).
\]
In the one-dimensional case it takes the simple form $\Delta_h f(t):=f(t+h) - f(t)$.
We will often
apply the forward difference operator to a bivariate
function $f:\R^2\to \R$.
To simplify notation, the operator $\Delta_h$ shall exlusively act on variables
denoted $t$ or $\tau \in \R$, e.g., the symbol $\Delta_h f(t,u)$ denotes the function
$(t,u)\mapsto f(t+h,u) - f(t,u)$.

We denote by
$\partial_i$ the derivative in the $i$-th coordinate
direction, $i\in \{1,\dots, d\}$, and we let $\partial^{m}=\partial_1^{m_1}\cdots \partial_d^{m_d}$
for $m\in\N_0^d$.

We write $\|\cdot\|_p$ for the respective norms in the spaces $L^p(\R^d)$ and $\ell^p(\Lambda)$, where $\Lambda$ is
a discrete set.

For $f\in L^1(\R^d)$, $d\in\N$, the Fourier transform $\mathcal{F}f$ is defined by
\[
\mathcal{F}f(\xi)= \int_{\R^d} f(x) \exp(-2\pi i x\cdot\xi) \,dx.
\]
The short-hand notation for $\mathcal{F}f$ is $\widehat{f}$.

We shall also make extensive use of
the Radon transform $\mathcal{R}f$ given by
\[
\mathcal{R}F(t,\eta):=\int_{\mathfrak{L}_{t,\eta}} F(x)\,dl(x),
\]
with $(t,\eta)\in \R\times (-\pi/2,\pi/2]$ and $\mathfrak{L}_{t,\eta}:=\left\{(x_1,x_2)\in \R^2:\, \sin(\eta)x_1 + \cos(\eta)x_2 = t\right\}$.
%

The Hölder spaces $C^\gamma$ for $\gamma \in (1,2]$ are defined as follows:
\[
	C^{\gamma}(\R^d):=
	\left\{f\in C(\R^d):\, \max_{i=1}^d
	\text{\sl Höl}(\partial_i f,\gamma -1 ) <\infty
	\right\}
\]
where for a function $f:\R^d\rightarrow\R$ we we use the notation
\[
\text{\sl Höl}(f,\alpha)=\sup_{x,y\in \R^d} \frac{|f(x)-f(y)|}{|x-y|^{\alpha}},\quad\alpha\in[0,1],
\]
for the H\"older constant. 
The notation $C_0^\gamma$ refers to compactly supported
functions in $C^\gamma$.

}

In the sequel, for two quantities $A,B\in \R$, which may depend on several parameters we
shall write $A\lesssim B$, if there exists a constant $C>0$ such that $A\le C B$, uniformly
in the parameters. If the converse inequality holds true, we write $A\gtrsim B$ and if both
inequalities hold we shall write $A \asymp B$.

\section{Cartoon Approximation}
\label{sec:cartoon}

Many applications require efficient encoding of multivariate data in the sense of optimal
(sparse) approximation rates by a suitable representation system. This is typically phrased
as a problem of best $N$-term approximation as explained in Subsection \ref{ssec:sparseapprox}. The
performance of an approximation scheme is then analyzed with respect to certain subclasses
of the Hilbert space $L^2(\R^2)$, which is the standard continuum domain model for $2$-dimensional
data, in particular in imaging science. The key feature of most
multivariate data is the appearance of anisotropic phenomena. Hence such a subclass of
$L^2(\R^2)$ is required to provide a suitable model for this fact, which is fulfilled by
the subclass of so-called cartoon-like images as defined in Subsection \ref{ssec:imagedata}.
The main result of this section, Theorem~\ref{thm:benchmark}, will provide \ms{an upper bound for the maximal achievable approximation rate for this class of cartoon images.
It serves as a benchmark for the approximation performance of concrete representation systems}.


\subsection{Sparse Approximation}
\label{ssec:sparseapprox}

Let us start with a short introduction to some aspects of approximation theory.
The standard continuum model for $2$-dimensional data, in particular in imaging science, is
the Hilbert space $L^2(\R^2)$. From a practical standpoint however,
a function $f\in L^2(\R^2)$ is a rather intractable object. Therefore, in order to analyze $f$, the most
common approach is to represent it with respect to some representation system
$\Phi=(\varphi_\lambda)_{\lambda\in \Lambda}\subset L^2(\R^2)$, i.e., to expand $f$ as
\beq \label{eq:decomp}
     f = \sum_{\lambda\in \Lambda} c_\lambda \varphi_\lambda,
\eeq
and then consider the coefficients $c_\lambda\in\R$.

Moreover, since in real world applications infinitely many coefficients
are infeasible, the function $f$ has to be approximated by a finite subset of this system. Letting
$N$ be the number of elements allowed in this approximation, we obtain what is called an
\emph{$N$-term approximation} for $f$ with respect to $\Phi$. The {\em best $N$-term
approximation}, usually denoted by $f_N$, is optimal among those in terms of a minimal
approximation error and is defined by
\[
    f_N = \mathop{\argmin}_{g=\sum_{\lambda\in \Lambda_N}c_\lambda\varphi_\lambda}\, \|f -  g\|_2^2
    \quad \mbox{s.t.}\quad \#\Lambda_N \le N.
\]
The approximation performance of a system $\Phi$ for a subclass
$\cF\subset L^2(\R^2)$ is typically measured by the {\em asymptotic approximation rate}, i.e., the
decay of the $L^2$-error of the best $N$-term approximation $\|f - f_N\|_2$ as $N \to \infty$.

In a very general form the representation system $\Phi$ can just be an arbitrary \emph{dictionary}, i.e. we require only $L^2(\R^2)=\overline{\spa\,\Phi}$.
General dictionaries $\Phi$ can be rather pathological, and for $f\in L^2(\R^2)$ there might not even exist a best $N$-term approximation.
For example, since $L^2(\R^2)$ is separable, we can choose $\Phi$ to be a countable dense subset of $L^2(\R^2)$.
This choice would yield arbitrarily good $1$-term approximations since the elements of $\Phi$ can come arbitrarily close to $f$.

Therefore, without reasonable restrictions on the search depth, the investigation of best $N$-term approximation with respect to a given
dictionary may not even make sense.
A commonly used and suitable restriction is to impose \emph{polynomial depth search}, which requires that the terms of the $N$-term approximation have to come from the
first $\pi(N)$ elements of the dictionary, where $\pi$ is some fixed polynomial \cite{Don01}.

\subsubsection{Polynomial Depth Search in a Dictionary}

Let us get more specific and assume that we have a countable dictionary $\Phi=(\varphi_n)_{n\in\N}$ indexed by the natural numbers, and a fixed polynomial
$\pi$ specifying the search depth.
A non-linear $N$-term approximation scheme can then be described by a set-valued selection function $\mathcal{S}$, which determines for given $f\in L^2(\R^2)$ and $N\in\N$ the
selected dictionary elements, i.e. $\mathcal{S}(f,N)\subset\Phi$ with $\# \mathcal{S}(f,N)=N$.
The function $\mathcal{S}$ shall obey the polynomial depth search constraint, i.e. $\mathcal{S}(f,N)\subset \{ \varphi_1, \ldots, \varphi_{\pi(N)} \}$, and the dependence on $f$
allows for adaptive approximation schemes.

Let us recall a benchmark concerning the optimality rate of approximation in a dictionary with polynomial depth search,
as derived in \cite{Don01}. Before, we have to define what it means for a class of functions to contain a copy of $\ell_0^p$.

\begin{definition}
\begin{enumerate}
\item

A class of functions $\cF\subset L^2(\R^2)$ is said to \emph{contain an embedded orthogonal hypercube of dimension $m$ and sidelength $\delta$} if there exist
$f_0 \in \cF$ and orthogonal functions $\psi_{i}\in L^2(\R^2)$ for $i=1,...,m$ with $\|\psi_{i}\|_{2}=\delta$ such that the collection of hypercube vertices
\begin{align*}
\cH(m;f_0,(\psi_{i})_i)= \Big{\{ }h=f_0 + \sum_{i=1}^{m} \epsilon_i\psi_{i} ~:~ \epsilon_i \in \{0,1\} \Big{\}}
\end{align*}
is contained in $\cF$. It should be noted that $\cH$ just consists of its vertices.
\item
A class of functions $\cF$ is said to \emph{contain a copy of $\ell_{0}^{p}$, $p>0$,} if there exists a sequence of orthogonal hypercubes $(\cH_k)_{k\in\N}$,
embedded in $\cF$, which have dimensions $m_k$ and side-lengths $\delta_k$, such that $\delta_k \rightarrow 0$ and for some constant $C>0$
\begin{equation}\label{eqn:locopy}
m_k\ge C\delta_k^{-p} \quad\text{for all }k \in\N.
\end{equation}
\end{enumerate}
\end{definition}

Note, that if $\mathfrak{F}$ contains a copy of $\ell_0^p$, then it also contains a copy of $\ell_0^{q}$ for all $0<q< p$.
It was shown in \cite{Don01} that if a function class $\mathfrak{F}$ contains a copy of $\ell_0^p$ there exists an upper bound on the maximal achievable approximation rate
via reconstruction in a fixed dictionary.

We state a reformulation of this landmark result, which in its original form \cite{Don01} was stated in terms of the coefficient decay.
The original proof however can be adapted to lead to the following formulation in terms of the best $N$-term approximation, which is more appropriate for our needs.
\ms{We remark, that this theorem is also valid in a general Hilbert space setting.}

\begin{theorem}\label{thm:upperbound}
Suppose, that a class of functions $\cF\subset L^2(\R^2)$ is uniformly $L^2$-bounded and contains a copy of $\ell^p_0$.
Then, allowing only polynomial depth search in a given dictionary, there is a constant $C>0$ such that for
every $N_0\in\N$ there is a function $f\in\cF$ and an $N\in\N$, $N\ge N_0$ such that
\begin{equation*}
\|f - f_{N} \|^2_2 \ge C \big(N \log_2 (N)\big)^{-(2-p)/p},
\end{equation*} 
where $f_N$ denotes the best $N$-term approximation under the polynomial depth search constraint.
\end{theorem}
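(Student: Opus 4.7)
The plan is to adapt the classical counting argument of \cite{Don01} to this $N$-term formulation, proceeding by contradiction. Let $\eps_N := \sup_{f \in \cF}\|f - f_N\|_2$. I would assume that the conclusion fails and produce a quantitative obstruction to the existence of sufficiently many distinguishable $N$-term approximants.

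First I would apply a Gilbert-Varshamov-type packing: from any hypercube $\cH_k\subset\cF$ of dimension $m_k\ge C\delta_k^{-p}$ and sidelength $\delta_k$, extract a subcollection $\cC_k\subset\cH_k$ of at least $2^{c_0 m_k}$ vertices with pairwise Hamming distance $\ge m_k/4$. Orthogonality of the $\psi_i$ then yields a pairwise $L^2$-separation
\[
 r_k := \tfrac12\delta_k\sqrt{m_k} \;\gtrsim\; \delta_k^{1-p/2}.
\]
Next I would count the approximants. The best $N$-term approximant of each $h\in\cC_k$ lies in one of the $\binom{\pi(N)}{N}\le \exp(c_1 N\log N)$ subspaces $V_S=\spa\{\varphi_i:i\in S\}$, $|S|=N$, $S\subset\{1,\ldots,\pi(N)\}$. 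Covering each $V_S$-ball of radius $D$ (with $D$ a uniform $L^2$-bound on $\cF$) by an $\eta$-net with $\eta:=r_k/8$ yields at most $\exp(c_1 N\log N)(24D/r_k)^N$ quantized approximants. Whenever $\eps_N<r_k/4$, distinct $h,h'\in\cC_k$ must produce distinct quantized approximants, for otherwise the triangle inequality would force $\|h-h'\|_2\le 2\eps_N+2\eta<r_k$, violating the separation. Hence
\[
 2^{c_0 m_k} \;\le\; \exp(c_1 N\log N)(24D/r_k)^N.
\]

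Taking logarithms, using $m_k\ge C\delta_k^{-p}$ and $r_k\gtrsim \delta_k^{1-p/2}$, and absorbing $\log(1/\delta_k)\lesssim \log N$ into constants, this simplifies to $\delta_k^{-p}\lesssim N\log N$. Given $N_0\in\N$, the condition $\delta_k\to 0$ lets me select $k$ and a matching $N\ge N_0$ with $\delta_k\asymp (N\log N)^{-1/p}$ at which this inequality just fails; the violation forces $\eps_N\ge r_k/4$, so
\[
 \eps_N^2 \;\gtrsim\; r_k^2 \;\asymp\; \delta_k^{2-p} \;\asymp\; (N\log N)^{-(2-p)/p},
\]
and some $f\in\cC_k\subset\cF$ realizes the bound.

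The main obstacle is extracting the sharp exponent $(2-p)/p$: a naive packing that uses all $2^{m_k}$ vertices at the minimal separation $\delta_k$ only yields the weaker exponent $2/p$. The Gilbert-Varshamov packing is essential, since it trades a (constant factor) loss in the exponential vertex count for the much larger geometric separation $\delta_k\sqrt{m_k}$, which is precisely the scale against which $\eps_N$ must be compared in the triangle-inequality step.
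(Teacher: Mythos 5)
Your argument is correct and takes a genuinely different route from the paper's, so it is worth comparing. Both proofs rest on the embedded hypercube $\cH_k$ of dimension $m_k\gtrsim\delta_k^{-p}$, and both trace the exponent $(2-p)/p$ to the tension between $m_k$ and the $\sim N\log N$ ``degrees of freedom'' that a polynomially deep $N$-term search can address; they diverge at the key quantitative step. The paper codes the approximant into $R(N)\lesssim N\log_2 N$ bits (Gram--Schmidt orthonormalization plus uniform coefficient quantization at scale $N^{-2/p}$) and then invokes rate-distortion theory as a black box: $D_m(R)/m\ge D_1(\rho)$ for $R/m\le\rho<\tfrac12$ guarantees a vertex with $\Theta(m_k)$ misreconstructed bits, hence $L^2$-error $\gtrsim\delta_k\sqrt{m_k}$. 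You replace that citation with a direct packing-and-covering count: Gilbert--Varshamov extracts $2^{c_0 m_k}$ vertices at mutual $L^2$-distance $r_k\asymp\delta_k\sqrt{m_k}$, and the set of quantized approximants, over all $\binom{\pi(N)}{N}$ candidate subspaces with an $r_k/8$-net in each, has cardinality at most $\exp\bigl(c_1 N\log N+O(N\log(1/r_k))\bigr)$, so pigeonhole forces some $\|h-h_N\|\gtrsim r_k$. Your remark that GV is what upgrades the separation from $\delta_k$ to $\delta_k\sqrt{m_k}$ pinpoints exactly the same gain that the $m$-letter distortion-rate bound (rather than a one-bit bound) delivers in the paper; without either device one only reaches the weaker exponent $2/p$. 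Trade-offs: the paper's version is shorter modulo the citation to \cite{Don01,berger1971rate}; yours is elementary and self-contained, and it folds the quantization into the contradiction so that the paper's final rounding-error correction step becomes unnecessary. One small sharpening: rather than informally matching $\delta_k\asymp(N\log N)^{-1/p}$ and saying the inequality ``just fails,'' fix $k$ and define $N_k$ as the largest integer for which $C'\delta_k^{-p}>N\bigl(\log N+\log(1/\delta_k)\bigr)$; maximality then yields the two-sided estimate $\delta_k^{-p}\asymp N_k\log N_k$ (and hence $\log(1/\delta_k)\asymp\log N_k$) automatically, which is precisely the discretization the paper performs via its choice $C_3 N_k\log_2 N_k\le\tfrac13 m_k$.
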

\begin{proof}
Let $\Phi=(\varphi_n)_{n\in\N}$ be a given dictionary and $\pi$ the polynomial, specifying the search depth.
The best $N$-term approximation of $f\in L^2(\R^2)$ obtained in this setting shall be denoted by $f_N$,
the corresponding selection rule, as described above, by $\mathcal{S}$.

Each system $\mathcal{S}(f,N)$ can be orthonormalized by the Gram-Schmidt procedure (starting from lower indices to higher indices), giving rise to an orthonormal basis
of $\spa\, \mathcal{S}(f,N)$ (with the exception of some possible zero vectors).
Therefore we can represent each $f_N$ by the unique set of coefficients obtained from an expansion in this basis.
(If a basis element is zero, the corresponding coefficient is chosen to be zero.)

In order to apply information theoretic arguments, we consider the following coding procedure.
For $f\in \cF$ we select the dictionary elements $\mathcal{S}(f,N)$ and quantize the coefficients
of $f_N$ obtained as above by rounding to multiples of the quantity $\mathfrak{q}=N^{-2/p}$.

We need $N\log_2(\pi(N))$ bits of information to encode the locations of the selected elements $\mathcal{S}(f,N)$ and
$N \log_2 (2T/\mathfrak{q})$ bits for the coefficients themselves, where $T$ is the $L^2$-bound for the elements of $\cF$.
Hence, in this procedure we are encoding with at most
\[
R(N)=N\big(C_1 + C_2 \log_2(N)\big),\qquad C_1,C_2>0,
\]
bits, and for $N\ge 2$ we have $R(N)\le  C_3 N \log_2(N)$ for some constant $C_3>0$.
To decode, we simply reconstruct the rounded values of the coefficients and then synthesize using the selected dictionary elements.

Let $\mathcal{H}$ be a hypercube in $\cF$ of dimension $m$ and sidelength $\delta$.
Starting with a vertex $h\in\mathcal{H}$ the coding-decoding procedure (for some fixed $N\in\N$) yields some $\tilde{h}\in L^2(\R^2)$.
By passing to the $L^2$-closest vertex $\hat{h}$, we again obtain an element of the hypercube $\mathcal{H}$.

Every vertex $h\in\mathcal{H}$ can be represented
as a word of $m$ bits, each bit corresponding to one side of the cube.
Thus the above coding procedure gives a map of the $m$ bits, which specify the
vertex $h\in\mathcal{H}$, to $R=R(N)$ bits. The decoding then reconstructs the $m$ bits specifying the
vertex $\hat{h}\in\mathcal{H}$. Since at the intermediate step we just have $R$ bits of information we
unavoidably loose information if $R<m$.

Now we can apply an information theoretic argument.
By rate-distortion theory \cite{Don01,berger1971rate} there must be at least one vertex $h\in\mathcal{H}$, where the
number of false reconstructed bits is larger than $D_m(R)$.
Here $D_m(R)$ is the so-called $m$-letter distortion rate function.
Since each bit determines a side of the cube, the error we make for this vertex $h$ obeys
\begin{align*}
\|h-\hat{h}\|_2^2 \ge \delta^2 \cdot D_m(R).
\end{align*}
Since by construction
$
\| \tilde{h}-h\|_2 \ge \|\tilde{h}-\hat{h} \|_2
$
we have
$
\|\tilde{h}-h\|_2 \ge \frac{1}{2} \| \hat{h}-h \|_2.
$
It follows
\[
\|\tilde{h}-h\|^2_2\ge \frac{1}{4} \delta^2 \cdot D_m(R).
\]

By assumption, $\cF$ contains a copy of $\ell^p_0$. Therefore we can find a sequence of hypercubes $\mathcal{H}_k$ with sidelengths $\delta_k\rightarrow 0$ as $k\rightarrow\infty$
and dimensions $m_k=m_k(\delta_k)\ge C\delta_k^{-p}$.
We pick $N_k\in\N$ such that $C_3 N_k \log_2(N_k)\le \frac{1}{3} m_k$ and $N_k\rightarrow \infty$ as $k\rightarrow\infty$.
For large $k$ we then obey the inequality $R(N_k)\le \frac{1}{3} m_k$, in fact $N_k\ge 2$ is sufficient.

Here we can apply another result from rate-distortion theory.
If $\frac{R}{m}\le\rho$ for some $\rho<\frac{1}{2}$ it holds $D_m(R)/m \ge D_1(\rho)$, where $D_1$ is the so-called single-letter distortion-rate function.
Hence, if $\frac{R}{m}\le\frac{1}{3}$, we have
\[
\|\tilde{h}-h\|^2_2\ge \frac{1}{4}D_1(\frac{1}{3}) \delta^2 m.
\]

Let $h_k$ denote the vertices with maximal reconstruction error $\|h_k-\tilde{h}_k\|_{2}$ at each hypercube $\mathcal{H}_k$.
Then we can conclude for large $k$
\begin{align*}
\|\tilde{h}_k-h_k\|_2^2  \ge \frac{1}{4}D_1({\scriptsize\frac{1}{3}}) \delta_k^2   m_k \gtrsim \delta_k^2 m_k \gtrsim  (N_k \log_2 (N_k))^{-(2-p)/p}.
\end{align*}
Finally we have to take care of the rounding errors.
The Euclidean distance between the best $N_k$-term approximation $h^\prime_{k}$ differs from $\tilde{h}_{k}$ by at most
$\mathfrak{q}\sqrt{N_k}$, i.e.
\begin{align*}
\|\tilde{h}_k-h^\prime_k\|_2 \le \mathfrak{q}\sqrt{N_k}\,,
\end{align*}
since the coefficients belong to an orthonormal basis.
It follows, with some constant $C>0$,
\begin{align*}
\|h_k-h^\prime_k\|_2 \ge  \|\tilde{h}_k-h_k\|_2 - \|\tilde{h}_k-h^\prime_k\|_2 \ge  C (N_k \log_2 (N_k))^{\frac{1}{2}-\frac{1}{p}} - N_k^{1/2-2/p} \gtrsim (N_k \log_2 (N_k))^{-(2-p)/(2p)}.
\end{align*}
This finishes the proof.
\end{proof}

\subsubsection{Frame Approximation}

In practice one needs representations, which are robust and stable with respect to noise.
This leads to the notion of a \emph{frame} \cite{Christensen2003a}, which is a special kind of dictionary.

A system $\Phi=(\varphi_\lambda)_{\lambda\in \Lambda}\subset L^2(\R^2)$
is called a frame for $L^2(\R^2)$, if there exist constants $0 < C_1 \le C_2 < \infty$ such that
\[
C_1 \|f\|_2^2 \le \sum_{\lambda\in \Lambda} |\langle f,\varphi_\lambda \rangle|^2 \le C_2 \|f\|_2^2 \quad \mbox{for all } f \in L^2(\R^2).
\]
A frame is called {\em tight}, if $C_1=C_2$ is possible, and {\em Parseval}, if $C_1=C_2=1$. Since the {\em
frame operator} $S : L^2(\R^2) \to L^2(\R^2)$ defined by $Sf=\sum_{\lambda\in \Lambda} \langle f,\varphi_\lambda \rangle \varphi_\lambda$
is always invertible, it follows that one particular sequence of coefficients in the expansion \eqref{eq:decomp} can be computed as
\begin{align}\label{eq:framecoeff}
c_\lambda=\langle f, S^{-1} \varphi_\lambda \rangle, \quad \lambda\in \Lambda,
\end{align}
where $(S^{-1}\varphi_\lambda)_\lambda$ is usually referred to as the {\em canonical dual frame}.
Note, that in general for a redundant system the expansion coefficients in \eqref{eq:decomp} are not unique.
The particular coefficients \eqref{eq:framecoeff}, however, are usually referred to as {\em the} frame coefficients.
They have the distinct property that they minimize the $\ell^2$-norm.

Even in the frame setting, the computation of the
best $N$-term approximation is not yet well-understood. The delicacy of this problem can for instance
be seen in \cite{GN04}. A typical approach to circumvent this problem is to consider instead the
$N$-term approximation obtained by choosing the $N$ largest frame coefficients.
It is evident that the associated error then also provides a bound for the error of best $N$-term approximation.

There exists a close relation between the $N$-term approximation rate achieved by a frame, and
the decay rate of the corresponding frame coefficients. A typical measure for the sparsity of a sequence $(c_\lambda)_\lambda$
are the $\ell^p$-(quasi-)norms, for $p>0$ defined by
\[
\|(c_\lambda)_\lambda\|_{\omega\ell^p}:=  \Big( \sup_{\varepsilon>0}  \varepsilon^p \cdot\#\big\{ \lambda : |c_\lambda|>\varepsilon \big\} \Big)^{1/p}.
\]
Equivalently, for a zero sequence $(c_n)_{n\in\N}$ indexed by $\N$, these (quasi-)norms can be characterized by
$\|(c_n)_n\|_{\omega\ell^p}= \sup_{n>0} n^{1/p} |c^\ast_n|$, where $(c_n^\ast)_n$ denotes the non-increasing rearrangement of $(c_n)_n$.

The following lemma shows that membership of the coefficient
sequence to an $\ell^p$-space for small $p$ implies good $N$-term approximation rates. For a
proof, we refer to \cite{Devore1998,Kutyniok2010}.

 \begin{lemma}\label{lem:decayapprox}
 Let $f=\sum c_\lambda \varphi_\lambda$ be an expansion of $f\in L^2(\R^2)$ with respect to a frame $(\varphi_\lambda)_{\lambda\in \Lambda}$.
 Further, assume that the coefficients satisfy $(c_\lambda)_\lambda\in \omega\ell^{2/(2k+1)}$ for some $k>0$.
 Then the best $N$-term approximation rate is at least of order $N^{-k}$, i.e.
 \begin{equation*}
  {\| f-f_N \|}_2 \lesssim N^{-k}.
 \end{equation*}
 \end{lemma}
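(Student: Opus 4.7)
The plan is to produce an explicit $N$-term approximant from the $N$ largest coefficients of the given expansion and show that its error decays at the claimed rate; the best $N$-term approximation is then at least this good by definition. The two ingredients are (i) the upper frame bound, which controls synthesis in $L^2$ by the $\ell^2$ norm of the coefficient sequence, and (ii) the characterization of $\omega\ell^p$ in terms of the decreasing rearrangement, which turns the hypothesis into a pointwise decay estimate on the coefficients.

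Concretely, let $(c_n^\ast)_{n\in\N}$ denote the non-increasing rearrangement of $(|c_\lambda|)_\lambda$ and let $\Lambda_N$ index the $N$ largest coefficients. Set
\[
f_N^{\Phi} := \sum_{\lambda\in \Lambda_N} c_\lambda \varphi_\lambda.
\]
Since $\|f-f_N\|_2 \le \|f - f_N^{\Phi}\|_2$, it suffices to bound the right-hand side. The upper frame inequality implies that the synthesis operator $(d_\lambda)_\lambda \mapsto \sum d_\lambda \varphi_\lambda$ is bounded from $\ell^2(\Lambda)$ to $L^2(\R^2)$ with norm at most $\sqrt{C_2}$, so
\[
\| f - f_N^{\Phi}\|_2^2 \;=\; \Bigl\|\sum_{\lambda\notin \Lambda_N} c_\lambda \varphi_\lambda\Bigr\|_2^2 \;\le\; C_2 \sum_{n>N} |c_n^\ast|^2.
\]

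Next, I use the equivalent description of the weak $\ell^p$-(quasi-)norm recalled in the text, namely $|c_n^\ast| \lesssim n^{-1/p}$ with $p=2/(2k+1)$, i.e.
\[
|c_n^\ast| \;\lesssim\; n^{-(2k+1)/2}.
\]
Substituting and comparing with an integral gives, for $k>0$,
\[
\sum_{n>N} |c_n^\ast|^2 \;\lesssim\; \sum_{n>N} n^{-(2k+1)} \;\lesssim\; \int_{N}^{\infty} t^{-(2k+1)}\,dt \;\asymp\; N^{-2k}.
\]
Combining this with the frame estimate yields $\|f-f_N^{\Phi}\|_2 \lesssim N^{-k}$, and hence the same bound for $\|f-f_N\|_2$.

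There is no genuine obstacle here; the only technical point worth double-checking is the reduction from the weak $\ell^p$ hypothesis to the pointwise decay $|c_n^\ast| \lesssim n^{-1/p}$, which is immediate from $n |c_n^\ast|^p \le \sum_{m\le n} |c_m^\ast|^p \le \|(c_\lambda)\|_{\omega\ell^p}^p$ after a standard distribution-function rewriting, and the convergence of the tail sum, which requires $2k+1>1$, i.e.\ precisely the assumption $k>0$.
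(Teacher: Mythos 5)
The paper does not itself prove this lemma but refers to \cite{Devore1998,Kutyniok2010}, so there is no internal proof to compare against. Your argument is the standard one and is correct: retain the $N$ largest coefficients of the given expansion, use the upper frame bound to control synthesis (the synthesis operator, being the adjoint of the analysis operator, has norm at most $\sqrt{C_2}$ from $\ell^2(\Lambda)$ to $L^2$), convert the hypothesis $(c_\lambda)\in\omega\ell^{2/(2k+1)}$ into the pointwise decay $|c_n^\ast|\lesssim n^{-(2k+1)/2}$ via the rearrangement characterization of $\omega\ell^p$ already recorded in the text, and bound the tail $\sum_{n>N}|c_n^\ast|^2$ by the convergent integral $\int_N^\infty t^{-(2k+1)}\,dt\asymp N^{-2k}$; the best $N$-term approximation can only improve on this particular choice.

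One inaccuracy in your closing aside: the chain $n|c_n^\ast|^p\le\sum_{m\le n}|c_m^\ast|^p\le\|(c_\lambda)\|_{\omega\ell^p}^p$ is false, since weak $\ell^p$ does not dominate partial $\ell^p$ sums (for $c_m^\ast=m^{-1/p}$ the middle quantity grows like $\log n$ while the right-hand side equals $1$). The correct direct derivation of $n|c_n^\ast|^p\le\|(c_\lambda)\|_{\omega\ell^p}^p$ uses the distribution-function definition: for any $\varepsilon<|c_n^\ast|$ there are at least $n$ coefficients with modulus exceeding $\varepsilon$, so $n\varepsilon^p\le\|(c_\lambda)\|_{\omega\ell^p}^p$, and letting $\varepsilon\uparrow|c_n^\ast|$ gives the claim. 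This does not affect your main argument, which correctly invokes the equivalent characterization $\|(c_n)_n\|_{\omega\ell^p}=\sup_n n^{1/p}|c_n^\ast|$ stated in the paper rather than the flawed chain.
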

Next, we apply Theorem~\ref{thm:upperbound} to obtain an upper bound on the approximation rate of cartoon-like images.

\subsection{Cartoon-like Images}\label{ssec:imagedata}

To model the fact that multivariate data appearing in applications is typically governed by anisotropic
features -- in the 2-dimensional case curvilinear structures --, the so-called {\em cartoon-like functions}
were introduced in \cite{Don01}. This class is by now a widely used
standard model in particular for natural images. It mimics the fact that natural images often consist
of nearly smooth parts separated by discontinuities.


The first rigorous mathematical definition was given in \cite{Don01} and extensively employed starting from
the work in \cite{CD04}. It postulates that images consist of $C^2$-regions separated by \ms{piecewise} smooth $C^2$-curves.
Since its introduction several extensions of this model have been introduced and studied, starting with
the extended model in \cite{Kutyniok2012}.
We consider images that contain two smooth $C^{\beta}$-regions separated by a \ms{piecewise smooth} $C^{\gamma}$-curve, where $\beta,\gamma\in(1,2]$.

\ms{Without loss of generality we will subsequently assume the smoothness of the edge curve.}
\ms{In addition, to avoid technicalities, we} restrict our considerations to star-shaped discontinuity curves, which
allow an easy parametrization. \ms{This is a classical simplification also used in \cite{Don01,CD04,Kutyniok2010,Kutyniok2012} }.
We remark however that this restriction \ms{to star-shaped curves} is artificial. In fact, we could work with the class of regular
$C^\gamma$-Jordan domains contained in $[0,1]^2$, where $\gamma\in(1,2]$ and the Hölder constants in the canonical parametrization are bounded.

\ms{
We begin by introducing the class of star-shaped sets $STAR^{\gamma}(\nu)$ for $\gamma\in(1,2]$ and $\nu>0$.
For that we take a $C^\gamma$-function $\rho:\mathbb{T}\rightarrow[0,\infty)$ defined on the torus $\mathbb{T}=[0,2\pi]$, where the boundary points are identified.
Additionally, we assume that there exists $0<\rho_0<1$ such that $\rho(\eta)\le\rho_0$ for all $\eta\in\mathbb{T}$.
Then we define the subset $\mathcal{B}\subset\R^2$ by
\begin{align}\label{eqn:cartoonim}
\mathcal{B}= \Big\{x \in \R^2 ~:~  x=(|x|_2,\eta) \text{ in polar coordinates with } \eta \in \mathbb{T},\, |x|_2 \le \rho(\eta) \Big\},
\end{align}
such that the boundary $\Gamma=\partial \mathcal{B}$ of $\mathcal{B}$ is a closed regular $C^\gamma$-Jordan curve in $[0,1]^2$ parameterized by
\begin{align}\label{eqn:radius}
b(\eta)=\begin{pmatrix} \rho(\eta)\cos(\eta)\\ \rho(\eta)\sin(\eta) \end{pmatrix}, \quad \eta\in\mathbb{T} .
\end{align}
Furthermore, we require the Hölder constant of the radius function $\rho$ to be bounded by $\nu$, i.e.,
\begin{align} \label{eqn:curvature}
\text{\sl Höl}(\rho^\prime,\gamma-1)= \sup_{\eta, \tilde{\eta}\in\mathbb{T}}\frac{|\rho^\prime(\eta)-\rho^\prime(\tilde{\eta})|}{|\eta-\tilde{\eta}|^{\gamma-1}} \le \nu,
\end{align}
where the distance $|\eta-\tilde{\eta}|$ is measured on the torus $\mathbb{T}$.
}

\begin{definition}
For $\nu>0$ the set $STAR^{\gamma}(\nu)$ is defined as the collection of all subsets $\mathcal{B} \subset [0,1]^2$, which are translates of sets \ms{of the form} \eqref{eqn:cartoonim}
\ms{with a boundary} obeying \eqref{eqn:radius} and \eqref{eqn:curvature}.
\end{definition}

The class of cartoon-like functions is then defined as follows.
\begin{definition}
\begin{enumerate}
\item Let $\nu>0$, $\beta,\gamma \in \left(1,2 \right]$. The \emph{cartoon-like functions} $\mathcal{E}^{\beta}_{\gamma}(\R^2;\nu)\subset L^2(\R^2)$ are defined as
the set of functions $f: \R^2 \to \mathbb{C}$ of the form
\begin{align*}
f=f_0+f_1\chi_{\mathcal{B}},
\end{align*}
where $\mathcal{B}\in STAR^\gamma(\nu)$ and $f_i \in C^{\beta}(\R^2)$ with $\supp f_0 \subset [0,1]^2$ and $\|f_i\|_{C^{\beta}} \le \nu$ for each $i=0,1$.
To simplify notation we let $\mathcal{E}^{\beta}(\R^2)=\mathcal{E}^{\beta}_{\beta}(\R^2)$. 
\item $\mathcal{E}^{bin}_{\gamma}(\R^2;\nu)$ denotes the class of \emph{binary cartoon-like images}, i.e., functions $f=f_0+f_1\chi_{\mathcal{B}} \in \mathcal{E}^{\beta}_{\gamma}(\R^2;\nu)$,
where $f_0=0$ and $f_1=1$.
\end{enumerate}
\end{definition}

We aim for a benchmark for sparse approximation of functions belonging to $\mathcal{E}^{\beta}_{\gamma}(\R^2)$.
For this we can use the results from the previous section.
In order to apply these results to our model of cartoon-like functions we investigate for which $p>0$ the class of cartoon-like images contains a copy of $\ell_0^p$
following \cite{Kutyniok2012,Kei13}.

\begin{theorem}
Let $\nu>0$ be fixed, and $\beta,\gamma \in \left(1,2 \right]$.
\begin{enumerate}
\item The class of binary cartoon-like images $\mathcal{E}^{bin}_{\gamma}(\R^2;\nu)$ contains a copy of $\ell_0^p$ for $p=2/(\gamma + 1)$.
\item The class $C_0^\beta([0,1]^2;\nu)$ of H\"older smooth functions $f\in C_0^\beta([0,1]^2)$ with $\|f\|_{C^\beta}\le \nu$
contains a copy of $\ell_0^p$ for $p=2/(\beta+1)$.
\end{enumerate}
\end{theorem}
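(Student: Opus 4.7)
The plan for both parts follows the standard Donoho-style construction: for each $k\in\N$ we build an explicit orthogonal hypercube $\cH_k\subset\cF$ of dimension $m_k$ and side-length $\delta_k$ with $\delta_k\to 0$ and $m_k\gtrsim\delta_k^{-p}$. In both settings the building blocks are dyadically rescaled translates of a fixed smooth bump, placed with pairwise disjoint supports on a grid of spacing $\asymp 2^{-k}$; orthogonality is then automatic and each hypercube vertex corresponds to switching a subset of bumps on or off.

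For part (1), I fix a reference domain $\mathcal{B}_0\in STAR^\gamma(\nu/2)$ with radius function $\rho_0$ and a nonnegative $\phi\in C_c^\infty((0,1))$ with $\|\phi\|_\infty=1$. At scale $k$, I partition the torus $\mathbb{T}$ into $m_k\asymp 2^k$ arcs of length $\asymp 2^{-k}$ separated by comparable gaps, centered at points $\eta_{k,i}$, and define angular bumps
\[
\phi_{k,i}(\eta):=h_k\,\phi\bigl(2^k(\eta-\eta_{k,i})\bigr),\qquad h_k:=c\cdot 2^{-k\gamma},
\]
with $c>0$ small. For $S\subset\{1,\ldots,m_k\}$ let $\mathcal{B}_S$ be the star-shaped domain with radius $\rho_S:=\rho_0+\sum_{i\in S}\phi_{k,i}$, and set $f_0:=\chi_{\mathcal{B}_0}$, $\psi_i:=\chi_{\mathcal{B}_{\{i\}}}-\chi_{\mathcal{B}_0}$. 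The $\psi_i$ are indicators of the pairwise disjoint crescents $\mathcal{B}_{\{i\}}\setminus\mathcal{B}_0$, each of area $\asymp h_k\cdot 2^{-k}=2^{-k(\gamma+1)}$, hence they are orthogonal and $f_0+\sum_{i\in S}\psi_i=\chi_{\mathcal{B}_S}$. This gives $\delta_k\asymp 2^{-k(\gamma+1)/2}$ and $m_k\asymp 2^k\asymp\delta_k^{-2/(\gamma+1)}$, as required.

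For part (2), I pick $\phi\in C_c^\infty([1/4,3/4]^2)$ and at scale $k$ form $m_k:=2^{2k}$ scaled translates
\[
\psi_{k,j}(x):=h_k\,\phi(2^kx-j),\qquad j\in\{0,\ldots,2^k-1\}^2,\quad h_k:=c\cdot 2^{-k\beta},
\]
on a dyadic grid in $[0,1]^2$. The choice of support of $\phi$ makes the bump supports pairwise disjoint with inter-support distance $\asymp 2^{-k}$. With $f_0:=0$ and $\psi_i:=\psi_{k,j_i}$, each hypercube vertex is a finite sum of disjointly supported bumps lying in $C_0^\beta([0,1]^2;\nu)$ once $c$ is small enough. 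A scaling calculation gives $\|\psi_{k,j}\|_2=h_k\cdot 2^{-k}\|\phi\|_2\asymp 2^{-k(\beta+1)}$, whence $m_k\asymp\delta_k^{-2/(\beta+1)}$.

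The main obstacle in both parts is the uniform Hölder bound on sums of bumps, namely $\Hol(\partial_i(\sum_{i\in S}\psi_{k,j_i}),\beta-1)\le\nu$ in part (2), and the analogue $\Hol(\rho_S',\gamma-1)\le\nu$ in part (1), uniformly in $k$ and $S$. Inside a single bump support the scaling $h_k\cdot 2^{k\beta}\asymp c$ (respectively $h_k\cdot 2^{k\gamma}\asymp c$) already provides a bound proportional to $c$. For a pair of points lying in distinct bump supports, the separation $\gtrsim 2^{-k}$ combined with the derivative bound $\lesssim h_k\cdot 2^k=c\cdot 2^{-k(\beta-1)}$ (respectively $c\cdot 2^{-k(\gamma-1)}$) again yields a Hölder quotient of order $c$; choosing $c$ sufficiently small absorbs all implicit constants and enforces the desired uniform bound.
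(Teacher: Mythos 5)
Your construction is correct, and for part (2) it is essentially the paper's: both use an $m_k\asymp k^2$ (you use $2^{2k}$) array of disjointly supported, $C^\beta$-normalized translates of a fixed bump, giving $\delta_k\asymp m_k^{-(\beta+1)/2}$. Where you add value is in actually verifying the uniform bound $\Hol(\partial_i(\sum_{i\in S}\psi_i),\beta-1)\le\nu$ for \emph{sums} of bumps; the paper asserts the hypercubes are ``clearly contained'' in $C_0^\beta([0,1]^2;\nu)$ and leaves this check implicit. Your two-regime argument (rescaling inside a single support; sup-norm plus the $\gtrsim 2^{-k}$ gap across supports) does the job, though strictly speaking the separation gap is not even needed: for any $x,y$ at most two bumps contribute to $\partial_i f(x)-\partial_i f(y)$, so the triangle inequality alone gives $\Hol(\partial_i(\sum_{i\in S}\psi_i),\beta-1)\le 2\,\Hol(\partial_i\psi,\beta-1)$, and one simply rescales $\phi$ by an extra factor of $1/2$. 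Your choice of dyadic scales $2^k$ versus the paper's $k$ is immaterial.

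For part (1) the paper simply cites Donoho, while you reconstruct the crescent argument from scratch. Your version is sound, but two small points deserve mention. First, the hypercube definition requires all $\psi_i$ to have the \emph{same} $L^2$ norm $\delta_k$; with a generic $\rho_0$ the crescent areas only satisfy $\|\psi_i\|_2\asymp\delta_k$, so you should either take $\rho_0\equiv$ const (a disk, which is rotationally symmetric and makes all crescent areas equal) or tune the individual bump heights $h_{k,i}$ so the areas coincide exactly. Second, you need the perturbed radius $\rho_S=\rho_0+\sum_{i\in S}\phi_{k,i}$ to stay bounded away from $1$ and keep $\mathcal B_S\subset[0,1]^2$ uniformly in $k$; since the bumps are bounded by $c$ uniformly, this is ensured by choosing $\mathcal B_0$ with $\rho_0$ bounded strictly below $1$ and $c$ small. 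With these details in place the argument is complete.
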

\begin{proof}
ad 1) This was proved by Donoho in \cite{Don01}.

\noindent
ad 2)
To show that $C_0^\beta([0,1]^2;\nu)$ contains a copy of $\ell_0^p$ we have to find a sequence of embedded orthogonal hypercubes $(\mathcal{H}_k)_{k\in\N}$ of dimension $m_k$
and size $\delta_k$ such that $\delta_k\rightarrow 0$ and \eqref{eqn:locopy} holds.

Let $\phi \in C_0^{\infty}(\R)$ with $\supp \phi \subset [0,1]$ and $\phi\ge0$ and put $\psi(t)=\phi(t_1)\phi(t_2)$ for $t=(t_1,t_2)\in\R^2$.
Then $\psi\in C^{\beta}(\R^2)$ with $\supp \psi\subset[0,1]^2$. By possibly rescaling we can ensure $\|\psi\|_{ C^{\beta}(\R^2)}\le \nu$.
Further, we define for
$k \in \N$ and $i=(i_1,i_2) \in \{0,...,k-1\}^2$ the functions
\begin{center}
$\psi_{i,k}(t)= k^{-\beta}\phi(kt_1-i_1)\phi(kt_2-i_2)$.
\end{center}
These functions $\psi_{i,k}\in C^{\beta}(\R^2)$ are dilated and translated versions
of $\psi$ satisfying $\Hol(\psi_{i,k},\beta)=\Hol(\psi,\beta)$ and $\|\psi_{i,k}\|_{ C^{\beta}(\R^2)}\le\|\psi\|_{ C^{\beta}(\R^2)}\le\nu$.
Moreover, it holds $\|\psi_{i,k}\|_{2}^2=k^{-2\beta-2}\|\psi\|^2_{2}$, and
since $\supp\psi_{i,k} \subset [\frac{i_1}{k},\frac{i_1+1}{k}]\times[\frac{i_2}{k},\frac{i_2+1}{k}]$ the functions
$\psi_{i,k}$ and $\psi_{j,k}$ are orthogonal in $L^2(\R^2)$ for $i\neq j$.

The hypercubes $\cH_k$ of dimensions $m_k=k^2$ given by
\begin{align*}
 \cH_k:=\cH(k^2;0,(\psi_{i,k}))&= \big{\{ }h= \sum\limits_{i_1=1}^{k} \sum\limits_{i_2=1}^{k}\epsilon_{i_1,i_2}\phi(k\cdot -i_1)\phi(k\cdot -i_2) \hspace{0.1cm}|\hspace{0.1cm}\epsilon_{i_1,i_2} \in \{0,1\}  \big{\}}\\
& =\big{\{ }h=\sum\limits_{i=1}^{m^2}\epsilon_i \psi_{i,k} \hspace{0.1cm}|\hspace{0.1cm}\epsilon_i \in \{0,1\} \big{\}}
\end{align*}
with side-lengths $\delta_k=\|\psi_{i,k}\|_{2} = k^{-\beta-1}\|\psi\|_{2}$ are clearly contained in $C_0^\beta([0,1]^2;\nu)$. We have to check \eqref{eqn:locopy}. It holds
\begin{align*}
m_k=k^2 &=  \left(\frac{\delta_k}{\|\psi\|_{2}}\right)^{-\frac{2}{\beta+1}} = \|\psi\|_{2}^{\frac{2}{\beta+1}} \cdot \left(\delta_k\right)^{-\frac{2}{\beta+1}},
\end{align*}
and the sequence $(\delta_k)_{k\in\N}$ obeys $\delta_k \rightarrow 0$. This finishes the proof.
\end{proof}

An immediate corollary is the following result.

\begin{cor}
The function class $\mathcal{E}^{\beta}_{\gamma}(\R^2;\nu)$ contains a copy of $\ell_0^p$ for $p=\max\{2/(\beta + 1),2/(\gamma + 1)\}$.
\end{cor}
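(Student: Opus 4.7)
The plan is a simple inclusion argument: the statement follows immediately from the preceding theorem by embedding two relevant subclasses into $\mathcal{E}^{\beta}_{\gamma}(\R^2;\nu)$.

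First, I would observe that $\mathcal{E}^{bin}_{\gamma}(\R^2;\nu) \subset \mathcal{E}^{\beta}_{\gamma}(\R^2;\nu)$ holds trivially from the definition, since the constant functions $f_0 = 0$ and $f_1 = 1$ satisfy $\|f_i\|_{C^\beta} = 0 \le \nu$ (their derivatives vanish identically, so the Hölder constant is zero). By part 1 of the preceding theorem, this subclass already carries a copy of $\ell_0^{p}$ for $p = 2/(\gamma+1)$, and hence so does $\mathcal{E}^{\beta}_{\gamma}(\R^2;\nu)$.

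Next I would check that $C_0^\beta([0,1]^2;\nu) \subset \mathcal{E}^{\beta}_{\gamma}(\R^2;\nu)$. Indeed, given any $f_0 \in C_0^\beta([0,1]^2;\nu)$, we can write $f_0 = f_0 + f_1\chi_{\mathcal{B}}$ with $f_1 \equiv 0$ and any $\mathcal{B} \in STAR^\gamma(\nu)$ (the class is nonempty for every $\nu > 0$, e.g.\ a small disk centered at $(1/2,1/2)$). The conditions $\supp f_0 \subset [0,1]^2$, $\|f_0\|_{C^\beta} \le \nu$, and $\|f_1\|_{C^\beta} = 0 \le \nu$ are all met, giving the desired embedding. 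Part 2 of the preceding theorem then yields a copy of $\ell_0^{p}$ for $p = 2/(\beta+1)$ inside $\mathcal{E}^{\beta}_{\gamma}(\R^2;\nu)$.

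Combining the two, $\mathcal{E}^{\beta}_{\gamma}(\R^2;\nu)$ contains copies of $\ell_0^{2/(\gamma+1)}$ and $\ell_0^{2/(\beta+1)}$ simultaneously. In particular, it contains a copy of $\ell_0^{p}$ for the larger of the two exponents, namely $p = \max\{2/(\beta+1), 2/(\gamma+1)\}$; by the remark following the definition of "contains a copy of $\ell_0^p$", this is the strongest statement of this form that can be extracted from the two embeddings. There is no genuine obstacle here: everything reduces to verifying the two elementary set-theoretic inclusions and then invoking the previous theorem.
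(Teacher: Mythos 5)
Your argument is correct and is exactly the "immediate" argument the paper intends: both $\mathcal{E}^{bin}_{\gamma}(\R^2;\nu)$ and $C_0^\beta([0,1]^2;\nu)$ embed into $\mathcal{E}^{\beta}_{\gamma}(\R^2;\nu)$, so the previous theorem transfers both copies of $\ell_0^p$ into the larger class, and the maximum exponent prevails. One small nitpick: for the binary class you claim $\|f_1\|_{C^\beta}=0$ for $f_1\equiv 1$; whether this holds depends on whether $\|\cdot\|_{C^\beta}$ is a norm (including the sup-norm, in which case $\|1\|_{C^\beta}\ge 1$) or merely the Hölder seminorm of the derivatives --- but this is moot since the paper defines $\mathcal{E}^{bin}_\gamma$ as a subclass of $\mathcal{E}^\beta_\gamma$ by construction, so the first inclusion holds without any verification.
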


As a direct consequence of Theorem~\ref{thm:upperbound} we now state the main result of this subsection, which has also been proved in \cite{Kutyniok2012}.

\begin{theorem}\label{thm:benchmark}
Given an arbitrary dictionary and allowing only polynomial depth search, for the class of cartoon-like images $f \in \mathcal{E}^{\beta}_{\gamma}(\mathbb{R}^2)$
the decay rate of the error $\|f-f_N\|_2^2$ of the best $N$-term approximation cannot exceed $N^{-\min\{\beta,\gamma\}}$.
\end{theorem}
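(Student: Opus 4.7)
The plan is to simply assemble the two ingredients that have been prepared just above the statement: the embedding result (Corollary) showing that $\mathcal{E}^{\beta}_{\gamma}(\R^2;\nu)$ contains a copy of $\ell_{0}^{p}$ for
\[
p = \max\!\left\{\frac{2}{\beta+1},\frac{2}{\gamma+1}\right\} = \frac{2}{\min\{\beta,\gamma\}+1},
\]
and the general benchmark Theorem~\ref{thm:upperbound}, which turns the presence of such a copy into a lower bound on the best $N$-term error with polynomial depth search.

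First I would verify the hypotheses of Theorem~\ref{thm:upperbound} for the class $\mathcal{E}^{\beta}_{\gamma}(\R^2;\nu)$. Uniform $L^{2}$-boundedness is immediate: every $f=f_0+f_1\chi_{\mathcal{B}}$ has support contained in $[0,1]^2$ and satisfies $\|f_i\|_{C^\beta}\le \nu$, which together with $|\mathcal{B}|\le 1$ gives $\|f\|_2\le 2\nu$. The copy-of-$\ell_{0}^{p}$ hypothesis with the value of $p$ above is exactly the preceding Corollary.

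Next I would apply Theorem~\ref{thm:upperbound} with this $p$. It yields a constant $C>0$ such that for every $N_0$ there is some $f\in \mathcal{E}^{\beta}_{\gamma}(\R^2;\nu)$ and some $N\ge N_0$ with
\[
\|f - f_N\|_2^2 \;\ge\; C\,(N\log_2 N)^{-(2-p)/p}.
\]
A direct computation of the exponent gives
\[
\frac{2-p}{p} \;=\; \frac{2}{p}-1 \;=\; \bigl(\min\{\beta,\gamma\}+1\bigr)-1 \;=\; \min\{\beta,\gamma\}.
\]
Consequently, along a subsequence $N_k\to\infty$ one has $\|f-f_{N_k}\|_2^{2}\gtrsim (N_k\log_2 N_k)^{-\min\{\beta,\gamma\}}$, and in particular an asymptotic decay of the form $\|f-f_N\|_2^{2}\le C N^{-\min\{\beta,\gamma\}-\varepsilon}$ uniformly over $\mathcal{E}^{\beta}_{\gamma}(\R^2;\nu)$ is impossible for any $\varepsilon>0$. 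This is exactly the claim that the best $N$-term approximation rate cannot exceed $N^{-\min\{\beta,\gamma\}}$ (the extra $\log$ factor in the lower bound is harmless for this rate statement, as any polynomial decay strictly faster than $N^{-\min\{\beta,\gamma\}}$ would eventually beat $(N\log N)^{-\min\{\beta,\gamma\}}$).

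There is essentially no obstacle beyond bookkeeping here, since all the heavy lifting has already been done: the rate-distortion argument is contained in the proof of Theorem~\ref{thm:upperbound}, and the geometric construction of embedded hypercubes is contained in the two parts of the Corollary (Donoho's construction for the boundary contribution with exponent $2/(\gamma+1)$, and the scaled bump-function hypercubes for the $C^\beta$ contribution with exponent $2/(\beta+1)$). The only thing to be careful about is the identification of the dominant exponent: the worse of the two contributions—equivalently, the larger $p$, equivalently the smaller of $\beta$ and $\gamma$—dictates the benchmark, which explains the appearance of $\min\{\beta,\gamma\}$ in the final rate.
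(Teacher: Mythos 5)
Your proposal is correct and follows exactly the route the paper intends: the Corollary gives the copy of $\ell_0^p$ with $p=2/(\min\{\beta,\gamma\}+1)$, and Theorem~\ref{thm:upperbound} converts this into the lower bound $(N\log_2 N)^{-(2-p)/p}$ with $(2-p)/p=\min\{\beta,\gamma\}$. The paper itself offers no separate proof and states the result as ``a direct consequence'' of Theorem~\ref{thm:upperbound}, so your reasoning and bookkeeping match the intended argument.
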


We observe that the upper bound on the approximation rate for $\mathcal{E}^{\beta}_{\gamma}(\R^2)$ is the same as for the larger class $\mathcal{E}^{\min\{\beta,\gamma\}}(\R^2)$.
Therefore we restrict our investigation to classes of the form $\mathcal{E}^{\beta}(\R^2)$. Since we are able to
establish a lower bound for $\mathcal{E}^{\beta}(\R^2)$ differing only by a log-factor from this upper bound, we do not loose much.

%
 %
\section{Hybrid Curvelets}
\label{ssec:HyCurve}
%

Second generation curvelets, which are nowadays simply referred to as curvelets, were introduced in 2004 by Cand\`{e}s and Donoho~\cite{CD04}.
The construction involves a parabolic scaling law, which can be viewed as a natural compromise between isotropic scaling, as utilized for wavelets, and
scaling in only one coordinate direction, as utilized for ridgelets \cite{GrohsRidLT}.

A more general anisotropic scaling law is realized by so-called $\alpha$-scaling, where the parameter $\alpha\in[0,1]$ is used to
control the degree of anisotropy in the scaling. The associated $\alpha$-scaling matrix is defined by
\[
D_s=\begin{pmatrix} s & 0 \\ 0 & s^\alpha  \end{pmatrix}, \quad s\in\mathbb{R}_+.
\]
Isotropic scaling corresponds to $\alpha=1$, and scaling in one coordinate direction to $\alpha=0$.
Parabolic scaling is the special case for $\alpha=\frac{1}{2}$. In this sense, curvelets can be viewed as lying in between ridgelets and wavelets.

Adapting the construction of curvelets to $\alpha$-scaling yields, what we will call hybrid curvelets, or more specifically $\alpha$-curvelets.
In the following we will construct bandlimited tight frames of $\alpha$-curvelets for every $\alpha\in[0,1]$.
Thus, we obtain a whole scale of representation systems, which interpolate between wavelets for $\alpha=1$ on the one end and
ridgelets for $\alpha=0$ on the other end.

The construction follows the same recipe used for the tight ridgelet frames in \cite{GrohsRidLT}.
As mentioned above it can also be seen as a variation of the classical second generation curvelet frame from \cite{CD04}.

The construction is simplified by treating the radial and angular
components separately.
For the construction of the radial functions $W^{(j)}$ at each scale $j\in\N_0$ we
start with $C^\infty$-functions
$\widetilde{W}^{(0)}:\R_+\rightarrow[0,1]$ and $\widetilde{W}:\R_+\rightarrow[0,1]$ with the following properties
\begin{align*}
\supp\, \widetilde{W}^{(0)}\subset[0,2], \quad \widetilde{W}^{(0)}(r)=1 \text{ for all }r\in[0,\textstyle{\frac{3}{2}}], \\
\supp\, \widetilde{W}\subset[\textstyle{\frac{1}{2}},2], \quad \widetilde{W}(r)=1 \text{ for all }r\in[\textstyle{\frac{3}{4}},\textstyle{\frac{3}{2}}].
\end{align*}
Then we define for $j\in\N$ the functions
$
\widetilde{W}^{(j)}(r):=\widetilde{W} (2^{-j} r)
$ on $\R_+$.
Finally, we rescale for every $j\in\N_0$ (to obtain an integer grid later)
\[
W^{(j)}(r):=\widetilde{W}^{(j)} (8\pi r), \quad r\in\R_+.
\]
Notice, that it holds $2\ge\sum W^{(j)}\ge 1$.

Next, we define the angular functions $V^{(j,\ell)}:\mathbb{S}^{1}\rightarrow[0,1]$
on the unit circle $\mathbb{S}^{1}\subset\R^2$, where $j\in\N$ specifies the scale and $\ell$ the orientation, running through $\big\{0,\ldots,L_j-1\big\}$ with
\begin{align*}
L_j=2^{\lfloor j(1-\alpha) \rfloor}, \quad j\in\N.
\end{align*}
This time we start with a $C^\infty$-function $V:\R\rightarrow[0,1]$, living on the whole of $\R$, satisfying
\begin{align*}
\supp\, V\subset [-\textstyle{\frac{3}{4}}\pi,\textstyle{\frac{3}{4}}\pi] && \text{ and } &&
V(t)= 1 \text{ for all }t\in[-\textstyle{\frac{\pi}{2}},\textstyle{\frac{\pi}{2}}].
\end{align*}
Since the interval $[-\pi,\pi]$ can be identified via $t\mapsto e^{it}$ with $\mathbb{S}^{1}$, we obtain
$C^\infty$-functions $\widetilde{V}^{(j,0)}:\mathbb{S}^{1}\rightarrow[0,1]$ for every $j\in\N$ by restricting the scaled functions
$V(2^{\lfloor j(1-\alpha) \rfloor} \cdot)$ to $[-\pi,\pi]$.
In order to end up with real-valued curvelets, we then symmetrize
\[
V^{(j,0)}(\xi):=\widetilde{V}^{(j,0)}(\xi) + \widetilde{V}^{(j,0)}(-\xi) \quad,\xi\in\mathbb{S}^1.
\]
At each scale $j\in\N$ we define the characteristic angle $\omega_j=\pi 2^{-\lfloor j(1-\alpha) \rfloor}$,
and for $\ell\in\big\{0,1,\ldots,L_j-1\big\}$ we let
\begin{align}\label{eq:matrixrot}
R_{j,\ell}
=\begin{pmatrix} \cos(\omega_{j,\ell}) & -\sin(\omega_{j,\ell}) \\
 \sin(\omega_{j,\ell}) & \,\cos(\omega_{j,\ell})
\end{pmatrix}
\end{align}
be the rotation matrix by the angle $\omega_{j,\ell}=\ell\omega_j$. By rotating $V^{(j,0)}$ we finally get $V^{(j,\ell)}:\mathbb{S}^{1}\rightarrow[0,1]$,
\[
V^{(j,\ell)}(\xi):=V^{(j,0)}(R_{j,\ell} \xi) \quad\text{ for }\xi\in\mathbb{S}^1.
\]
In order to secure the tightness of the frame we need to renormalize with the function
\[
\Psi(\xi):= W^{(0)}(|\xi|)^2 + \sum_{j,\ell} W^{(j)}(|\xi|)^2V^{(j,\ell)}\Big(\frac{\xi}{|\xi|}\Big)^2,
\]
which satisfies $1\le\Psi(\xi)\le8$ for all $\xi\in\R^2$. 
Bringing the radial and angular components together, we obtain the functions
\begin{align}\label{eq:suppfunctions}
\chi_{0}(\xi):=\frac{W^{(0)}(|\xi|))}{\sqrt{\Psi(\xi)}}, \qquad \chi_{j,\ell}(\xi)=\frac{W^{(j)}(|\xi|)V^{(j,\ell)}\Big(\frac{\xi}{|\xi|}\Big)}{\sqrt{\Psi(\xi)}} \qquad\text{for }\xi\in\R^2.
\end{align}
It is obvious that $\chi_{0}$, $\chi_{j,\ell}\in C^\infty(\R^2)$.
Moreover, these functions are real-valued, non-negative, compactly supported, and $L^\infty$-bounded by $1$. Let $\mathfrak{e}_1=(1,0)\in\R^2$ be the first unit vector and put
\[
S_j:=\Big\{ x\in\mathbb{S}^1 ~:~ |\langle x,\mathfrak{e}_1 \rangle| \ge \cos(\omega_j/2) \Big\}.
\]
Indeed, we have $\supp\, \chi_{0}\subset \mathcal{W}_{0}:=\{ \xi\in\R^2 : 8\pi|\xi|\le2 \}$ and $\supp\, \chi_{j,\ell}\subset \mathcal{W}_{j,\ell}$, where
for $j\in\N$, $\ell\in\big\{0,\ldots,L_j-1\big\}$ the sets
\begin{align}\label{eq:wedgePJ}
\mathcal{W}_{j,\ell}:=\Big\{ \xi\in\R^2 ~:~ 2^{j-1}\le 8\pi|\xi|\le2^{j+1},\quad \frac{\xi}{|\xi|}\in R^{-1}_{j,\ell}S_j \Big\}
\end{align}
are antipodal pairs of symmetric wedges.

After this preparation, we are ready to define the functions $\psi_{0}$ and $\psi_{j,\ell}$ on the Fourier side by
\[
\widehat{\psi}_{0}(\xi):=\chi_{0}(\xi) \quad\text{ and }\quad \widehat{\psi}_{j,\ell}(\xi):= \chi_{j,\ell}(\xi) \text{ for }\xi\in\R^2.
\]
They have the following important property.

\begin{lemma}\label{lem:CurveletFramePW}
For every $f\in L^2(\R^2)$ it holds
\[
\|f\|_2^2= \|f\ast\psi_{0}\|_2^2 + \sum_{j,\ell} \|f\ast\psi_{j,\ell}\|_2^2.
\]
\end{lemma}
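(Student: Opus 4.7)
The plan is to move everything to the frequency side via Plancherel and the convolution theorem, and then reduce the claim to a pointwise identity for the squared symbols $\chi_0, \chi_{j,\ell}$.

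First I would observe that each $\widehat{\psi}_0 = \chi_0$ and $\widehat{\psi}_{j,\ell} = \chi_{j,\ell}$ lies in $C_0^\infty(\R^2)$, so each $\psi_0, \psi_{j,\ell}$ is a Schwartz function. In particular, for $f\in L^2(\R^2)$ the convolutions are well-defined by $\widehat{f\ast\psi_0}=\widehat f\cdot\chi_0$ and $\widehat{f\ast\psi_{j,\ell}}=\widehat f\cdot\chi_{j,\ell}$, and Plancherel yields
\[
\|f\ast\psi_0\|_2^2=\int_{\R^2}|\widehat f(\xi)|^2\chi_0(\xi)^2\,d\xi,\qquad
\|f\ast\psi_{j,\ell}\|_2^2=\int_{\R^2}|\widehat f(\xi)|^2\chi_{j,\ell}(\xi)^2\,d\xi.
\]
Because all integrands are non-negative, Tonelli's theorem permits me to interchange the sum over $(j,\ell)$ with the integral, so the right-hand side of the claim equals
\[
\int_{\R^2}|\widehat f(\xi)|^2\Bigl(\chi_0(\xi)^2+\sum_{j,\ell}\chi_{j,\ell}(\xi)^2\Bigr)d\xi.
\]

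The heart of the proof is then the pointwise identity
\[
\chi_0(\xi)^2+\sum_{j\in\N}\sum_{\ell=0}^{L_j-1}\chi_{j,\ell}(\xi)^2=1\qquad\text{for all }\xi\in\R^2.
\]
But this is built directly into the definition \eqref{eq:suppfunctions}: plugging in $\chi_0^2=W^{(0)}(|\xi|)^2/\Psi(\xi)$ and $\chi_{j,\ell}^2=W^{(j)}(|\xi|)^2 V^{(j,\ell)}(\xi/|\xi|)^2/\Psi(\xi)$, the numerator is by definition $\Psi(\xi)$, and $\Psi(\xi)\ge 1>0$ ensures that division is legitimate everywhere. At $\xi=0$ the expression $\xi/|\xi|$ is ill-defined, but the supports of $\widetilde W$ and $\widetilde W^{(0)}$ give $W^{(j)}(0)=0$ for $j\ge 1$ and $W^{(0)}(0)=1$, so the identity degenerates to $1=1$ at the origin and no ambiguity arises.

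Applying this identity inside the integral and invoking Plancherel once more on the left,
\[
\|f\ast\psi_0\|_2^2+\sum_{j,\ell}\|f\ast\psi_{j,\ell}\|_2^2=\int_{\R^2}|\widehat f(\xi)|^2\,d\xi=\|f\|_2^2,
\]
which is the claim. I do not anticipate any real obstacle: the only points requiring any care are the measurability/interchange step (handled by non-negativity and Tonelli) and the behaviour at $\xi=0$ (handled by the support of $W^{(j)}$ for $j\ge 1$).
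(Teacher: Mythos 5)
Your proof is correct and follows essentially the same route as the paper: Plancherel plus the discrete Calder\'on condition $\chi_0^2+\sum_{j,\ell}\chi_{j,\ell}^2\equiv 1$, which the paper asserts ``by construction'' and you verify explicitly from the definition of $\Psi$. The only difference is that you spell out the Tonelli interchange and the behaviour at $\xi=0$, which the paper leaves implicit.
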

\begin{proof}
By construction the system satisfies the discrete Caldéron condition, i.e., 
\[
|\widehat{\psi}_{0}(\xi)|^2 + \sum_{j,\ell} |\widehat{\psi}_{j,\ell}(\xi)|^2=1 \quad\text{ for all }\xi\in\R^2.
\]
This yields for all $f\in L^2(\R^2)$
\begin{align*}
\|f\|_2^2&= \int_{\R^2} |\widehat{f}(\xi)|^2 \,d\xi \\
&= \int_{\R^2} |\widehat{\psi}_{0}(\xi)|^2 |\widehat{f}(\xi)|^2 \,d\xi + \int_{\R^2} \sum_{j,\ell} |\widehat{\psi}_{j,\ell}(\xi)|^2 |\widehat{f}(\xi)|^2 \,d\xi\\
&= \|f\ast\psi_{0}\|_2^2 + \sum_{j,\ell} \|f\ast\psi_{j,\ell}\|_2^2.
\end{align*}
\end{proof}
The full frame of $\alpha$-curvelets is obtained by taking translates of $\psi_{0}$ and $\psi_{j,\ell}$ in the spatial domain
and $L^2$-normalizing afterwards.
Accordingly, for $j\in\N$, $\ell\in\{0,1,\ldots,L_j-1\}$, and $k\in\Z^2$ we define $\psi_{0,k}:=\psi_0(\cdot-k)$ and
\begin{align*}
 \psi_{j,\ell,k}:= 2^{-j(1+\alpha)/2} \cdot \psi_{j,\ell}(\cdot-x_{j,\ell,k})\quad\text{with}\quad x_{j,\ell,k}=R^{-1}_{j,\ell}D^{-1}_{2^{j}}k.
\end{align*}
The corresponding set of curvelet indices $\mu=(j,\ell,k)$ will henceforth be denoted by $M$. Further, we will use the following notation for this system
\[
\mathcal{C}_\alpha(W^{(0)},W,V)=(\psi_\mu)_{\mu\in M} =\Big\{ \psi_{0,k} ~:~ k\in\Z^2 \Big\} \cup \Big\{ \psi_{j,\ell,k} ~:~ j\in\N,\,k\in\Z^2,\,\ell\in\{0,1,\ldots,L_j-1\} \Big\}.
\]

\begin{theorem}
Let $\alpha\in[0,1]$ and $W^{(0)}$, $W$, $V$ be defined as above. The $\alpha$-curvelet system $\mathcal{C}_\alpha(W^{(0)},W,V)$
constitutes a tight frame for $L^2(\R^2)$.
\end{theorem}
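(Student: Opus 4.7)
The plan is to establish the Parseval identity
\[
\|f\|_2^2 = \sum_{k\in\Z^2} |\langle f, \psi_{0,k}\rangle|^2 + \sum_{j,\ell,k} |\langle f, \psi_{j,\ell,k}\rangle|^2
\]
by combining Lemma~\ref{lem:CurveletFramePW} with a Shannon-type sampling identity at each scale--orientation pair $(j,\ell)$. Because of the symmetrization $V^{(j,0)}(\xi) = \widetilde{V}^{(j,0)}(\xi) + \widetilde{V}^{(j,0)}(-\xi)$, each $\chi_{j,\ell}$ is even in $\xi$, so $\psi_{j,\ell}$ is real and even. Consequently,
\[
\langle f,\psi_{j,\ell,k}\rangle = 2^{-j(1+\alpha)/2}(f*\psi_{j,\ell})(x_{j,\ell,k}),
\]
and in view of Lemma~\ref{lem:CurveletFramePW} it suffices to show, for every $(j,\ell)$, the sampling identity
\[
\|h\|_2^2 = 2^{-j(1+\alpha)}\sum_{k\in\Z^2}|h(x_{j,\ell,k})|^2,\qquad h:=f*\psi_{j,\ell}.
\]

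Since $\widehat{h}$ is supported in the wedge $\mathcal{W}_{j,\ell}$, I would introduce the renormalized function $H(y):=h(R_{j,\ell}^{-1}D_{2^j}^{-1}y)$. The substitution $y=D_{2^j}R_{j,\ell}x$, whose Jacobian equals $2^{j(1+\alpha)}$, yields $\widehat{H}(\eta)=2^{j(1+\alpha)}\widehat{h}(R_{j,\ell}^{-1}D_{2^j}\eta)$ with support $D_{2^j}^{-1}R_{j,\ell}\mathcal{W}_{j,\ell}$. The geometric crux is then to verify
\[
D_{2^j}^{-1}R_{j,\ell}\mathcal{W}_{j,\ell}\subset[-\tfrac12,\tfrac12]^2.
\]
The radial bound $|\xi|\le 2^j/(4\pi)$ from the definition of $\mathcal{W}_{j,\ell}$ gives $|\eta_1|\le 1/(4\pi)<1/2$, while the angular bound $\omega_j/2\le\pi 2^{-\lfloor j(1-\alpha)\rfloor}/2$ combined with the elementary inequality $j(1-\alpha)-\lfloor j(1-\alpha)\rfloor<1$ gives $|\eta_2|<1/4$. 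Once this inclusion is in place, Parseval for Fourier series on $[-\tfrac12,\tfrac12]^2$ produces $\|H\|_2^2=\sum_{k\in\Z^2}|H(k)|^2$, and transforming back through the change of variables yields exactly the desired sampling identity.

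The low-pass term is treated in the same manner but without rotation or anisotropic scaling: $\widehat{\psi}_0$ is supported in $\{|\xi|\le 1/(4\pi)\}\subset[-\tfrac12,\tfrac12]^2$, so $\|f*\psi_0\|_2^2=\sum_k|\langle f,\psi_{0,k}\rangle|^2$ follows at once from Shannon sampling on the integer lattice. Summing these identities and invoking Lemma~\ref{lem:CurveletFramePW} completes the proof. The only mildly delicate point is the wedge-in-box verification; it depends crucially on the constants $8\pi$ in the radial bump and the floor in $L_j=2^{\lfloor j(1-\alpha)\rfloor}$ that are hardwired into the construction. Everything else is a routine assembly of Plancherel, Shannon sampling, and the Calder\'on-type decomposition already furnished by Lemma~\ref{lem:CurveletFramePW}.
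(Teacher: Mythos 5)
Your proof is correct and takes essentially the same approach as the paper: the paper verifies the wedge-in-rectangle containment $\mathcal{W}_{j,\ell}\subset\Xi_{j,\ell}$ and applies Parseval for the Fourier orthonormal basis $(u_{j,k}(R_{j,\ell}\cdot))_k$ of $L^2(\Xi_{j,\ell})$ to $\widehat{f}\widehat{\psi}_{j,\ell}$, which is the frequency-side formulation of your spatial-side Shannon-sampling identity after the normalization $D_{2^j}^{-1}R_{j,\ell}$. The only cosmetic difference is that you invoke evenness of $\psi_{j,\ell}$ to write the coefficients as convolution samples, whereas the paper works directly with $\langle\widehat f, \widehat\psi_{j,\ell}\,u_{j,k}(R_{j,\ell}\cdot)\rangle$ and only needs that $\widehat\psi_{j,\ell}$ is real.
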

\begin{proof}
For $j\in\N$ the wedges $\mathcal{W}_{j,0}$ are contained in the rectangles $\Xi_{j,0}$,
\begin{align}\label{eq:supprect}
\mathcal{W}_{j,0}\subset \frac{1}{8\pi} \Big([-2^{j+1},2^{j+1}]\times[-4\pi 2^{j\alpha},4\pi 2^{j\alpha}]\Big) \subset [-2^{j-1},2^{j-1}]\times[-2^{j\alpha-1},2^{j\alpha-1}]=:\Xi_{j,0},
\end{align}
and $\mathcal{W}_0$ is contained in the cube $[-2,2]^2/(8\pi)\subset[-\frac{1}{2},\frac{1}{2}]^2=:\Xi_0$. Since $\mathcal{W}_{j,\ell}=R^{-1}_{j,\ell}\mathcal{W}_{j,0}$
we can put $\Xi_{j,\ell}:=R^{-1}_{j,\ell}\Xi_{j,0}$ such that $\mathcal{W}_{j,\ell}\subset\Xi_{j,\ell}$.

For each fixed $j\in\N$ the Fourier system $(u_{j,k})_{k\in\Z^2}$ given by
\begin{align}\label{eq:fouriersys}
u_{j,k}(\xi)=2^{-j(1+\alpha)/2}\exp\big(2\pi ix_{j,0,k}\cdot\xi\big) \quad \text{with} \quad x_{j,0,k}=(2^{-j}k_1, 2^{-j\alpha}k_2)=D^{-1}_{2^j}k
\end{align}
constitutes an orthonormal basis for $L^2(\Xi_{j,0})$. Observe that $x_{j,\ell,k}=R^{-1}_{j,\ell}x_{j,0,k}$. Therefore,
\[
u_{j,k}(R_{j,\ell}\xi)=2^{-j(1+\alpha)/2}\exp\big(2\pi ix_{j,\ell,k}\cdot\xi\big),
\]
and the system $\big(u_{j,k}(R_{j,\ell}\cdot)\big)_{k\in\Z^2} $ is an orthonormal basis for $L^2(\Xi_{j,\ell})$. By Lemma~\ref{lem:CurveletFramePW} it holds
\[
\|f\|_2^2= \|f\ast\psi_{0}\|_2^2 + \sum_{j,\ell} \|f\ast\psi_{j,\ell}\|_2^2.
\]
By the above observation we have for $j\in\N$
\begin{align*}
\|f\ast\psi_{j,\ell}\|_2^2 &=  \|\widehat{f}\cdot\widehat{\psi}_{j,\ell}\|_2^2 =
\int_{\Xi_{j,\ell}} |\widehat{f}(\xi)\widehat{\psi}_{j,\ell}(\xi)|^2 \,d\xi
=\sum_{k\in\Z^2} \Big|\int \widehat{f}(\xi)\widehat{\psi}_{j,\ell}(\xi) u_{j,k}(R_{j,\ell}\xi) \,d\xi\Big|^2 \\
&=\sum_{k\in\Z^2}  |\langle\widehat{f}, 2^{-j(1+\alpha)/2}\widehat{\psi}_{j,\ell}e^{2\pi ix_{j,\ell,k}\cdot} \rangle|^2
=\sum_{k\in\Z^2}  |\langle f,\psi_{j,\ell,k} \rangle|^2.
\end{align*}
A similar argument for $\|f\ast\psi_{0}\|_2^2$ finishes the proof.
\end{proof}

In order to simplify the notation we henceforth use the capital letter $J$ to denote a scale-angle pair $(j,\ell)$.
In this context $|J|$ shall denote the corresponding scale variable $j$,
i.e. for $J=(j,\ell)$ it is $|J|=j$. Therefore, from now on we may write e.g.\ $\omega_J$, $R_J$ to abbreviate $\omega_{j,\ell}$, $R_{j,\ell}$, etc.

\section{Cartoon Approximation with $\alpha$-Curvelets}

In this main section we will use the tight frame of $\alpha$-curvelets $\mathcal{C}_\alpha(W^{(0)},W,V)=(\psi_\mu)_{\mu\in M}$, constructed in the previous section,
to approximate the cartoon image class $\cE^{\beta}(\RR^2)$. In this investigation we keep $\alpha\in [\frac{1}{2},1)$ fixed in the range between wavelets and curvelets,
and put $\beta=\alpha^{-1}$.
Our main goal is to prove the following approximation result, which generalizes an analogous result in \cite{CD04} for second generation curvelets.

\begin{theorem}\label{thm:mainappr1}
   Let $\alpha\in[\frac{1}{2},1)$ and $\beta=\alpha^{-1}$. The tight frame of $\alpha$-curvelets $\mathcal{C}_\alpha(W^{(0)},W,V)$ constructed in Section \ref{ssec:HyCurve}
   provides almost optimally sparse approximations for cartoon-like functions $\cE^{\beta}(\RR^2)$. More precisely,
   there exists some constant $C$ such that for every $f\in \cE^{\beta}(\RR^2)$
\begin{align*}
 \|f-f_N\|_2^2\le CN^{-\beta} \cdot \left(\log_2 N\right)^{\beta+1} \text{\hspace{0.5cm}as }  N \rightarrow \infty,
\end{align*}
where $f_N$ is the $N$-term approximation of $f$ obtained by choosing the $N$ largest curvelet coefficients.
\end{theorem}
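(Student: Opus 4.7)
The plan is to establish that the sequence of $\alpha$-curvelet coefficients $(\langle f,\psi_\mu\rangle)_{\mu\in M}$ lies in the weak sequence space $\omega\ell^p$ with $p=2/(\beta+1)$. Once this is shown, Lemma~\ref{lem:decayapprox} applied with $k=\beta/2$ immediately yields $\|f-f_N\|_2\lesssim N^{-\beta/2}$, hence $\|f-f_N\|_2^2\lesssim N^{-\beta}$; the $(\log N)^{\beta+1}$ overshoot in the statement is picked up when the weak-$\ell^p$ tail is summed across the $\cO(\log N)$ scales actually contributing at a given threshold. By linearity, writing $f=f_0+f_1\chi_{\mathcal{B}}$, it suffices to bound separately the coefficients coming from the $C^\beta$-smooth summands $f_0,f_1$ and those coming from the singular edge term $f_1\chi_{\mathcal{B}}$.

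For the smooth part, $f_i \in C^\beta_0([0,1]^2)$ yields averaged Fourier decay $|\widehat{f_i}(\xi)|\lesssim(1+|\xi|_2)^{-\beta}$. Since $\widehat{\psi}_{j,\ell}$ is supported in a wedge at radius $\asymp 2^j$ and $\psi_{j,\ell}$ is rapidly decaying in space around $x_{j,\ell,k}$, a direct Plancherel estimate gives $|\langle f_i,\psi_{j,\ell,k}\rangle|\lesssim 2^{-j(1+\alpha)/2}\cdot 2^{-j\beta}$, with only $\cO(2^{j(1+\alpha)})$ spatially relevant indices per scale-angle pair. This easily places the smooth-part coefficients in $\omega\ell^p$ with room to spare.

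The core of the argument is the edge contribution. After a smooth partition of unity at spatial scale $2^{-j\alpha}$, one may assume the boundary is locally the graph of a $C^\beta$-function $E(\tau)$ with $E(0)=E'(0)=0$. Via Parseval and the Fourier slice theorem, $\langle f_1\chi_{\mathcal{B}},\psi_{j,\ell,k}\rangle$ becomes an integral of the Radon transform $\mathcal{R}(f_1\chi_{\mathcal{B}})(t,\eta)$ against a tensor product of a scale-$2^j$ bandlimited function in $t$ and a scale-$2^{j(1-\alpha)}$ window in $\eta$. The relevant curvelets at scale $j$ split into three classes: (a) those whose spatial support is far from the edge, which give negligible contributions by rapid decay of $\psi_{j,\ell,k}$; (b) those near the edge but with orientation transverse to the local tangent, for which the directional mismatch between $\widehat{\psi}_{j,\ell}$ and the slowly-decaying normal direction of $\widehat{\chi_{\mathcal{B}}}$ forces a $2^{-jN}$ bound for any $N$; and (c) tangential curvelets, whose orientation agrees with the true tangent to within $\omega_j\asymp 2^{-j(1-\alpha)}$. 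For class (c) one establishes the size bound $|\langle f_1\chi_{\mathcal{B}},\psi_\mu\rangle|\lesssim 2^{-j(1+\alpha)/2-j\beta}$, while a geometric count along the edge produces only $\cO(2^{j\alpha})$ such coefficients per scale. Using $\alpha\beta=1$, one then checks that $n^{1/p}$ times the $n$-th largest coefficient stays bounded, giving weak-$\ell^p$ membership.

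The main obstacle will be the class (c) size estimate. The proof of \cite{CD04} in the case $\beta=2$ exploits two orders of differentiation of the boundary parametrization on the Radon side, which is not available for $\beta\in(1,2]$. Following the strategy announced in the introduction, I would replace each derivative estimate by an estimate on iterated forward differences $\Delta_h$ (acting in the tangential variable $t$) of $\mathcal{R}(f_1\chi_{\mathcal{B}})(t,\eta)$, extracting the crucial $|h|^\beta$-size from $\text{\sl Höl}(\rho',\beta-1)\le\nu$ in \eqref{eqn:curvature} together with $\|f_1\|_{C^\beta}\le\nu$. Balancing these modulus-of-smoothness bounds against the anisotropic bandwidths $2^j$ (tangential) and $2^{j\alpha}$ (transversal) of $\widehat{\psi}_{j,\ell}$ is precisely what forces the critical coupling $\beta=1/\alpha$, and constitutes the technical heart of the argument.
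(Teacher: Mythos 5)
Your high-level architecture matches the paper: a smooth partition of unity at spatial scale $2^{-j\alpha}$, separate treatment of smooth and edge fragments, Radon transform and Fourier slice theorem, and (crucially) the replacement of two-fold differentiation by iterated forward differences $\Delta_h$ to extract the $h^\beta$ modulus of smoothness from $\Hol(\rho',\beta-1)\le\nu$ and $\|f_i\|_{C^\beta}\le\nu$. This is exactly the mechanism that enforces the coupling $\beta=1/\alpha$. The final reduction via Lemma~\ref{lem:decayapprox}, with the $(\log N)^{\beta+1}$ coming from summing a bounded \emph{per-scale} weak-$\ell^p$ tail over the $\cO(\log\varepsilon^{-1})$ scales that can contribute at a given threshold, is also the route the paper takes (Theorem~\ref{thm:mainappr2} $\Rightarrow$ Theorem~\ref{thm:mainappr1}). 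Be careful with the opening phrasing, though: the \emph{full} coefficient sequence is not in $\omega\ell^{2/(\beta+1)}$; only the per-scale sequences $\theta_j$ are uniformly so, and the $\log$-loss is precisely the failure of the aggregate to be weak-$\ell^p$. If the whole sequence really were in $\omega\ell^p$ you would get $N^{-\beta}$ with no $\log$, contradicting your own closing remark.

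There is, however, a genuine gap in the quantitative core — your ``class~(c)'' size-and-count estimate. You assert that tangential curvelet coefficients obey $|\langle f_1\chi_{\mathcal B},\psi_\mu\rangle|\lesssim 2^{-j(1+\alpha)/2-j\beta}$ with only $\cO(2^{j\alpha})$ such indices per scale. This is too strong and in fact internally inconsistent with the claimed rate. The largest edge coefficients at scale $j$ are of size $\sim 2^{-j(1+\alpha)/2}$ (they saturate the a~priori bound \eqref{eq:apriori}); the extra factor $2^{-j\beta}$ appears only in aggregate, through the decay of the coefficient \emph{distribution} over orientations and translations. What the paper actually proves is the per-edge-fragment weak norm bound $\|\theta_Q\|_{\omega\ell_{2/(1+\beta)}}\le C\,2^{-(1+\alpha)j/2}$ (Proposition~\ref{prop:sequence1}), obtained from the orientation-dependent wedge estimate of Theorem~\ref{thm:central1},
\[
\int_{\R^2}|\widehat f\chi_J(\xi)|^2\,d\xi\lesssim 2^{-(1+\alpha)j}\bigl(1+2^{(1-\alpha)j}|\sin\omega_J|\bigr)^{-1-2\beta},
\]
combined with integration by parts against the anisotropic operator $\mathcal L=(\mathcal I-(2^j/\ell_J)^2\mathcal D_1^2)(\mathcal I-2^{2\alpha j}\mathcal D_2^2)$ to gain spatial decay in the translate index, and a count of $\min\{\ell_J,(\varepsilon L_K)^{-2}2^{-(1+\alpha)j}\ell_J^{-1-2\beta}\}$ summed over $\ell$ and $K$. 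No single uniform size bound on a ``tangential'' class, paired with a simple $\cO(2^{j\alpha})$ count, can reproduce this; the critical rate emerges only by balancing the number of moderately large coefficients against the polynomial decay in $\ell_J=1+2^{(1-\alpha)j}|\sin\omega_J|$. Your three-way split (far / transverse / tangential) should either be replaced by this quantitative orientation--position distribution argument, or your class~(c) size should be corrected to $\sim 2^{-j(1+\alpha)/2}$ with the decay encoded in the tail count rather than the uniform magnitude.
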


When we compare this theorem with the benchmark Theorem~\ref{thm:benchmark}, we see that $\mathcal{C}_\alpha(W^{(0)},W,V)=(\psi_\mu)_{\mu\in M}$
attains the maximal achievable approximation rate up to a log-factor. This is remarkable since this rate is achieved by simply thresholding the coefficients,
leading to an intrinsically non-adaptive approximation scheme.

Theorem~\ref{thm:mainappr1} is proved in several steps, extending the techniques used in \cite{CD04}.
The basic idea is to study the decay of the sequence $(\theta_\mu)_{\mu\in M}$ of curvelet coefficients $\theta_\mu=\langle f,\psi_\mu \rangle$ for $f\in \cE^{\beta}(\RR^2)$.
According to Lemma~\ref{lem:decayapprox} this decay rate provides us with the desired information on the approximation rate.


\subsection{Sparsity of Curvelet Coefficients for Cartoon-like Images}


We first state a simple a-priori estimate for the size of the curvelet coefficients $\theta_\mu=\langle f,\psi_\mu \rangle$ at scale $j$.
The curvelets $\psi_\mu$ are $L^2$-normalized and essentially supported in a box of length $2^{-\alpha j} $ and width $2^{-j}$.
Therefore we can estimate
\[
\|\psi_\mu\|_{1} \le B 2^{-(1+\alpha)j/2} \|\psi_\mu\|_{2} \le B 2^{-(1+\alpha)j/2},
\]
where the constant $B$ is uniform over all scales. This argument can easily be made rigorous, and we obtain the following estimate
for the coefficients at scale $j$:
\begin{align}\label{eq:apriori}
|\theta_\mu|=|\langle f,\psi_\mu \rangle| \le \|f\|_{\infty} \|\psi_\mu\|_{1} \le B \|f\|_{\infty} 2^{-(1+\alpha)j/2}.
\end{align}
Not surprisingly, this a-priori estimate is not yet sufficient to prove Theorem~\ref{thm:mainappr1}.
A more sophisticated result is the following theorem.
\begin{theorem}\label{thm:mainappr2}
   Let $\theta^*_{N}$ denote the (in modulus) $N$-th largest curvelet coefficient. Then there exists some universal constant $C$ such that
\begin{align*}
 \sup_{f\in \mathcal{E}^{\beta}(\R^2)}  |\theta^*_{N}| \le C\cdot N^{-(1+\beta)/2} \cdot \left(\log_2 N\right)^{(1+\beta)/2}.
\end{align*}
 \end{theorem}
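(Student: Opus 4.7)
The plan is to estimate the individual coefficients $\theta_\mu = \langle f,\psi_\mu\rangle$ scale by scale as a function of the geometric relationship between $\psi_\mu$ and the singularity curve $\Gamma = \partial \mathcal{B}$, and then deduce the bound on the non-increasing rearrangement $\theta^*_N$ via a counting argument. First I would write $f = f_0 + f_1\chi_{\mathcal{B}}$ and isolate the cartoon piece: the $C^\beta_0$ summand $f_0$ (and, after smooth extension, the analogous contribution from $f_1$) yields coefficients that decay rapidly by the usual argument combining the vanishing moments of $\widehat{\psi}_\mu$ (which vanishes near the origin at every scale $j\ge 1$) with the $C^\beta$ modulus of smoothness. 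Thus the main task reduces to estimating coefficients of $g\chi_{\mathcal{B}}$ for $g\in C^\beta_0$.

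At scale $j$ I would partition the indices into those whose essential support avoids $\Gamma$ and those that meet it. For the former, $g\chi_{\mathcal{B}}$ agrees with a $C^\beta$ function on the relevant patch, and matching vanishing moments of $\psi_\mu$ against a suitable Taylor expansion with remainder controlled by the $C^\beta$ modulus of smoothness gives
\[
|\theta_\mu| \lesssim 2^{-(1+\alpha)j/2}\cdot 2^{-\beta\alpha j} = 2^{-(3+\alpha)j/2},
\]
since $\alpha\beta=1$; these terms are negligible in the counting. For the "meets $\Gamma$" coefficients I would follow \cite{CD04} and pass to a Radon-transform representation, writing $\theta_\mu$ as the pairing of a 1D kernel with $\mathcal{R}(g\chi_{\mathcal{B}})(\,\cdot\,,\omega_\ell^{\perp})$; the chord-length function $t\mapsto \mathcal{R}\chi_{\mathcal{B}}(t,\eta)$ inherits the $C^\beta$-smoothness of the boundary parameterization $\rho$ away from tangencies. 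Within the "meets $\Gamma$" class I then separate \emph{aligned} coefficients (the tangent to $\Gamma$ at the nearest point lies within the angular bin $\omega_j = 2^{-(1-\alpha)j}$ of $\omega_\ell$) from \emph{misaligned} coefficients; the misaligned ones gain an additional decay factor in the angular mismatch via the Radon-transform bound and are again negligible.

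For the aligned coefficients only the a-priori bound $|\theta_\mu|\lesssim 2^{-(1+\alpha)j/2}$ is available, but their count at scale $j$ is at most $O(2^{\alpha j})$: the curve $\Gamma$ has length $O(1)$, hence is covered by $O(2^{\alpha j})$ boxes of length $2^{-\alpha j}$, and at each location only $O(1)$ of the $L_j = 2^{(1-\alpha)j}$ orientations lie within $\omega_j$ of the local tangent. To conclude, if we want $|\theta^*_N|\le \varepsilon$ the a-priori bound forces $\varepsilon \gtrsim 2^{-(1+\alpha)j_0/2}$ at the cutoff scale $j_0$ defined by $\sum_{j\le j_0} 2^{\alpha j} \asymp N$; solving $2^{\alpha j_0}\asymp N$ (up to log factors from the geometric sum) gives $\varepsilon \lesssim N^{-(1+\alpha)/(2\alpha)}(\log_2 N)^{(1+\alpha)/(2\alpha)}$, and since $(1+\alpha)/(2\alpha) = (1+\beta)/2$ this is exactly the claim. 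The main obstacle is the refined Radon-transform estimate needed to handle the misaligned coefficients in this fractional-smoothness setting: \cite{CD04} uses pointwise bounds on derivatives of $\mathcal{R}\chi_{\mathcal{B}}$, whereas here $\rho$ is merely $C^\beta$ with $\beta\in(1,2]$, so derivative bounds must be replaced by moduli-of-smoothness estimates on $\mathcal{R}(g\chi_{\mathcal{B}})(t,\eta)$, with constants uniform over the admissible geometries of $\mathcal{B}$.
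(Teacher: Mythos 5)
Your overall roadmap is in the right spirit — scale-by-scale analysis, the split into edge-near vs.\ smooth-region coefficients, the Radon transform approach inherited from \cite{CD04}, the aligned/misaligned angular split, and the replacement of derivative bounds by moduli-of-smoothness bounds in the $C^\beta$, $\beta\in(1,2]$, setting. These are indeed exactly the ingredients the paper uses. However, the final counting step in your argument does not hold up, and the central technical estimates are acknowledged but not supplied.

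The most concrete problem is the source of the log factor. You write that the log enters ``from the geometric sum'' after solving $\sum_{j\le j_0}2^{\alpha j}\asymp N$, but for $\alpha>0$ that sum is $\asymp 2^{\alpha j_0}$ with no logarithm. Taken at face value, your picture --- exactly $O(2^{\alpha j})$ aligned coefficients of magnitude $\sim 2^{-(1+\alpha)j/2}$ at each scale, everything else negligible --- would yield $\#\{\mu : |\theta_\mu|>\varepsilon\} \lesssim \varepsilon^{-2/(1+\beta)}$ with no log and hence $|\theta^*_N|\lesssim N^{-(1+\beta)/2}$ with no log. That is strictly stronger than Theorem~\ref{thm:mainappr2}, which is a symptom that the simplified picture is too optimistic: the aligned coefficients are not all of one size, and the misaligned/smooth-region ones are not negligible in aggregate. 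The paper instead proves a scale-uniform weak bound $\|(\theta_\mu)_{\mu\in M_j}\|_{\omega\ell^{2/(1+\beta)}}\le C$ (Theorem~\ref{thm:main_sequence}, via Propositions~\ref{prop:sequence1} and \ref{prop:sequence2}), so that \emph{each} scale may contribute up to $\varepsilon^{-2/(1+\beta)}$ coefficients above threshold $\varepsilon$; the log factor then arises because the a priori bound \eqref{eq:apriori} limits the number of contributing scales to $\lesssim\log_2(\varepsilon^{-1})$. This is a different counting from yours, and it is essential --- your geometric-sum argument cannot produce the stated conclusion.

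Beyond the counting, the real analytical work --- the angular decay $(1+2^{(1-\alpha)j}|\sin\omega_J|)^{-1-2\beta}$ for edge fragments (Theorem~\ref{thm:central1}) obtained from finite-difference/Radon-slice estimates, the integration-by-parts with the scale-adapted operator $\mathcal{L}$ to control translation indices $k$ (Theorems~\ref{thm:central0} and \ref{thm:central2}, Propositions~\ref{prop:sequence1} and \ref{prop:sequence2}), and the fragment decomposition with the counts \eqref{eq:numcubes} --- is precisely what yields the per-scale weak-$\ell^{2/(1+\beta)}$ bound. You correctly identify the moduli-of-smoothness Radon estimate as the ``main obstacle,'' but that obstacle \emph{is} the proof; without it, neither the misaligned decay nor the per-scale bound is available, and the theorem cannot be concluded. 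A complete argument here must establish the per-scale weak bound and then use the two-step count (uniform per scale times logarithmically many scales), as the paper does.
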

  \begin{proof}
  Let $M_j\subset M$ denote the indices corresponding to curvelets at scale $j$, and
  for $\varepsilon>0$ put
  \[
  M_{j,\varepsilon} = \Big\{ \mu\in M_j, |\theta_\mu|>\varepsilon \Big\}.
  \]
  By Theorem~\ref{thm:main_sequence}, which is stated and proved below, we have for $\varepsilon>0$
  \begin{align}\label{eq:proof1}
  \# M_{j,\varepsilon} = \# \Big\{ \mu\in M_j, |\theta_\mu|>\varepsilon \Big\} \lesssim  \varepsilon^{-2/(1+\beta)}.
  \end{align}
  On the other hand, \eqref{eq:apriori} shows that there is a constant $B$, independent of scale, such that
  \[
  |\theta_\mu| \le B \|f\|_{\infty} 2^{-(1+\alpha)j/2}.
  \]
  It follows that for each $\varepsilon>0$ there is $j_\varepsilon$ such that at scales $j\ge j_\varepsilon$ the coefficients satisfy $\theta_\mu<\varepsilon$.
  Hence, for $j\ge j_\varepsilon$
  \[
  \#M_{j,\varepsilon}= \# \Big\{ \mu\in M_j, |\theta_\mu|>\varepsilon \Big\} = 0.
  \]
  The number of scales at which $M_{j,\varepsilon}$ is nonempty is therefore bounded by
  \begin{align}\label{eq:proof2}
  \frac{2}{1+\alpha} \Big( \log_2(B) + \log_2(\|f\|_{\infty})  +  \log_2 (\varepsilon^{-1}) \Big) \lesssim \log_2 (\varepsilon^{-1}).
  \end{align}
  It follows from \eqref{eq:proof1} and \eqref{eq:proof2} that there is a constant $\widetilde{C}\ge1$ such that
  \[
  \# \Big\{ \mu\in M, |\theta_\mu|>\varepsilon \Big\} = \sum_{j} \# \Big\{ \mu\in M_j, |\theta_\mu|>\varepsilon \Big\}  \le \widetilde{C}  \varepsilon^{-2/(1+\beta)} \log_2(\varepsilon^{-1}).
  \]
  Let $\theta^\ast_N$ be the $N$-th largest coefficient. Then for $\varepsilon_N>\delta_N$, where $\delta_N$ satisfies $N=\widetilde{C}\delta_N^{-2/(1+\beta)} \log_2(\delta_N^{-1}) $,
  we have $|\theta^*_{N}|\le \varepsilon_N$.
  If $N\ge2$ it holds $\widetilde{C} N^{4/(1+\beta)} \log_2(N^{2}) \ge N$, because $1\le\beta\le 2$ and $\widetilde{C}\ge1$.  For $N\ge2$ therefore
  \[
  N^{4/(1+\beta)} \log_2(N^{2}) \ge  \delta_N^{-2/(1+\beta)} \log_2(\delta_N^{-1}) .
  \]
  This implies $\delta_N\ge N^{-2}$, and we can conclude that $\varepsilon_N>\delta_N$ if we choose $\varepsilon_N$ as the solution of
  \[
   N= \widetilde{C}\varepsilon_N^{-2/(1+\beta)} \log_2(N^{2}) .
  \]
  This choice leads to
  \[
  \varepsilon_N =(2\widetilde{C})^{(1+\beta)/2} \cdot N^{-(1+\beta)/2} (\log_2 N)^{(1+\beta)/2},
  \]
  which proves our claim with constant $C=(2\widetilde{C})^{(1+\beta)/2}$.
  \end{proof}
In fact, Theorem~\ref{thm:mainappr2} is strong enough to deduce Theorem~\ref{thm:mainappr1} via Lemma~\ref{lem:decayapprox}.
\begin{proof}[Proof of Theorem~\ref{thm:mainappr1}]
Applying Lemma~\ref{lem:decayapprox} and Theorem~\ref{thm:mainappr2} we can estimate
\[
\| f-f_N \|^2 \lesssim \sum_{m>N} |\theta^\ast_m|^2 \lesssim  \sum_{m>N} m^{-(1+\beta)} \cdot \left(\log_2 m\right)^{(1+\beta)}
\lesssim \int_N^\infty t^{-(1+\beta)} \cdot \left(\log_2 t\right)^{(1+\beta)} \,dt.
\]
Using partial integration we obtain
\begin{align*}
 \int_N^\infty t^{-(1+\beta)} \cdot \left(\log_2 t\right)^{(1+\beta)} \,dt &\lesssim [ -t^{-\beta}  \left(\log_2 t\right)^{(1+\beta)}  ]_N^\infty +
 \int_N^\infty t^{-(1+\beta)} \cdot \left(\log_2 t\right)^{\beta} \,dt \\
 &\lesssim N^{-\beta}  \left(\log_2 N\right)^{(1+\beta)}
 + \int_N^\infty t^{-(1+\beta)} \cdot \left(\log_2 t\right)^{\lceil\beta\rceil} \,dt  \\
 &\lesssim  \ldots  \lesssim  N^{-\beta}  \left(\log_2 N\right)^{(1+\beta)} + \int_N^\infty t^{-(1+\beta)}  \,dt  \\
 &\lesssim  N^{-\beta}  \left(\log_2 N\right)^{(1+\beta)}.
\end{align*}
\end{proof}
It remains to prove Theorem~\ref{thm:main_sequence}, which is the main building block in the proof of Theorem~\ref{thm:mainappr2}.
By abuse of notation, we write $\theta_j$ for the subsequence $(\theta_\mu)_{\mu\in M_j}$ of curvelet coefficients, where $M_j$ denotes the
curvelet indices at scale $j$. Theorem~\ref{thm:main_sequence} then reads as follows.

\begin{theorem}\label{thm:main_sequence}
The sequence $\theta_j$ obeys
\[
\| \theta_j \|_{\omega\ell_{2/(1+\beta)}}\le C,
\]
for some constant $C$ independent of scale.
\end{theorem}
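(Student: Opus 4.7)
The target is the scale-uniform weak-$\ell^{2/(1+\beta)}$ bound
\[
\#\{\mu \in M_j : |\theta_\mu| > \epsilon\} \lesssim \epsilon^{-2/(1+\beta)}, \qquad \epsilon>0,
\]
with a constant independent of $j$ and of $f \in \mathcal{E}^\beta(\R^2)$. Writing $f = f_0 + f_1 \chi_{\mathcal{B}}$, I would split $\theta_\mu = \langle f_0, \psi_\mu\rangle + \langle f_1\chi_{\mathcal{B}}, \psi_\mu\rangle$ and treat the "smooth" contribution (controlled by the $C^\beta$-regularity of $f_0,f_1$) and the "edge" contribution (coming from the jump of $f$ across $\partial \mathcal{B}$) separately.

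For the smooth part, I would exploit that $\widehat{\psi}_\mu$ vanishes in a neighbourhood of the origin, so $\psi_\mu$ annihilates every polynomial. Whenever the essential support $R_\mu$ of $\psi_\mu$ (a rotated rectangle of dimensions $\sim 2^{-j}\times 2^{-\alpha j}$) avoids $\partial \mathcal{B}$, $f$ agrees on $R_\mu$ with some $\tilde f\in C^\beta$. Subtracting the first-order Taylor polynomial of $\tilde f$ at $x_\mu$ and estimating the remainder via the $(\beta-1)$-Hölder modulus of $\nabla \tilde f$ gives $|\theta_\mu|\lesssim 2^{-(1+\alpha)j/2}\cdot(2^{-\alpha j})^\beta = 2^{-(3+\alpha)j/2}$, using $\alpha\beta=1$. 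Since $|M_j|=O(2^{2j})$, these coefficients contribute nothing to the count when $\epsilon\gtrsim 2^{-(3+\alpha)j/2}$, and for smaller $\epsilon$ the crude bound $\#M_j\le 2^{2j}$ is already below $\epsilon^{-2/(1+\beta)}$.

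For the edge part I would introduce a smooth partition of unity on $[0,1]^2$ at spatial scale $2^{-\alpha j}$; only $O(2^{\alpha j})$ of the resulting tiles meet $\partial \mathcal{B}$ because the boundary has length $O(1)$. On each such tile the $C^\beta$ parametrization \eqref{eqn:radius} straightens $\partial \mathcal{B}$ locally, reducing $\langle f_1\chi_{\mathcal{B}},\psi_\mu\rangle$ to an oscillatory integral whose phase is governed by the edge curve and whose amplitude is the 1D profile of $\psi_\mu$. Applying the Radon transform $\mathcal{R}$ along lines perpendicular to the curvelet's short axis converts the 2D inner product into a 1D convolution, to which a non-stationary-phase argument in the short direction can be applied. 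I would expect the outcome to be a schematic bound of the form
\[
|\theta_\mu|\;\lesssim\;2^{-(1+\alpha)j/2}\bigl(1+2^j d_\perp(\mu)\bigr)^{-N}\bigl(1+2^j|\omega_{j,\ell}-\omega_*(\mu)|^2\bigr)^{-N},
\]
where $d_\perp(\mu)$ is the distance from $x_\mu$ to $\partial\mathcal{B}$ and $\omega_*(\mu)$ is the tangent angle at the nearest boundary point. The identity $2\alpha/(1+\alpha)=2/(1+\beta)$ then matches the geometry to the target exponent: at the a-priori saturating threshold $\epsilon=2^{-(1+\alpha)j/2}$ only tangentially aligned, edge-adjacent coefficients survive, and these number $O(2^{\alpha j})=O(\epsilon^{-2/(1+\beta)})$; summing the analogous estimates over dyadic threshold levels yields the claim.

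The main obstacle is the oscillatory-integral estimate in the range $\beta<2$. In the $\beta=2$ case of \cite{CD04} this step is handled by integrating by parts against genuine second derivatives of the edge parametrization, but here $\partial\mathcal{B}$ is only $C^\beta$ with $\beta\in(1,2]$, so derivative estimates of order above one must be replaced throughout by bounds on the $(\beta-1)$-modulus of smoothness of $\rho'$, precisely as the authors flag in the introduction. Making this step quantitative and uniform in $j$, $\ell$, and the position of the curvelet is the technical heart of the proof, and is where I would expect the bulk of the appendix's work to lie.
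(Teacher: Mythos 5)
Your overall strategy — tile $[0,1]^2$ at spatial scale $2^{-\alpha j}$ so that only $O(2^{\alpha j})$ tiles touch $\partial\mathcal{B}$, run a Radon-transform/Fourier-slice analysis on the edge fragments with bounds on the $(\beta-1)$-modulus of smoothness of the edge parametrization in place of second-derivative bounds, and count tangentially aligned edge-adjacent curvelets — is exactly the paper's route (Propositions~\ref{prop:sequence1}, \ref{prop:sequence2} and the proof of Theorem~\ref{thm:main_sequence}, plus the appendix). You also correctly identify the technical heart. However, your treatment of the smooth contribution has a genuine gap, and it is not cosmetic.

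The dichotomy ``uniform a-priori bound plus crude count'' does not close the argument for $\alpha<1$. You claim $|\theta_\mu|\lesssim 2^{-(3+\alpha)j/2}$ for a curvelet avoiding the edge (this Taylor-remainder bound is fine in spirit) and then that for $\epsilon < 2^{-(3+\alpha)j/2}$ the crude count $\#M_j \lesssim 2^{2j}$ is already below $\epsilon^{-2/(1+\beta)}$. Take $\epsilon\approx 2^{-(3+\alpha)j/2}$ and $p=2/(1+\beta)=2\alpha/(1+\alpha)$: then $\epsilon^{-p}\approx 2^{(3+\alpha)\alpha j/(1+\alpha)}$, and
\[
\frac{(3+\alpha)\alpha}{1+\alpha}\ge 2 \iff \alpha^2+\alpha-2\ge 0 \iff (\alpha+2)(\alpha-1)\ge 0,
\]
which fails on the whole range $\alpha\in[\tfrac12,1)$. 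So just below the saturating threshold the raw count $2^{2j}$ can exceed $\epsilon^{-2/(1+\beta)}$ by a factor that grows in $j$; a flat bound on $|\theta_\mu|$ together with a raw count is simply not strong enough. What is actually needed — and what the paper's proof of Proposition~\ref{prop:sequence2} supplies — is spatial localization of the coefficient sequence of each smooth fragment: integrating by parts against the scaled operator $\mathcal{L}=\mathcal{I}-2^{2\alpha j}\Delta$ applied twice yields the decay factor $(1+|K|^2)^{-4}$ in the translation index $K$, and combining this with a finite-sequence interpolation inequality gives the per-fragment bound $\|\theta_Q\|_{\omega\ell_{2/(1+\beta)}}\lesssim 2^{-(1+\alpha)j}$. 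Summed over the $O(2^{2\alpha j})$ smooth cubes via the $p$-triangle inequality for the quasi-norm, the exponents balance exactly. Without the decay in $K$, the smooth part cannot be dispatched.

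Two smaller points. First, since the curvelets are bandlimited they are not compactly supported, so ``the essential support $R_\mu$ of $\psi_\mu$ avoids $\partial\mathcal{B}$'' does not define a genuine partition of $M_j$, and the Taylor/vanishing-moments argument would in any case need a tail estimate. The paper avoids this by localizing $f$ (via $\omega_Q$) rather than $\psi_\mu$: every index $\mu$ at scale $j$ is paired against every fragment $f_Q$, and the $(1+|K|^2)^{-4}$ decay controls the far-field contributions. Adopt the same viewpoint for the smooth part that you already adopted for the edge part. Second, for the edge part you verify the count only at the saturating threshold; the clean way to handle all thresholds simultaneously, as the paper does, is to prove the per-fragment weak-$\ell^p$ bound $\|\theta_Q\|_{\omega\ell_{2/(1+\beta)}}\lesssim 2^{-(1+\alpha)j/2}$ and then combine over the $O(2^{\alpha j})$ edge cubes with the $p$-triangle inequality.
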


For the analysis of the coefficient sequence $\theta_j$ we follow the technique in \cite{CD04} and smoothly decompose $f$ into so-called \emph{fragments},
which can then be analyzed separately. For that we cover $\R^2$ at each scale $j\in\N_0$ with cubes
\[
Q=[(k_1-1) 2^{-j\alpha}, (k_1+1) 2^{-j\alpha}]\times[(k_2-1) 2^{-j\alpha}, (k_2+1) 2^{-j\alpha}], \quad j\in\N,\,(k_1,k_2)\in\Z^2,
\]
which we collect in the sets $\mathcal{Q}_j$. Further, we put $\mathcal{Q}:= \bigcup_{j\in\N} \mathcal{Q}_j$.
Note how the size of the squares depends upon the scale $2^{-j}$:
The `width' of the curvelets at scale $j$ obeys $\sim 2^{-j}$ and the `length' of the curvelets is approximately $\sim 2^{-\alpha j}$.
Thus, the size of the squares is about the length of the curvelets.

Next, we take a smooth partition of unity $(\omega_Q)_{Q\in\mathcal{Q}_j}$, where these squares are used as the index set and
the functions $\omega_Q$ are supported in the corresponding squares $Q:=(2^{-j\alpha}k_1,2^{-j\alpha}k_2) + [-2^{-j\alpha},2^{-j\alpha}]^2$.
More precisely,
for some fixed nonnegative $C^\infty$-function $\omega$ vanishing outside the square $[-1,1]^2$, we put $\omega_Q=\omega(2^{j\alpha}x_1-k_1, 2^{j\alpha}x_2-k_2)$
and assume that
$
\sum_{Q\in\mathcal{Q}_j} \omega_Q(x)\equiv1.
$
The function $f$ can then at each scale $j\in\N_0$ be smoothly localized into the fragments
\[
f_Q:=f \omega_Q, \quad Q\in\mathcal{Q}_j.
\]
For $Q\in\mathcal{Q}_j$ let $\theta_Q$ denote the curvelet coefficient sequence of $f_Q$ at scale $j$, i.e.,
\begin{align}\label{eq:Qsequence}
\theta_{Q}=\Big(\langle f_Q, \psi_\mu \rangle \Big)_{\mu\in M_j}.
\end{align}

The strategy laid out in \cite{CD04} is to analyze the sparsity of the sequences $\theta_Q$ and combine these results
to obtain Theorem~\ref{thm:main_sequence}. In this investigation we have to distinguish between two cases: Either the square
$Q\in\mathcal{Q}_j$ meets the edge curve $\Gamma=\partial\mathcal{B}$ or not.
Accordingly, we let $\mathcal{Q}_j^0$ be the subset of $\mathcal{Q}_j$ containing those cubes, which intersect the edge curve $\Gamma$.
Among the remaining cubes of $\mathcal{Q}_j$ we collect those, which intersect $\supp f$, in $\mathcal{Q}_j^1$. The others can be neglected, because they
lead to trivial sequences $\theta_Q$.

The following two propositions directly lead to Theorem~\ref{thm:main_sequence}.

\begin{prop}\label{prop:sequence1}
Let $Q$ be a square such that $Q\in\mathcal{Q}_j^0$. The curvelet coefficient sequence $\theta_Q$ obeys
\[
\| \theta_Q \|_{\omega\ell_{2/(1+\beta)}}\le C\cdot 2^{-(1+\alpha)j/2},
\]
for some constant $C$ independent of $Q$ and $j$.
\end{prop}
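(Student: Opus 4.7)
The plan is to reduce the weak $\ell^p$ bound for the coefficient vector $\theta_Q$ to a pointwise decay estimate on its entries, then sum. After translating coordinates so that $0\in Q$, let $\eta_Q$ denote the tangent angle of $\Gamma$ at a reference point in $\Gamma\cap Q$. Since $Q$ has side-length $\sim 2^{-j\alpha}$ and $\Gamma$ is $C^\beta$, the arc $\Gamma\cap Q$ is well-approximated by a straight line through that reference point in direction $\eta_Q$, so the fragment $f_Q=f_0\omega_Q+(f_1\omega_Q)\chi_{\mathcal{B}}$ essentially carries a single straight jump of a $C^\beta$-smooth function. On the curvelet side, $\psi_{j,\ell,k}$ is concentrated---modulo Schwartz tails---in a rectangular box of size $2^{-j}\times 2^{-j\alpha}$ in the $R^{-1}_{j,\ell}$-rotated frame centred at $x_{j,\ell,k}$. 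Hence only $O(2^{j(1-\alpha)})$ values of $k_1$ and $O(1)$ values of $k_2$ per orientation $\ell$ correspond to curvelets essentially meeting $Q$; contributions from all other indices are absorbed by the rapid decay of $\psi_{j,\ell,k}$.

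The core step is to establish the pointwise estimate
\[
|\theta_{j,\ell,k}|\le C_N \cdot 2^{-(1+\alpha)j/2}\cdot \bigl(1+2^{j(1-\alpha)}|\omega_{j,\ell}-\eta_Q^\perp|\bigr)^{-N}\cdot (1+|k_1-k_1^*|)^{-N}
\]
for every $N\in\N$, where $\eta_Q^\perp$ is the direction normal to $\eta_Q$ and $k_1^*$ is the $k_1$-index of the curvelet centred on $\Gamma\cap Q$ in the direction perpendicular to its long axis. I would derive this via the Fourier-side pairing $\theta_\mu=\langle\widehat{f_Q},\widehat{\psi_\mu}\rangle$, which confines the integration to the wedge $\mathcal{W}_{j,\ell}$, and combine two decay mechanisms: (i) the spatial windowing $\omega_Q$ convolves $\widehat{f}$ with a Schwartz bump on frequency scale $2^{j\alpha}$, while the $C^\beta$-regularity of $f_0, f_1$ and of the edge curve---whose contribution is most conveniently analysed via the Radon transform of $f_Q$---produces additional decay of $\widehat{f_Q}$ away from the codirection $\eta_Q^\perp$, yielding the angular factor; (ii) the modulation $e^{2\pi i x_{j,\ell,k}\cdot\xi}$ in $\widehat{\psi_{j,\ell,k}}$ permits integration by parts in the long Fourier direction of the wedge, giving polynomial decay in $k_1$. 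The main technical obstacle is that $\beta<2$ is only fractional, so the integration-by-parts step used in \cite{CD04} must be replaced by estimates against the moduli of smoothness of $f_0,f_1$ and of the radius function $\rho$, exactly as indicated in the introduction.

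Once the pointwise estimate is in hand, the weak $\ell^p$ quasi-norm with $p=2/(1+\beta)=2\alpha/(1+\alpha)$ follows by majorisation by the strong $\ell^p$ norm. Summing the $p$-th power over $(\ell,k)$ and choosing $N$ so that $Np>1$, both the $k_1$-sum and the $\ell$-sum (the latter via the quantisation $|\omega_{j,\ell}-\omega_{j,\ell'}|\asymp 2^{-j(1-\alpha)}|\ell-\ell'|$) reduce to convergent geometric series, yielding
\[
\sum_{\mu\in M_j}|\theta_\mu|^p \;\lesssim\; 2^{-(1+\alpha)jp/2}=2^{-j\alpha},
\]
and therefore $\|\theta_Q\|_{\omega\ell^p}\le\|\theta_Q\|_{\ell^p}\lesssim 2^{-(1+\alpha)j/2}$ with a constant independent of $Q$ and $j$, as required.
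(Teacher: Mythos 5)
Your plan has two genuine gaps, one in the claimed pointwise estimate and one in the final summation step. The angular decay $\bigl(1+2^{j(1-\alpha)}|\omega_{j,\ell}-\eta_Q^\perp|\bigr)^{-N}$ for \emph{every} $N$ is not available: the $C^\beta$ regularity of the density and of the edge curve caps the achievable decay of $\widehat{f}_Q$ at a fixed fractional power. What one actually gets (Theorem~\ref{thm:mainest1}, Corollary~\ref{cor:localf}) is an $L^2$ bound on radial slices of $\widehat{F}$ of size $2^{-(1-\alpha)j}\bigl(1+2^{(1-\alpha)j}|\sin\eta|\bigr)^{-1-2\beta}$; the exponent $1+2\beta$ is forced by the modulus-of-smoothness estimate $\|\Delta_h\partial_1\mathcal{R}F(\cdot,\eta)\|_2\lesssim h^{\beta}|\sin\eta|^{-(1+2\beta)/2}$ and cannot be pushed higher. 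Moreover, there is no single $k_1^*$: at angular offset $\ell_J=1+2^{(1-\alpha)j}|\sin\omega_J|$ the edge crosses $\asymp\ell_J$ distinct curvelet positions within $Q$ (the paper's blocks $\mathfrak{Z}_K$ have cardinality $\asymp\ell_J$), so the coefficients at a fixed orientation are essentially flat over $k_1\lesssim\ell_J$ and only then start to decay — not Schwartz-rapidly from a point. Your reduction to a straight jump discards exactly the curvature information that produces the $\beta$-dependent angular decay; the paper only uses a straight-edge surrogate in the nearly aligned regime $|\sin\eta|<2\delta_j$.

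Even if one substitutes the correct angular power, the final step ``bound the weak norm by the strong $\ell^p$ norm and sum'' fails, and this is precisely why the proposition is a \emph{weak} $\ell^p$ statement. At the critical exponent $p=2/(1+\beta)$, combining $\#\mathfrak{Z}_K\lesssim\ell_J$ with the sharp $\ell^2$ bound $\sum_{k\in\mathfrak{Z}_K}|\theta_\mu|^2\lesssim L_K^{-2}2^{-(1+\alpha)j}\ell_J^{-1-2\beta}$ via H\"older gives only $\|\theta_Q\|_{\ell^p}^p\lesssim 2^{-\alpha j}\sum_{\ell}\ell_J^{-1}$, which diverges logarithmically in $j$. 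To obtain the clean bound $\|\theta_Q\|_{\omega\ell^p}\lesssim 2^{-(1+\alpha)j/2}$ one must estimate the distribution function $\#\{\mu:|\theta_\mu|>\varepsilon\}$ directly using the $\min$ of the cardinality bound and the Chebyshev bound — the paper's \eqref{eq:estNJK} — and then balance the crossover $L^*$; that $\min$ is exactly what suppresses the logarithm, and majorizing by the strong $\ell^p$ norm throws it away. So although your toolbox (Fourier pairing, Radon slice analysis, moduli of smoothness in place of derivatives) matches the paper's, the quantitative decay claim and the concluding summation cannot be made to work as written.
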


\begin{prop}\label{prop:sequence2}
Let $Q$ be a square such that $Q\in\mathcal{Q}_j^1$. The curvelet coefficient sequence $\theta_Q$ obeys
\[
\| \theta_Q \|_{\omega\ell_{2/(1+\beta)}}\le C\cdot 2^{-(1+\alpha)j},
\]
for some constant $C$ independent of $Q$ and $j$.
\end{prop}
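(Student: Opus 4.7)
The strategy is to prove the bound in three steps: (i) an $\ell^2$ identity for $\theta_Q$ via the tight-frame Parseval relation, (ii) sharp Fourier decay of $f_Q$ at high frequencies coming from its $C^\beta$-smoothness, and (iii) a Hölder-type interpolation using a count of the essentially supported coefficients. Throughout, the identity $\alpha\beta=1$ is what will make the various scaling factors combine correctly.

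For step (i), the Plancherel-plus-sampling calculation from the proof of the tight-frame theorem, specialized to scale $j$, yields
\[
  \sum_{\mu\in M_j}|\theta_\mu|^2 \;=\; \sum_\ell \int |\widehat{f_Q}(\xi)|^2\,|\chi_{j,\ell}(\xi)|^2\,d\xi \;\le\; \int_{|\xi|\asymp 2^j}|\widehat{f_Q}(\xi)|^2\,d\xi,
\]
using $\sum_\ell|\chi_{j,\ell}|^2\le 1$ with support in the dyadic annulus at scale $2^j$. For step (ii), I would zoom in on $Q$ by setting $\tilde f_Q(u):=f_Q(x_Q+2^{-\alpha j}u)$, where $x_Q$ is the center of $Q$, and subtract the degree-$1$ Taylor polynomial $P_1$ of $f$ at $x_Q$, writing $\tilde f_Q=\tilde P_1+2^{-j}r$. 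The Hölder estimate $|f(x)-P_1(x)|\lesssim|x-x_Q|^\beta$ together with $\alpha\beta=1$ forces $r$ to be supported in a fixed cube with $\|r\|_{C^\beta}$ bounded uniformly in $j$, while $\tilde P_1$ is smooth at unit scale and its Fourier transform decays faster than any polynomial. For the non-trivial piece $r$, a Besov embedding $C_0^\beta\hookrightarrow B^\beta_{2,\infty}$ -- equivalently the modulus-of-smoothness characterization advertised in the introduction -- gives $\int_{|\eta|\ge R}|\widehat r(\eta)|^2\,d\eta\lesssim R^{-2\beta}$. Evaluating this at $R\asymp 2^{(1-\alpha)j}$ and transferring back via $\widehat{f_Q}(\xi)\propto 2^{-2\alpha j}\widehat{\tilde f_Q}(2^{-\alpha j}\xi)$ leads to
\[
 \int_{|\xi|\asymp 2^j}|\widehat{f_Q}|^2\,d\xi \;\lesssim\; 2^{-2\alpha j}\cdot 2^{-2j}\cdot 2^{-2\beta(1-\alpha)j} \;=\; 2^{-2(\alpha+\beta)j},
\]
after simplification with $\alpha\beta=1$. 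Hence $\|\theta_Q\|_{\ell^2(M_j)}\lesssim 2^{-(\alpha+\beta)j}$.

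For step (iii), the lattice $(x_{j,\ell,k})_k=R_{j,\ell}^{-1}D_{2^j}^{-1}\Z^2$ has spacings $2^{-j}$ and $2^{-\alpha j}$ in the rotated coordinate directions, so for each of the $L_j\asymp 2^{(1-\alpha)j}$ orientations at most $O(2^{(1-\alpha)j})$ lattice points lie in a fixed $2^{-\alpha j}$-neighborhood of $Q$, yielding an effective support size $N_0\lesssim 2^{2(1-\alpha)j}$. The Schwartz-type spatial decay of $\psi_\mu$ makes $|\theta_\mu|$ negligibly small outside this effective window, so the corresponding tails contribute only lower-order terms to the weak-$\ell^p$ sup. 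Since $1/p-1/2=\beta/2$ for $p=2/(1+\beta)$, the Hölder-type bound (which dominates the weak-$\ell^p$ norm on effectively supported sequences) gives
\[
 \|\theta_Q\|_{\omega\ell^p} \;\lesssim\; N_0^{\beta/2}\,\|\theta_Q\|_{\ell^2} \;\lesssim\; 2^{(\beta-1)j}\cdot 2^{-(\alpha+\beta)j} \;=\; 2^{-(1+\alpha)j},
\]
where $(1-\alpha)\beta=\beta-1$ has been used.

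The main obstacle is the sharpness required in step (ii): a naive Sobolev embedding $C_0^\beta\hookrightarrow H^s$ only holds for $s<\beta$ in two dimensions, which would cost a full power of $j$ in the final bound. Replacing derivative estimates by moduli of smoothness (as advertised in the introduction) is precisely what is needed to hit the exponent $\beta$ exactly, and the structural vanishing of both $r$ and its gradient at the origin should be exploited to control the relevant Besov seminorm without loss.
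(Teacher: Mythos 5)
Your steps (i) and (ii) are sound in substance. Step (i) is the standard tight-frame Parseval reduction, and step (ii) arrives at the same intermediate Fourier estimate as the paper's Theorem~\ref{thm:localf0}, namely $\int_{|\xi|\asymp 2^j}|\widehat{f_Q}|^2\,d\xi \lesssim 2^{-2(\alpha+\beta)j}$. Your route --- rescale, subtract the degree-one Taylor polynomial, control the remainder via a Besov/modulus-of-smoothness embedding --- is a legitimate alternative to the paper's Lemma~\ref{lem:central0}, which instead estimates $\|\Delta^3_{(h,0)}\partial_1F\|_2$ by direct finite-difference computations; both routes exploit $\alpha\beta=1$ in the same way. (One would still need to verify uniformity in $j$ of the relevant Besov seminorm of the rescaled remainder, but that is a manageable bookkeeping exercise.)

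The genuine gap is in step (iii). The interpolation inequality $\|\theta\|_{\omega\ell^p}\lesssim N_0^{1/p-1/2}\|\theta\|_{\ell^2}$ holds only for sequences supported on at most $N_0$ indices, and $\theta_Q$ is not compactly supported: at fixed scale $j$ it is indexed by $k\in\Z^2$ for each of $L_j$ orientations. An $\ell^2$ bound alone does not give membership in $\omega\ell^p$ for $p<2$ (e.g.\ $c_n=n^{-1/2}(\log n)^{-1}$ is in $\ell^2$ but in no $\omega\ell^p$ with $p<2$), so the remark that the tails ``contribute only lower-order terms'' carries the entire weight of the argument rather than being a cosmetic aside. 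The paper supplies the required quantitative tail control by integrating by parts twice with the operator $\mathcal{L}=\mathcal{I}-2^{2\alpha j}\Delta$, producing the explicit factor $(1+2^{-2j(1-\alpha)}k_1^2+k_2^2)^{-2}$, and then squaring, grouping coefficients into blocks $M_{j,K}$, and invoking Theorem~\ref{thm:central0} to obtain $\sum_{\mu\in M_{j,K}}|\theta_\mu|^2\lesssim 2^{-2j(\beta+\alpha)}(1+|K|^2)^{-4}$. This in turn relies on the stronger frequency-localization result of Corollary~\ref{cor:localf0}, which controls $\partial^m\widehat f$ rather than only $\widehat f$, together with Lemma~\ref{lem:laplace} --- none of which your outline produces or replaces. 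The interpolation inequality is then applied to each finite block $M_{j,K}$, and the polynomial decay in $K$ makes the subsequent $\ell^p$-summation over $K$ converge. Without an explicit, uniform-in-$j$ decay estimate in $K$ (or in spatial distance from $Q$), the Hölder bound controls only the restriction of $\theta_Q$ to the effective support, and the remaining coefficients are left unaccounted for.
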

Theorem~\ref{thm:main_sequence} is an easy consequence of these two results and the observation, that
there are constants $A_0$ and $A_1$, independent of scale, such that
\begin{align}\label{eq:numcubes}
\#\mathcal{Q}_j^0\le A_0 2^{\alpha j} \quad\text{and}\quad \#\mathcal{Q}_j^1\le A_1 \big(2^{2\alpha j} + 4\cdot2^{\alpha j}\big).
\end{align}
The estimates \eqref{eq:numcubes} hold true since $f$ is supported in $[0,1]^2\subset [-1,1]^2$.
\begin{proof}[Proof of Theorem~\ref{thm:main_sequence}]
For $0<p\le1$ we have the $p$-triangle inequality
\[
\|a+b\|^p_{\omega\ell_p} \le \|a\|^p_{\omega\ell_p} + \|b\|^p_{\omega\ell_p}, \qquad a,b\in\omega\ell_p.
\]
Since $\theta_j=\sum_{Q\in\mathcal{Q}_j} \theta_Q$, we can conclude
\[
\|\theta_j\|^{2/(1+\beta)}_{\omega\ell_{2/(1+\beta)}} \le \sum_{Q\in\mathcal{Q}_j} \|\theta_Q\|^{2/(1+\beta)}_{\omega\ell_{2/(1+\beta)}}
\le \big(\#\mathcal{Q}_j^0\big) \cdot \sup_{\mathcal{Q}_j^0} \|\theta_Q\|^{2/(1+\beta)}_{\omega\ell_{2/(1+\beta)}} +
\big(\#\mathcal{Q}_j^1\big) \cdot \sup_{\mathcal{Q}_j^1} \|\theta_Q\|^{2/(1+\beta)}_{\omega\ell_{2/(1+\beta)}}.
\]
The claim follows now from the above two propositions together with observation \eqref{eq:numcubes}.
\end{proof}

It remains to prove Propositions~\ref{prop:sequence1} and ~\ref{prop:sequence2}.
%
%
%
For that let $Q\in\mathcal{Q}_j$ be a fixed cube at a fixed scale $j\in\N_0$, which nontrivially intersects $\supp f$.
We need to analyze the decay of the sequence $\theta_Q=(\langle f_Q, \psi_\mu\rangle)_{\mu\in M_j}$. Since the frame elements $\psi_\mu$ are bandlimited, it is
advantageous to turn to the Fourier side. The Plancherel identity yields
\[
\langle f_Q, \psi_\mu\rangle=\langle \widehat{f}_Q, \widehat{\psi}_\mu\rangle.
\]
These scalar products can be estimated, if we have knowledge about the localization of the functions $\widehat{f}_Q$.
This investigation is carried out separately in Subsections~\ref{ss:SmoothFragment} and \ref{ss:EdgeFragment} for the cases $Q\in\mathcal{Q}^0_j$ and $Q\in\mathcal{Q}^1_j$, respectively.


\subsection{Analysis of a Smooth Fragment}\label{ss:SmoothFragment}

Let us first treat the case $Q\in\mathcal{Q}^1_j$, where the cube $Q$ does not intersect the edge curve $\Gamma=\partial \mathcal{B}$.
In this case we call $f_Q=f\omega_Q$ a \emph{smooth fragment}.

Before we begin, we briefly recall our setting.
The parameters $\alpha\in[\frac{1}{2},1)$ and $\beta=\alpha^{-1}\in(1,2]$ are fixed, as is $f\in\mathcal{E}^\beta(\R^2)$.
Since $Q$ does not intersect the edge curve, there is a function $g\in C^\beta(\R^2)$ such that
$f_Q=g\omega_Q$. By smoothly cutting $g$ off outside the square $[-1,1]^2$, we can even assume $g\in C_0^\beta(\R^2)$.

We want to analyze $\widehat{f}_Q$ and for simplicity we look at the following model situation.
Without loss of generality we assume that the cube $Q$ is centered at the origin, by possibly translating the coordinate system.
In this case the smooth fragment takes the simple form
\[
f_Q=g \omega(2^{\alpha j}\cdot),
\]
where $g\in C_0^\beta(\R^2)$ and $\omega\in C_0^\infty(\R^2)$, $\supp \omega\subset [-1,1]^2$, is the fixed window generating the partition of unity $(\omega_Q)_Q$.
By rescaling we obtain for each scale $j$ the functions
\begin{align}
F(x)=g(2^{-\alpha j}x)\omega(x),\quad x\in\R^2,
\end{align}
with $\supp F\subset[-1,1]^2$. We put $g_j(x):=g(2^{-\alpha j}x)$, so that we can write $F(x)=g_j(x)\omega(x)$. It is important to note, that $g_j$ and $F$ depend on the scale, whereas
$\omega$ remains fixed.

\ms{Since the following inverstigation takes place in the model situation, the explicit reference to the cube $Q$ is not necessary in this exposition.
Therefore, by abuse of notation, we will write henceforth $f$ for the standard segment $f_Q$ and accordingly $\theta$ for the sequence $\theta_Q$.}


\subsubsection{Fourier Analysis of a Smooth Fragment}

We first analyze the localization of $\widehat{F}$. The key result in this direction will be Theorem~\ref{thm:mainest0}.
Its proof relies on Lemma~\ref{lem:central0} below.

\begin{lemma}\label{lem:central0}
Assume that $h=C2^{-(1-\alpha)j}$ for some fixed constant $C>0$. We then have
\[
\| \Delta^3_{(h,0)}\partial_1F \|^2_2 \lesssim h^{2\beta} 2^{-2j\alpha},
\]
where the implicit constant is independent of the scale $j$. Notice that $h$ is not independent and depends on the scale $j$.
\end{lemma}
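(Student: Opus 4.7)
The plan is to apply the discrete Leibniz rule twice. First, I would expand $\partial_1 F = (\partial_1 g_j)\,\omega + g_j\,(\partial_1\omega)$ and then use the discrete Leibniz formula
\[
\Delta^3_{(h,0)}(uv)(x) = \sum_{k=0}^{3}\binom{3}{k}\,\Delta^k_{(h,0)}u(x)\cdot\Delta^{3-k}_{(h,0)}v(x+khe_1)
\]
on each of the two products. This reduces the problem to uniformly estimating iterated differences of the four building blocks $g_j$, $\partial_1 g_j$, $\omega$ and $\partial_1\omega$. Since $\omega$ is compactly supported and $h$ is bounded, $\Delta^3_{(h,0)}\partial_1 F$ is supported in a set of uniformly bounded measure, so it suffices to establish the pointwise estimate $\|\Delta^3_{(h,0)}\partial_1 F\|_\infty \lesssim h^{\beta}\,2^{-\alpha j}$.

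The auxiliary estimates I would use split into a smooth side and a Hölder side. On the smooth side, $\omega,\partial_1\omega\in C^\infty_c$ and Taylor expansion gives $\|\Delta^k_{(h,0)}\omega\|_\infty,\|\Delta^k_{(h,0)}\partial_1\omega\|_\infty\lesssim h^k$ with constants depending only on $\omega$. On the Hölder side, $\partial_1 g\in C^{\beta-1}$ with $\beta-1\in(0,1]$, and the rescaling $g_j=g(2^{-\alpha j}\cdot)$ yields
\[
\|\partial_1 g_j\|_\infty\lesssim 2^{-\alpha j},\qquad \|\Delta^k_{(h,0)}\partial_1 g_j\|_\infty\lesssim 2^{-\alpha j\beta}\,h^{\beta-1}\quad(k\ge 1),
\]
the second bound combining the chain-rule factor $2^{-\alpha j}$ with the Hölder scaling $(2^{-\alpha j})^{\beta-1}$. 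For $g_j$ itself, the mean value theorem gives $\|\Delta_{(h,0)}g_j\|_\infty\lesssim 2^{-\alpha j}h$, and writing $\Delta_{(h,0)}g_j=h\int_0^1(\partial_1 g_j)(\cdot+she_1)\,ds$ and then differencing the integrand using the previous bound produces the sharper estimate $\|\Delta^k_{(h,0)}g_j\|_\infty\lesssim 2^{-\alpha j\beta}h^{\beta}$ for $k\ge 2$.

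Plugging these into each of the two Leibniz expansions and invoking the identities $\alpha\beta=1$ (so that $2^{-\alpha j\beta}=2^{-j}$) together with the prescribed $h=C\,2^{-(1-\alpha)j}$, each summand can be compared against the target size $h^\beta\,2^{-\alpha j}=2^{-(\beta-1+\alpha)j}$. A term-by-term check then shows that the dominant summand is the $k=3$ contribution in the first Leibniz expansion, which matches the target exactly, while the other summands decay at least as fast because of the extra factors of $h$ supplied by $\omega$. Squaring the resulting pointwise bound and integrating over the $O(1)$-support of $\Delta^3_{(h,0)}\partial_1 F$ yields the claimed $L^2$-bound $h^{2\beta}\,2^{-2\alpha j}$.

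The main obstacle is the bookkeeping required to check that every summand in the two Leibniz expansions meets the target. The delicate case is the $k=0$ summand of the second expansion, $g_j\cdot\Delta^3_{(h,0)}\partial_1\omega$, which receives no Hölder factor from $g$ and is controlled only by the smoothness of $\omega$. To bring it under the target bound one exploits the slow variation of $g_j$ by Taylor-expanding $g_j$ about the center of the cube: the leading constant combines with the first non-vanishing alternating sum of binomial coefficients to produce a third-order difference that, by the specific relation $h = C\,2^{-(1-\alpha)j}$ with $\alpha\beta=1$, matches the target, while the Hölder remainder of order $2^{-\alpha j\beta}|x|^\beta$ is absorbed using the localization $|x|\le 1$ provided by $\omega$.
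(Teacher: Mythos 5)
Your proposal follows the same route as the paper: reduce to an $L^\infty$ bound on a support of uniformly bounded measure, split $\partial_1 F=(\partial_1 g_j)\,\omega+g_j\,\partial_1\omega$ by the product rule, apply the discrete Leibniz formula to each piece, and estimate the summands using the scaled H\"older bounds on $g_j,\partial_1 g_j$ and the smoothness of $\omega$.

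The step that does not go through, however, is exactly the one you flag as delicate, and the Taylor-expansion repair you sketch does not close it. The summand $g_j\cdot\Delta^3_{(h,0)}\partial_1\omega$ genuinely has sup-norm $\asymp h^3$: indeed $\Delta^3_{(h,0)}\partial_1\omega = h^3\,\partial_1^4\omega + O(h^4)$, and $\partial_1^4\omega\not\equiv 0$ for any nontrivial compactly supported window. For this to be dominated by the target $h^\beta 2^{-\alpha j}$ one needs, with $h\asymp 2^{-(1-\alpha)j}$ and $\beta=1/\alpha$, the inequality $(3-\beta)(1-\alpha)\ge \alpha$, which simplifies to $(2\alpha-1)^2\le 0$ and hence holds only at $\alpha=\tfrac12$. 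Taylor-expanding $g_j$ about the center handles the linear part (which carries the factor $\|\nabla g_j\|_\infty\lesssim 2^{-\alpha j}$) and the H\"older remainder (factor $\lesssim 2^{-j}$), but the constant $g_j(0)$ is generically $\asymp 1$ and still multiplies $\Delta^3_{(h,0)}\partial_1\omega$, contributing $\asymp h^3$; the additional decay factor $2^{-\alpha j}$ that the target requires is simply absent from this term. For what it is worth, the paper's own proof relies on the same unqualified assertion ``$h^3\lesssim h^\beta 2^{-j\alpha}$ for $\alpha\in[\tfrac12,1)$,'' which by the computation above is an identity at $\alpha=\tfrac12$ and false for $\alpha\in(\tfrac12,1)$; so the gap is shared, but the term-by-term bookkeeping in your write-up does not resolve it either, and the claim that the $k=0$ summand of the second expansion ``matches the target'' is not correct once $\alpha>\tfrac12$.
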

\begin{proof}
Since $\supp F\subset [-1,1]^2$ it suffices to prove
\begin{align}\label{eq:aux_lemg0}
\| \Delta^3_{(h,0)}\partial_1F \|_{\infty} \lesssim h^{\beta} 2^{-j\alpha}.
\end{align}
By the product rule we have
$
\partial_1F = \partial_1g_j\cdot \omega + g_j\cdot\partial_1\omega
$
and it holds
    \begin{align*}
    \Delta^3_{(h,0)} \big(\partial_1g_j\cdot\omega\big) &= \sum_{k=0}^3 \Delta_{(h,0)}^k \partial_1g_j \cdot \Delta^{3-k}_{(h,0)} \omega(\cdot+kh,\cdot), \\
    \Delta^3_{(h,0)} \big(g_j\cdot\partial_1\omega\big) &= \sum_{k=0}^3 \Delta_{(h,0)}^k g_j \cdot \Delta^{3-k}_{(h,0)} \partial_1\omega(\cdot+kh,\cdot).
    \end{align*}
Clearly, we have for every $k\in\N$ the estimates $\|\Delta_{(h,0)}^k \omega\|_\infty \lesssim h^k$
    and
    $\|\Delta_{(h,0)}^k \partial_1\omega\|_\infty \lesssim h^k$.
    Further, according to Lemma~\ref{lemapp:basicg}, it holds $\|\partial_1g_j\|_\infty \lesssim  2^{-\alpha j}$,
    $\|\Delta_{(h,0)} g_j\|_\infty \lesssim  h 2^{-\alpha j}$, and
    \begin{align*}
    \|\Delta_{(h,0)}^k g_j\|_\infty &\lesssim h^{\beta} 2^{-j} \quad\text{for $k\ge 2$}, \\
    \|\Delta_{(h,0)}^k \partial_1g_j\|_\infty &\lesssim  h^{\beta} 2^{-\alpha j} \quad\text{for $k\ge 1$}.
    \end{align*}
    Since $h^3 \lesssim h^{\beta} 2^{-j\alpha}$ for $h=C 2^{-(1-\alpha)j}$ and $\alpha\in [\frac{1}{2},1)$, the assertion \eqref{eq:aux_lemg0} follows.
\end{proof}

The previous lemma is key to the proof of the following theorem.
Here we use the notation $|\xi|\sim 2^{(1-\alpha) j}$ to indicate $|\xi|\in [C_12^{(1-\alpha) j},C_22^{(1-\alpha) j}]$ for some arbitrary but fixed constants $0<C_1\le C_2<\infty$. A typical choice would be $C_1=1$ and $C_2=2^{1-\alpha}$.

\begin{theorem}\label{thm:mainest0}
It holds independent of the scale $j$
\begin{align*}
\int_{|\xi|\sim 2^{(1-\alpha)j}} |\widehat{F}(\xi)|^2 \,d\xi \lesssim 2^{-2\beta j}.
\end{align*}
\end{theorem}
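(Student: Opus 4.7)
The idea is to use Lemma~\ref{lem:central0} together with Plancherel's theorem to extract decay of $\widehat{F}$ on the annulus. Recall that for any $h \in \R$,
\[
\widehat{\Delta^3_{(h,0)}\partial_1 F}(\xi)
 = \bigl(e^{2\pi i h \xi_1} - 1\bigr)^3 \cdot 2\pi i\,\xi_1 \cdot \widehat{F}(\xi),
\]
so Plancherel gives
\[
\|\Delta^3_{(h,0)}\partial_1 F\|_2^2
  = 256\pi^2 \int_{\R^2} \xi_1^2 \sin^6(\pi h \xi_1)\, |\widehat{F}(\xi)|^2 \,d\xi.
\]
The strategy is to choose $h$ adapted to the scale so that the multiplier $\xi_1^2\sin^6(\pi h\xi_1)$ is bounded below on the region of interest.

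First, I would split the annulus $\{|\xi| \sim 2^{(1-\alpha)j}\}$ into the two pieces
\[
R_1 = \bigl\{|\xi|\sim 2^{(1-\alpha)j},\ |\xi_1| \ge |\xi_2|\bigr\},\qquad
R_2 = \bigl\{|\xi|\sim 2^{(1-\alpha)j},\ |\xi_2| > |\xi_1|\bigr\}.
\]
On $R_1$ we have $|\xi_1| \ge |\xi|/\sqrt{2} \gtrsim 2^{(1-\alpha)j}$ and $|\xi_1| \lesssim 2^{(1-\alpha)j}$. Pick $h = c\, 2^{-(1-\alpha)j}$ with $c>0$ so small that $\pi h|\xi_1|$ stays uniformly in a compact subinterval of $(0,\pi)$ on $R_1$; then $\sin^6(\pi h\xi_1) \gtrsim 1$ and $\xi_1^2 \gtrsim 2^{2(1-\alpha)j}$ uniformly. (If $C_2/C_1$ is large enough that one choice of $c$ will not suffice, cover $R_1$ by finitely many subregions and repeat with adapted values of $c$; the number of pieces and hence the constant are independent of $j$.) Consequently,
\[
\int_{R_1} |\widehat{F}(\xi)|^2\,d\xi
  \;\lesssim\; 2^{-2(1-\alpha)j}\, \|\Delta^3_{(h,0)}\partial_1 F\|_2^2
  \;\lesssim\; 2^{-2(1-\alpha)j}\cdot h^{2\beta}\cdot 2^{-2\alpha j},
\]
using Lemma~\ref{lem:central0} in the last step. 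Plugging in $h = c\,2^{-(1-\alpha)j}$ and using $\alpha\beta = 1$, the exponent collapses to exactly $-2\beta j$, which is the desired bound on $R_1$.

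For $R_2$ I would use the analogous estimate
\[
\|\Delta^3_{(0,h)}\partial_2 F\|_2^2 \;\lesssim\; h^{2\beta}\, 2^{-2j\alpha}
\qquad\text{for } h = c\,2^{-(1-\alpha)j},
\]
which follows by the exact same argument as Lemma~\ref{lem:central0} once one notes that the proof only uses Hölder smoothness of $g$ in each coordinate and decay of the dilated window $\omega$, both of which are symmetric in the two variables. Then the same Plancherel--multiplier calculation, with the roles of $\xi_1$ and $\xi_2$ interchanged, yields $\int_{R_2}|\widehat{F}(\xi)|^2 d\xi \lesssim 2^{-2\beta j}$, and combining the two pieces finishes the proof.

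The only real subtlety is the choice of $h$: one must verify that $c$ can be taken independent of $j$ so that $\sin^6(\pi h \xi_1)$ is bounded below on $R_1$ and that finitely many such choices cover any fixed annulus $|\xi|\sim 2^{(1-\alpha)j}$. Once this elementary geometric point is settled, the rest is bookkeeping and a direct application of the arithmetic identity $\alpha\beta = 1$.
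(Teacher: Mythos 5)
Your proposal is correct and follows essentially the same route as the paper: the annulus is covered by regions where one coordinate is comparable to $2^{(1-\alpha)j}$, the finite-difference multiplier $\xi_1^2|e^{2\pi i h\xi_1}-1|^6$ is bounded below there with $h\asymp 2^{-(1-\alpha)j}$, and Lemma~\ref{lem:central0} plus the identity $\alpha\beta=1$ closes the estimate (the paper phrases the covering as ``vertical/horizontal strips'' rather than your $R_1,R_2$, but this is cosmetic). One small remark: the parenthetical worry about $C_2/C_1$ being large is unnecessary --- a single $c$ with $cC_2<1$ already keeps $\pi h|\xi_1|$ inside a fixed compact subinterval of $(0,\pi)$ on $R_1$, so $\sin^6(\pi h\xi_1)$ is bounded below without further subdivision.
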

%

\begin{proof}
Let $0<C_1\le C_2<\infty$ be fixed and choose $C>0$ such that $C_2C<2\pi$. Putting $h:=C 2^{-(1-\alpha)j}$,
there then exists $c>0$ such that $|e^{i\xi_1 h}-1|^2\ge c $ for every $\xi_1$ with $|\xi_1|\in[C_12^{(1-\alpha) j},C_22^{(1-\alpha) j}]$.
Using Lemma~\ref{lem:central0} we then estimate the integrals on the vertical strips:
\begin{align*}
2^{2(1-\alpha)j} \int_{|\xi_1|\sim 2^{(1-\alpha)j}} \int_{\xi_2}  |\widehat{F}(\xi_1,\xi_2)|^2 \,d\xi_2\,d\xi_1
&\asymp \int_{|\xi_1|\sim 2^{(1-\alpha)j}} \int_{\xi_2} |\xi_1|^2 |\widehat{F}(\xi_1,\xi_2)|^2 \,d\xi_2\,d\xi_1 \\
&\lesssim \int_{|\xi_1|\sim 2^{(1-\alpha)j}} \int_{\xi_2} |e^{i\xi_1 h}-1|^6 |\xi_1|^2 |\widehat{F}(\xi_1,\xi_2)|^2 \,d\xi_2\,d\xi_1 \\
&= \int_{|\xi_1|\sim 2^{(1-\alpha)j}} \int_{\xi_2} |\widehat{\Delta^3_{(h,0)}\partial_1 F}(\xi_1,\xi_2)|^2  \,d\xi_2\,d\xi_1  \\
&\le \int_{\R^2}  |\widehat{\Delta^3_{(h,0)}\partial_1F}(\xi)|^2   \,d\xi
= \| \Delta^3_{(h,0)}\partial_1F \|_2^2
\lesssim  h^{2\beta} 2^{-2j\alpha}.
\end{align*}
Interchanging $\xi_1$ and $\xi_2$ yields analogous estimates for the horizontal strips. Altogether, we obtain
\begin{align*}
\int_{|\xi|\sim 2^{(1-\alpha)j}} |\widehat{F}(\xi)|^2 \,d\xi
\lesssim  2^{-2j(1-\alpha)}  h^{2\beta}  2^{-2j\alpha}
\asymp  2^{-2\beta j}  .
\end{align*}

\end{proof}

As an immediate corollary, we deduce a corresponding estimate for the original smooth fragment $f$.

\begin{theorem}\label{thm:localf0}
We have independent of scale $j$
\begin{align*}
\int_{|\xi|\sim 2^j} |\widehat{f}(\xi)|^2 \,d\xi \lesssim 2^{-2(\beta+\alpha)j}.
\end{align*}
\end{theorem}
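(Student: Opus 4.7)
The plan is to reduce the statement to Theorem~\ref{thm:mainest0} by undoing the dilation that relates $f = f_Q$ to the rescaled fragment $F$. Recall that $F(x) = g_j(x)\omega(x) = g(2^{-\alpha j}x)\omega(x)$, while $f(x) = g(x)\omega(2^{\alpha j}x)$, so $f$ is obtained from $F$ by the anisotropy-free dilation $f(x) = F(2^{\alpha j}x)$.

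First I would apply the Fourier scaling rule to write
\[
\widehat{f}(\xi) = 2^{-2\alpha j}\,\widehat{F}\bigl(2^{-\alpha j}\xi\bigr),
\]
and then perform the substitution $\eta = 2^{-\alpha j}\xi$ in the integral on the left-hand side. The annulus $\{|\xi|\sim 2^j\}$ is mapped bijectively onto $\{|\eta|\sim 2^{(1-\alpha)j}\}$, and the Jacobian contributes a factor $2^{2\alpha j}$. Combining the squared prefactor $2^{-4\alpha j}$ from the Fourier scaling with this $2^{2\alpha j}$ yields
\[
\int_{|\xi|\sim 2^j} |\widehat{f}(\xi)|^2\,d\xi \;=\; 2^{-2\alpha j} \int_{|\eta|\sim 2^{(1-\alpha)j}} |\widehat{F}(\eta)|^2\,d\eta.
\]

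Finally I would invoke Theorem~\ref{thm:mainest0}, which bounds the remaining integral by $\lesssim 2^{-2\beta j}$ uniformly in the scale. Multiplying by $2^{-2\alpha j}$ gives the desired bound $\lesssim 2^{-2(\beta+\alpha)j}$.

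There is essentially no obstacle here: all the substantive work — namely controlling the third forward difference of $\partial_1 F$ at step size $h\sim 2^{-(1-\alpha)j}$ via the Hölder regularity of $g$ — has already been done in Lemma~\ref{lem:central0} and Theorem~\ref{thm:mainest0}. The present corollary is just the translation of that estimate from the normalized model $F$, which lives on the unit scale, back to the original smooth fragment $f$, which lives on the scale $2^{-\alpha j}$ dictated by the cube $Q$.
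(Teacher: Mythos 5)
Your argument is precisely the one the paper uses: its proof consists of the single remark that $\widehat{f}(\xi)=2^{-2\alpha j}\widehat{F}(2^{-\alpha j}\xi)$, which, together with Theorem~\ref{thm:mainest0}, yields the bound after the change of variables you carry out explicitly. The bookkeeping with the squared prefactor $2^{-4\alpha j}$ and the Jacobian $2^{2\alpha j}$ is correct and matches the intended computation.
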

\begin{proof}
The statement follows from the relation $\widehat{f}(\xi)=2^{-2\alpha j}\widehat{F}(2^{-\alpha j}\xi)$.
\end{proof}

Finally, we state a refinement of Theorem~\ref{thm:localf0}.

\begin{cor}\label{cor:localf0}
Let $m=(m_1,m_2)\in N_0^2$ and $\partial^m=\partial_1^{m_1}\partial_2^{m_2}$. We have
\begin{align*}
\int_{|\xi|\sim 2^j} |\partial^m\widehat{f}(\xi)|^2 \,d\xi \lesssim 2^{-2j\alpha |m|}  2^{-2(\beta+\alpha)j}.
\end{align*}
\end{cor}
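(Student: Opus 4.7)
My plan is to reduce the statement to Theorem~\ref{thm:mainest0} in two moves: first, convert the derivative $\partial^m\widehat{F}$ into the Fourier transform of a modified fragment that still falls under the scope of Theorem~\ref{thm:mainest0}; second, translate the estimate on $\widehat{F}$ into the desired estimate on $\widehat{f}$ via the scaling relation used in Theorem~\ref{thm:localf0}.

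For the first step, I use the standard Fourier identity
\[
\partial^m\widehat{F}(\xi)=\widehat{(-2\pi i x)^m F}(\xi).
\]
Writing $F=g_j\cdot \omega$ and defining $\tilde\omega(x):=(-2\pi i x)^m \omega(x)$, we have
\[
(-2\pi i x)^m F(x)=g_j(x)\,\tilde\omega(x)=:\tilde F(x).
\]
Since $\omega\in C_0^\infty(\R^2)$ with $\supp\omega\subset[-1,1]^2$, the weight $\tilde\omega$ also belongs to $C_0^\infty(\R^2)$ with the same support, so $\tilde F$ has exactly the structure of a smooth fragment but with $\omega$ replaced by $\tilde\omega$. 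The proof of Lemma~\ref{lem:central0} only uses that $\omega$ and $\partial_1\omega$ are $C^\infty$ with compact support (to bound $\|\Delta_{(h,0)}^k\omega\|_\infty$ and $\|\Delta_{(h,0)}^k\partial_1\omega\|_\infty$ by $h^k$), and these properties transfer verbatim to $\tilde\omega$. Hence Theorem~\ref{thm:mainest0} applies to $\tilde F$ and yields
\[
\int_{|\eta|\sim 2^{(1-\alpha)j}}|\partial^m\widehat{F}(\eta)|^2\,d\eta = \int_{|\eta|\sim 2^{(1-\alpha)j}}|\widehat{\tilde F}(\eta)|^2\,d\eta\lesssim 2^{-2\beta j}.
\]

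For the second step, I use the scaling $f(x)=F(2^{\alpha j}x)$ employed in the proof of Theorem~\ref{thm:localf0}, which gives $\widehat{f}(\xi)=2^{-2\alpha j}\widehat{F}(2^{-\alpha j}\xi)$ and hence
\[
\partial^m\widehat{f}(\xi)=2^{-2\alpha j}\cdot 2^{-\alpha j|m|}(\partial^m\widehat{F})(2^{-\alpha j}\xi).
\]
Substituting $\eta=2^{-\alpha j}\xi$ (so that $|\xi|\sim 2^j$ corresponds to $|\eta|\sim 2^{(1-\alpha)j}$ and $d\xi=2^{2\alpha j}\,d\eta$),
\[
\int_{|\xi|\sim 2^j}|\partial^m\widehat{f}(\xi)|^2\,d\xi = 2^{-2\alpha j}\cdot 2^{-2\alpha j|m|}\int_{|\eta|\sim 2^{(1-\alpha)j}}|\partial^m\widehat{F}(\eta)|^2\,d\eta.
\]
Combining this with the bound from the first step gives $2^{-2\alpha j|m|}\cdot 2^{-2(\beta+\alpha)j}$, which is the claim.

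The only mildly technical point is checking that the argument of Lemma~\ref{lem:central0} is insensitive to replacing $\omega$ by $\tilde\omega=(-2\pi i x)^m\omega$, but since everything there is expressed through $L^\infty$ norms of differences of $\omega$, $\partial_1\omega$ and of $g_j$, $\partial_1 g_j$, and since $\tilde\omega\in C_0^\infty(\R^2)$, this is immediate. Thus the corollary follows with no further work.
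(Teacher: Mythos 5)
Your proof is correct, and the key observation — that multiplying the localization window by a polynomial produces another admissible $C_0^\infty$ window with the same support, so that the existing machinery applies — is exactly the idea the paper uses. The difference is the level at which you apply it and how the scaling factor $2^{-2j\alpha|m|}$ emerges. The paper works at the level of $f$ itself: it writes $x^m f(x) = 2^{-j\alpha|m|} f_m(x)$ with $f_m = g\cdot\omega_m(2^{\alpha j}\cdot)$ and $\omega_m(x)=x^m\omega(x)$, so the factor $2^{-j\alpha|m|}$ drops out immediately from rescaling the polynomial, and then it invokes Theorem~\ref{thm:localf0} for $f_m$ directly. You instead absorb the polynomial at the level of the unit-scale fragment $F$, first obtaining $\int_{|\eta|\sim 2^{(1-\alpha)j}}|\partial^m\widehat F|^2\lesssim 2^{-2\beta j}$ from Theorem~\ref{thm:mainest0}, and then recover the $2^{-2j\alpha|m|}$ via the chain rule applied to $\widehat f(\xi)=2^{-2\alpha j}\widehat F(2^{-\alpha j}\xi)$ together with a change of variables. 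Both computations are correct; the paper's is marginally tidier because it invokes only the $f$-level result and lets the window's rescaling supply the power of two, whereas yours requires careful bookkeeping of the Jacobian and chain-rule factors, but it has the small virtue of making explicit why the proof of Lemma~\ref{lem:central0} (and hence Theorem~\ref{thm:mainest0}) is insensitive to the choice of window. Either way the implicit constant depends on $m$, which is harmless since only finitely many $m$ are needed downstream.
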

\begin{proof}
Recall that $f= g\omega(2^{\alpha j}\cdot)$.
Let us define the window $\omega_m(x):=x^m \omega(x)$ and the function
$f_m(x):= g(x)\omega_m(2^{\alpha j}x)$ for $x\in\R^2$.
Then for every $x\in\R^2$
\begin{align*}
x^m f(x)= g(x) x^m \omega(2^{\alpha j}x) = 2^{-j\alpha|m|} g(x) \omega_m(2^{\alpha j}x) = 2^{-j\alpha|m|} f_m(x) .
\end{align*}
We conclude with Theorem~\ref{thm:localf0}
\begin{align*}
\int_{|\xi|\sim 2^j} |\partial^m\widehat{f}(\xi)|^2 \,d\xi
= \int_{|\xi|\sim 2^j} |\widehat{x^mf(x)}(\xi)|^2 \,d\xi
= 2^{-2j\alpha|m|} \int_{|\xi|\sim 2^j} |\widehat{ f_m}(\xi)|^2 \,d\xi   \lesssim 2^{-2j\alpha |m|}  2^{-2(\beta+\alpha)j}.
\end{align*}

\end{proof}

\subsubsection{Curvelet Analysis of a Smooth Fragment}

Let $J$ be a scale-angle pair and $\chi_0$ and $\chi_J$ the functions from \eqref{eq:suppfunctions}, used
in the construction of the $\alpha$-curvelet frame $\mathcal{C}_\alpha(W^{(0)},W,V)$. We remark, that $\chi_J$ is generally not a characteristic function.
However, $\chi_J$ is a non-negative real-valued function, supported in the wedges $\mathcal{W}_J$ given in \eqref{eq:wedgePJ} and satisfying $\|\chi_J\|_\infty\le1$.

Theorem~\ref{thm:localf0} directly leads to a central result, namely that it holds
\begin{align}\label{eq:cent0}
\int_{\R^2} \sum_{|J|=j} |(\widehat{f}\chi_J)(\xi)|^2 \,d\xi \lesssim 2^{-2j(\beta + \alpha)}.
\end{align}
Our next goal, is to refine this result.
Let us first record a basic fact.
\begin{lemma}\label{lem:basicfact}
Let $m\in\N_0^2$. It holds for all $\xi\in\R^2$
\[
\sum_{|J|=j} |\partial^m \chi_J(\xi)|^2 \lesssim 2^{-2j\alpha|m|}.
\]
\end{lemma}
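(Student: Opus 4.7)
The plan is to first derive a uniform pointwise bound $|\partial^m \chi_J(\xi)| \lesssim 2^{-j\alpha|m|}$ valid on the support $\mathcal{W}_J$, and then pass to the sum over $\ell$ by exploiting that the wedges $\mathcal{W}_{j,\ell}$, $\ell = 0, \ldots, L_j - 1$, have uniformly bounded angular overlap. The key balance I am trying to reveal is that the ``angular dilation by $2^{j(1-\alpha)}$'' built into $V^{(j,\ell)}$ gets compensated by the radial factor $|\xi|^{-1} \sim 2^{-j}$ coming from differentiating $\xi/|\xi|$, producing exactly $2^{-j\alpha}$ of decay per Cartesian derivative.

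First I would decompose $\chi_J = W_j \cdot V_J \cdot \Psi^{-1/2}$ with $W_j(\xi) := W^{(j)}(|\xi|)$ and $V_J(\xi) := V^{(j,\ell)}(\xi/|\xi|)$, and apply the Leibniz rule so that it suffices to bound derivatives of each of the three factors on $\mathcal{W}_J$. For the radial factor, since $W^{(j)}(r) = \widetilde{W}(8\pi 2^{-j} r)$ with $\widetilde{W}$ a fixed smooth function and $|\xi| \sim 2^j$ on $\mathcal{W}_J$, a routine chain-rule calculation using $|\partial^a |\xi|| \lesssim |\xi|^{1-|a|}$ gives $\|\partial^a W_j\|_{L^\infty(\mathcal{W}_J)} \lesssim 2^{-j|a|}$. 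For the angular factor, $V^{(j,\ell)}$ is, up to symmetrization and rotation, essentially a $2^{j(1-\alpha)}$-dilate of a fixed smooth function on $\mathbb{S}^1$, and one checks inductively that $|\partial^a (\xi/|\xi|)| \lesssim |\xi|^{-|a|} \lesssim 2^{-j|a|}$ on the support. Fa\`a di Bruno then yields each derivative costing a factor $2^{j(1-\alpha)} \cdot 2^{-j} = 2^{-j\alpha}$, so $\|\partial^b V_J\|_{L^\infty(\mathcal{W}_J)} \lesssim 2^{-j\alpha|b|}$.

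For the remaining factor $\Psi^{-1/2}$, the key observation is that $\Psi \geq 1$ everywhere and that on $\mathcal{W}_J$ only a uniformly bounded number of terms in the defining sum for $\Psi$ are nonzero, namely those of the form $W^{(j')}(|\cdot|)^2 V^{(j',\ell')}(\cdot/|\cdot|)^2$ with $j' \in \{j-1, j, j+1\}$ and $\ell'$ at bounded distance from $\ell$, thanks to the bounded overlap of supports in both the radial and the angular variables. Applying the previous two estimates termwise produces $|\partial^c \Psi(\xi)| \lesssim 2^{-j\alpha|c|}$ on $\mathcal{W}_J$, and Fa\`a di Bruno combined with the two-sided bound $1 \leq \Psi \leq 8$ gives $|\partial^c \Psi^{-1/2}(\xi)| \lesssim 2^{-j\alpha|c|}$. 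Assembling via Leibniz and using $\alpha \leq 1$ so that $2^{-j|a|} \leq 2^{-j\alpha|a|}$ produces the desired pointwise bound $|\partial^m \chi_J(\xi)| \lesssim 2^{-j\alpha|m|}$.

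Finally, at any fixed $\xi \in \R^2$ only a uniformly bounded number of summands in $\sum_{\ell} |\partial^m \chi_{j,\ell}(\xi)|^2$ are nonzero, since the angular windows $V^{(j,\ell)}$ are supported on arcs of length $\asymp \omega_j = \pi 2^{-\lfloor j(1-\alpha)\rfloor}$ centered at the equispaced angles $\omega_{j,\ell} = \ell \omega_j$. Squaring the pointwise bound and summing over those $O(1)$ nonzero terms yields the claim. I expect the main technical obstacle to be the careful chain-rule bookkeeping for $V^{(j,\ell)}(\xi/|\xi|)$ and $\Psi^{-1/2}$, where one must verify that all Fa\`a di Bruno contributions respect the claimed $2^{-j\alpha}$-per-derivative decay; once that is in place, everything else is organizational.
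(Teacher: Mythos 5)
Your proposal is correct and takes essentially the same approach as the paper's (very terse) proof: both rest on the observation that each Cartesian derivative of $\chi_J$ gains at least a factor $2^{-j\alpha}$ because the wedge has length $\sim 2^j$ and width $\sim 2^{j\alpha}$, combined with the bounded overlap of the angular supports. You simply supply the explicit Leibniz and Fa\`a di Bruno bookkeeping for the factors $W_j$, $V_J$, and $\Psi^{-1/2}$ that the paper leaves to the reader.
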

\begin{proof}
From the definition it follows that $\chi_J$ scales with $2^{-\alpha j}$ in one direction
and with  $2^{-j}$ in the orthogonal direction. No matter what direction, we always do better
than $|\partial^m \chi_J(\xi)|^2 \lesssim 2^{-2j\alpha|m|} $. For fixed $\xi$ only a fixed number of summands are not zero, uniformly for all $\xi$.
The claim follows.
\end{proof}

Next we prove an auxiliary lemma. Here $\Delta=\partial_1^2+\partial_2^2$ denotes the standard Laplacian.

\begin{lemma}\label{lem:laplace}
It holds for $m\in\N_0$
\[
\int \sum_{|J|=j} |\Delta^m (\widehat{f}\chi_J)(\xi)|^2 \,d\xi \lesssim 2^{-2j(\beta + \alpha)}\cdot 2^{-4m\alpha j}.
\]
\end{lemma}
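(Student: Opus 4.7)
The plan is to expand $\Delta^m(\widehat{f}\chi_J)$ by the Leibniz rule and reduce the estimate to the two ingredients already established: the anisotropic decay of $\widehat f$ on the annulus $\{|\xi|\sim 2^j\}$ in Corollary~\ref{cor:localf0}, and the pointwise control of $\sum_{|J|=j}|\partial^b\chi_J|^2$ in Lemma~\ref{lem:basicfact}. Since $\Delta^m$ is a fixed linear combination of $\partial_1^{2a_1}\partial_2^{2a_2}$ with $a_1+a_2=m$, applying the bivariate Leibniz rule gives a finite representation
\[
\Delta^m(\widehat f\,\chi_J)=\sum_{|a|+|b|=2m} c_{a,b}\,\partial^a\widehat f\cdot\partial^b\chi_J,
\]
with absolute constants $c_{a,b}$ depending only on $m$. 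Squaring and using the elementary bound $|\sum_k z_k|^2\lesssim \sum_k|z_k|^2$ for finite sums yields
\[
|\Delta^m(\widehat f\,\chi_J)(\xi)|^2 \lesssim \sum_{|a|+|b|=2m} |\partial^a\widehat f(\xi)|^2\,|\partial^b\chi_J(\xi)|^2.
\]

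Next I would sum over all scale-angle indices $J$ with $|J|=j$ and integrate. By Fubini and the pointwise bound above,
\[
\int \sum_{|J|=j} |\Delta^m(\widehat f\,\chi_J)(\xi)|^2\,d\xi
\;\lesssim\; \sum_{|a|+|b|=2m} \int |\partial^a\widehat f(\xi)|^2 \Bigl(\sum_{|J|=j}|\partial^b\chi_J(\xi)|^2\Bigr)d\xi.
\]
The key observation is that each $\chi_J$ (and therefore each derivative $\partial^b\chi_J$) is supported in the wedge $\mathcal{W}_J\subset\{|\xi|\sim 2^j\}$, so the inner sum in $J$ is supported in the dyadic annulus $\{|\xi|\sim 2^j\}$. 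Hence Lemma~\ref{lem:basicfact} lets me replace the inner sum by the bound $2^{-2j\alpha|b|}$ on that annulus and by $0$ elsewhere, yielding
\[
\int |\partial^a\widehat f(\xi)|^2 \sum_{|J|=j}|\partial^b\chi_J(\xi)|^2\,d\xi
\;\lesssim\; 2^{-2j\alpha|b|}\int_{|\xi|\sim 2^j}|\partial^a\widehat f(\xi)|^2\,d\xi.
\]

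Finally I apply Corollary~\ref{cor:localf0} to the remaining integral, which gives $\int_{|\xi|\sim 2^j}|\partial^a\widehat f|^2\,d\xi\lesssim 2^{-2j\alpha|a|}\,2^{-2(\beta+\alpha)j}$. Collecting the two anisotropic factors and using $|a|+|b|=2m$,
\[
2^{-2j\alpha|b|}\cdot 2^{-2j\alpha|a|}\cdot 2^{-2(\beta+\alpha)j} \;=\; 2^{-4m\alpha j}\cdot 2^{-2(\beta+\alpha)j},
\]
and this bound is uniform over the finitely many admissible $(a,b)$, so it survives the outer finite sum. This is exactly the claimed estimate. There is no real obstacle here: the only points requiring care are that the Leibniz expansion of $\Delta^m$ produces only derivatives of total order $2m$ (so the two gains in $2^{-2j\alpha|a|}$ and $2^{-2j\alpha|b|}$ combine cleanly), and that the support of $\chi_J$ lies inside the annulus $\{|\xi|\sim 2^j\}$ used in Corollary~\ref{cor:localf0}, which is guaranteed by the construction in Section~\ref{ssec:HyCurve}.
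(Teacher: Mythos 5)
Your proof is correct and follows essentially the same route as the paper: expand $\Delta^m(\widehat f\chi_J)$ by Leibniz into terms $\partial^a\widehat f\cdot\partial^b\chi_J$ with $|a|+|b|=2m$, bound $\sum_{|J|=j}|\partial^b\chi_J|^2$ pointwise via Lemma~\ref{lem:basicfact} (noting the support lies in $\{|\xi|\sim 2^j\}$), and invoke Corollary~\ref{cor:localf0} for $\int_{|\xi|\sim 2^j}|\partial^a\widehat f|^2$. You spell out the $|\sum z_k|^2\lesssim\sum|z_k|^2$ step and the annulus-support observation that the paper leaves implicit, but the decomposition and the two key inputs are identical.
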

\begin{proof}
For $m=0$ this is just \eqref{eq:cent0}, a direct consequence of Theorem~\ref{thm:localf0}.
Now let $m>0$. It holds with $a,b\in\N_0^2$
and certain coefficients $c_{a,b}\in\N_0$
\begin{align*}
\Delta^m (\widehat{f}\chi_J)(\xi) = \sum_{|a|+|b|=2m} c_{a,b} \partial^a\widehat{f}(\xi) \partial^b\chi_J(\xi).
\end{align*}
Let $a,b\in\N_0^2$ such that $|a|+|b|=2m$.
Then with Lemma~\ref{lem:basicfact} and Corollary~\ref{cor:localf0}
\begin{align*}
\int \sum_{|J|=j} |\partial^a\widehat{f}(\xi)|^2 |\partial^b\chi_J(\xi)|^2 \,d\xi &\lesssim 2^{-2j \alpha |b|}  \int_{|\xi|\sim 2^j} |\partial^a\widehat{f}(\xi)|^2 \,d\xi \\
&\lesssim 2^{-2j \alpha |b|} 2^{-2j\alpha |a|}  2^{-2(\beta+\alpha)j} \\
& = 2^{-2j\alpha (|a|+|b|)}   2^{-2(\beta+\alpha)j} = 2^{-4j\alpha m}   2^{-2(\beta+\alpha)j}.
\end{align*}
\end{proof}

\noindent
Now we come to the refinement of \eqref{eq:cent0}.
For that, we need the differential operator
\[
\mathcal{L}=\mathcal{I}- 2^{2\alpha j}\Delta,
\]
where $\mathcal{I}$ is the identity and $\Delta$ the standard Laplacian.
The theorem below shows that $\mathcal{L}^2(\widehat{f}\chi_J)$ obeys the same estimate \eqref{eq:cent0} as $\widehat{f}\chi_J$.

\begin{theorem}\label{thm:central0}
It holds
\[
\int_{\R^2} \sum_{|J|=j} |\mathcal{L}^2 (\widehat{f}\chi_J)(\xi)|^2 \,d\xi \lesssim 2^{-2j(\beta + \alpha)}
\]
\end{theorem}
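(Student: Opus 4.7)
My plan is to expand $\mathcal{L}^2$ by the binomial formula, reduce the statement to a linear combination of the bounds already established in Lemma \ref{lem:laplace}, and verify that the weights $2^{2k\alpha j}$ introduced by $\mathcal{L}$ exactly cancel the factors $2^{-2k\alpha j}$ appearing per application of $\Delta$.

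Concretely, since $\mathcal{L}=\mathcal{I}-2^{2\alpha j}\Delta$, one has
\[
\mathcal{L}^2 = \mathcal{I} - 2\cdot 2^{2\alpha j}\Delta + 2^{4\alpha j}\Delta^2,
\]
so that
\[
\mathcal{L}^2(\widehat{f}\chi_J) = \sum_{k=0}^{2} (-1)^k \binom{2}{k} 2^{2k\alpha j}\, \Delta^k(\widehat{f}\chi_J).
\]
The natural norm on the left-hand side of the claim is the sequence-valued $L^2$-norm
\[
\Big\|\, (g_J)_{|J|=j}\, \Big\|:= \Big(\int_{\R^2} \sum_{|J|=j} |g_J(\xi)|^2\, d\xi\Big)^{1/2},
\]
which is a genuine norm (square root of an $L^2$ norm in $\R^2\times \{J:|J|=j\}$ with counting measure in $J$). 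The triangle inequality for this norm gives
\[
\Big\|\, (\mathcal{L}^2(\widehat{f}\chi_J))_{|J|=j}\, \Big\| \;\le\; \sum_{k=0}^{2} \binom{2}{k}\, 2^{2k\alpha j}\, \Big\|\, (\Delta^k(\widehat{f}\chi_J))_{|J|=j}\, \Big\|.
\]

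By Lemma \ref{lem:laplace} each of the three sequence-norms on the right is bounded by $2^{-j(\beta+\alpha)}\cdot 2^{-2k\alpha j}$, up to a constant independent of $j$. Multiplying by the prefactor $2^{2k\alpha j}$ yields $2^{-j(\beta+\alpha)}$ for every $k\in\{0,1,2\}$. Summing the three terms and squaring gives
\[
\int_{\R^2} \sum_{|J|=j} |\mathcal{L}^2(\widehat{f}\chi_J)(\xi)|^2\, d\xi \;\lesssim\; 2^{-2j(\beta+\alpha)},
\]
which is the claim.

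There is no real obstacle here beyond bookkeeping: the operator $\mathcal{L}$ is precisely engineered so that its symbol balances the $2^{-2\alpha j}$-factors produced by each Laplacian in Lemma \ref{lem:laplace}, and the whole proof reduces to this observation together with Minkowski's inequality in the weighted $L^2$-space. The only place where one has to be slightly careful is to use the triangle inequality on the square root $\bigl(\int\sum_{|J|=j}|\cdot|^2\,d\xi\bigr)^{1/2}$ rather than on the integral itself, in order to avoid picking up additional cross terms.
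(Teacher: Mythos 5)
Your proof is correct and follows exactly the same route as the paper: expand $\mathcal{L}^2$ by the binomial formula, apply Lemma~\ref{lem:laplace} to each of the three terms (with $m=0$ giving \eqref{eq:cent0}), and observe that the prefactors $2^{2k\alpha j}$ exactly cancel the $2^{-2k\alpha j}$ decay from the lemma. The only difference is that you make explicit the use of Minkowski's inequality in the vector-valued $L^2$-norm, which the paper leaves implicit.
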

\begin{proof}
It holds
\[
\mathcal{L}^2= \mathcal{I}- 2\cdot 2^{2\alpha j}\Delta +  2^{4\alpha j}\Delta^2.
\]
Applying \eqref{eq:cent0} and Lemma~\ref{lem:laplace} yields the desired result.
\end{proof}

Finally we can give the proof of Proposition~\ref{prop:sequence2}.

\begin{proof}[Proof of Proposition~\ref{prop:sequence2}]
Recall the curvelet frame $\mathcal{C}_\alpha(W^{(0)},W,V)=(\psi_\mu)_{\mu\in M}$. On the Fourier side
\[
\widehat{\psi}_{j,\ell,k}=\chi_J u_{j,k}(R_J\cdot),
\]
with rotation matrix $R_J$ given as in \eqref{eq:matrixrot} and functions
\[
u_{j,k}(\xi)=2^{-j(1+\alpha)/2}e^{2\pi i (2^{-j}k_1,2^{-\alpha j}k_2) \cdot \xi}, \quad\xi\in\R^2.
\]
The curvelet coefficients $(\theta_\mu)_{\mu\in M}$ of $f=f_Q$ are therefore given by the formula
\begin{align*}
\theta_\mu = \langle f, \psi_{j,\ell,k} \rangle = \int_{\R^2} \widehat{f}\chi_J(\xi) \overline{u_{j,k}(R_J\xi)} \,d\xi.
\end{align*}
We have to study the decay of the subsequence $\theta=\theta_{Q}$ defined in \eqref{eq:Qsequence}. We observe
\begin{align*}
\mathcal{L} u_{j,k} = (1 + 2^{-2j(1-\alpha)}k_1^2 + k_2^2 ) u_{j,k},
\end{align*}
which also holds for the rotated versions $u_{j,k}(R_J\cdot)$. Partial integration thus yields
\begin{align*}
\theta_{\mu} = \int_{\R^2} \widehat{f}(\xi) \chi_J(\xi) \overline{u_{j,k}(R_J \xi)} \,d\xi
=  (1 + 2^{-2j(1-\alpha)}k_1^2 + k_2^2 )^{-2} \int_{\R^2} \mathcal{L}^2( \widehat{f}\chi_J ) \overline{u_{j,k}(R_{J} \xi)} \,d\xi.
\end{align*}
For $K=(K_1,K_2)\in\Z^2$ we define the set
\[
\mathfrak{Z}_{K}:=\Big\{ (k_1,k_2) \in \Z^2 ~:~ k_1 2^{-j(1-\alpha)} \in [K_1,K_1+1),\, k_2 = K_2 \Big\}.
\]
Further, we put
\[
M_{j,K}:=\Big\{ \mu=(j,\ell,k) \in M_j ~:~ k\in \mathfrak{Z}_K \Big\},
\]
where $M_j$ denotes the curvelet indices at scale $j$.
It follows from the orthogonality properties of the Fourier system $(u_{j,k})_{k\in\Z^2}$ that
\[
\sum_{k\in \mathfrak{Z}_K} |\theta_\mu|^2 \le (1+|K|^2)^{-4} \int_{\R^2}  |\mathcal{L}^2(\widehat{f}\chi_J)(\xi)|^2 \,d\xi,
\]
where $|K|^2=K_1^2+K_2^2$. We further conclude
\begin{align*}
\sum_{M_{j,K}} |\theta_\mu|^2 = \sum_{|J|=j} \sum_{k\in \mathfrak{Z}_K} |\theta_\mu|^2
\le (1+|K|^2)^{-4} \int_{\R^2}  \sum_{|J|=j} |\mathcal{L}^2(\widehat{f}\chi_J)(\xi)|^2 \,d\xi.
\end{align*}
Now we apply Theorem~\ref{thm:central0} and obtain
\[
\sum_{M_{j,K}} |\theta_\mu|^2 \lesssim 2^{-2j(\beta + \alpha)}(1+|K|^2)^{-4}  ,
\]
which directly implies
\begin{align}\label{eq:haha}
\big\|(\theta_\mu)_{\mu\in M_{j,K}} \big\|_{\ell^2}  \lesssim 2^{-j(\beta+\alpha)} (1+|K|^2)^{-2}.
\end{align}
It holds $\# \mathfrak{Z}_{K} \le 1+ 2^{j(1-\alpha)}$ and therefore, since $L_j=2^{\lfloor j(1-\alpha)\rfloor}$, the estimate $\# M_{j,K} \le 2\cdot 2^{2j(1-\alpha)}$.
Now we recall the interpolation inequality $\|(c_\lambda)\|_{\ell_p} \le n^{1/p-1/2} \|(c_\lambda)\|_{\ell_2} $ for a finite sequence $(c_\lambda)_\lambda$
with $n$ nonzero entries. Applying this inequality with $p=2/(1+\beta)$ we get from \eqref{eq:haha}
\begin{align*}
\big\|(\theta_\mu)_{\mu\in M_{j,K}} \big\|_{\ell^{2/(1+\beta)}}
\lesssim 2^{j(\beta-1)} \big\|(\theta_\mu)_{\mu\in M_{j,K}} \big\|_{\ell^{2}}
\le 2^{-j(1+\alpha)} (1+|K|^2)^{-2}.
\end{align*}
It follows
\[
\sum_{\mu\in M_{j,K}} |\theta_\mu|^{2/(1+\beta)}
\lesssim  2^{-j(1+\alpha)2/(1+\beta)}\cdot (1+|K|^2)^{-4/(1+\beta)}
=2^{-2\alpha j} (1+|K|^2)^{-4/(1+\beta)}.
\]
Finally, we have
\begin{align*}
\sum_{\mu\in M_j} |\theta_\mu|^{2/(1+\beta)} = \sum_{K\in \Z^2} \sum_{\mu\in M_{j,K}} |\theta_\mu|^{2/(1+\beta)}
\le 2^{-2\alpha j} \sum_{K\in \Z^2} (1+|K|^2)^{-4/(1+\beta)} \lesssim  2^{-2\alpha j}.
\end{align*}
The desired estimate for $\theta=\theta_Q$ follows, i.e.
\begin{align*}
\|\theta_Q\|_{\ell_{2/(1+\beta)}}  \lesssim 2^{-j(\alpha+1)}.
\end{align*}

\end{proof}

\noindent The following subsection is devoted to the proof of Proposition~\ref{prop:sequence1}.



\subsection{Analysis of an Edge Fragment}\label{ss:EdgeFragment}

Let us turn to the more complicated case $Q\in\mathcal{Q}^0_j$ and the proof of Proposition~\ref{prop:sequence1}. In this case
the cube $Q$ intersects the edge curve $\Gamma$ and $f_Q=f\omega_Q$ is accordingly called an \emph{edge fragment}.

In order to prove Proposition~\ref{prop:sequence1} we need to analyze the decay of
the sequence $\theta_Q=(\langle f_Q, \psi_\mu\rangle)_{\mu\in M_j}=(\langle \widehat{f}_Q, \widehat{\psi}_\mu\rangle)_{\mu\in M_j}$.
To estimate these scalar products, we again study the localization of the function $\widehat{f}_Q$.
As in the treatment of the smooth fragments, our investigation starts with some simplifying reductions.

Firstly, we note that it suffices to prove Proposition~\ref{prop:sequence1}
for a function $f\in \mathcal{E}^{\beta}(\R^2)$ of the form $f=g \chi_{\mathcal{B}}$ with $g\in C_0^{\beta}(\R^2)$.
In fact, the curvelet coefficient sequence of a general cartoon $f=f^{(0)}+f^{(1)}=g_0  \cdot \chi_{\mathcal{B}} + g_1$
can be decomposed into $\theta^{(0)}_Q=(\langle f^{(0)}_Q, \psi_\mu\rangle)_{\mu\in M_j}$ and $\theta^{(1)}_Q=(\langle f^{(1)}_Q, \psi_\mu\rangle)_{\mu\in M_j}$.
From Proposition~\ref{prop:sequence2} we already know
\[
\| \theta^{(1)}_Q \|_{\omega\ell_{2/(1+\beta)}}\lesssim 2^{-(1+\alpha)j} \lesssim 2^{-(1+\alpha)j/2}\,.
\]
Therefore it only remains to show $\| \theta^{(0)}_Q \|_{\omega\ell_{2/(1+\beta)}}\lesssim 2^{-(1+\alpha)j/2}$.
Since $\mathcal{B}\subset[-1,1]^2$, we can further smoothly cut off $g_0$ outside of $[-1,1]^2$ to obtain a function $g\in C_0^{\beta}(\R^2)$ such
that $f^{(0)}=g \chi_{\mathcal{B}}$.

Secondly, without loss of generality we restrict to the following model situation.
The cube $Q$ is centered at the origin
and the edge curve $\Gamma$ is the graph of a function $E:[-2^{-j\alpha},2^{-j\alpha}]\rightarrow[-2^{-j\alpha},2^{-j\alpha}]$ belonging to $C^\beta(\R)$, with $x_1=E(x_2)$.
Further, it shall hold $E(0)=E^\prime(0)=0$, so that $\Gamma$ approximates a vertical line through the origin.
If the scale $j$ is big enough, say bigger than some fixed base scale $j_0\in\N_0$, it is always possible to arrive at this setting
by possibly translating or rotating the coordinate axes.
Henceforth, we assume $j\in\N$ and $j\ge j_0$ which clearly
poses no loss of generality.

In this simplified model situation the edge fragment $f=f_Q$ can be written in the form
\begin{align*}
f(x)=\omega(2^{\alpha j}x)g(x)\chi_{\{x_1\ge E(x_2)\}}, \qquad x=(x_1,x_2)\in\R^2,
\end{align*}
where $g\in C_0^{\beta}(\R^2)$, and $\omega\in C^\infty_0(\R^2)$ is the nonnegative window with $\supp \omega\subset [-1,1]^2$, generating the partition of unity $(\omega_Q)_Q$.
For our investigation it is again more convenient to work with a rescaled version $F$ of the edge fragment. Therefore we put $g_j=g(2^{-\alpha j}\cdot)$ and define
\begin{align}\label{eq:functF}
F(x):=\omega(x)g_j(x)\chi_{\{x_1\ge E_j(x_2)\}},\quad x=(x_1,x_2)\in\R^2,
\end{align}
with the rescaled edge function
\[
E_j:[-1,1]\rightarrow[-1,1]~,~ E_j(x_2)=2^{\alpha j} E(2^{-\alpha j}x_2).
\]
It holds $E_j\in C^{\beta}([-1,1])$ with
$E_j^\prime=E^\prime(2^{-\alpha j}\cdot)$ and
$\Hol(E_j^\prime,\beta-1)\le\delta_j$,
where
\[
\delta_j=2^{-j(1-\alpha)}\cdot \Hol(E^\prime,\beta-1).
\]
Observe that $\Hol(E^\prime,\beta-1)$ is a constant independent of the scale $j$.
Together with $E_j(0)=E^\prime_j(0)=0$ this implies for all $u\in[-1,1]$ that
\begin{align}\label{eq:vertstrip}
|E_j(u)|\le\delta_j \quad\text{and}\quad |E^\prime_j(u)|\le\delta_j.
\end{align}

For convenience, we continuously extend the function $E_j$ to the whole of $\R$ by attaching straight lines
on the left and on the right, with constant slopes $E_j^\prime(1)$ and $E_j^\prime(-1)$ respectively. Since this extension occurs outside
of the square $[-1,1]^2$, it does not change the representation \eqref{eq:functF} of the edge fragment. Furthermore, it also does not alter
the regularity and the Hölder constant.

\subsubsection{Fourier Analysis of an Edge Fragment}

Our first goal is to analyze the Fourier transform $\widehat{F}$ (and thus also $\widehat{f}$) along radial lines, whose orientations are specified by
angles $\eta\in[-\pi/2,\pi/2]$ with respect to the $x_1$-axis.
If the angle $\eta$ satisfies $|\sin\eta|>\delta_j$, it is possible because of \eqref{eq:vertstrip} to define a function $u=u_j(\cdot,\eta):\R\rightarrow\R$ implicitly by
\begin{align}\label{eq:u_def}
E_j(u(t))\cos\eta + u(t)\sin\eta=t.
\end{align}
The value $u(t)$ is the $x_2$-coordinate of the intersection point of the (extended) edge curve $\Gamma$ and the line $\mathfrak{L}_{t,\eta}$ defined by
\begin{align}\label{eq:intline}
\mathfrak{L}_{t,\eta}:=\Big\{ x=(x_1,x_2)\in\R^2 ~:~ x_1\cos\eta + x_2\sin\eta=t \Big\}.
\end{align}
Further, we can define the function $a=a_j(\cdot,\eta):\R\rightarrow\R$ by
\begin{align}\label{eq:a_def}
a(t):=-E_j(u(t))\sin\eta + u(t)\cos\eta.
\end{align}
The value $a(t)$ is the $x_2$-coordinate of the point $(E_j(u),u)^T\in\Gamma$ in the coordinate system rotated by the angle $\eta$.
For an illustration we refer to Figure~\ref{fig:edgefragment}.

\begin{figure}[t]
 \begin{center}
  \includegraphics[width=0.35\textwidth]{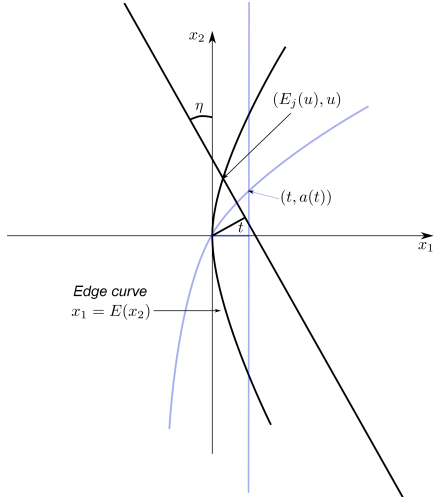}
  \label{fig:edgefragment}
  \caption{Illustration of standard edge fragment.}
  \end{center}
\end{figure}

The functions $u$ and $a$ are strictly monotone, increasing if $\eta>0$ and decreasing if $\eta<0$.
Note, that we suppressed the dependence of $u$ and $a$ on $j$ in the notation.
The following lemma studies the regularity of $u$ under the assumption $|\sin\eta|\ge 2\delta_j$.

\begin{lemma}
Assume $|\sin\eta|\ge 2\delta_j$. Then the function $u:\R\rightarrow\R$ defined implicitly by \eqref{eq:u_def}
belongs to $C^{\beta}(\R)$. Moreover, we have $\|u^\prime\|_\infty\lesssim |\sin\eta|^{-1}$
and
\[
\| \Delta_h u^\prime \|_\infty \lesssim \delta_j  h^{\beta-1} |\sin\eta|^{-1-\beta},
\]
where the implicit constants are independent of the scale $j$, the angle $\eta$, and $h\ge0$.
\end{lemma}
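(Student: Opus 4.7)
The plan is to derive an explicit formula for $u'$ by implicit differentiation of \eqref{eq:u_def} and then read off both bounds directly from the size and Hölder regularity of $E_j'$.

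First I would verify that $u$ is well-defined and $C^1$ via the implicit function theorem. The derivative of the left-hand side of \eqref{eq:u_def} with respect to $u$ equals $E_j'(u)\cos\eta + \sin\eta$. By \eqref{eq:vertstrip} we have $|E_j'(u)|\le \delta_j$, and the hypothesis $|\sin\eta|\ge 2\delta_j$ then yields
\begin{equation*}
|E_j'(u)\cos\eta + \sin\eta| \ge |\sin\eta| - \delta_j \ge \tfrac{1}{2}|\sin\eta| > 0,
\end{equation*}
uniformly in $u$. Hence $u$ is globally well-defined and differentiable, with
\begin{equation*}
u'(t) = \frac{1}{E_j'(u(t))\cos\eta + \sin\eta}.
\end{equation*}
The bound $\|u'\|_\infty \lesssim |\sin\eta|^{-1}$ is then immediate from the denominator estimate above.

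For the Hölder estimate on $u'$, I would write
\begin{equation*}
\Delta_h u'(t) = \frac{\bigl[E_j'(u(t)) - E_j'(u(t+h))\bigr]\cos\eta}{\bigl[E_j'(u(t+h))\cos\eta + \sin\eta\bigr]\bigl[E_j'(u(t))\cos\eta + \sin\eta\bigr]}.
\end{equation*}
The denominator is bounded below by $|\sin\eta|^2/4$ by the same estimate as above. For the numerator, I use $\text{\sl Höl}(E_j',\beta-1)\le \delta_j$ together with the mean value estimate $|u(t+h)-u(t)|\le \|u'\|_\infty \cdot h \lesssim h/|\sin\eta|$, giving
\begin{equation*}
|E_j'(u(t+h)) - E_j'(u(t))| \le \delta_j\, |u(t+h)-u(t)|^{\beta-1} \lesssim \delta_j\, h^{\beta-1} |\sin\eta|^{-(\beta-1)}.
\end{equation*}
Combining these two estimates yields $\|\Delta_h u'\|_\infty \lesssim \delta_j h^{\beta-1} |\sin\eta|^{-1-\beta}$, which also shows $u' \in C^{\beta-1}$ and therefore $u \in C^\beta(\R)$.

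There is no real obstacle here: everything reduces to the observation that the denominator in the formula for $u'$ is comparable to $\sin\eta$, so the losses in negative powers of $\sin\eta$ stack up exactly as claimed. The only mild point to be careful about is to invoke the global extension of $E_j$ to all of $\R$ (introduced right before the lemma), which guarantees that $u(t)$ is defined for every $t\in\R$ and that the bounds on $E_j'$ and its Hölder constant persist on the whole real line.
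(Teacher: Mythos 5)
Your proof is correct and takes essentially the same approach as the paper: implicit differentiation to obtain $u'(t) = (\sin\eta + E_j'(u(t))\cos\eta)^{-1}$, the lower bound $|\sin\eta + E_j'(u)\cos\eta|\ge \tfrac12|\sin\eta|$ from $|E_j'|\le\delta_j\le\tfrac12|\sin\eta|$, and then $\Delta_h u'(t)=u'(t+h)u'(t)\cos\eta\,(E_j'(u(t))-E_j'(u(t+h)))$ combined with the Hölder bound on $E_j'$ and the mean value estimate $|u(t+h)-u(t)|\le\|u'\|_\infty h$.
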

\begin{proof}
First of all it is not difficult to show that $u=u_j(\cdot,\eta)\in C^1(\R)$ with
\[
u^\prime(t)=\big(\sin\eta + E_j^\prime(u(t))\cos\eta\big)^{-1}.
\]
Under the assumption $|\sin\eta|\ge 2\delta_j$ it follows $\|u^\prime\|_\infty\lesssim |\sin\eta|^{-1}$
because of $|E^\prime_j(u)|\le\delta_j\le \frac{1}{2} |\sin\eta|$ for all $u\in[-1,1]$.
Finally, we examine $\Delta_h u^\prime$. For $t\in\R$
\begin{align*}
\Delta_h u^\prime(t)&=u^\prime(t+h)-u^\prime(t)=u^\prime(t+h)u^\prime(t)\big( u^\prime(t)^{-1} - u^\prime(t+h)^{-1} \big) \\
&= u^\prime(t+h)u^\prime(t) \cos\eta \big( E_j^\prime(u(t)) - E_j^\prime(u(t+h)) \big).
\end{align*}
Using $\Hol(E_j^\prime,\beta-1)\le\delta_j$ and the mean value theorem leads to
\begin{align*}
\|\Delta_h u^\prime\|_\infty\le \|u^\prime\|_\infty^2 \delta_j \|\Delta_h u\|_\infty^{\beta-1}
\le   \|u^\prime\|_\infty^{\beta +1} \delta_j   h^{\beta-1} \lesssim \delta_j h^{\beta-1} |\sin\eta|^{-1-\beta}  .
\end{align*}

\end{proof}

The following lemma collects some properties of the function $a:\R\rightarrow\R$ defined in \eqref{eq:a_def}.

\begin{lemma}\label{lem:propa}
Assume $|\sin\eta|\ge 2\delta_j$. It holds $a\in C^{\beta}(\R)$ with
\begin{align*}
\|a^\prime\|_\infty\lesssim |\sin\eta|^{-1}, \quad \|\Delta_h a\|_\infty\lesssim h|\sin\eta|^{-1}, \quad \|\Delta_h a^\prime\|_\infty\lesssim \delta_j h^{\beta-1} |\sin\eta|^{-1-\beta},
\end{align*}
with implicit constants independent of $j$, $\eta$, and $h\ge0$.
\end{lemma}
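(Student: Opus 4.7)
The plan is to work directly from the defining identity $a(t) = -E_j(u(t))\sin\eta + u(t)\cos\eta$ and exploit the previous lemma on $u$. Differentiation gives
\[
a'(t) = u'(t)\bigl(\cos\eta - E_j'(u(t))\sin\eta\bigr),
\]
so the smoothness of $a'$ reduces to that of $u'$ and of the composition $E_j'\circ u$. Since $|E_j'(u)|\le\delta_j\le\tfrac12|\sin\eta|$ uniformly, the bracket is bounded by an absolute constant, and the estimate $\|u'\|_\infty\lesssim|\sin\eta|^{-1}$ from the preceding lemma at once yields $\|a'\|_\infty\lesssim|\sin\eta|^{-1}$. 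The second estimate $\|\Delta_h a\|_\infty\lesssim h|\sin\eta|^{-1}$ then follows from the mean value theorem applied to $a$.

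The real work is the fractional-Hölder bound on $a'$. I will split
\[
a'(t) = u'(t)\cos\eta \;-\; \sin\eta\cdot u'(t) E_j'(u(t))
\]
and apply $\Delta_h$ to each summand. The first summand contributes $\cos\eta\cdot\Delta_h u'(t)$, which the preceding lemma bounds by $\delta_j h^{\beta-1}|\sin\eta|^{-1-\beta}$ — this will be the dominant term and matches the claimed estimate. For the second summand I use the discrete product rule
\[
\Delta_h\bigl(u'\cdot(E_j'\circ u)\bigr)(t) = \Delta_h u'(t)\cdot E_j'(u(t+h)) + u'(t)\cdot \Delta_h(E_j'\circ u)(t).
\]
The mixed term is controlled by $\delta_j^2 h^{\beta-1}|\sin\eta|^{-1-\beta}$ using $|E_j'|\le\delta_j$ again. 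For the composition, the Hölder assumption $\Hol(E_j',\beta-1)\le\delta_j$ together with the mean value theorem on $u$ give
\[
|\Delta_h(E_j'\circ u)(t)| \le \delta_j\,|u(t+h)-u(t)|^{\beta-1}\lesssim \delta_j\,h^{\beta-1}|\sin\eta|^{-(\beta-1)},
\]
so this contribution is bounded by $|\sin\eta|\cdot|\sin\eta|^{-1}\cdot\delta_j h^{\beta-1}|\sin\eta|^{-(\beta-1)}=\delta_j h^{\beta-1}|\sin\eta|^{1-\beta}$.

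Finally, since $|\sin\eta|\le 1$, every exponent $\ge -(1+\beta)$ yields a factor dominated by $|\sin\eta|^{-1-\beta}$, so all three contributions can be absorbed into $\delta_j h^{\beta-1}|\sin\eta|^{-1-\beta}$, completing the proof. The $C^\beta$ regularity of $a$ is then immediate from the fact that $a'$ is continuous and satisfies this uniform $(\beta-1)$-Hölder bound (uniformly in $t$, for the fixed $j,\eta$). The only real obstacle is the careful bookkeeping of the three error terms produced by the product-plus-chain rule decomposition; no new ideas beyond those in the previous lemma are required, and in particular the key input $\|\Delta_h u'\|_\infty\lesssim \delta_j h^{\beta-1}|\sin\eta|^{-1-\beta}$ is used only once.
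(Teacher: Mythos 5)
Your argument is correct and is exactly the computation the paper intends: the published proof of Lemma~\ref{lem:propa} is the one-line remark that the claims follow easily from the properties of $u$, and your write-up simply supplies the details — differentiating the defining formula for $a$, bounding the bracket $\cos\eta - E_j'(u)\sin\eta$ via $|E_j'|\le\delta_j\le\tfrac12|\sin\eta|$, and applying the discrete product rule together with $\Hol(E_j',\beta-1)\le\delta_j$ and the bounds $\|u'\|_\infty\lesssim|\sin\eta|^{-1}$, $\|\Delta_h u'\|_\infty\lesssim\delta_j h^{\beta-1}|\sin\eta|^{-1-\beta}$. The final absorption of the subleading terms (using $\delta_j\lesssim|\sin\eta|\le 1$ so that any exponent $\ge -(1+\beta)$ on $|\sin\eta|$ is dominated by $-(1+\beta)$) is sound, so the proof is complete and matches the paper's intended route.
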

\begin{proof}
This is an easy consequence of the properties of $u$ proved in the previous lemma.
\end{proof}

\noindent
Next, we introduce the scale-dependent interval
\begin{align}\label{eq:interval}
I(\eta):=I_j(\eta):=[a_j(\eta),b_j(\eta)],
\end{align}
where $a_j(\eta)=E_j(-1)\cos\eta - \sin\eta$ and $b_j(\eta)=E_j(1)\cos\eta + \sin\eta$.
The restrictions of $u$ and $a$ to $I(\eta)$ correspond precisely to that part of the edge curve $\Gamma$ lying inside the square $[-1,1]^2$.
In particular, we have a bijection $u:I(\eta)\rightarrow[-1,1]$.
\ms{In the sequel it is more convenient to work with an extension of $I(\eta)$, given by
\[
\widetilde{I}(\eta):=\widetilde{I}_j(\eta):=[a_j(\eta)-C\delta_j, b_j(\eta)+C\delta_j],
\]
for some suitable fixed constant $C>0$.}

\begin{lemma}\label{lem:estinterval}
For $|\sin\eta|>\delta_j$ we have
\[
|I(\eta)|\lesssim |\sin\eta| \quad\text{and}\quad |\widetilde{I}(\eta)|\lesssim |\sin\eta|.
\]
\end{lemma}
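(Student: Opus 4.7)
The plan is to prove both bounds by a direct computation using the definitions of $a_j(\eta)$ and $b_j(\eta)$ together with the size estimate $|E_j(u)| \le \delta_j$ on $[-1,1]$ given in \eqref{eq:vertstrip}.

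First I would write out the length of $I(\eta)$ explicitly:
\[
|I(\eta)| = |b_j(\eta) - a_j(\eta)| = \bigl|(E_j(1) - E_j(-1))\cos\eta + 2\sin\eta\bigr|,
\]
so by the triangle inequality
\[
|I(\eta)| \le |E_j(1) - E_j(-1)|\,|\cos\eta| + 2|\sin\eta|.
\]
Next I would invoke \eqref{eq:vertstrip}, which gives $|E_j(1)|, |E_j(-1)| \le \delta_j$, and hence $|E_j(1) - E_j(-1)| \le 2\delta_j$. Combining with $|\cos\eta| \le 1$, this yields
\[
|I(\eta)| \le 2\delta_j + 2|\sin\eta|.
\]
Under the standing assumption $|\sin\eta| > \delta_j$, the first term is dominated by the second, so $|I(\eta)| \le 4|\sin\eta| \lesssim |\sin\eta|$.

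For the extended interval $\widetilde{I}(\eta)$, by construction its length is $|I(\eta)| + 2C\delta_j$, and the same argument $2C\delta_j \le 2C|\sin\eta|$ shows that $|\widetilde{I}(\eta)| \lesssim |\sin\eta|$ as well. No real obstacle arises here; the proof is a one-line estimate once the definitions are unfolded, and the only ingredient beyond basic algebra is the uniform bound on $E_j$ from \eqref{eq:vertstrip}.
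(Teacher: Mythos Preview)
Your proof is correct and follows essentially the same approach as the paper: both expand $|I(\eta)|$ via the triangle inequality, invoke \eqref{eq:vertstrip} to bound $|E_j(1)-E_j(-1)|\le 2\delta_j$, and then use $\delta_j<|\sin\eta|$ to absorb the remaining terms. The treatment of $\widetilde{I}(\eta)$ is likewise identical in spirit.
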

\begin{proof}
In view of $|\sin\eta|>\delta_j$ and \eqref{eq:vertstrip} we can estimate
\begin{align*}
|I(\eta)|\le |E_j(1)-E_j(-1)| |\cos\eta| + 2 |\sin\eta| \le 2\delta_j + 2 |\sin\eta| \lesssim |\sin\eta|.
\end{align*}
The estimate for $\widetilde{I}(\eta)$ then follows directly from $|\sin\eta|>\delta_j$.
\end{proof}


We want to analyze $\widehat{F}$ along lines through the origin with orientation $\eta\in[-\pi/2,\pi/2]$.
The central tool in this investigation is the Fourier slice theorem.
In view of this theorem it makes sense to first study the Radon transform $\mathcal{R}F$, in particular its regularity. By Paley-Wiener type arguments
we can then later extract information about the decay of $\widehat{F}$. This is the same approach taken in \cite{CD04}. Due to the lack
of regularity in our case, however, we have to use a more refined technique in this investigation. The main idea is to
use finite differences instead of derivatives.

The value $\mathcal{R}F(t,\eta)$ of the Radon transform is obtained by integrating $F$ along the line $\mathfrak{L}_{t,\eta}$ defined in \eqref{eq:intline}.
Rotating $F$ by the angle $\eta$ yields the function $F^\eta$ and we can write
\[
(\mathcal{R}F)(t,\eta)
= \int_{\R} F^\eta(t,u)\,du. 
\]
The rescaled edge fragment $F$ can be rewritten as the product $F=G\chi_{\{x_1\ge E_j(x_2)\}}$ with the function
\begin{align*}
G:=\omega g(2^{-\alpha j}\cdot)=\omega g_j.
\end{align*}
Then we have
\begin{align}\label{eq:RepRadon}
(\mathcal{R}F)(t,\eta)=\int_{-\infty}^{a(t,\eta)} G^\eta(t,u) \,du,
\end{align}
where $G^\eta$ is the function obtained by rotating $G$ by the angle $\eta$. Using the notation
$g_j^\eta$ and $\omega^\eta$ for the rotated versions of $g_j$ and $\omega$, the integrand of \eqref{eq:RepRadon} takes the form
\[
G^\eta=g_j^\eta\omega^\eta.
\]
We see, that the component $g^\eta_j=g^\eta(2^{-\alpha j}\cdot)\in C_0^{\beta}(\R^2)$ of $G^\eta$ is scaled and the window $\omega^\eta\in C_0^\infty(\R^2)$ remains fixed.
During the following investigation the angle $\eta$ remains constant. Therefore, we can simplify the notation by omitting the index $\eta$.

The central lemma of this subsection is given below. Its proof relies on estimates
of the functions $g^\eta_j$ and $\omega^\eta$, outsourced to the appendix.

\begin{lemma}\label{lem:central1}
Assume that $|\sin\eta|\ge 2\delta_j$. For $h=C2^{-(1-\alpha)j}$, where $C>0$ is some fixed constant, we then have
\[
\Delta_h\partial_1\mathcal{R}F(t,\eta)=S_1(t,\eta) + S_2(t,\eta)
\]
with
\begin{align*}
\| S_1(\cdot,\eta) \|^2_{2} &\lesssim \delta_j^2 h^{2(\beta-1)} |\sin\eta|^{-1-2\beta}, \\
\| \Delta_{(h,0)} S_2(\cdot,\eta) \|^2_{2} &\lesssim h^{2\beta} |\sin\eta|^{-1-2\beta},
\end{align*}
where the implicit constants are independent of the scale $j$ and the angle $\eta$.
\end{lemma}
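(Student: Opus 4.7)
The plan is to start from the integral representation \eqref{eq:RepRadon} and apply Leibniz's rule to obtain
\[
\partial_1 \mathcal{R}F(t,\eta) = a'(t)\, G^\eta(t,a(t)) + \int_{-\infty}^{a(t)} \partial_1 G^\eta(t,u)\,du,
\]
where $\partial_1$ differentiates in the first argument. Applying $\Delta_h$ and distributing through the finite-difference product rule (splitting the second integral at $a(t)$) yields a sum of four terms. I would set
\[
S_1(t,\eta) := \bigl[\Delta_h a'(t)\bigr]\cdot G^\eta(t+h,a(t+h)),
\]
i.e., $S_1$ isolates the single contribution carrying a $\Delta_h a'$ factor and thus the full edge roughness encoded by $\delta_j$. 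The remaining three terms form
\[
S_2 := a'(t)\,\Delta_h\bigl[G^\eta(t,a(t))\bigr] + \int_{a(t)}^{a(t+h)} \partial_1 G^\eta(t+h,u)\,du + \int_{-\infty}^{a(t)} \Delta_h\bigl[\partial_1 G^\eta(t,u)\bigr]\,du,
\]
which by construction contains no $\Delta_h a'$ and can therefore absorb one further finite difference.

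For $S_1$ the estimate is immediate: $G^\eta = \omega^\eta g_j^\eta$ is uniformly bounded, and its first-argument support (for fixed $\eta$) lies in $\widetilde{I}(\eta)$, which has length $\lesssim |\sin\eta|$ by Lemma~\ref{lem:estinterval}. Combined with $\|\Delta_h a'\|_\infty \lesssim \delta_j h^{\beta-1}|\sin\eta|^{-1-\beta}$ from Lemma~\ref{lem:propa}, this gives
\[
\|S_1(\cdot,\eta)\|_2^2 \lesssim |\sin\eta|\cdot \delta_j^2 h^{2(\beta-1)}|\sin\eta|^{-2-2\beta} = \delta_j^2 h^{2(\beta-1)}|\sin\eta|^{-1-2\beta}.
\]

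The bound on $\Delta_h S_2$ is the technical heart. I would apply a further $\Delta_h$ to each summand of $S_2$ and expand again by the finite-difference product rule. In the first summand this produces a term containing the second-order difference $\Delta_h^2[G^\eta(t,a(t))]$; the composition $t\mapsto G^\eta(t,a(t))$ inherits $C^\beta$-regularity from $a\in C^\beta$ and the fixed smoothness of $\omega^\eta$, and a careful chain-rule accounting shows that its relevant Hölder constant scales like $|\sin\eta|^{-\beta}$ (through $a'$), so the second difference is $\lesssim h^\beta|\sin\eta|^{-\beta}$. For the two integral summands the extra $\Delta_h$ acts both on the limits of integration — producing factors of $\Delta_h a\lesssim h|\sin\eta|^{-1}$ — and on the integrand $\partial_1 G^\eta$, whose $C^{\beta-1}$-modulus is controlled via the scaling estimates for $g_j^\eta$ used already in Lemma~\ref{lem:central0}. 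Adding the three contributions and invoking the support estimate of Lemma~\ref{lem:estinterval} yields $\|\Delta_h S_2(\cdot,\eta)\|_2^2 \lesssim h^{2\beta}|\sin\eta|^{-1-2\beta}$.

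The main obstacle is the precise bookkeeping of the three parameters $h$, $\delta_j$, $|\sin\eta|$: every $h^\beta$ gained by an additional finite difference has to be offset against the $|\sin\eta|^{-\beta}$ produced by the chain rule through $a'$, and one must verify that no term silently hides a $\Delta_h a'$ inside $S_2$, which would reintroduce $\delta_j$ and spoil the claimed bound. This is the technical substitute for the thrice-differentiation argument of Candès–Donoho~\cite{CD04} in the $C^2$ case; the scale-adapted choice $h\asymp 2^{-(1-\alpha)j}$ is precisely what makes the competing powers of $h$ and $|\sin\eta|$ fit together in both $S_1$ and $\Delta_h S_2$.
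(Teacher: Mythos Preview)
Your outline matches the paper's proof: differentiate \eqref{eq:RepRadon}, apply $\Delta_h$, obtain the same four terms $T_1,\dots,T_4$, and then split into $S_1,S_2$. The paper's grouping is slightly finer than yours---it first splits $T_2=T_{21}+T_{22}$ and $T_4=T_{41}+T_{42}+T_{43}$ using the product structure $G=g_j\cdot\omega$, then places the pieces that are already $O(h^\beta|\sin\eta|^{-1-\beta})$ in $L^\infty$ (namely $T_1,T_{21},T_{41},T_{42}$) into $S_1$ and only the remainder ($T_{22},T_3,T_{43}$) into $S_2$. Your coarser split $S_1=T_1$, $S_2=T_2+T_3+T_4$ also works, since $\|\Delta_h X\|_\infty\le 2\|X\|_\infty$ for the pieces the paper moved to $S_1$; you just end up doing a few redundant estimates.

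Two points of caution. First, your ``careful chain-rule accounting'' for $\Delta_h^2[G^\eta(t,a(t))]$ is exactly where the paper invests its effort: it does \emph{not} treat $G$ as a single $C^\beta$ object but systematically separates $g_j$ (scaled, contributing factors $2^{-\alpha j}$) from $\omega$ (fixed $C^\infty$, contributing $h^k$), via Lemmas~\ref{lemapp:edgeg} and~\ref{lemapp:edgew}. Your claim that the Hölder constant scales like $|\sin\eta|^{-\beta}$ is the right target but needs this split to be justified; the dominant contribution is actually $h^2|\sin\eta|^{-2}$ from the $\omega$ part, which is then absorbed using $h\lesssim|\sin\eta|$. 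Second, your stated obstacle---that a hidden $\Delta_h a'$ in $\Delta_h S_2$ would ``spoil the claimed bound''---is a misconception. Applying $\Delta_h$ to $T_2$ \emph{does} produce a $\Delta_h a'$ term, namely $\Delta_h a'(t)\cdot\Delta_h G(t+h,a(t+h))$, but it carries an extra $\Delta_h G\lesssim h|\sin\eta|^{-1}$ factor, and together with $\delta_j\lesssim|\sin\eta|$ this stays within $h^\beta|\sin\eta|^{-1-\beta}$. The $\delta_j$ is not forbidden in $\Delta_h S_2$; it is simply always accompanied by a compensating factor.
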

\begin{proof}
    %
    First we differentiate $\mathcal{R}F(t,\eta)$ with respect to $t$ and obtain from \eqref{eq:RepRadon}
    \[
    \partial_1(\mathcal{R}F)(t,\eta)=a^\prime(t)  G(t,a(t)) + \int_{-\infty}^{a(t)} \partial_1G(t,u) \,du=:T(t)
    \]
    where on the right-hand side the dependence on $\eta$ is omitted in the notation.
    Applying $\Delta_h$ then yields for $t\in\R$
    \begin{align*}
    \Delta_hT(t)&=\Delta_h a^\prime(t)  G(t+h,a(t+h))+  a^\prime(t) \Delta_h G(t,a(t))  + \int_{a(t)}^{a(t+h)} \partial_1G(t+h,u) \,du
    + \int_{-\infty}^{a(t)} \Delta_{(h,0)}  \partial_1G(t,u) \,du \\
    &=: T_1(t)+T_2(t)+T_3(t)+T_4(t).
    \end{align*}
    Next, we estimate the $L^\infty$-norms of the functions $T_i$ for $i\in\{1,2,3,4\}$.
    Let us begin with $T_1$. Applying Lemma~\ref{lem:propa} we obtain
    \begin{align*}
    \|T_1\|_\infty \le \|\Delta_h a^\prime\|_\infty \| G \|_\infty \lesssim \|\Delta_h a^\prime\|_\infty \lesssim \delta_j h^{\beta-1} |\sin\eta|^{-1-\beta} \lesssim  h^{\beta} |\sin\eta|^{-1-\beta}.
    \end{align*}
    %
    %
    The estimate of $T_2$ takes some more effort. The product rule yields for $t\in\R$
    \begin{align*}
    T_2(t) =  a^\prime(t) \Delta_h G(t,a(t)) = a^\prime(t) \Delta_h g_j(t,a(t)) \omega(t+h,a(t+h)) + a^\prime(t) g_j(t,a(t)) \Delta_h \omega(t,a(t)) =: T_{21}(t) + T_{22}(t).
    \end{align*}
    Using the mean value theorem and Lemmas~\ref{lem:propa} and \ref{lemapp:edgeg} yields
    \begin{align*}
    \|T_{21} \|_\infty \le \|a^\prime\|_\infty  \sup_{t\in\R}|\Delta_h g_j(t,a(t))| \|\omega\|_\infty
    \lesssim h |\sin\eta|^{-2} 2^{-\alpha j} \lesssim  h^{\beta} |\sin\eta|^{-1-\beta}.
    \end{align*}
    We take another forward difference of the component $T_{22}$ and obtain
    \beqn
    \Delta_h T_{22}(t) &=& \Delta_h a^\prime(t) g_j(t+h,a(t+h)) \Delta_h \omega(t+h,a(t+h)) \\
    && + a^\prime(t) \Delta_h g_j(t,a(t)) \Delta_h \omega(t+h,a(t+h)) + a^\prime(t) g (t,a(t)) \Delta^2_h \omega(t,a(t)) \\
    &=:& T^1_{22}(t) +  T^2_{22}(t) +  T^3_{22}(t).
    \eeqn
    These terms allow the following estimates, where we use Lemmas~\ref{lem:propa}, \ref{lemapp:edgew} and \ref{lemapp:edgeg}.
    Also note $h\lesssim |\sin\eta|$.
    \begin{align*}
    \|T^1_{22}\|_\infty &\le h^{\beta+1} |\sin\eta|^{-2-\beta} \lesssim  h^{\beta} |\sin\eta|^{-1-\beta} , \\
    \|T^2_{22}\|_\infty &\le h^2 |\sin\eta|^{-3}  2^{-\alpha j} \lesssim h^{\beta} |\sin\eta|^{-1-\beta} ,  \\
    \|T^3_{22}\|_\infty &\le  h^2 |\sin\eta|^{-3} +  h^{\beta+1}  |\sin\eta|^{-2-\beta}   \lesssim h^{\beta} |\sin\eta|^{-1-\beta} .
    \end{align*}
    %
    %
    By substitution the term $T_3$ transforms to
    \beqn
    T_3(t)&=&\int_{t}^{t+h} \partial_1G(t+h,a(u)) a^\prime(u) \,du \\
       &=&\int_{t}^{t+h} \partial_1g_j(t+h,a(u))\omega(t+h,a(u)) a^\prime(u) \,du + \int_{t}^{t+h} g_j(t+h,a(u)) \partial_1\omega(t+h,a(u))a^\prime(u) \,du \\
       &=:& T_{31}(t) + T_{32}(t)
    \eeqn
    We apply $\Delta_h$ to $T_{31}$. Here $\Delta_h$ acts exclusively on $t$ and $\tau$. We obtain
    \beqn
    \Delta_h T_{31}(t) &=&  \int_{t}^{t+h} \Delta_h\big[ \partial_1g_j(t+h,a(\tau))\omega(t+h,a(\tau)) a^\prime(\tau) \big] \,d\tau \\
    &=&\int_{t}^{t+h} \Delta_h\partial_1g_j(t+h,a(\tau))\omega(t+2h,a(\tau+h)) a^\prime(\tau+h)\,d\tau \\
    &&+ \int_{t}^{t+h} \partial_1g_j(t+h,a(\tau)) \Delta_h\omega(t+h,a(\tau)) a^\prime(\tau+h) \,d\tau \\
    &&+ \int_{t}^{t+h} \partial_1g_j(t+h,a(\tau))\omega(t+h,a(\tau))  \Delta_ha^\prime(\tau) \,d\tau  \\
    &=:&  T^1_{31}(t)+ T^2_{31}(t) + T^3_{31}(t).
    \eeqn
    Analogously, we decompose
    \begin{align*}
    \Delta_h T_{32}(t) =  \int_{t}^{t+h} \Delta_h\big[ g_j(t+h,a(\tau)) \partial_1\omega(t+h,a(\tau))a^\prime(\tau) \big] \,d\tau
    =:  T^1_{32}(t)+ T^2_{32}(t) + T^3_{32}(t).
    \end{align*}
    Then we estimate with the results from the appendix
    \begin{align*}
    \|T^1_{31}\|_\infty&\lesssim h |\sin\eta|^{-1} 2^{-j} \big(   h^{\beta-1} + h^{\beta-1} |\sin\eta|^{1-\beta} \big) \lesssim  h^{\beta} |\sin\eta|^{-1-\beta},  \\
    \|T^2_{31}\|_\infty&\lesssim h |\sin\eta|^{-1} 2^{-\alpha j} \big(   h + h |\sin\eta|^{-1} \big) \lesssim  h^{\beta} |\sin\eta|^{-1-\beta},  \\
    \|T^3_{31}\|_\infty&\lesssim h  2^{-\alpha j} \|\Delta_h a^\prime\|_\infty \lesssim  h^{\beta} |\sin\eta|^{-1-\beta} ,
    \end{align*}
    and
    \begin{align*}
    \|T^1_{32}\|_\infty&\lesssim h |\sin\eta|^{-1} 2^{-\alpha j} \big(   h + h|\sin\eta|^{-1} \big) \lesssim  h^{\beta} |\sin\eta|^{-1-\beta}, \\
    \|T^2_{32}\|_\infty&\lesssim h |\sin\eta|^{-1} \big(   h + h |\sin\eta|^{-1} \big) \lesssim  h^{\beta} |\sin\eta|^{-1-\beta},  \\
    \|T^3_{32}\|_\infty&\lesssim h \|\Delta_h a^\prime\|_\infty \lesssim  h^{\beta} |\sin\eta|^{-1-\beta}.
    \end{align*}


    Finally, we treat the term $T_4$,
    \beqn
    T_4(t)&=&\int_{-\infty}^{a(t)} \Delta_h \partial_1G(t,u) \,du =\int_{-\infty}^{a(t)} \Delta_h \big(\partial_1g_j(t,u)\omega(t,u) + g_j(t,u)\partial_1\omega(t,u)\big) \,du \\
    &=& \int_{-\infty}^{a(t)} \Delta_h \partial_1g_j(t,u) \omega(t+h,u) \,du
    + \int_{-\infty}^{a(t)} \big( \partial_1g_j(t,u) \Delta_h \omega(t,u) + \Delta_h g_j(t,u) \partial_1\omega(t+h,u) \big)\,du \\
    && + \int_{-\infty}^{a(t)} g_j(t,u)  \Delta_h\partial_1\omega(t,u) \,du
    =: T_{41}(t) +  T_{42}(t) +  T_{43}(t).
    \eeqn
    The terms $T_{41}$ and $T_{42}$ can be estimated directly,
    \begin{align*}
    \|T_{41} \|_\infty &\lesssim h^{\beta-1}\cdot 2^{-j} \le  h^{\beta} , \\
    \|T_{42} \|_\infty &\lesssim h\cdot 2^{-\alpha j}\asymp 2^{-j}\le 2^{-j(\beta-1)}\asymp h^{\beta}.
    \end{align*}
    The term $T_{43}$ again needs some further preparation,
    \begin{align*}
    \Delta_h T_{43}(t)= \int_{a(t)}^{a(t+h)} g_j(t+h,u)\Delta_h\partial_1\omega(t+h,u) \,du
    +  \int_{-\infty}^{a(t)} \Delta_h\big(g_j(t,u)\Delta_h\partial_1\omega(t,u)\big) \,du =: T^1_{43}(t) + T^2_{43}(t).
    \end{align*}
    In the end we arrive at
    \begin{align*}
    \|T^1_{43}\|_\infty &\lesssim h^2 |\sin\eta|^{-1}\lesssim h^{\beta} |\sin\eta|^{-1},  \\
    \|T^2_{43}\|_\infty &\lesssim h^2 \lesssim h^{\beta} .
    \end{align*}
Now we collect the appropriate terms and add them up to obtain $S_1$ and $S_2$. In a last step, we use our $L^\infty$-estimates
to obtain the desired $L^2$-estimates. Here we use that $|\supp T_i|\lesssim |I(\eta)| \lesssim |\sin\eta|$ according to Lemma~\ref{lem:estinterval}
for $i\in\{1,2,3\}$ and $|\supp T_4|\lesssim 1$.
This finishes the proof.
\end{proof}

The previous lemma is the key to the following theorem.
The notation $|\lambda|\sim 2^{(1-\alpha) j}$ indicates that $|\lambda|\in [C_12^{(1-\alpha) j},C_22^{(1-\alpha) j}]$ for fixed constants $0<C_1\le C_2<\infty$.
A typical choice would be $C_1=1$ and $C_2=2^{1-\alpha}$.

\begin{theorem}\label{thm:mainest1}
It holds
\begin{align*}
\int_{|\lambda|\sim 2^{(1-\alpha)j}} |\widehat{F}(\lambda\cos\eta, \lambda\sin\eta)|^2 \,d\lambda \lesssim 2^{-(1-\alpha)j}\Big( 1+ 2^{(1-\alpha)j}|\sin\eta| \Big)^{-1-2\beta}
\end{align*}
with an implicit constant independent of $j$ and $\eta$.
\end{theorem}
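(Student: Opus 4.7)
The plan is to use the Fourier slice theorem to reduce the integral to a one-dimensional statement about the Radon transform $\mathcal{R}F(\cdot,\eta)$, apply Lemma~\ref{lem:central1} in the main regime $|\sin\eta|\ge 2\delta_j$, and treat the degenerate regime $|\sin\eta|<2\delta_j$ by a uniform bounded-variation estimate.

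The Fourier slice identity $\widehat{F}(\lambda\cos\eta,\lambda\sin\eta)=\widehat{\mathcal{R}F(\cdot,\eta)}(\lambda)$ turns $\partial_1$ into multiplication by $2\pi i\lambda$ and $\Delta_h$ into multiplication by $e^{2\pi i\lambda h}-1$. Setting $M:=2^{(1-\alpha)j}$ and $h:=CM^{-1}$ for a small constant $C>0$, both $|\lambda|/M$ and $|e^{2\pi i\lambda h}-1|$ are bounded away from $0$ and $\infty$ by absolute constants on the annulus $|\lambda|\sim M$.

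Case $|\sin\eta|\ge 2\delta_j$. Write $s=|\sin\eta|$. Combining the two multipliers,
\[
M^2\!\int_{|\lambda|\sim M}\!|\widehat{F}(\lambda\cos\eta,\lambda\sin\eta)|^2\,d\lambda \asymp \int_{|\lambda|\sim M}\!|\widehat{S_1}(\lambda)+\widehat{S_2}(\lambda)|^2\,d\lambda.
\]
Plancherel controls the $S_1$ term by $\|S_1\|_2^2\lesssim \delta_j^{2}h^{2(\beta-1)}s^{-1-2\beta}$, while for $S_2$ the two-sided bound on $|e^{2\pi i\lambda h}-1|$ lets one swap $\widehat{S_2}$ for $\widehat{\Delta_{(h,0)}S_2}$ on the strip, giving $\int_{|\lambda|\sim M}|\widehat{S_2}|^2\,d\lambda\lesssim \|\Delta_{(h,0)}S_2\|_2^2\lesssim h^{2\beta}s^{-1-2\beta}$. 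Since $\delta_j\asymp h\asymp M^{-1}$, both contributions collapse to $M^{-2\beta}s^{-1-2\beta}$, so that
\[
\int_{|\lambda|\sim M}|\widehat{F}|^2\,d\lambda\lesssim M^{-1}(Ms)^{-1-2\beta}\asymp M^{-1}(1+Ms)^{-1-2\beta},
\]
where the last equivalence uses $Ms\gtrsim 1$ in this regime.

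Case $|\sin\eta|<2\delta_j$. Here $Ms\lesssim 1$, so it suffices to prove $\int_{|\lambda|\sim M}|\widehat{F}|^2\,d\lambda\lesssim M^{-1}$. I would establish $\|\mathcal{R}F(\cdot,\eta)\|_{BV}\lesssim 1$ uniformly in $j$ and $\eta$. For the smooth piece of $\partial_t\mathcal{R}F$ the bound is immediate, as $\partial_1 G^\eta$ is uniformly $L^1$-bounded on a set of bounded measure. For the boundary piece $a'(t)\,G^\eta(t,a(t))$, I would change variables to $x_2\in[-1,1]$ via $t=E_j(x_2)\sin\eta+x_2\cos\eta$; its Jacobian $|E_j'\sin\eta+\cos\eta|$ is bounded below by $1/2$ because $|\cos\eta|\approx 1$ in this regime, and after the substitution the integrand becomes $|E_j'\cos\eta-\sin\eta|\,|G^\eta|\lesssim 1$. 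The standard BV-Fourier bound $|\widehat{\mathcal{R}F(\cdot,\eta)}(\lambda)|\lesssim |\lambda|^{-1}$, integrated over a strip of measure $\asymp M$, then delivers the desired $M^{-1}$.

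The principal technical point is the calibration of $h\asymp \delta_j\asymp M^{-1}$, which must simultaneously (i) satisfy the hypothesis of Lemma~\ref{lem:central1}, (ii) keep $|e^{2\pi i\lambda h}-1|$ bounded below on the dyadic annulus $|\lambda|\sim M$ so that $S_1$ and $S_2$ can be handled on equal footing, and (iii) reconcile the $\delta_j$- and $h$-dependences so that $\|S_1\|_2^2$ and $\|\Delta_{(h,0)}S_2\|_2^2$ are both of the same dyadic order $M^{-2\beta}s^{-1-2\beta}$.
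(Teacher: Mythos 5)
Your treatment of the main regime $|\sin\eta|\ge 2\delta_j$ is essentially the paper's proof: fix $h\asymp 2^{-(1-\alpha)j}$ so that $|e^{i\lambda h}-1|$ is bounded away from zero on the dyadic annulus, use the Fourier slice theorem and Plancherel to reduce to $\|S_1\|_2^2$ and, via one more multiplier swap, to $\|\Delta_h S_2\|_2^2$, and then invoke Lemma~\ref{lem:central1}. That part is correct and matches the paper.

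Your degenerate case $|\sin\eta|<2\delta_j$ takes a genuinely different route (a uniform BV bound on $\mathcal{R}F(\cdot,\eta)$, then $|\widehat{\mathcal{R}F(\cdot,\eta)}(\lambda)|\lesssim|\lambda|^{-1}$), whereas the paper splits $F=F_0+D$ into a straight-edge fragment $F_0=G\chi_{\{x_1\ge\delta_j\}}$ and a deviation $D$ supported in a vertical strip of width $2\delta_j$, handling $D$ by a support-size argument and $F_0$ by the explicit $|\lambda|^{-1/2}$ decay normal to a straight edge. Your BV idea is in principle viable, but your justification has a concrete error. With the convention used for the slice theorem in this paper (lines $x_1\cos\eta+x_2\sin\eta=t$, see \eqref{eq:intline}), the substitution tracking the edge is $t=E_j(x_2)\cos\eta+x_2\sin\eta$, not $t=E_j(x_2)\sin\eta+x_2\cos\eta$. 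Its Jacobian is $E_j'(x_2)\cos\eta+\sin\eta$, which is \emph{not} bounded below when $|\sin\eta|<2\delta_j$ — it can vanish and even change sign, since $|E_j'|\le\delta_j$ and $|\sin\eta|$ is of the same order. Your claimed lower bound $1/2$ applies only to the wrong expression $E_j'\sin\eta+\cos\eta$. Moreover, the representation $\partial_t\mathcal{R}F(t,\eta)=a'(t)G^\eta(t,a(t))+\int_{-\infty}^{a(t)}\partial_1 G^\eta(t,u)\,du$ requires the implicit functions $u$ and $a$, which the paper only defines for $|\sin\eta|>\delta_j$; for $|\sin\eta|\le\delta_j$ the line $\mathfrak{L}_{t,\eta}$ can meet $\Gamma$ several times and $a(t)$ need not exist. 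The BV bound can in fact be saved — one does not need the Jacobian bounded below, only the pushforward identity $\int|a'(t)G^\eta(t,a(t))|\,dt=\int|\cos\eta-E_j'\sin\eta|\,|G^\eta|\,du\lesssim1$, and for $|\sin\eta|\le\delta_j$ one can argue via the Dirac-layer formula for $\partial_t\mathcal{R}F$ and the coarea/Fubini identity — but as written your step relies on a false pointwise Jacobian estimate and on objects that are undefined in part of the regime, so it is a gap.
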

%

\begin{proof}
First we assume $|\sin\eta|\ge 2\delta_j$.
The integration domain is given by $[C_12^{(1-\alpha)j},C_22^{(1-\alpha)j}]$ for fixed constants $0<C_1\le C_2<\infty$.
Let us fix $h:=C2^{-j(1-\alpha)}$, where $C>0$ is chosen such that $C_2C<2\pi$.
For this choice of $h$ there is $c>0$ such that  $ |e^{i\lambda h}-1|^2\ge c $ for all $|\lambda|\in [C_12^{(1-\alpha)j},C_22^{(1-\alpha)j}]$ at all scales.
We conclude, where $S_i$, $i=1,2$,
denote the entities from Lemma~\ref{lem:central1}:
\begin{align*}
\int\limits_{|\lambda|\sim 2^{j(1-\alpha)}} |\lambda|^2 |\widehat{F}(\lambda\cos\eta,\lambda\sin\eta)|^2 \,d\lambda
&\lesssim \int\limits_{|\lambda|\sim 2^{j(1-\alpha)}} |e^{i\lambda h}-1|^2 |\lambda|^2 |\widehat{F}(\lambda\cos\eta,\lambda\sin\eta)|^2 \,d\lambda \\
&= \int\limits_{|\lambda|\sim 2^{j(1-\alpha)}} |\mathcal{F}\big[\Delta_h \partial_1 \mathcal{R}F(\cdot,\eta)\big](\lambda)|^2  \,d\lambda  \\
&\lesssim \sum_{i=1}^2 \int\limits_{|\lambda|\sim 2^{j(1-\alpha)}} |\widehat{S_i(\cdot,\eta)}(\lambda)|^2  \,d\lambda \\
&\lesssim \int\limits_{|\lambda|\sim 2^{j(1-\alpha)}} |e^{i\lambda h}-1|^2 |\widehat{S_2(\cdot,\eta)}(\lambda)|^2  \,d\lambda +
\int\limits_{|\lambda|\sim 2^{j(1-\alpha)}} |\widehat{S_1(\cdot,\eta)}(\lambda)|^2  \,d\lambda \\
&\le \int_{\R}  |e^{i\lambda h}-1|^2 |\widehat{S_2(\cdot,\eta)}(\lambda)|^2 \,d\lambda + \int_{\R} |\widehat{S_1(\cdot,\eta)}(\lambda)|^2  \,d\lambda  \\
&= \| S_1(\cdot,\eta) \|^2_{2} + \| \Delta_hS_2(\cdot,\eta) \|^2_{2}
\lesssim  h^{2\beta} |\sin\eta|^{-1-2\beta}.
\end{align*}
It follows
\begin{align*}
\int_{|\lambda|\sim 2^{j(1-\alpha)}} |\widehat{F}(\lambda\cos\eta,\lambda\sin\eta)|^2 \,d\lambda
\lesssim  2^{-j(1-\alpha)} \big(2^{j(1-\alpha)} |\sin\eta|\big)^{-1-2\beta} .
\end{align*}
Next, we handle the case $|\sin\eta|< 2\delta_j$. We want to show
\begin{align}\label{eq:estpart2}
\int_{|\lambda|\sim 2^{j(1-\alpha)}} |\widehat{F}(\lambda\cos\eta,\lambda\sin\eta)|^2 \,d\lambda \lesssim  2^{-j(1-\alpha)}.
\end{align}
Altogether we then obtain the desired estimate
\begin{align*}
\int_{|\lambda|\sim 2^{(1-\alpha)j}} |\widehat{F}(\lambda\cos\eta, \lambda\sin\eta)|^2 \,d\lambda \lesssim 2^{-(1-\alpha)j}\Big( 1+ 2^{(1-\alpha)j}|\sin\eta| \Big)^{-1-2\beta},
\end{align*}
since $2^{(1-\alpha)j}|\sin\eta|\gtrsim 1$ if $|\sin\eta|\ge 2\delta_j$ and $2^{(1-\alpha)j}|\sin\eta|\lesssim 1$ if $|\sin\eta|< 2\delta_j$,

It remains to show \eqref{eq:estpart2}. For this we write the edge fragment as a sum $F=F_0+D$, where
\[
F_0(x)= g(2^{-j\alpha}x)\omega(x)\chi_{\{x_1\ge\delta_j\}},\quad x\in\R^2,
\]
is a fragment with a straight edge and $D(x)=F(x)-F_0(x)$ is the deviation.

The function $D$ is supported in a vertical strip around the $x_2$-axis of width $2\delta_j$.
For $\eta$ satisfying $|\sin\eta|<2\delta_j$ the Radon transform $\mathcal{R}D(\cdot,\eta)$ is $L^\infty$-bounded
and supported in an interval of length
\begin{align*}
2(\delta_j\cos\eta + \sin\eta)\lesssim \delta_j.
\end{align*}
It follows
$\|\mathcal{R}D(\cdot,\eta)\|^2_{2}\lesssim \delta_j \lesssim 2^{-j(1-\alpha)}$,
and therefore
\begin{align*}
\int_{|\lambda|\sim 2^{j(1-\alpha)}} |\widehat{D}(\lambda\cos\eta,\lambda\sin\eta)|^2 \,d\lambda
\le \int_{\R} |\widehat{\mathcal{R}D(\cdot,\eta)}(\lambda)|^2 \,d\lambda \lesssim  2^{-j(1-\alpha)}.
\end{align*}
Finally, the estimate
\begin{align*}
\int_{|\lambda|\sim 2^{j(1-\alpha)}} |\widehat{F_0}(\lambda\cos\eta,\lambda\sin\eta)|^2 \,d\lambda \lesssim  2^{-j(1-\alpha)}
\end{align*}
follows from the fact, that we have decay $|\widehat{F_0}(\lambda,0)|\sim |\lambda|^{-1/2}$ normal to the straight singularity curve, and that
the second argument ($\lambda\sin\eta$) remains bounded due to the condition $|\sin\eta|< 2\delta_j$.
This finishes the proof.
\end{proof}

\noindent
A direct consequence is the following corollary.

\begin{cor}\label{cor:localf}
We have
\begin{align*}
\int_{|\lambda|\sim 2^j} |\widehat{f}(\lambda\cos\eta, \lambda\sin\eta)|^2 \,d\lambda \lesssim 2^{-(1+2\alpha)j}\Big( 1+ 2^{(1-\alpha)j}|\sin\eta| \Big)^{-1-2\beta}.
\end{align*}
\end{cor}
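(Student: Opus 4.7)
The plan is to derive the corollary directly from Theorem~\ref{thm:mainest1} by exploiting the anisotropic scaling relation between the original edge fragment $f$ and its rescaled version $F$, and then performing a change of variable in the one-dimensional radial integral.

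First I would unwind the definitions of Subsection~\ref{ss:EdgeFragment}. Recall that
\[
f(x)=\omega(2^{\alpha j}x)g(x)\chi_{\{x_1\ge E(x_2)\}}, \qquad F(x)=\omega(x)g_j(x)\chi_{\{x_1\ge E_j(x_2)\}},
\]
with $g_j(x)=g(2^{-\alpha j}x)$ and $E_j(x_2)=2^{\alpha j}E(2^{-\alpha j}x_2)$. A direct computation shows that the substitution $y=2^{-\alpha j}x$ converts the indicator $\chi_{\{x_1\ge E_j(x_2)\}}$ into $\chi_{\{y_1\ge E(y_2)\}}$, while the factor $\omega(x)$ becomes $\omega(2^{\alpha j}y)$. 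Hence the isotropic scaling identity $F(x)=f(2^{-\alpha j}x)$ holds, and the Fourier transform satisfies
\[
\widehat{f}(\xi)=2^{-2\alpha j}\,\widehat{F}(2^{-\alpha j}\xi).
\]

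Next I would substitute $\xi=(\lambda\cos\eta,\lambda\sin\eta)$ and change variables $\mu=2^{-\alpha j}\lambda$ in the radial integral, so that $|\lambda|\sim 2^{j}$ transforms into $|\mu|\sim 2^{(1-\alpha)j}$ and $d\lambda=2^{\alpha j}d\mu$. This yields
\[
\int_{|\lambda|\sim 2^j}|\widehat{f}(\lambda\cos\eta,\lambda\sin\eta)|^2\,d\lambda
=2^{-3\alpha j}\int_{|\mu|\sim 2^{(1-\alpha)j}}|\widehat{F}(\mu\cos\eta,\mu\sin\eta)|^2\,d\mu.
\]

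Finally I would invoke Theorem~\ref{thm:mainest1} on the right-hand side, giving the upper bound
\[
2^{-3\alpha j}\cdot 2^{-(1-\alpha)j}\bigl(1+2^{(1-\alpha)j}|\sin\eta|\bigr)^{-1-2\beta}=2^{-(1+2\alpha)j}\bigl(1+2^{(1-\alpha)j}|\sin\eta|\bigr)^{-1-2\beta},
\]
which matches the claim. There is no real obstacle here; the only point that requires a bit of care is checking that the scaling between $f$ and $F$ is truly isotropic (not anisotropic), so that a single dilation factor $2^{-\alpha j}$ is applied to both coordinates of $\xi$, and consequently the angular direction $\eta$ is preserved under the substitution. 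Once this is verified, the Jacobian bookkeeping and the arithmetic of exponents is mechanical.
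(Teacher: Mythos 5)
Your proof is correct and takes essentially the same approach as the paper: the paper's proof likewise consists of invoking the relation $\widehat{f}(\xi)=2^{-2\alpha j}\widehat{F}(2^{-\alpha j}\xi)$ and applying Theorem~\ref{thm:mainest1}; you have simply spelled out the isotropic scaling identity $F(x)=f(2^{-\alpha j}x)$, the Jacobian $2^{\alpha j}$ from the change of variables $\mu=2^{-\alpha j}\lambda$, and the resulting exponent bookkeeping $2^{-4\alpha j}\cdot 2^{\alpha j}\cdot 2^{-(1-\alpha)j}=2^{-(1+2\alpha)j}$.
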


\begin{proof}
The statement follows directly from the relation $\widehat{f}(\xi)=2^{-2\alpha j}\widehat{F}(2^{-\alpha j}\xi)$.
\end{proof}


\subsubsection{Refinement of the Previous Discussion}

In this subsection we will prove a refinement of Corollary~\ref{cor:localf}. For that we need
to analyze the modified edge fragment $\widetilde{F}$, given for fixed $m\in\N_0$ by
\begin{align}\label{eq:modF}
\widetilde{F}(x)=r(x)^{m}F(x), \quad x\in\R^2,
\end{align}
where the function $r:\R^2\rightarrow\R$ maps $x\in\R^2$ to its first component $x_1\in\R$. It can be written as the product $\widetilde{F}(x)= \widetilde{G}(x)\chi_{\{x_1\ge E_j(x_2)\}}$ with the function
\begin{align}\label{eq:functGF}
\widetilde{G}(x):=r(x)^m G(x) =r(x)^m \omega(x) g_j(x), \quad x\in\R^2.
\end{align}
Rotating by the angle $\eta$ yields $\widetilde{G}^\eta(x)= (r^\eta(x))^m G^\eta(x)$,
where $G^\eta$ and $r^\eta$ are the functions obtained by rotating $G$ and $r$, respectively. The function $r^\eta:\R^2\rightarrow\R$ then has the form
\[
r^\eta(t,a):=t\cos\eta - a \sin\eta, \quad (t,a)\in\R^2.
\]
Note, that since
$
G^\eta(x)=g_j^\eta(x)\omega^\eta(x)
$
we have $\widetilde{G}^\eta(x)=(r^\eta(x))^m g_j^\eta(x) \omega^\eta(x)$.
Some important properties of $r^\eta$ and $\widetilde{G}^\eta$ are collected in the appendix.

Similar to the investigation of $F$ in the previous subsection, the angle $\eta$ remains fixed in the remainder. We can therefore again simplify
the notation by omitting this index.
The following result generalizes Lemma~\ref{lem:central1}.
\begin{lemma}\label{lem:central2}
For $m\in\N_0$ let $\widetilde{F}$ be the modified edge fragment \eqref{eq:modF}.
Further, assume that $|\sin\eta|\ge 2\delta_j$ and $h\asymp 2^{-(1-\alpha)j}$. Then the function $S:=\Delta_h\partial_1\mathcal{R}\widetilde{F}(\cdot,\eta)$ admits a decomposition
 \begin{flalign*}
&& S&=S^0_1+S^0_2, &&\\
\text{such that} && \Delta_h S^0_2&= S^1_1+ S^1_2, &&\\
&&    \Delta_h S^1_2&= S^2_1+ S^2_2, &&\\
&&    &\vdots &&\\
&&    \Delta_h S^{m-1}_2&= S^{m}_1+ S^{m}_2, &&\\
&&    \Delta_h S^{m}_2&= S^{m+1}_1, &&
    \end{flalign*}
with the estimates
\begin{align*}
 \|S^{k}_1\|_{2}^2&\lesssim 2^{-2jm(1-\alpha)}  h^{2\beta} |\sin\eta|^{-1-2\beta} + 2^{-j(1-\alpha)(2\beta+1)},\qquad k=0,1,\ldots,m+1.
 \end{align*}
For convenience we set $S_2^{m+1}=0$.
\end{lemma}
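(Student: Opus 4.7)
The plan is to emulate the proof of Lemma~\ref{lem:central1} (which establishes the case $m=0$), but carefully tracking the extra polynomial factor $(r^\eta(t,u))^m=(t\cos\eta-u\sin\eta)^m$ that appears in $\widetilde{G}^\eta=(r^\eta)^m g_j^\eta\omega^\eta$. As in that earlier proof, I would start from the representation
\[
\partial_1\mathcal{R}\widetilde{F}(t,\eta) = a'(t)\widetilde{G}(t,a(t)) + \int_{-\infty}^{a(t)}\partial_1\widetilde{G}(t,u)\,du,
\]
and then iteratively apply $\Delta_h$. Two facts drive the analysis: first, on the edge one has $r^\eta(t,a(t))=E_j(u(t))$, so $|r^\eta(t,a(t))|\le\delta_j\asymp 2^{-(1-\alpha)j}$, which will be the source of the small factor $2^{-jm(1-\alpha)}$; second, $\Delta_h r^\eta(t,u)=h\cos\eta$ is a constant, so each application of $\Delta_h$ to the polynomial strictly reduces its $t$-degree while contributing a factor of $h$, and after $m+1$ such reductions the polynomial vanishes entirely.

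First I would perform the initial decomposition $S=S_1^0+S_2^0$ by Leibniz: $S_1^0$ collects those terms in which the first $\Delta_h$ has fallen on a smooth factor ($g_j$, $\omega$, $a$, or $a'$), while $S_2^0$ collects those terms in which $\Delta_h$ has only acted on the polynomial, reducing its degree. The analysis of $S_1^0$ is a direct extension of the $m=0$ argument, the only difference being that every term now carries a residual polynomial factor $(r^\eta)^{m-\ell}$ for the appropriate $\ell$. I would then iterate: at each step $k\to k+1$, applying $\Delta_h$ to $S_2^k$ again splits off a smooth-difference part $S_1^{k+1}$ and leaves a further-reduced polynomial part $S_2^{k+1}$. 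Once the polynomial factor reaches degree $-1$ the residual $S_2^{m+1}$ vanishes identically, terminating the chain.

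Next I would prove $\|S_1^k\|_2^2\lesssim 2^{-2jm(1-\alpha)}h^{2\beta}|\sin\eta|^{-1-2\beta}+2^{-j(1-\alpha)(2\beta+1)}$ by classifying the terms in $S_1^k$ by their support. Boundary contributions are supported in $t\in\widetilde{I}(\eta)$ (of length $\lesssim|\sin\eta|$ by Lemma~\ref{lem:estinterval}); at the edge, the residual polynomial evaluates to $E_j(u(t))^{m-k}\lesssim\delta_j^{m-k}$, and combining this with the accumulated factor $h^{k}$ from the polynomial reductions and with the smoothness $L^\infty$-bound of order $h^\beta|\sin\eta|^{-1-\beta}$ inherited from the proof of Lemma~\ref{lem:central1}, one obtains a pointwise bound of size $2^{-jm(1-\alpha)}h^\beta|\sin\eta|^{-1-\beta}$, whose $L^2$-norm over $\widetilde{I}(\eta)$ produces the first contribution $2^{-2jm(1-\alpha)}h^{2\beta}|\sin\eta|^{-1-2\beta}$. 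Interior contributions come from integrals supported in $|t|\lesssim 1$ where the residual polynomial is only bounded by $1$; here the accumulated $h$ factors combined with the smoothness differences yield the second bound $2^{-j(1-\alpha)(2\beta+1)}=h^{2\beta+1}$.

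The main obstacle will be the bookkeeping. Each iteration of $\Delta_h$ combined with Leibniz spawns a rapidly growing family of summands, each of which requires a tailored $L^\infty$-estimate depending on the exact combination of factors ($\Delta_h$'s hitting $g_j$, $\omega$, $a$, $a'$, or the polynomial) that produced it. The required pointwise bounds are already in place, provided by Lemma~\ref{lem:propa} (for $a$, $a'$, $\Delta_h a$, $\Delta_h a'$), by the appendix Lemmas~\ref{lemapp:edgeg} and \ref{lemapp:edgew} (for differences of $g_j$ and $\omega$), and by analogous appendix estimates for $\widetilde{G}$ and $r^\eta$. Once every term has been classified and bounded pointwise, multiplication by the measure of the corresponding support ($\lesssim|\sin\eta|$ at the edge, $\lesssim 1$ in the interior) produces the claimed $L^2$-estimates term by term.
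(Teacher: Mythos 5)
Your high-level plan—iterate $\Delta_h$, split into "good" pieces $S_1^k$ and "residual" pieces $S_2^k$, and exploit that $r^\eta(t,a(t))=E_j(u(t))$ is $\lesssim\delta_j$ along the edge—is indeed the spirit of the paper's argument. But your stated rule for performing the split, namely putting into $S_1^k$ every term where $\Delta_h$ has landed on a smooth factor ($g_j$, $\omega$, $a$, $a'$) and into $S_2^k$ only those where it hit the polynomial, does not work for the interior-integral piece $\widetilde T_4(t)=\int_{-\infty}^{a(t)}\Delta_{(h,0)}\partial_1\widetilde G(t,u)\,du$. There the Leibniz expansion contains, for example, $\int(r^\eta)^m g_j\,\Delta_{(h,0)}\partial_1\omega\,du$, which by your rule would be placed in $S_1^0$; this term is supported on $|t|\lesssim 1$, has only one accumulated $h$, and no decay from $g_j$, so $\|\cdot\|_2^2\lesssim h^2$, which violently exceeds the required $h^{2\beta+1}$ for every $\beta>1$. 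Even the milder term $\int(r^\eta)^m\partial_1 g_j\,\Delta_{(h,0)}\omega\,du$, with $\|\cdot\|_\infty\lesssim 2^{-\alpha j}h$, has $\|\cdot\|_2^2\lesssim 2^{-2j}=h^{2+2/(\beta-1)}$, and $2+2/(\beta-1)\ge 2\beta+1$ fails once $\beta>(3+\sqrt{17})/4\approx 1.78$. For $m\ge 1$ the $|\sin\eta|$-dependent summand of the lemma carries the suppressive factor $\delta_j^{2m}$ and cannot rescue these interior terms either.

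The paper sidesteps this precisely by \emph{not} Leibniz-splitting the interior integral at each step. Instead it leaves the whole $\widetilde T_4$ in the $S_2$-chain and takes $\Delta_h$ of the entire integral, which at each step produces one new boundary piece (an integral from $a(t)$ to $a(t+h)$, supported in $\widetilde I(\eta)$) plus an interior integral in which the inner forward-difference order has increased by one. Only after reaching $\int_{-\infty}^{a(t)}\Delta^3_{(h,0)}\partial_1\widetilde G\,du$ does the interior piece get placed into $S_1$, and only then is the bound $h^{2\beta+1}$ within reach—because three differences on the $C^\infty$ window $\omega$ contribute $h^3$, and $3\ge\beta+1$ for $\beta\le 2$, while $\Delta_{(h,0)}\partial_1 g_j\lesssim h^\beta 2^{-\alpha j}\lesssim h^{\beta+1}$ uses $\alpha\ge 1/2$. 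That mechanism is independent of the polynomial degree $m$, so there is no "terminate after $m+1$ reductions" for $\widetilde T_4$; rather the chain for $\widetilde T_4$ ends in finitely many (two) further $\Delta_h$-steps regardless of $m$.

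A second, milder issue: your guiding observation "$\Delta_h r^\eta(t,u)=h\cos\eta$ is constant, so each $\Delta_h$ reduces the $t$-degree" holds only when $u$ is held fixed. For the boundary terms the relevant object is $\Delta_h[r^\eta(t,a(t))]$, and since $a(t)$ varies with $t$ this is not $h\cos\eta$ but a difference of the edge function; the paper controls it via $\|\Delta_h r(\cdot,a(\cdot))\|_\infty\lesssim h$ and $\|\Delta^2_h r(\cdot,a(\cdot))\|_\infty\lesssim h^\beta\delta_j|\sin\eta|^{-\beta}$ (Lemma~\ref{lemapp:r}), and the termination of the chain comes from the inductive structure of Lemmas~\ref{lemapp:main1}--\ref{lemapp:main3}, not from a literal polynomial-degree argument. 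So the boundary bookkeeping you sketch is salvageable but needs to be replaced by those inductive appendix lemmas, while the interior term needs the separate, non-Leibniz treatment described above.
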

We introduce the following language and say, that the function $S$ \emph{admits a decomposition $(S^k_1,S^k_2)_k$ of the form $(*)$
of length $m+1$ with the estimates}
\begin{align*}
 \|S^{k}_1\|_{2}^2&\lesssim 2^{-2jm(1-\alpha)}  h^{2\beta} |\sin\eta|^{-1-2\beta} + 2^{-j(1-\alpha)(2\beta+1)},\qquad k=0,1,\ldots,m+1.
\end{align*}
\begin{proof}
  We have $\widetilde{G}^{\eta}(x)=r^\eta(x)^{m}G^{\eta}(x)$ for $x\in\R^2$ and
  \[
  \mathcal{R}\widetilde{F}(t,\eta)= \int_{-\infty}^{a(t,\eta)} \widetilde{G}^{\eta}(t,u) \,du .
  \]
  For simplicity we omit the superindex $\eta$ subsequently.
  Similar to the proof of Lemma~\ref{lem:central1} we obtain
  \begin{align*}
    S(t)&=\Delta_h a^\prime(t)  \widetilde{G}(t+h,a(t+h))+  a^\prime(t) \Delta_h \widetilde{G}(t,a(t))  + \int_{a(t)}^{a(t+h)} \partial_1\widetilde{G}(t+h,u) \,du
    + \int_{-\infty}^{a(t)} \Delta_{(h,0)}  \partial_1\widetilde{G}(t,u) \,du \\
    &=: \widetilde{T}_1(t)+\widetilde{T}_2(t)+\widetilde{T}_3(t)+\widetilde{T}_4(t).
    \end{align*}
  We will show the assertion for each of these terms separately. Moreover, it suffices to prove $L^\infty$-estimates, which
  can be transformed to the desired $L^2$-estimates via the corresponding support properties.
  Note that $|\supp \widetilde{T}_i|\lesssim |\widetilde{I}(\eta)| \lesssim |\sin\eta|$ according to Lemma~\ref{lem:estinterval}
  for $i\in\{1,2,3\}$ and that $|\supp \widetilde{T}_4|\lesssim 1$.

  For $\widetilde{T}_1$ the estimate
    \beqn
    \|\widetilde{T}_1 \|_\infty &\le& \|\Delta_h a^\prime\|_\infty \sup_{t\in\R}| \widetilde{G}(t,a(t))| \lesssim \|\Delta_h a^\prime \|_\infty \sup_{t\in I(\eta)}|r(t,a(t))|^{m} \\
     &\lesssim& \delta^{m+1}_j h^{\beta-1} |\sin\eta|^{-1-\beta} \lesssim  h^m h^{\beta} |\sin\eta|^{-1-\beta}
    \eeqn
    is sufficient.
    %
    Next, we show that $\widetilde{T}_2$ and $\widetilde{T}_3$ admit decompositions $(\widetilde{T}^k_1,\widetilde{T}^k_2)_k$ of the form $(\ast)$
    of length $m+1$ with $\supp \widetilde{T}^k_i\subset\widetilde{I}(\eta)$, $i\in\{1,2\}$, and the estimates
    \begin{align*}
    \|\widetilde{T}^{k}_1\|_\infty&\lesssim h^{m}h^{\beta} |\sin\eta|^{-1-\beta}, \quad k=0,\ldots,m+1, \\
    \|\widetilde{T}^{k}_2\|_\infty&\lesssim h^{m}|\sin\eta|^{-1}, \quad k=0,\ldots,m.
    \end{align*}
    The decomposition of the component $\widetilde{T}_2$ is provided by Lemma~\ref{lemapp:main1}.
    %
    %
    Let us turn to $\widetilde{T}_3$. By substitution this term transforms to
    \begin{align*}
    \widetilde{T}_3(t)&=\int_{t}^{t+h} \partial_1\widetilde{G}(t+h,a(u)) a^\prime(u) \,du.
    \end{align*}
    %
    %
    We put $\widetilde{T}^0_1=0$ and $\widetilde{T}^0_2=\widetilde{T}_3$. These terms clearly satisfy the assertions.
    Next we take the forward difference of $\widetilde{T}^0_2$. Here $\Delta_h$ acts on both $t$ and $\tau$. We obtain
    \beqn
    \Delta_h \widetilde{T}^0_2(t) = \Delta_h \widetilde{T}_3(t) &=& \int_{t}^{t+h} \Delta_h\big(\partial_1\widetilde{G}(t+h,a(\tau)) a^\prime(\tau)\big) \,d\tau \\
      &=& \int_{t}^{t+h} \Delta_h\partial_1\widetilde{G}(t+h,a(\tau)) a^\prime(\tau) \,d\tau \\
      && + \int_{t}^{t+h} \partial_1\widetilde{G}(t+2h,a(\tau+h))\Delta_ha^\prime(\tau) \,d\tau  \\
      &=:& \widetilde{T}_{31}(t) + \widetilde{T}_{32}(t).
    \eeqn
    Lemma~\ref{lemapp:main3} then yields a decomposition $(\widetilde{S}^k_1,\widetilde{S}^k_2)_k$, such that we can write
    \[
    \Delta_h\partial_1\widetilde{G}(t+h,a(\tau)) a^\prime(\tau)=\widetilde{S}^0_1(t,\tau)+\widetilde{S}^0_2(t,\tau).
    \]
    This leads to
    \[
    \widetilde{T}_{31}(t)=\int_{t}^{t+h} \Delta_h\partial_1\widetilde{G}(t+h,a(\tau)) a^\prime(\tau) \,d\tau
    =\int_{t}^{t+h} \widetilde{S}^0_1(t,\tau) \,d\tau + \int_{t}^{t+h} \widetilde{S}^0_2(t,\tau) \,d\tau.
    \]
    We put
    $
    \widetilde{T}^1_1(t):= \widetilde{T}_{32}(t) + \int_{t}^{t+h} \widetilde{S}^0_1(t,\tau) \,d\tau$ and
    $\widetilde{T}^1_2(t):= \int_{t}^{t+h} \widetilde{S}^0_2(t,\tau) \,d\tau$.
    These terms $\widetilde{T}^1_1$ and $\widetilde{T}^1_2$  then satisfy the requirements, i.e.,
    \begin{align*}
    \|\widetilde{T}^1_1\|_\infty &\lesssim \|\widetilde{T}_{32}\|_\infty + h\cdot \sup_{t\in\R}\sup_{\tau\in[t,t+h]} |\widetilde{S}^0_1(t,\tau)| \lesssim h \cdot \delta_j^{m-1}
    \cdot h^{\beta-1}\delta_j|\sin\eta|^{-1-\beta}, \\
    \|\widetilde{T}^1_2\|_\infty&\lesssim h \sup_{t\in\R}\sup_{\tau\in[t,t+h]}|\widetilde{S}^0_2(t,\tau)| \lesssim h^{m} |\sin\eta|^{-1}.
    \end{align*}
    Taking another forward difference of $\widetilde{T}^1_2$ yields
    \begin{align*}
    \Delta_h \widetilde{T}^1_2(t) = \int_t^{t+h} \Delta_h S^0_2(t,\tau) \,d\tau.
    \end{align*}
    Proceeding inductively from here with Lemma~\ref{lemapp:main3} settles the claim for the component $\widetilde{T}_3$.



Finally, we turn to the function $\widetilde{T}_4(t)=\int_{-\infty}^{a(t)} \Delta_{(h,0)} \partial_1\widetilde{G}(t,u)\,du$.
First, we calculate
\begin{flalign*}
&&&\Delta_h \widetilde{T}_4(t) = \int_{a(t)}^{a(t+h)} \Delta_{(h,0)} \partial_1\widetilde{G}(t+h,u)\,du + \int_{-\infty}^{a(t)} \Delta^2_{(h,0)} \partial_1\widetilde{G}(t,u)\,du =: \widetilde{T}_{41}(t) + \widetilde{T}_{42}(t) & \\
\text{and} &&&
\Delta_h \widetilde{T}_{42}(t) = \int_{a(t)}^{a(t+h)} \Delta^2_{(h,0)} \partial_1\widetilde{G}(t+h,u)\,du + \int_{-\infty}^{a(t)} \Delta^3_{(h,0)} \partial_1\widetilde{G}(t,u)\,du =: \widetilde{T}_{43}(t) + \widetilde{T}_{44}(t). &
\end{flalign*}
Next, we show $\| \widetilde{T}_{44}\|_\infty \lesssim h^{\beta+\frac{1}{2}}$ because then, in view of $|\supp \widetilde{T}_{44}|\asymp 1$,
\[
\|\widetilde{T}_{44}\|^2_{2}\lesssim h^{2\beta+1}\asymp 2^{-j(1-\alpha)(2\beta+1)}.
\]
The $L^\infty$-estimate of the term $\widetilde{T}_{44}$ relies on the fact, that for $h\asymp 2^{-j(1-\alpha)}$
\[
\|\Delta^3_{(h,0)}\partial_1\widetilde{G}\|_\infty \lesssim h^{\beta}2^{-\alpha j} \lesssim h^{\beta+1}.
\]
This estimate is a consequence of Lemmas~\ref{lemapp:basicg}, \ref{lemapp:basicw}, and \ref{lemapp:basicr} and is analogous to \eqref{eq:aux_lemg0}. Essential is the observation that since $\alpha\ge\frac{1}{2}$
Lemma~\ref{lemapp:basicg} yields
\[
\|\Delta_{(h,0)}\partial_1g_j\|_\infty \lesssim 2^{-\alpha j} h^{\beta} \lesssim h^{\beta+1}.
\]

Finally, we take care of the remaining terms $\widetilde{T}_{41}$ and $\widetilde{T}_{43}$. First we note
that $|\supp \widetilde{T}_{41}|\lesssim  |\widetilde{I}(\eta)| \lesssim |\sin\eta|$ and also $|\supp \widetilde{T}_{43}|\lesssim  |\widetilde{I}(\eta)| \lesssim |\sin\eta|$.
Hence, it suffices to prove $\|\widetilde{T}_{41}\|_\infty \lesssim h^mh^\beta |\sin\eta|^{-1-\beta} $ and $\|\widetilde{T}_{41}\|_\infty \lesssim h^mh^\beta |\sin\eta|^{-1-\beta}$.
It holds
\beqn
\|\widetilde{T}_{41}\|_\infty &\le& \sup_{t\in\R}\Big|\int_{a(t)}^{a(t+h)} \partial_1\widetilde{G}(t+2h,u)\,du\Big| + \sup_{t\in\R}\Big|\int_{a(t)}^{a(t+h)} \partial_1\widetilde{G}(t+h,u)\,du\Big|.
\eeqn
Analogously, we have
\beqn
\|\widetilde{T}_{43}\|_\infty &\le&  \sup_{t\in\R}\Big|\int_{a(t)}^{a(t+h)} \partial_1\widetilde{G}(t+3h,u)\,du\Big| + 2\sup_{t\in\R}\Big|\int_{a(t)}^{a(t+h)} \partial_1\widetilde{G}(t+2h,u)\,du\Big| \\
&&+ \sup_{t\in\R}\Big|\int_{a(t)}^{a(t+h)} \partial_1\widetilde{G}(t+h,u)\,du\Big|.
\eeqn
All these terms on the right hand side can be estimated in the same way as
\[
\widetilde{T}_{3}(t)=\int_{t}^{t+h} \partial_1\widetilde{G}(t+h,a(u)) a^\prime(u) \,du.
\]
This finishes the proof.
\end{proof}


The following theorem generalizes Theorem~\ref{thm:mainest1}.
\begin{theorem}\label{thm:mainest2}
We have for $m=(m_1,m_2)\in\N_0^2$ the estimate
\begin{align*}
 \int_{|\lambda|\sim 2^{j(1-\alpha)}} |\partial^{m}\widehat{F}(\lambda\cos\eta, \lambda\sin\eta)|^2 \,d\lambda
\lesssim 2^{-2(1-\alpha)jm_1} 2^{-(1-\alpha)j} \big( 1+ 2^{(1-\alpha)j}\sin\eta \big)^{-1-2\beta} + 2^{3\alpha j}2^{(-1-2\beta)j}
\end{align*}
\end{theorem}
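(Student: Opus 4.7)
The plan is to reduce the estimate to one covered by Lemma~\ref{lem:central2} and then mimic the Fourier-slice argument of Theorem~\ref{thm:mainest1}.

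First I would use the Fourier differentiation rule $\partial^{m}\widehat{F}(\xi) = (-2\pi i)^{|m|}\widehat{x_1^{m_1}x_2^{m_2}F}(\xi)$ and absorb the factor $x_2^{m_2}$ into the window, setting $\omega_{m_2}(x) := x_2^{m_2}\omega(x)$, which is still smooth and compactly supported. The quantity to estimate then equals (up to a constant) $\int_{|\lambda|\sim 2^{(1-\alpha)j}}|\widehat{\widetilde{F}}(\lambda\cos\eta,\lambda\sin\eta)|^2 \,d\lambda$, where $\widetilde{F}(x) = x_1^{m_1}g_j(x)\omega_{m_2}(x)\chi_{\{x_1\ge E_j(x_2)\}}$ is precisely the modified edge fragment of \eqref{eq:modF} with parameter $m_1$ and with $\omega$ replaced by $\omega_{m_2}$; the bounds in Lemma~\ref{lem:central2} are uniform over such smooth bounded windows.

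For $|\sin\eta|\ge 2\delta_j$, I would set $h = C\,2^{-(1-\alpha)j}$ with $C$ chosen so that $|e^{i\lambda h}-1|^2 \gtrsim 1$ on $|\lambda|\sim 2^{(1-\alpha)j}$, and multiply the integrand by the harmless factor $|e^{i\lambda h}-1|^{2(m_1+2)}$. By Plancherel and the Fourier slice theorem,
\begin{align*}
\int_{|\lambda|\sim 2^{(1-\alpha)j}}|\lambda|^2|\widehat{\widetilde{F}}(\lambda\cos\eta,\lambda\sin\eta)|^2 \,d\lambda \lesssim \|\Delta_h^{m_1+2}\partial_1\mathcal{R}\widetilde{F}(\cdot,\eta)\|_2^2.
\end{align*}
Unfolding the telescoping recursion of Lemma~\ref{lem:central2} gives
\begin{align*}
\Delta_h^{m_1+2}\partial_1\mathcal{R}\widetilde{F} = \sum_{k=0}^{m_1+1}\Delta_h^{m_1+1-k}S_1^k,
\end{align*}
so the right-hand side is bounded by $\sum_k \|S_1^k\|_2^2 \lesssim 2^{-2jm_1(1-\alpha)}h^{2\beta}|\sin\eta|^{-1-2\beta} + 2^{-j(1-\alpha)(2\beta+1)}$. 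Dividing by $|\lambda|^2\asymp 2^{2(1-\alpha)j}$, substituting $h^{2\beta}\asymp 2^{-2\beta(1-\alpha)j}$, and using $\alpha\beta=1$ to rewrite $2^{-(1-\alpha)j(2\beta+3)}=2^{(3\alpha-1-2\beta)j}$ yields exactly the claimed bound.

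For the complementary regime $|\sin\eta|<2\delta_j$, where Lemma~\ref{lem:central2} is silent, I would adapt the splitting $\widetilde{F}=\widetilde{F}_0+\widetilde{D}$ from the proof of Theorem~\ref{thm:mainest1} with $\widetilde{F}_0 = x_1^{m_1}g_j\omega_{m_2}\chi_{\{x_1\ge\delta_j\}}$ a straight-edge version and $\widetilde{D}$ supported in a vertical strip of width $\sim\delta_j$. The extra factor $x_1^{m_1}$ yields $\|\mathcal{R}\widetilde{D}(\cdot,\eta)\|_2^2\lesssim \delta_j^{2m_1+1} \asymp 2^{-(1-\alpha)j(2m_1+1)}$, while $m_1+1$ integrations by parts across the straight jump in $\widetilde{F}_0$ give $|\widehat{\widetilde{F}_0}(\lambda,0)|\lesssim |\lambda|^{-m_1-1}$; since $|\lambda\sin\eta|\lesssim 1$ on the integration range, this same decay holds on the tilted line, and integration over $|\lambda|\sim 2^{(1-\alpha)j}$ again gives $\lesssim 2^{-(1-\alpha)j(2m_1+1)}$. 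Both contributions fit the claim, since $(1+2^{(1-\alpha)j}|\sin\eta|)^{-1-2\beta}\asymp 1$ in this regime.

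The main hurdle will be the bookkeeping for the telescoping step: one must verify that exactly $m_1+2$ applications of $\Delta_h$ kill every $S_2^k$ residue in the Lemma~\ref{lem:central2} recursion while keeping only $S_1^k$ contributions, whose $L^2$-norms are controlled by the lemma. Beyond that, the argument is a direct adaptation of the proof of Theorem~\ref{thm:mainest1}.
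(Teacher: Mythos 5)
Your proposal follows essentially the same approach as the paper, with the same decomposition by cases ($|\sin\eta|\ge 2\delta_j$ vs. $|\sin\eta|<2\delta_j$), the same use of Lemma~\ref{lem:central2}, and the same arithmetic leading to the claimed bound. A few remarks on where your presentation differs from the paper's without affecting correctness. The paper compresses your telescoping step $\Delta_h^{m_1+2}\partial_1\mathcal{R}\widetilde{F}=\sum_{k=0}^{m_1+1}\Delta_h^{m_1+1-k}S_1^k$ into the phrase "using the same trick as in Theorem~\ref{thm:mainest1}"; what the paper actually does is unfold the recursion one step at a time by inserting one factor $|e^{i\lambda h}-1|^2\gtrsim 1$ per step on the integration annulus, rather than inserting all $|e^{i\lambda h}-1|^{2(m_1+2)}$ at once as you do. Both are valid, and both reduce to $\sum_k\|S_1^k\|_2^2$ after using $\|\Delta_h g\|_2\le 2\|g\|_2$. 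Your absorption of $x_2^{m_2}$ into the window is what the paper does when it redefines "$F:=x_2^{m_2}F$" with abuse of notation; the appendix estimates for $\omega$ only use boundedness of derivatives, so they hold for $\omega_{m_2}$ equally. For the straight-edge piece $\widetilde{F}_0$ the paper explains the decay via the reduced jump height $\delta_j^{m_1}$, giving $|\widehat{\widetilde{F}_0}(\lambda,0)|\lesssim\delta_j^{m_1}|\lambda|^{-1}$, whereas you invoke $m_1+1$ integrations by parts and quote $|\lambda|^{-m_1-1}$; strictly speaking the boundary terms after $k$ integrations by parts are $\delta_j^{m_1-k}|\lambda|^{-k-1}$ and not all killed, but on the range $|\lambda|\sim 2^{(1-\alpha)j}\sim\delta_j^{-1}$ every such term is of the same size $\sim 2^{-(m_1+1)(1-\alpha)j}$, so your stated bound and the paper's coincide there. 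This is a cosmetic difference; your conclusion $\lesssim 2^{-(1-\alpha)j(2m_1+1)}$ matches the paper's. In short, this is the paper's proof with the telescoping and the straight-edge decay made slightly more explicit.
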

\begin{proof}
Note that $\partial^{m}\widehat{F}=\big(x^mF\big)^{\wedge}$.
We define the edge fragment $F:=x_2^{m_2}F$, with a little abuse of notation. Further, we put $\widetilde{F}:=x_1^{m_1}F$.
Analogous to Theorem~\ref{thm:mainest1} we distinguish between the cases $|\sin\eta|< 2\delta_j$ and $|\sin\eta|\ge 2\delta_j$.

In case $|\sin\eta|< 2\delta_j$ we write $\widetilde{F}=\widetilde{F}_0+\widetilde{D}$ with
\[
\widetilde{F}_0(x)=x_1^{m_1} F_0(x) = x_1^{m_1} g(2^{-j\alpha}x)\omega(x) \chi_{\{x_1\ge\delta_j\}}=  x_1^{m_1} F(x) \chi_{\{x_1\ge\delta_j\}}
\]
and $\widetilde{D}(x)=\widetilde{F}(x)-\widetilde{F}_0(x)$ the deviation. We also let $D(x)=F(x)-F_0(x)$.
Note that $\widetilde{F}_0$ is a fragment with a straight edge of height about $\delta_j^{m_1}$.
We want to show
\begin{align*}
\int_{|\lambda|\sim 2^{j(1-\alpha)}} |\widehat{\widetilde{F}}(\lambda\cos\eta,\lambda\sin\eta)|^2 \,d\lambda \lesssim  2^{-2jm_1(1-\alpha)} 2^{-j(1-\alpha)}.
\end{align*}

The function $\widetilde{D}$ is supported in a vertical strip of width $2\delta_j$.
For $\eta$ satisfying $|\sin\eta|<2\delta_j$ the Radon transform $\mathcal{R}\widetilde{D}(\cdot,\eta)$ is $L^\infty$-bounded
with $\|\mathcal{R}\widetilde{D}(\cdot,\eta)\|_\infty \lesssim \delta_j^{m_1} \|\mathcal{R}D(\cdot,\eta)\|_\infty$ and
it is supported in an interval of length
\begin{align*}
2(\delta_j\cos\eta + \sin\eta)\lesssim \delta_j.
\end{align*}
It follows
$\|\mathcal{R}D(\cdot,\eta)\|^2_{2}\lesssim \delta_j^{2 m_1} \delta_j \lesssim \delta_j^{2 m_1} 2^{-j(1-\alpha)}$.
Therefore
\begin{align*}
\int_{|\lambda|\sim 2^{j(1-\alpha)}} |\widehat{D}(\lambda\cos\eta,\lambda\sin\eta)|^2 \,d\lambda
\le \int_{\R} |\widehat{\mathcal{R}D(\cdot,\eta)}(\lambda)|^2 \,d\lambda \lesssim   \delta_j^{2 m_1} 2^{-j(1-\alpha)}.
\end{align*}

It remains to show
\begin{align*}
\int_{|\lambda|\sim 2^{j(1-\alpha)}} |\widehat{\widetilde{F}_0}(\lambda\cos\eta,\lambda\sin\eta)|^2 \,d\lambda \lesssim  \delta_j^{2 m_1} 2^{-j(1-\alpha)}.
\end{align*}
This follows from the fact, that we have decay $|\widehat{\widetilde{F}_0}(\lambda,0)|\lesssim \delta_j^{m_1} |\lambda|^{-1/2}$ normal to the straight singularity curve, since the height
of the jump is $\delta_j^{m_1} $.
Further, the second argument $(\lambda\sin\eta)$ remains bounded due to the condition $|\sin\eta|< 2\delta_j$.

In case $|\sin\eta|\ge 2\delta_j$ we conclude as follows. Let $C_1,\,C_2>0$ be the constants specifying the integration domain
$[C_1 2^{j(1-\alpha)},C_2 2^{j(1-\alpha)}]$. We choose $C>0$ such that $C_2C<2\pi$ and fix $h:=C 2^{-j(1-\alpha)}$.
Then there is $c>0$ such that $ |e^{i\lambda h}-1|^{m_1}\ge c$ for $|\lambda|\in [C_1 2^{j(1-\alpha)},C_2 2^{j(1-\alpha)}]$ at all scales.
We obtain
 \begin{align*}
 \int_{|\lambda|\sim 2^{j(1-\alpha)}} |\lambda|^2 |\partial^{m}\widehat{F}(\lambda\cos\eta,\lambda\sin\eta)|^2 \,d\lambda
 &\lesssim \int_{|\lambda|\sim 2^{j(1-\alpha)}} |e^{i\lambda h}-1|^2 |\lambda|^2|\widehat{x^{m}F}(\lambda\cos\eta,\lambda\sin\eta)|^2 \,d\lambda\\
 &\lesssim\int_{|\lambda|\sim 2^{j(1-\alpha)}}  |e^{i\lambda h}-1|^2 |\lambda|^2  |\left(\mathcal{R}\left(x^{m}F\right)(\cdot,\eta)\right)^{\wedge}(\lambda)|^2 \,d\lambda\\
 &\lesssim\int_{|\lambda|\sim 2^{j(1-\alpha)}}  |\big[\Delta_h\partial_1\mathcal{R}\widetilde{F}(\cdot,\eta)\big]^{\wedge}(\lambda)|^2 \,d\lambda
 \end{align*}
From Lemma~\ref{lem:central2} we know, that $S=\Delta_h\partial_1\mathcal{R}\widetilde{F}(\cdot,\eta)$ admits a decomposition $(S^k_1,S^k_2)_k$ of length $m_1$
with estimates
\begin{align*}
\|S^{k}_1\|_{2}^2\lesssim 2^{-2jm_1(1-\alpha)}  h^{2\beta} |\sin\eta|^{-1-2\beta} + 2^{-j(1-\alpha)(2\beta+1)},\qquad k=0,1,\ldots,m_1+1.
\end{align*}

Using the same trick as in Theorem~\ref{thm:mainest1} we can then conclude
\begin{align*}
 \int_{|\lambda|\sim 2^{j(1-\alpha)}} |\lambda|^2 |\partial^m\widehat{F}(\lambda\cos\eta,\lambda\sin\eta)|^2 \,d\lambda
&\lesssim\int_{|\lambda|\sim 2^{j(1-\alpha)}}  |\big[\Delta_h\partial_1\mathcal{R}\widetilde{F}(\cdot,\eta)\big]^{\wedge}(\lambda)|^2 \,d\lambda \\
&\lesssim \sum_{k=0}^{m_1+1} \int_{|\lambda|\sim 2^{j(1-\alpha)}}   |\widehat{S^k_1}|^2 \,d\lambda  \\
&\lesssim \sum_{k=0}^{m_1+1} \|S^k_1\|_2^2  \\
&\lesssim  2^{-2jm_1(1-\alpha)}  h^{2\beta} |\sin\eta|^{-1-2\beta} + 2^{-j(1-\alpha)(2\beta+1)}.
\end{align*}

It follows
\begin{align*}
\int_{|\lambda|\sim 2^{j(1-\alpha)}} |\partial^{m}\widehat{F}(\lambda\cos\eta,\lambda\sin\eta)|^2 \,d\lambda
&\lesssim  2^{-2j(1-\alpha)(m_1+1)}  h^{2\beta} |\sin\eta|^{-1-2\beta} + 2^{-j(1-\alpha)(2\beta+3)}\\
&\lesssim  2^{-2(1-\alpha)jm_1} 2^{-(1-\alpha)j} \big( 2^{(1-\alpha)j}\sin\eta \big)^{-1-2\beta} + 2^{3\alpha j}2^{(-1-2\beta)j}.
\end{align*}
This finishes the proof.
\end{proof}

Rescaling $F$ to the original edge fragment $f$ yields the following corollary.

\begin{cor}\label{cor:localf2}
We have for $m=(m_1,m_2)\in\N_0^2$ the estimate
\begin{align*}
\int\limits_{|\lambda|\sim 2^{j}} |\partial^{m}\widehat{f}(\lambda\cos\eta, \lambda\sin\eta)|^2 \,d\lambda
\lesssim 2^{-j2\alpha|m|} \Big( 2^{-j2(1-\alpha)m_1} 2^{-(1+2\alpha)j} \Big( 1+ 2^{(1-\alpha)j}|\sin\eta| \Big)^{-1-2\beta} + 2^{(-1-2\beta)j}\Big).
\end{align*}
\end{cor}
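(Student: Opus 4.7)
The plan is to derive Corollary~\ref{cor:localf2} from Theorem~\ref{thm:mainest2} by a simple rescaling argument, as was done to pass from Theorem~\ref{thm:mainest1} to Corollary~\ref{cor:localf}. The relation between the edge fragment $f$ and its rescaled counterpart $F$ is $f(x) = F(2^{\alpha j} x)$ (up to the centering convention), which on the Fourier side becomes $\widehat{f}(\xi) = 2^{-2\alpha j}\widehat{F}(2^{-\alpha j}\xi)$.

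First I would differentiate this scaling identity in $\xi$. Applying $\partial^m$ brings down a factor $2^{-\alpha j|m|}$, yielding
\[
\partial^m\widehat{f}(\xi) = 2^{-\alpha j(|m|+2)}(\partial^m\widehat{F})(2^{-\alpha j}\xi).
\]
Squaring and integrating along the radial line of orientation $\eta$, I would then perform the substitution $\mu = 2^{-\alpha j}\lambda$ (so $d\lambda = 2^{\alpha j}d\mu$), which maps the range $|\lambda|\sim 2^j$ to $|\mu|\sim 2^{(1-\alpha)j}$. The integral becomes
\[
\int_{|\lambda|\sim 2^j}|\partial^m\widehat{f}(\lambda\cos\eta,\lambda\sin\eta)|^2\,d\lambda = 2^{-\alpha j(2|m|+3)}\!\!\int_{|\mu|\sim 2^{(1-\alpha)j}}|(\partial^m\widehat{F})(\mu\cos\eta,\mu\sin\eta)|^2\,d\mu.
\]

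Next I would apply Theorem~\ref{thm:mainest2} to bound the inner integral by
\[
2^{-2(1-\alpha)jm_1}2^{-(1-\alpha)j}\bigl(1+2^{(1-\alpha)j}|\sin\eta|\bigr)^{-1-2\beta} + 2^{3\alpha j}2^{(-1-2\beta)j}.
\]
A short bookkeeping of exponents finishes the proof. The prefactor $2^{-\alpha j(2|m|+3)}$ combines with $2^{-2(1-\alpha)jm_1}2^{-(1-\alpha)j}$ to give $2^{-2\alpha j|m|}\cdot 2^{-2(1-\alpha)jm_1}\cdot 2^{-(1+2\alpha)j}$, which is the first term in the claimed bound; it combines with $2^{3\alpha j}2^{(-1-2\beta)j}$ to give $2^{-2\alpha j|m|}\cdot 2^{(-1-2\beta)j}$, which is the second term. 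Since the argument is a purely bookkeeping rescaling, there is no real obstacle, the only care required being to track the polynomial power $|m|$ that appears from differentiation and to correctly transport the anisotropic angular weight $(1+2^{(1-\alpha)j}|\sin\eta|)^{-1-2\beta}$ through the substitution (which leaves it unchanged, because the frequency orientation $\eta$ is invariant under the isotropic radial rescaling $\mu = 2^{-\alpha j}\lambda$).
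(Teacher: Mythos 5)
Your proposal is correct and takes exactly the approach the paper uses: the paper's proof of Corollary~\ref{cor:localf2} consists of the single sentence ``The statement follows directly from the relation $\widehat{f}(\xi)=2^{-2\alpha j}\widehat{F}(2^{-\alpha j}\xi)$,'' and your argument simply spells out the differentiation, squaring, substitution $\mu=2^{-\alpha j}\lambda$, and exponent bookkeeping that this entails, all of which check out.
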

\begin{proof}
The statement follows directly from the relation $\widehat{f}(\xi)=2^{-2\alpha j}\widehat{F}(2^{-\alpha j}\xi)$.
\end{proof}

\subsubsection{Curvelet Analysis of an Edge Fragment}

In the following $J=(j,\ell)$ shall denote a scale-angle pair with $j\in\N_0$, $\ell\in\{0,\ldots,L_j-1\}$, and
$\chi_J$ and $\chi_0$ shall be the functions from \eqref{eq:suppfunctions}.
Recall also the characteristic angle $\omega_j=\pi 2^{-\lfloor j(1-\alpha) \rfloor}$ at scale $j$ and the corresponding orientations
$\omega_J=\ell\omega_j$, ranging between $0$ and $\pi$.
From Corollary~\ref{cor:localf} we can directly conclude the following result.

\begin{theorem}\label{thm:central1}
We have
\begin{align}\label{eq:central1}
\int_{\R^2} |\widehat{f}\chi_J(\xi)|^2 \,d\xi \lesssim 2^{-(1+\alpha)j}\Big( 1+ 2^{(1-\alpha)j}|\sin\omega_J| \Big)^{-1-2\beta}.
\end{align}
\end{theorem}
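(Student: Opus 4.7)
My plan is to obtain the bound by integrating the one-dimensional estimate of Corollary~\ref{cor:localf} over the angular variable, exploiting that the wedge $\mathcal{W}_J$ supporting $\chi_J$ is thin.

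First I would switch to polar-type coordinates: parametrizing $\xi = (\lambda\cos\eta,\lambda\sin\eta)$ with $\eta\in(-\pi/2,\pi/2]$ and $\lambda\in\R$ (Jacobian $|\lambda|$), we have
\[
\int_{\R^2}|\widehat{f}\chi_J(\xi)|^2\,d\xi
= \int_{-\pi/2}^{\pi/2}\int_{\R} |\widehat{f}(\lambda\cos\eta,\lambda\sin\eta)|^2 |\chi_J(\lambda\cos\eta,\lambda\sin\eta)|^2 |\lambda|\,d\lambda\,d\eta.
\]
By \eqref{eq:wedgePJ} the wedge is radially concentrated at $|\xi|\sim 2^j$ and angularly concentrated within $\omega_j/2$ of $\omega_J$ (and of $\omega_J+\pi$, its antipode). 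Letting $\eta_J\in(-\pi/2,\pi/2]$ be the representative of $\omega_J$ modulo $\pi$, both antipodal halves are covered by $\eta$ in a single interval $|\eta-\eta_J|\le \omega_j/2$ once the sign of $\lambda$ is allowed to vary. Since $|\chi_J|\le 1$ and $|\lambda|\lesssim 2^j$ on the support, this yields
\[
\int_{\R^2}|\widehat{f}\chi_J|^2\,d\xi
\lesssim 2^j \int_{|\eta-\eta_J|\le \omega_j/2} \int_{|\lambda|\sim 2^j} |\widehat{f}(\lambda\cos\eta,\lambda\sin\eta)|^2 \,d\lambda\,d\eta.
\]

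Next I would apply Corollary~\ref{cor:localf} to the inner integral uniformly in $\eta$, obtaining the bound
\[
\int_{\R^2}|\widehat{f}\chi_J|^2\,d\xi
\lesssim 2^{-2\alpha j}\int_{|\eta-\eta_J|\le \omega_j/2} \bigl(1+2^{(1-\alpha)j}|\sin\eta|\bigr)^{-1-2\beta}\,d\eta.
\]
The remaining ingredient is that the integrand is essentially constant on the angular window. Indeed, by the mean value theorem $|\sin\eta-\sin\eta_J|\le |\eta-\eta_J|\le \omega_j/2 \lesssim 2^{-(1-\alpha)j}$, hence
\[
2^{(1-\alpha)j}|\sin\eta| = 2^{(1-\alpha)j}|\sin\eta_J| + O(1),
\]
so $\bigl(1+2^{(1-\alpha)j}|\sin\eta|\bigr)\asymp \bigl(1+2^{(1-\alpha)j}|\sin\omega_J|\bigr)$ throughout the window (using $|\sin\eta_J|=|\sin\omega_J|$). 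Pulling this factor out and using $|\{\eta:|\eta-\eta_J|\le\omega_j/2\}|=\omega_j\lesssim 2^{-(1-\alpha)j}$ gives
\[
\int_{\R^2}|\widehat{f}\chi_J|^2\,d\xi
\lesssim 2^{-2\alpha j}\cdot 2^{-(1-\alpha)j}\bigl(1+2^{(1-\alpha)j}|\sin\omega_J|\bigr)^{-1-2\beta}
=2^{-(1+\alpha)j}\bigl(1+2^{(1-\alpha)j}|\sin\omega_J|\bigr)^{-1-2\beta},
\]
which is the claim.

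The only delicate step is the angular book-keeping: making sure the two antipodal halves of $\mathcal{W}_J$ are both captured by the signed radial parameter $\lambda\in\R$, and verifying the comparability $(1+2^{(1-\alpha)j}|\sin\eta|)\asymp(1+2^{(1-\alpha)j}|\sin\omega_J|)$ across all positions of $\omega_J\in[0,\pi)$ (in particular when $\omega_J$ lies near the endpoints $0$ or $\pi$ where $\sin\omega_J$ is small). Apart from this, the proof is a direct polar integration on top of Corollary~\ref{cor:localf}, with no further analytic input.
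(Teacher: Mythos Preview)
Your proposal is correct and follows essentially the same route as the paper: pass to polar coordinates, use $|\chi_J|\le 1$ together with the wedge support to restrict the radial variable to $|\lambda|\sim 2^j$ and the angular variable to a window of width $\asymp\omega_j$, apply Corollary~\ref{cor:localf} to the radial integral, and then integrate over the thin angular window. The only cosmetic difference is that you use a signed radial parameter $\lambda\in\R$ with a single angular interval around $\eta_J$, whereas the paper keeps $\lambda>0$ and integrates over both $\mathcal{A}_J$ and $\pi+\mathcal{A}_J$; and you spell out the comparability $(1+2^{(1-\alpha)j}|\sin\eta|)\asymp(1+2^{(1-\alpha)j}|\sin\omega_J|)$ on the window, which the paper leaves implicit in its final line.
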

\begin{proof}
It holds $\|\chi_J\|_\infty\le1$ and $\supp \chi_J\subset \mathcal{W}_J$, where $\mathcal{W}_J$ is the wedge defined in \eqref{eq:wedgePJ}.
Let us define the intervals $\mathcal{K}_j=\frac{1}{8\pi}[2^{j-1},2^{j+1}]$ and $\mathcal{A}_J=[\omega_J-\omega_j/2,\omega_J+\omega_j/2]$.
Using Corollary~\ref{cor:localf} we calculate
\begin{align*}
\int_{\R^2} |\widehat{f}\chi_J(\xi)|^2 \,d\xi
&\le \int_{\mathcal{W}_J}  |\widehat{f}(\xi)|^2 \,d\xi \\
&=   \int_{\mathcal{K}_j} \Big(\int_{\mathcal{A}_J} + \int_{\pi+\mathcal{A}_J} \Big)  |\widehat{f}(\lambda,\eta)|^2 \lambda \,d\eta\,d\lambda \\
&\lesssim \Big(\int_{\mathcal{A}_J} + \int_{\pi+\mathcal{A}_J} \Big)   2^{-(1+2\alpha)j}\Big( 1+ 2^{(1-\alpha)j}|\sin\eta| \Big)^{-1-2\beta}2^j  \,d\eta \\
&\lesssim 2^{-(1+\alpha)j}\Big( 1+ 2^{(1-\alpha)j}|\sin\omega_J| \Big)^{-1-2\beta}.
\end{align*}
\end{proof}

\noindent
For a scale-angle pair $J=(j,\ell)$ let us define
\[
\ell_J=1+ 2^{(1-\alpha)j}|\sin\omega_J|
\]
and the differential operator
\[
\mathcal{L}=(\mathcal{I}-(2^j/\ell_J)^2\mathcal{D}_1^2)(\mathcal{I}-2^{2\alpha j}\mathcal{D}_2^2)=\mathcal{I}-2^{2j}\ell_J^{-2}\mathcal{D}_1^2-2^{2\alpha j}\mathcal{D}_2^2+2^{2(1+\alpha)j}\ell_J^{-2}\mathcal{D}_1^2\mathcal{D}_2^2,
\]
with identity $\mathcal{I}$ and partial derivatives
\begin{align}\label{eq:diffop}
\mathcal{D}_1=\cos \omega_J\cdot \partial_1 + \sin \omega_J\cdot \partial_2 \quad\text{and}\quad \mathcal{D}_2=-\sin \omega_J\cdot \partial_1 + \cos \omega_J\cdot \partial_2.
\end{align}
We will show that $\mathcal{L}(\widehat{f}\chi_J)$ obeys the same estimate \eqref{eq:central1} as $\widehat{f}\chi_J$.


\begin{theorem}\label{thm:central2}
We have
\begin{align*}
\int_{\R^2} |\mathcal{L}(\widehat{f}\chi_J)(\xi)|^2 \,d\xi \lesssim 2^{-(1+\alpha)j}\Big( 1+ 2^{(1-\alpha)j}|\sin\omega_J| \Big)^{-1-2\beta}.
\end{align*}
\end{theorem}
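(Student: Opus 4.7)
The plan is to expand $\mathcal{L}(\widehat{f}\chi_J)$ via the Leibniz rule, obtaining a finite sum of terms of the form $c\cdot 2^{Aj}\ell_J^{-B}\cdot(\mathcal{D}_1^{a_1}\mathcal{D}_2^{a_2}\widehat{f})(\mathcal{D}_1^{b_1}\mathcal{D}_2^{b_2}\chi_J)$ with $a_i+b_i\in\{0,2\}$, and to bound each contribution in $L^2$ individually. The $\chi_J$-factor will be handled pointwise, while the $\widehat{f}$-factor will be estimated in $L^2(\mathcal{W}_J)$ using Corollary~\ref{cor:localf2}. The case $a_i+b_i=0$ is exactly Theorem~\ref{thm:central1}.

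For the pointwise bounds on $\chi_J$: since $\chi_J$ is smooth, $L^\infty$-bounded by $1$, and supported in the wedge $\mathcal{W}_J$ of radial length $\sim 2^j$ (in direction $\omega_J$) and tangential width $\sim 2^{\alpha j}$, differentiating $\chi_J$ costs $2^{-j}$ per $\mathcal{D}_1$ and $2^{-\alpha j}$ per $\mathcal{D}_2$, so that $|\mathcal{D}_1^{b_1}\mathcal{D}_2^{b_2}\chi_J|\lesssim 2^{-jb_1-\alpha jb_2}$.

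For the $\widehat{f}$-factor: expanding $\mathcal{D}_1^{a_1}\mathcal{D}_2^{a_2}$ as a linear combination of $\partial_1^{m_1}\partial_2^{m_2}$ with trigonometric coefficients $\cos\omega_J,\sin\omega_J$, we reduce to $\int_{\mathcal{W}_J}|\partial^m\widehat{f}|^2\,d\xi$. Passing to polar coordinates and using that for $\eta\in[\omega_J-\omega_j/2,\omega_J+\omega_j/2]$ we have $\ell(\eta):=1+2^{(1-\alpha)j}|\sin\eta|\asymp\ell_J$, Corollary~\ref{cor:localf2} gives
\[
\int_{\mathcal{W}_J}|\partial^m\widehat{f}|^2\,d\xi \lesssim 2^{-2\alpha j|m|}\bigl(2^{-2(1-\alpha)jm_1}\cdot 2^{-(1+\alpha)j}\ell_J^{-1-2\beta} + 2^{-(1+2\beta-\alpha)j}\bigr).
\]
Moreover each $\sin\omega_J$ arising from an expansion of $\mathcal{D}_1$ applied to $\widehat{f}$ can be absorbed using $|\sin\omega_J|\le 2^{-(1-\alpha)j}\ell_J$, and we also retain the trivial bound $\ell_J\lesssim 2^{(1-\alpha)j}$.

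Combining these ingredients, each of the resulting terms in $\int|\mathcal{L}(\widehat{f}\chi_J)|^2\,d\xi$ reduces to a product of powers of $2^j$ and $\ell_J$, which after exploiting the identity $\alpha\beta=1$ is bounded by $2^{-(1+\alpha)j}\ell_J^{-1-2\beta}$. The main obstacle is the bookkeeping rather than any single delicate estimate: the coefficient $\ell_J^{-2}$ multiplying $\mathcal{D}_1^2$ in $\mathcal{L}$ is tuned precisely to absorb the $\sin^2\omega_J$ picked up when the $\partial_2$-component of $\mathcal{D}_1$ is expanded (which otherwise would force use of the worse, $m_1=0$ version of Corollary~\ref{cor:localf2}), and the balance in the cross terms where the subdominant $2^{-(1+2\beta-\alpha)j}$ tail of Corollary~\ref{cor:localf2} appears works out exactly because $\alpha\beta=1$ together with $\ell_J\lesssim 2^{(1-\alpha)j}$.
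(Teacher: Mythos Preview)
Your proposal is correct and follows essentially the same route as the paper: Leibniz expansion of $\mathcal{L}(\widehat{f}\chi_J)$, pointwise bounds $|\mathcal{D}_1^{b_1}\mathcal{D}_2^{b_2}\chi_J|\lesssim 2^{-jb_1-\alpha jb_2}$ from the wedge geometry, $L^2(\mathcal{W}_J)$-bounds on $\partial^m\widehat{f}$ via Corollary~\ref{cor:localf2}, and absorption of each $\sin\omega_J$ through $|\sin\omega_J|\le 2^{-(1-\alpha)j}\ell_J$ together with $\ell_J\lesssim 2^{(1-\alpha)j}$. The paper organizes the same computation by first packaging the intermediate estimates $\|\mathcal{D}_1^{m_1}\widehat{f}\|_{L^2(\mathcal{W}_J)}^2\lesssim 2^{-2jm_1}2^{-(1+\alpha)j}\ell_J^{2m_1-1-2\beta}$ (for $m_1\le 2$) and $\|\mathcal{D}_2^{m_2}\widehat{f}\|_{L^2(\mathcal{W}_J)}^2\lesssim 2^{-2\alpha jm_2}2^{-(1+\alpha)j}\ell_J^{-1-2\beta}$, then treating the four pieces of $\mathcal{L}$ separately, but the content is the same.
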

\begin{proof}
First, observe that for each pair $m=(m_1,m_2)\in N_0^2$ the mixed derivative of $\chi_J$ obeys
\begin{align}\label{basicfact}
\|\mathcal{D}_1^{m_1}\mathcal{D}_2^{m_2}\chi_J\|_{\infty}= \mathcal{O}\big(2^{-jm_1}\cdot 2^{-j\alpha m_2}\big).
\end{align}
This follows from the fact, that the functions $\chi_J$ from \eqref{eq:suppfunctions} scale with their support wedges $\mathcal{W}_J$, which are
of length $\sim 2^{j}$ and width $\sim2^{\alpha j}$.

Next, from the definition \eqref{eq:diffop} of the operators $\mathcal{D}_1$ and $\mathcal{D}_2$ we deduce for $m_1\in\N_0$
\[
\mathcal{D}_1^{m_1} \widehat{f} = \sum_{a+b=m_1}  c_{a,b} (\cos\omega_J)^a(\sin\omega_J)^b \partial^{(a,b)}\widehat{f}
\]
with binomial coefficients $c_{a,b}\in\N$. A similar formula holds for $\mathcal{D}_2^{m_2} \widehat{f}$ and $m_2\in\N_0$. Using $|\sin\omega_J|\le 2^{-(1-\alpha)j}\ell_J$ we obtain the estimate
\[
\|\mathcal{D}_1^{m_1} \widehat{f}\|^2_{L^2(\mathcal{W}_J)} \lesssim \sum_{a+b=m_1} |\sin\omega_J|^{2b} \cdot \| \partial^{(a,b)}\widehat{f} \|^2_{L^2(\mathcal{W}_J)}
\le \sum_{a+b=m_1} (2^{-(1-\alpha)j}\ell_J)^{2b} \cdot \| \partial^{(a,b)}\widehat{f} \|^2_{L^2(\mathcal{W}_J)}.
\]
Analogously, we obtain
\[
\|\mathcal{D}_2^{m_2} \widehat{f}\|^2_{L^2(\mathcal{W}_J)} \lesssim \sum_{a+b=m_2}  \| \partial^{(a,b)}\widehat{f} \|^2_{L^2(\mathcal{W}_J)}.
\]

Taking into account the width $\sim2^{\alpha j}$ of the wedges $\mathcal{W}_J$, Corollary~\ref{cor:localf2} gives for $(a,b)\in\N_0^2$ the bound
\[
\|\partial^{(a,b)}\widehat{f} \|^2_{L^2(\mathcal{W}_J)} \le C_{a,b}  2^{\alpha j} \cdot 2^{-j2\alpha(a+b)} \big(  2^{-2j(1-\alpha)a}2^{-(1+2\alpha)j} \ell_J^{-1-2\beta}+ 2^{(-1-2\beta)j}\big),
\]
with some constant $C_{a,b}>0$ independent of scale. Therefore we can estimate for $m_1\in\N_0$
\begin{align*}
\|\mathcal{D}_1^{m_1} \widehat{f}\|^2_{L^2(\mathcal{W}_J)} \lesssim  2^{-j2\alpha m_1} \big(  2^{-2j(1-\alpha)m_1} 2^{-(1+\alpha)j} \ell_J^{2m_1-1-2\beta} + 2^{\alpha j} 2^{(-1-2\beta)j} \big).
\end{align*}
If $m_1\le 2$ this further simplifies to
\begin{align}\label{eq:finest1}
\|\mathcal{D}^{m_1}_1 \widehat{f} \|^2_{L^2(\mathcal{W}_J)} \lesssim 2^{-2jm_1} 2^{-(1+\alpha)j} \ell_J^{2m_1-1-2\beta},
\end{align}
since for every $m_1\le (1+\alpha)/\alpha$ we have
\[
2^{\alpha j} 2^{(-1-2\beta)j} \lesssim 2^{-2j(1-\alpha)m_1} 2^{-(1+\alpha)j} \ell_J^{2m_1-1-2\beta}.
\]

Similar calculations lead to
\begin{align}\label{eq:finest2}
\|\mathcal{D}^{m_2}_2 \widehat{f} \|^2_{L^2(\mathcal{W}_J)} \lesssim 2^{-2\alpha jm_2} 2^{-(1+\alpha)j} \ell_J^{-1-2\beta}.
\end{align}
Indeed, if $a+b=m_2$ we have
\[
\|\partial_1^a\partial_2^b \widehat{f} \|^2_{L^2(\mathcal{W}_J)} \le C_{a,b}\cdot 2^{\alpha j} \cdot 2^{-2\alpha j m_2} \big(  2^{-(1+2\alpha)j} \ell_J^{-1-2\beta}+ 2^{(-1-2\beta)j}\big).
\]
Since $1\le\ell_J\le 2\cdot 2^{(1-\alpha)j}$ it holds
\[
2^{j(1+\alpha)}2^{-2\beta j}=2^{-(1-\alpha)(1+2\beta)j} \lesssim \ell_J^{-1-2\beta}.
\]
Therefore, taking into account $1\le 2^{(1-\alpha)j}$, we can conclude
\[
2^{(-1-2\beta)j} \le 2^{j(1+\alpha)}2^{-2\beta j}2^{-(1+2\alpha)j} \lesssim 2^{-(1+2\alpha)j} \ell_J^{-1-2\beta}.
\]
Altogether we obtain the desired estimate \eqref{eq:finest2}.

After this preliminary work we can finally prove the statement of Theorem~\ref{thm:central2}. We have
\[
\mathcal{L}(\widehat{f}\chi_J)=\widehat{f}\chi_J-2^{2j}\ell_J^{-2}\mathcal{D}_1^2(\widehat{f}\chi_J)-2^{2\alpha j}\mathcal{D}_2^2(\widehat{f}\chi_J)+2^{2(1+\alpha)j}\ell_J^{-2}\mathcal{D}_1^2\mathcal{D}_2^2(\widehat{f}\chi_J),
\]
which allows us to show the desired estimate for each term separately. For $\widehat{f}\chi_J$ the estimate
holds true by Theorem~\ref{thm:central1}.

Let us turn to the second term. The product rule yields
\[
\mathcal{D}^2_1(\widehat{f}\chi_J) = (\mathcal{D}_1^2\widehat{f})\chi_J + 2(\mathcal{D}_1\widehat{f})(\mathcal{D}_1\chi_J) + \widehat{f}(\mathcal{D}_1^2\chi_J).
\]
The previous estimates together with the Hölder inequality then lead to
\begin{align*}
\| \mathcal{D}^2_1(\widehat{f}\chi_J) \|^2_{2}\lesssim 2^{-4j} \ell_J^4 2^{-(1+\alpha)j} \ell_J^{-1-2\beta}.
\end{align*}
Here \eqref{eq:finest1} was used, and that $\|\mathcal{D}^{m_1}_1 \chi_J \|^2_{\infty} \le 2^{-2jm_1} \le 2^{-2jm_1}\ell_J^{2m_1}$ by \eqref{basicfact}.
This settles the claim for the second term.

Analogously, we can deduce
\begin{align*}
\| \mathcal{D}^2_2(\widehat{f}\chi_J) \|^2_{2} \lesssim 2^{-4\alpha j} 2^{-(1+\alpha)j} \ell_J^{-1-2\beta},
\end{align*}
using \eqref{eq:finest2} and that $\|\mathcal{D}^{m_2}_2 \chi_J \|^2_{\infty} \le 2^{-2\alpha jm_2}$ by \eqref{basicfact}.
This gives the estimate for the third term.

Finally, it also holds
\begin{align*}
\| \mathcal{D}^2_1\mathcal{D}^2_2(\widehat{f}\chi_J) \|^2_{2} \lesssim 2^{-4j(1+\alpha)} \ell_J^4 2^{-(1+\alpha)j} \ell_J^{-1-2\beta},
\end{align*}
which establishes the result for the fourth term.

\end{proof}

At last we are ready to give the proof of Proposition~\ref{prop:sequence1}.
The essential tool is Theorem~\ref{thm:central2}.

\begin{proof}[Proof of Proposition~\ref{prop:sequence1}]
Recall the curvelet frame $\mathcal{C}_\alpha(W^{(0)},W,V)=(\psi_\mu)_{\mu\in M}$. On the Fourier side we have
\[
\widehat{\psi}_{j,\ell,k}=\chi_J u_{j,k}(R_J\cdot),
\]
with rotation matrix $R_J$ given as in \eqref{eq:matrixrot} and functions
\[
u_{j,k}(\xi)=2^{-j(1+\alpha)/2}e^{2\pi i (2^{-j}k_1,2^{-\alpha j}k_2) \cdot \xi}, \quad\xi\in\R^2.
\]
The curvelet coefficients $(\theta_\mu)_{\mu\in M}$ are therefore given by the formula
\begin{align*}
\theta_\mu = \langle f, \psi_{j,\ell,k} \rangle = \int_{\R^2} \widehat{f}\chi_J(\xi) \overline{u_{j,k}}(R_J\xi) \,d\xi.
\end{align*}
Since
\begin{align*}
\mathcal{L} (u_{j,k}) = (1+ \ell_J^{-2}k_1^2)(1+ k^2_2 ) u_{j,k},
\end{align*}
integration by parts yields
\begin{align*}
\theta_\mu = (1+ \ell_J^{-2}k_1^2)^{-1}(1+ k^2_2 )^{-1} \int_{\R^2} \mathcal{L}(\widehat{f}\chi_J)(\xi) \overline{u_{j,k}}(R_J\xi) \,d\xi.
\end{align*}
Let $K=(K_1,K_2)\in\Z^2$ and define
\[
\mathfrak{Z}_{K}:=\Big\{ (k_1,k_2) \in \Z^2 ~:~ \ell_J^{-1}k_1\in[K_1,K_1+1),\, k_2 = K_2 \Big\}.
\]
For fixed $J=(j,\ell)$ the Fourier system $\big(u_{j,k}(R_J\cdot)\big)_{k\in\Z^2}$ is an orthonormal basis for $L^2(\Xi_J)$, where $\Xi_J$ is the
rectangle defined in \eqref{eq:supprect} containing the support of $\chi_J$. Therefore,
\begin{align*}
\sum_{k\in \mathfrak{Z}_K}|\theta_\mu|^2 \lesssim (1+ K_1^2)^{-2}(1+ K^2_2 )^{-2}  \int_{\R^2} |\mathcal{L}(\widehat{f}\chi_J)(\xi)|^2 \,d\xi.
\end{align*}
The integral on the right-hand side is bounded by Theorem~\ref{thm:central2}, and we thus arrive at
\begin{align}\label{eq:keyest2}
\sum_{k\in \mathfrak{Z}_K}|\theta_\mu|^2 \lesssim (L_K )^{-2}  2^{-(1+\alpha)j} \ell_J^{-1-2\beta}
\end{align}
with $L_K:=(1+ K_1^2)(1+ K^2_2 )$.

Let $M_J$ denote the subset of curvelet coefficients associated with a fixed scale-angle pair $J=(j,\ell)$.
Further, let 
$N_{J,K}(\varepsilon)$
be the number of indices $\mu\in M_J$ such that $k\in \mathfrak{Z}_K$ and $|\theta_\mu|>\varepsilon$.

Since $\#\mathfrak{Z}_K\le \ell_J$ and because of \eqref{eq:keyest2} we can conclude
\begin{align}\label{eq:estNJK}
N_{J,K}(\varepsilon) \lesssim \min\Big\{\ell_J,  (\varepsilon L_K)^{-2} 2^{-(1+\alpha)j} \ell_J^{-1-2\beta} \Big\}.
\end{align}

For $\omega_J=\pi \ell 2^{-\lfloor j(1-\alpha)\rfloor}\in [0,\pi)$
let $\langle  \omega_J \rangle$ denote the equivalent angle modulo $\pi$ in the interval $(-\pi/2,\pi/2]$. The corresponding indices in the range $\{\lfloor-L_j/2+1\rfloor,\ldots,\lfloor L_j/2\rfloor\}$
shall be denoted by $\langle\ell\rangle$.
Since it holds $|\sin\eta|\asymp |\eta|$ for $\eta\in[-\pi/2,\pi/2]$, it follows
\begin{align}\label{eq:eqiv}
\ell_J= 1+2^{(1-\alpha)j}|\sin\omega_J| =  1+2^{(1-\alpha)j}|\sin \langle\omega_J\rangle| \asymp 1+|\langle\ell\rangle|.
\end{align}

Let $\ell_\ast$ be the solution of $\ell_\ast=(\varepsilon L_K)^{-2} 2^{-(1+\alpha)j} \ell_\ast^{-1-2\beta}$ and put $L^\ast=\lfloor \ell_\ast \rfloor$.
Utilizing \eqref{eq:estNJK} and \eqref{eq:eqiv} yields
\begin{align*}
\sum_{|J|=j} N_{J,K}(\varepsilon) &\lesssim
\sum_{\substack{\ell\in\{0,\ldots,L_j-1\} \\ |\langle\ell\rangle|\le L^\ast-1 }} (1+|\langle\ell\rangle|) + \sum_{\substack{\ell\in\{0,\ldots,L_j-1\} \\ |\langle\ell\rangle|\ge L^\ast }}  (\varepsilon L_K)^{-2} 2^{-(1+\alpha)j} (1+|\langle\ell\rangle|)^{-1-2\beta} \\
&\lesssim \sum_{\ell=0}^{L^*-1} (1+|\ell|) + \sum_{\ell=L^*}^{\infty} (\varepsilon L_K)^{-2} 2^{-(1+\alpha)j} (1+|\ell|)^{-1-2\beta} \\
&\lesssim  (L^*)^2 +   (\varepsilon L_K)^{-2} 2^{-(1+\alpha)j} (L^*)^{-2\beta}.
\end{align*}
This translates to
\begin{align*}
\sum_{|J|=j} N_{J,K}(\varepsilon) \lesssim  \varepsilon^{-2/(1+\beta)} \cdot L_K^{-2/(1+\beta)} \cdot 2^{-(1+\alpha)j/(1+\beta)}.
\end{align*}
Since $\beta<3$ we have $\sum_{K\in\Z^2} L_K^{-2/(1+\beta)}< \infty$. Hence
\[
\# \Big\{ \mu\in M_j, |\theta_\mu|>\varepsilon \Big\} = \sum_{K\in\Z^2} \sum_{|J|=j} N_{J,K}(\varepsilon)   \lesssim 2^{-(1+\alpha)j/(1+\beta)} \varepsilon^{-2/(1+\beta)}  .
\]
This finishes the proof.
\end{proof}

\clearpage
\begin{appendix}

\section{Additional Technical Estimates}

This appendix contains some technical estimates, which are needed for the proofs of the main results.
They were outsourced from the main exposition to enhance the readability.
We first state a lemma, which shows how scaling affects the H\"older constant in an abstract setting.

\begin{lemma}\label{lemapp:hoel}
Let $f\in C^\alpha(\R)$ and $0<\alpha<1$. Then for $t,s>0$
\[
\text{\sl Höl}(s f(t\cdot),\alpha)=st^\alpha\cdot \text{\sl Höl}(f,\alpha).
\]
\end{lemma}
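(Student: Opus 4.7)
The plan is to compute directly from the definition of the Hölder seminorm and reduce the claim to a change of variables. By the definition given in the preliminaries,
\[
\text{\sl Höl}(g,\alpha)=\sup_{x\neq y}\frac{|g(x)-g(y)|}{|x-y|^{\alpha}},
\]
so for $g(x):=sf(tx)$ I would factor out the scalar $s$ and multiply numerator and denominator by $t^{\alpha}$ in order to bring the argument of $f$ into the denominator as well:
\[
\frac{|sf(tx)-sf(ty)|}{|x-y|^{\alpha}} \;=\; s\,t^{\alpha}\cdot \frac{|f(tx)-f(ty)|}{|tx-ty|^{\alpha}}.
\]

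Next I would take the supremum over all pairs $x\neq y$ on both sides. Since the map $(x,y)\mapsto (tx,ty)=:(u,v)$ is a bijection of $\{x\neq y\}$ onto $\{u\neq v\}$ for $t>0$, the supremum on the right equals $\text{\sl Höl}(f,\alpha)$, giving the claimed identity. No further ingredients are needed, and the assumption $t,s>0$ ensures the factor $t^\alpha$ is well defined and that the change of variables is bijective.

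There is no real obstacle here; the only point to be slightly careful about is the bijectivity of the rescaling map (which fails only at $t=0$, excluded by hypothesis) and the equality (rather than merely inequality) of the suprema, which follows because the substitution is onto. Consequently the identity holds with equality in both directions.
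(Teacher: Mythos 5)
The paper states Lemma~\ref{lemapp:hoel} without proof, treating it as elementary. Your direct computation from the definition — factoring out $s$, multiplying by $t^\alpha/t^\alpha$ to rewrite the denominator as $|tx-ty|^\alpha$, and observing that $(x,y)\mapsto(tx,ty)$ is a bijection of the admissible pairs so that the supremum is preserved exactly — is correct and is precisely the argument the authors implicitly rely on.
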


Subsequenlty we establish some estimates for the `elementary functions' $g^\eta$, $\omega^\eta$, and $r^\eta$,
which occur as components of the edge fragments $F$ and $\widetilde{F}$. These estimates provide the basis
for the more complex estimates needed throughout this paper. To enhance readability
we omit the superindex $\eta$ during this discussion.

\subsection{Estimates for $g^\eta$}

The functions $g^\eta_j$ are defined for $j\in\N_0$ by $g^\eta_j:=g^\eta(2^{-\alpha j}\cdot)$, where $g^\eta\in C_0^{\beta}(\R^2)$ is
a fixed function and $\beta\in(1,2]$. For simplicity we write henceforth $g_j$ and $g$.

Clearly, still $g_j\in C_0^{\beta}(\R^2)$. However, the parameters of the regularity change.
It is obvious that $\|g_j\|_\infty\lesssim 1$. Further, the chain rule yields
\begin{align*}
\|\partial_1 g_j\|_\infty \lesssim 2^{-\alpha j} \quad\text{and}\quad  \|\partial_2 g_j\|_\infty\lesssim 2^{-\alpha j}.
\end{align*}
Applying Lemma~\ref{lemapp:hoel} yields the following result.
\begin{lemma}\label{lemapp:hoelg}
Let $\beta\in(1,2]$ and $g\in C_0^{\beta}(\R^2)$. Then for $g_j=g(2^{-\alpha j}\cdot)$
\[
\Hol(\partial_1g_{j},\beta-1)=2^{-j}\Hol(\partial_1g,\beta-1).
\]
\end{lemma}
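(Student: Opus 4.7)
The proof will be short and amounts to a direct chain-rule computation combined with the scaling identity for Hölder constants (Lemma~\ref{lemapp:hoel}), exploiting the standing relation $\alpha\beta=1$.

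First I would apply the chain rule to the definition $g_j(x)=g(2^{-\alpha j}x)$ to obtain
\[
\partial_1 g_j(x) = 2^{-\alpha j}(\partial_1 g)(2^{-\alpha j}x),
\]
so that $\partial_1 g_j = s\cdot (\partial_1 g)(t\cdot)$ with the specific choice $s=t=2^{-\alpha j}$.

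Next I would apply Lemma~\ref{lemapp:hoel} (whose statement and proof carry over verbatim from $\R$ to $\R^2$ for an exponent in $(0,1]$, since the argument only uses the scaling of the difference quotient) to the function $\partial_1 g\in C^{\beta-1}(\R^2)$ with exponent $\beta-1\in(0,1]$. This yields
\[
\Hol(\partial_1 g_j,\beta-1)
= s\, t^{\beta-1}\,\Hol(\partial_1 g,\beta-1)
= 2^{-\alpha j}\cdot 2^{-\alpha j(\beta-1)}\,\Hol(\partial_1 g,\beta-1)
= 2^{-\alpha\beta j}\,\Hol(\partial_1 g,\beta-1).
\]

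Finally, using the standing assumption $\beta=\alpha^{-1}$, i.e.\ $\alpha\beta=1$, the exponent collapses to $2^{-j}$ and the asserted identity
\[
\Hol(\partial_1 g_j,\beta-1)=2^{-j}\,\Hol(\partial_1 g,\beta-1)
\]
follows. There is no genuine obstacle here; the only point worth noting is that one must invoke $\alpha\beta=1$ at the very end, which is why the factor is exactly $2^{-j}$ rather than a more complicated power of $2$.
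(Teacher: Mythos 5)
Your proof is correct and follows exactly the same route as the paper: chain rule, then Lemma~\ref{lemapp:hoel} with $s=t=2^{-\alpha j}$ and exponent $\beta-1$, then the standing relation $\alpha\beta=1$ to collapse the exponent to $2^{-j}$. Your explicit remark that Lemma~\ref{lemapp:hoel} (stated for $\R$) carries over to $\R^2$ is a careful touch the paper omits but implicitly relies on.
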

\begin{proof}
In view of Lemma~\ref{lemapp:hoel} we have
\begin{align*}
\Hol(\partial_1g_{j},\beta-1)=\Hol(2^{-\alpha j}\partial_1g(2^{-\alpha j}\cdot),\beta-1)=2^{-j}\Hol(\partial_1g,\beta-1).
\end{align*}
\end{proof}

Some more estimates for $g_j$ are collected in the following two lemmas.

\begin{lemma}\label{lemapp:basicg}
The following estimates hold true for $g_j$:
\begin{align*}
\|\Delta_{(h,0)} g_j\|_\infty &\lesssim 2^{-\alpha j}h , \\
\|\Delta_{(h,0)} \partial_1g_j \|_\infty,\, \|\Delta_{(h,0)} \partial_2g_j \|_\infty &\lesssim 2^{-j} h^{\beta-1}=  2^{-\alpha j}h^{\beta}  , \\
\|\Delta^2_{(h,0)} g_j \|_\infty &\lesssim 2^{-j}h^{\beta}  ,
\end{align*}
with implicit constants, that do not depend on $j\in\N_0$ and $h\ge0$.
\end{lemma}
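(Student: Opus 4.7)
The plan is to reduce all three estimates to (i) pointwise bounds on $\partial_i g_j$ via the chain rule and (ii) the Hölder control on $\partial_1 g_j$ recorded in Lemma~\ref{lemapp:hoelg}, namely $\Hol(\partial_1 g_j,\beta-1) = 2^{-j}\Hol(\partial_1 g,\beta-1)\lesssim 2^{-j}$, and symmetrically for $\partial_2 g_j$. Since $g\in C_0^{\beta}(\R^2)$ is fixed, all implicit constants arising from $g$ are scale-independent, so only the prefactors coming from the rescaling $g_j=g(2^{-\alpha j}\cdot)$ matter.

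For the first estimate, I would invoke the mean value theorem in the $x_1$-variable: $|\Delta_{(h,0)}g_j(x)|\le\|\partial_1 g_j\|_\infty \cdot h$, and the chain rule gives $\|\partial_1 g_j\|_\infty = 2^{-\alpha j}\|\partial_1 g\|_\infty \lesssim 2^{-\alpha j}$, yielding $\|\Delta_{(h,0)}g_j\|_\infty\lesssim 2^{-\alpha j}h$. For the second, I would apply the definition of the Hölder constant directly to $\partial_i g_j$: $|\Delta_{(h,0)}\partial_i g_j(x)|\le \Hol(\partial_i g_j,\beta-1)\cdot h^{\beta-1}\lesssim 2^{-j}h^{\beta-1}$ by Lemma~\ref{lemapp:hoelg}. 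The equality $2^{-j}h^{\beta-1}=2^{-\alpha j}h^{\beta}$ then is simply the observation that it holds exactly when $h\asymp 2^{-(1-\alpha)j}$, which is the regime in which this estimate will subsequently be applied.

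For the third estimate, which is the only nontrivial step, I would use the fundamental theorem of calculus to rewrite the second-order difference as a first-order difference of the derivative:
\[
\Delta^2_{(h,0)} g_j(x) = \int_0^h \bigl[\partial_1 g_j(x_1+h+s,x_2)-\partial_1 g_j(x_1+s,x_2)\bigr]\,ds.
\]
The integrand is pointwise bounded by $\Hol(\partial_1 g_j,\beta-1)\cdot h^{\beta-1}\lesssim 2^{-j}h^{\beta-1}$, and integrating over an interval of length $h$ produces the desired $2^{-j}h^{\beta}$. The main (mild) obstacle is precisely this third estimate: a naive application of the mean value theorem twice would cost a second derivative of $g_j$, which is unavailable since $\beta\le 2$; writing the second difference as an integral of a first difference of $\partial_1 g_j$ is the trick that exploits the fractional Hölder regularity and gives the sharp $h^\beta$ behavior.
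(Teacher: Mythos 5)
Your proof is correct and follows essentially the same route as the paper: mean value theorem plus chain rule for the first bound, the Hölder constant from Lemma~\ref{lemapp:hoelg} for the second, and reduction of the second-order difference to a first-order difference of $\partial_1 g_j$ for the third. Your integral (FTC) representation of $\Delta^2_{(h,0)}g_j$ is just a slightly more explicit form of the paper's MVT-plus-commutativity ($\partial_1\Delta_{(h,0)}=\Delta_{(h,0)}\partial_1$) argument, and your observation that the equality $2^{-j}h^{\beta-1}=2^{-\alpha j}h^{\beta}$ is tied to the regime $h\asymp 2^{-(1-\alpha)j}$ is the correct reading.
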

\begin{proof}
Applying the mean value theorem yields
\begin{align*}
\|\Delta_{(h,0)} g_j \|_\infty \le h \|\partial_1g_j \|_\infty
\lesssim 2^{-\alpha j}h.
\end{align*}
Considering Lemma~\ref{lemapp:hoelg} we obtain
\begin{align*}
\|\Delta_{(h,0)} \partial_1g_j \|_\infty \lesssim 2^{-j} h^{\beta-1} = 2^{-\alpha j} h^{\beta}.
\end{align*}
Noting the commutativity $\partial_1\Delta_{(h,0)}=\Delta_{(h,0)}\partial_1$, we obtain
\begin{align*}
\|\Delta^2_{(h,0)} g_j\|_\infty \lesssim h \|\Delta_{(h,0)} \partial_1g_j \|_\infty
\lesssim 2^{-j}h^{\beta}.
\end{align*}
\end{proof}

The next lemma gives estimates for $g_j$ along the edge curve.
Here the function $a\in C^\beta(\R)$ comes into play, which was defined in \eqref{eq:a_def}.
The following estimates also depend on the properties of $a$, which are summarized
in Lemma~\ref{lem:propa}.

\begin{lemma}\label{lemapp:edgeg}
Assume $|\sin\eta|\ge 2\delta_j$.
The following estimates hold true for $g_j$:
\begin{align*}
\sup_{t\in\R}|\Delta_h g_j(t,a(t))| &\lesssim h |\sin\eta|^{-1} 2^{-\alpha j}, \\
\sup_{t\in\R}|\Delta_h \partial_1g_j(t,a(t))|,\, \sup_{t\in\R}|\Delta_h \partial_2g_j(t,a(t))| &\lesssim h^{\beta-1} |\sin\eta|^{1-\beta} 2^{-j},\\
\sup_{t\in\R}|\Delta^2_h g_j(t,a(t))| &\lesssim h^{\beta} |\sin\eta|^{-1-\beta}  2^{-j},
\end{align*}
where the implicit constants are independent of $j\in\N_0$ and $h\ge0$.
\end{lemma}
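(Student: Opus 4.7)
\medskip
\noindent\textbf{Proof plan.} All three bounds are comparisons of $g_j$ evaluated at the moving point $(t,a(t))$, so the basic idea is to interpolate: add and subtract an intermediate evaluation that freezes one coordinate, thereby splitting each difference into a pure ``first-coordinate'' contribution (controlled by Lemma~\ref{lemapp:basicg}) and a ``second-coordinate'' contribution (controlled by the Hölder regularity of $g_j$ in $x_2$ combined with the estimates for $a$ from Lemma~\ref{lem:propa}). The key quantitative inputs are $\|a'\|_\infty\lesssim|\sin\eta|^{-1}$, $\|\Delta_h a'\|_\infty\lesssim \delta_j h^{\beta-1}|\sin\eta|^{-1-\beta}$, and the fact (following from the definition $g_j=g(2^{-\alpha j}\cdot)$ and Lemma~\ref{lemapp:hoel}) that $\partial_i g_j$ is Hölder of exponent $\beta-1$ with constant $\lesssim 2^{-j}$.

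\medskip
\noindent\emph{First estimate.} I would write
\[
\Delta_h g_j(t,a(t)) = \bigl[g_j(t+h,a(t+h))-g_j(t+h,a(t))\bigr] + \bigl[g_j(t+h,a(t))-g_j(t,a(t))\bigr].
\]
The second bracket is bounded by $h\|\partial_1 g_j\|_\infty\lesssim h\,2^{-\alpha j}$, while the first bracket is controlled by the mean value theorem in the second variable: $\|\partial_2 g_j\|_\infty\lesssim 2^{-\alpha j}$ and $|a(t+h)-a(t)|\lesssim h|\sin\eta|^{-1}$. Since $|\sin\eta|\le 1$, the second contribution dominates and yields the claimed bound.

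\medskip
\noindent\emph{Second estimate.} I apply exactly the same splitting with $\partial_1 g_j$ (resp.\ $\partial_2 g_j$) in place of $g_j$. The ``frozen-$a$'' part is bounded by $\|\Delta_{(h,0)}\partial_i g_j\|_\infty\lesssim 2^{-j} h^{\beta-1}$ from Lemma~\ref{lemapp:basicg}. For the ``moving-$a$'' part, I invoke the Hölder continuity of $\partial_i g_j$ in the second argument (Hölder constant $\lesssim 2^{-j}$) together with $|a(t+h)-a(t)|^{\beta-1}\lesssim h^{\beta-1}|\sin\eta|^{1-\beta}$, yielding $\lesssim 2^{-j} h^{\beta-1}|\sin\eta|^{1-\beta}$. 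Since $|\sin\eta|^{1-\beta}\ge 1$, this dominates and gives the claim.

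\medskip
\noindent\emph{Third estimate.} This is the main obstacle because $\phi(t):=g_j(t,a(t))$ is only $C^\beta$, not $C^2$. The trick is the identity
\[
\Delta_h^2\phi(t) = \int_0^h \Delta_h \phi'(t+s)\,ds,
\]
so it suffices to show $\|\Delta_h \phi'\|_\infty\lesssim 2^{-j} h^{\beta-1}|\sin\eta|^{-1-\beta}$. Differentiating via the chain rule gives
\[
\phi'(t) = \partial_1 g_j(t,a(t)) + \partial_2 g_j(t,a(t))\,a'(t),
\]
and I would split $\Delta_h\phi'$ into three pieces: (i) $\Delta_h[\partial_1 g_j(\cdot,a(\cdot))]$, already handled by the second estimate; (ii) $\Delta_h[\partial_2 g_j(\cdot,a(\cdot))]\cdot a'(t+h)$, bounded by the second estimate times $\|a'\|_\infty\lesssim|\sin\eta|^{-1}$; and (iii) $\partial_2 g_j(t,a(t))\cdot \Delta_h a'(t)$, bounded by $\|\partial_2 g_j\|_\infty\lesssim 2^{-\alpha j}$ times $\|\Delta_h a'\|_\infty\lesssim \delta_j h^{\beta-1}|\sin\eta|^{-1-\beta}$. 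Using $\delta_j\lesssim 2^{-(1-\alpha)j}$ to collapse $2^{-\alpha j}\delta_j$ into $2^{-j}$, the dominant contribution of all three is $\lesssim 2^{-j} h^{\beta-1}|\sin\eta|^{-1-\beta}$, which, multiplied by $h$, yields the desired bound.
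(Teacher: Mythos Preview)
Your proof is correct and follows essentially the same route as the paper. The only cosmetic difference is that where you split each difference into a ``frozen-$a$'' and a ``moving-$a$'' part, the paper argues directly on the composition $t\mapsto g_j(t,a(t))$: for the first bound it applies the mean value theorem to this composition and uses the chain rule, for the second it bounds $|\partial_i g_j(t+h,a(t+h))-\partial_i g_j(t,a(t))|$ by the 2D H\"older constant of $\partial_i g_j$ times $|(h,a(t+h)-a(t))|_2^{\beta-1}$, and for the third it uses the mean value theorem together with $\frac{d}{dt}\Delta_h=\Delta_h\frac{d}{dt}$ (your integral identity $\Delta_h^2\phi=\int_0^h\Delta_h\phi'(\cdot+s)\,ds$ is the same step in slightly different clothing). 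The decomposition of $\Delta_h\phi'$ into the three pieces (i)--(iii) and the bound on each piece are identical to the paper's.
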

\begin{proof}
In view of Lemma~\ref{lem:propa} it holds
\begin{align*}
\sup_{t\in\R}|\Delta_h g_j(t,a(t))| &\lesssim h\cdot \sup_{t\in\R}|\frac{d}{dt} g_j(t,a(t))| \\
&\lesssim h\cdot \Big(  \sup_{t\in\R}| \partial_1g_j(t,a(t))|  + \sup_{t\in\R} |\partial_2g_j(t,a(t))a^\prime(t)| \Big) \\
&\lesssim h\cdot |\sin\eta|^{-1} 2^{-\alpha j}.
\end{align*}
Considering the transformation behavior of the H\"older constant we obtain with Lemma~\ref{lem:propa}
\begin{align*}
\sup_{t\in\R}|\Delta_h \partial_1g_j(t,a(t))| &\lesssim 2^{-j} \sup_{t\in\R}|(h,a(t+h)-a(t))|_2^{\beta-1} \\
&\lesssim 2^{-j} \big( h^{\beta-1} + \sup_{t\in\R}|a(t+h)-a(t)|^{\beta-1} \big)  \\
&\lesssim 2^{-j} h^{\beta-1} |\sin\eta|^{1-\beta}.
\end{align*}
Applying Lemma~\ref{lem:propa}, the mean value theorem and $\frac{d}{dt}\Delta_h=\Delta_h\frac{d}{dt}$ yields
\begin{align*}
\sup_{t\in\R}|\Delta^2_h g_j(t,a(t))| &\lesssim h \cdot \sup_{t\in\R}|\Delta_h \frac{d}{dt}g_j(t,a(t)) |  \\
&=  h\cdot \sup_{t\in\R}|\Delta_h \big(\partial_1g_j(t,a(t)) + \partial_2g_j(t,a(t))a^\prime(t) \big) |  \\
&=  h\cdot \sup_{t\in\R}| \Delta_h \partial_1g_j(t,a(t)) + \Delta_h \partial_2g_j(t,a(t))a^\prime(t+h) + \partial_2g_j(t,a(t))\Delta_ha^\prime(t)  |  \\
&\lesssim  h^{\beta} |\sin\eta|^{1-\beta}2^{-j} +   h^{\beta} |\sin\eta|^{-\beta}2^{-j}
+ \delta_j h^{\beta}  |\sin\eta|^{-1-\beta}2^{-\alpha j}.
\end{align*}
\end{proof}

\subsection{Estimates for $\omega^\eta$}

Similarly, we obtain estimates for the window function $\omega^\eta\in C_0^\infty(\R^2)$, which
in contrast to the functions $g_j$ remains fixed at all scales.
This fact and the smoothness of $\omega$ result in different estimates.

First we state the trivial estimates $\|\omega\|_\infty\lesssim 1$, $\|\partial_1\omega\|_\infty\lesssim 1$, and $\|\partial_2\omega\|_\infty\lesssim 1$.
Next, we apply the forward difference operator $\Delta_{(h,0)}$ to $\omega$.

\begin{lemma}\label{lemapp:basicw}
Let $k\in\N_0$. It holds with implicit constants independent of $h\ge0$
\begin{align*}
\|\Delta^k_{(h,0)} \omega\|_\infty \lesssim h^k  \quad\text{and}\quad
\|\Delta^k_{(h,0)} \partial_1\omega\|_\infty \lesssim h^k.
\end{align*}
\end{lemma}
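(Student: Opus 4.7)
The plan is to derive both bounds directly from the smoothness and compact support of $\omega$ using the integral representation of iterated forward differences. Specifically, by the fundamental theorem of calculus and induction on $k$, any $f$ that is $k$-times continuously differentiable in the first variable satisfies
\[
\Delta^k_{(h,0)} f(x_1,x_2) = \int_0^h\!\!\cdots\!\int_0^h \partial_1^k f(x_1+s_1+\cdots+s_k,x_2)\, ds_1\cdots ds_k.
\]
This formula makes the $h^k$-dependence explicit and reduces each claim to an $L^\infty$-bound on a derivative of $\omega$.

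For $k=0$ the two inequalities are trivial, amounting to $\|\omega\|_\infty\lesssim 1$ and $\|\partial_1\omega\|_\infty \lesssim 1$, both of which hold since $\omega\in C_0^\infty(\R^2)$. For $k\ge 1$, I will apply the integral identity with $f=\omega$ and take the supremum over $(x_1,x_2)\in\R^2$, obtaining
\[
\|\Delta^k_{(h,0)}\omega\|_\infty \le h^k\,\|\partial_1^k\omega\|_\infty.
\]
Because $\partial_1^k\omega$ is continuous and compactly supported, $\|\partial_1^k\omega\|_\infty$ is a finite constant independent of $h$, which yields the first bound. The same reasoning applied to $\partial_1\omega\in C_0^\infty(\R^2)$ gives
\[
\|\Delta^k_{(h,0)}\partial_1\omega\|_\infty \le h^k\,\|\partial_1^{k+1}\omega\|_\infty\lesssim h^k,
\]
which is the second bound.

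There is no real obstacle in this proof: both estimates are essentially immediate consequences of the smoothness and compact support of the window $\omega$, which guarantee that all partial derivatives are uniformly bounded. The only point to note is that the implicit constants depend on $k$ (through the order of the derivative of $\omega$ appearing) but not on $h$, as required by the statement.
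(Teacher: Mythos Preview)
Your proof is correct; the paper does not supply a proof for this lemma at all, treating it as an immediate consequence of $\omega\in C_0^\infty(\R^2)$. Your integral representation of the iterated forward difference is the standard way to make this precise, and it gives exactly the required $h^k$ factor with constants depending only on $k$.
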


Analogous to Lemma~\ref{lemapp:edgeg} we establish estimates along the edge curve.

\begin{lemma}\label{lemapp:edgew}
Assume $|\sin\eta|\ge 2\delta_j$. It holds
\begin{align*}
\sup_{t\in\R}|\Delta_h \omega(t,a(t))| &\lesssim h |\sin\eta|^{-1},  \\
\sup_{t\in\R}|\Delta_h \partial_1\omega(t,a(t))|,\, \sup_{t\in\R}|\Delta_h \partial_2\omega(t,a(t))| &\lesssim h |\sin\eta|^{-1}, \\
\sup_{t\in\R}|\Delta^2_h \omega(t,a(t))| &\lesssim h^2 |\sin\eta|^{-2} + \delta_j h^{\beta} |\sin\eta|^{-1-\beta}.
\end{align*}
\end{lemma}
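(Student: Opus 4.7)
The plan is to treat the three estimates in sequence, mirroring the structure used in Lemma~\ref{lemapp:edgeg} but exploiting that $\omega\in C_0^\infty(\R^2)$ is fixed (so all its derivatives are $O(1)$ uniformly in the scale $j$), the only scale-dependent quantities entering the bounds come from the regularity of the curve-parametrization $a$ supplied by Lemma~\ref{lem:propa}.

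For the first estimate I would apply the one-dimensional mean value theorem to the composition $t\mapsto \omega(t,a(t))$. Its derivative equals $\partial_1\omega(t,a(t)) + \partial_2\omega(t,a(t))\,a'(t)$, which is bounded in sup-norm by $\|\partial_1\omega\|_\infty + \|\partial_2\omega\|_\infty\|a'\|_\infty\lesssim 1 + |\sin\eta|^{-1}\lesssim |\sin\eta|^{-1}$, using $|\sin\eta|\le 1$. This immediately yields $|\Delta_h \omega(t,a(t))|\lesssim h|\sin\eta|^{-1}$. The second estimate follows from exactly the same argument applied to $\partial_1\omega$ and $\partial_2\omega$ in place of $\omega$, since these functions also belong to $C_0^\infty(\R^2)$.

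For the third estimate, I would first use the mean value theorem to write
\[
|\Delta_h^2\omega(t,a(t))|\le h\cdot\sup_{t\in\R}\Big|\Delta_h\tfrac{d}{dt}\omega(t,a(t))\Big|,
\]
and then expand $\tfrac{d}{dt}\omega(t,a(t)) = \partial_1\omega(t,a(t)) + \partial_2\omega(t,a(t))\,a'(t)$ and apply $\Delta_h$ termwise. Using the product-rule identity
\[
\Delta_h\bigl(\partial_2\omega(t,a(t))\,a'(t)\bigr) = \Delta_h\partial_2\omega(t,a(t))\,a'(t+h) + \partial_2\omega(t,a(t))\,\Delta_h a'(t),
\]
the three resulting contributions are controlled by the already-established second estimate (giving $h|\sin\eta|^{-1}$), by $\|\Delta_h\partial_2\omega(\cdot,a(\cdot))\|_\infty\|a'\|_\infty\lesssim h|\sin\eta|^{-2}$, and by $\|\partial_2\omega\|_\infty\|\Delta_h a'\|_\infty\lesssim \delta_j h^{\beta-1}|\sin\eta|^{-1-\beta}$, respectively, where the last bound uses Lemma~\ref{lem:propa}. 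Multiplying by $h$ and absorbing the $h^2|\sin\eta|^{-1}$ term into $h^2|\sin\eta|^{-2}$ (valid since $|\sin\eta|\le 1$) yields the claimed bound.

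The only potentially delicate point is the bookkeeping in the third estimate, specifically ensuring that the $\delta_j$ factor from $\Delta_h a'$ is preserved in the final bound (it cannot be absorbed into the other terms because without this factor the estimate would be false when $\delta_j$ is much smaller than $|\sin\eta|$). Everything else is a direct application of the mean value theorem together with Lemma~\ref{lem:propa} and the uniform smoothness of $\omega$.
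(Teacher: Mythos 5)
The proposal is correct and takes essentially the same approach as the paper, whose proof is simply stated to be ``analogous to the proof of Lemma~\ref{lemapp:edgeg}'': you correctly carry out that analogue, replacing the H\"older estimates for $\partial_i g_j$ by the mean value theorem (available because $\omega\in C_0^\infty$ is fixed and scale-independent) and invoking Lemma~\ref{lem:propa} for $\|a'\|_\infty$ and $\|\Delta_h a'\|_\infty$. The bookkeeping in the third estimate is handled correctly, including absorbing $h^2|\sin\eta|^{-1}$ into $h^2|\sin\eta|^{-2}$ and retaining the $\delta_j$ factor from $\Delta_h a'$.
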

\begin{proof}
This proof is analogous to the proof of Lemma~\ref{lemapp:edgeg}.
%
%
\end{proof}

\subsection{Estimates for $r^\eta$}

Next we analyze the function $r^\eta:\R^2\rightarrow\R$ given by
\[
r^\eta(t,a):=t\cos\eta - a \sin\eta,\quad (t,a)\in\R^2,
\]
which is a component of the modified edge fragment $\widetilde{F}$.
Clearly $r\in C^\infty(\R^2)$. Note however, that $r$ is not compactly supported.
Since $r$ only occurs as a factor in products with the window $\omega$ this does not cause any problems.

Thanks to the smoothness of $r$ we have the following result.

\begin{lemma}\label{lemapp:basicr}
Let $k,m\in\N_0$ and $K\subset\R^2$ a compact set. Then we have
\begin{align*}
\| \Delta^{k}_{(h,0)} r^m \|_{L^\infty(K)} \lesssim h^{k}.
\end{align*}
\end{lemma}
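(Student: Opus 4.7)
The plan is to exploit the explicit polynomial structure of $r^\eta$. Since $r^\eta(t,a) = t\cos\eta - a\sin\eta$ is affine in $t$, the $m$-th power $(r^\eta)^m$ is, at each fixed $a$, a polynomial of degree $m$ in $t$. Consequently, $\partial_1^k (r^\eta)^m$ is a polynomial in $(t,a)$ of degree $m-k$ in $t$ and at most $m$ in $a$ when $k\le m$, and vanishes identically when $k>m$. In either case, its coefficients are (up to binomial factors from the multinomial expansion of $(t\cos\eta - a\sin\eta)^m$) monomials in $\cos\eta$ and $\sin\eta$, hence bounded in absolute value by a constant depending only on $m$.

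First I would invoke the standard integral representation of the $k$-th forward difference,
\[
\Delta_h^k f(t) = \int_0^h\!\!\cdots\!\int_0^h f^{(k)}(t+s_1+\cdots+s_k)\,ds_1\cdots ds_k,
\]
valid for any $f\in C^k(\R)$. Applied pointwise in $a$ to the function $t\mapsto (r^\eta(t,a))^m$, this immediately yields
\[
|\Delta_{(h,0)}^k (r^\eta)^m(t,a)| \;\le\; h^k \sup_{s\in[t,t+kh]}|\partial_1^k (r^\eta)^m(s,a)|.
\]

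For $(t,a)\in K$ and $h$ bounded from above (which is automatic in the applications of this lemma, where $h\asymp 2^{-(1-\alpha)j}\le 1$), the pair $(s,a)$ on the right-hand side lies in a fixed compact enlargement $K'\supset K$ depending only on $K$ and the upper bound on $h$. On $K'$, the polynomial $\partial_1^k(r^\eta)^m$ is uniformly bounded by a constant depending only on $m$, $k$, and $K'$; and since $|\cos\eta|,|\sin\eta|\le 1$, this bound does not depend on $\eta$. This yields $\|\Delta^k_{(h,0)} r^m\|_{L^\infty(K)} \lesssim h^k$, with the case $k>m$ being trivial as both sides vanish (the left-hand side) or we simply use the positive right-hand side.

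The statement is essentially elementary; there is no substantive obstacle beyond bookkeeping with the polynomial coefficients and confirming that the implicit constant is uniform in $\eta$ (immediate from the boundedness of $\cos\eta$ and $\sin\eta$) and in $h$ within the relevant range.
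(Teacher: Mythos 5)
Your proof is correct. The paper itself states Lemma~\ref{lemapp:basicr} without proof, evidently regarding it as elementary, so there is no reference argument to compare against; your write-up supplies the routine justification. The two ingredients you use---that $r^m$ is a polynomial of degree $m$ in the first variable with coefficients that are monomials in $\cos\eta,\sin\eta$ (hence bounded uniformly in $\eta$), and the iterated integral representation
\[
\Delta_{(h,0)}^k f(t,a)=\int_0^h\!\cdots\!\int_0^h \partial_1^k f(t+s_1+\cdots+s_k,a)\,ds_1\cdots ds_k,
\]
---immediately give $|\Delta_{(h,0)}^k r^m|\le h^k\,\sup_{K'}|\partial_1^k r^m|$ on a fixed compact enlargement $K'$ of $K$. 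You correctly flag the only real subtlety: the constant depends on an upper bound for $h$ (through $K'$), and such a bound is indeed always available in the contexts where the lemma is invoked ($h\asymp 2^{-(1-\alpha)j}\le 1$). The $k>m$ case is trivial, as you note, since $\partial_1^k r^m\equiv 0$. Nothing further is needed.
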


Along the edge curve the following estimates hold.
Here $\widetilde{I}(\eta)$ denotes the interval defined in \eqref{eq:interval}.

\begin{lemma}\label{lemapp:r}
Let $|\sin\eta|\ge2\delta_j$. Then we have $\sup_{t\in\widetilde{I}(\eta)}|r(t,a(t))|\lesssim \delta_j$. Moreover, for $h\ge 0$ it holds
\[
\sup_{t\in\R}|\Delta_hr(t,a(t))|\lesssim h \quad\text{and}\quad  \sup_{t\in\R}|\Delta^2_hr(t,a(t))|\lesssim h^{\beta}\delta_j |\sin\eta|^{-\beta}.
\]
\end{lemma}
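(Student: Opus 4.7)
The plan is to reduce all three bounds to a single key identity. Substituting the definition $a(t) = -E_j(u(t))\sin\eta + u(t)\cos\eta$ into $r^\eta(t,a(t)) = t\cos\eta - a(t)\sin\eta$ and then using the implicit relation $t = E_j(u(t))\cos\eta + u(t)\sin\eta$, a direct calculation (using $\sin^2\eta + \cos^2\eta = 1$) yields
\[
r^\eta(t,a(t)) = E_j(u(t)).
\]
This identity has a clean geometric meaning (the first rotated coordinate of the point $(E_j(u),u)^T$ on $\Gamma$ is simply $E_j(u)$) and converts everything into estimates of $E_j\circ u$, for which I may freely use the regularity of $E_j$ together with the properties of $u$ established in the earlier lemma.

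For bound (1), note that $|E_j(u)| \le \delta_j$ on $[-1,1]$ by \eqref{eq:vertstrip}, and on the linear extension the slope is bounded by $\delta_j$; since $\|u'\|_\infty \lesssim |\sin\eta|^{-1}$ and $\widetilde{I}(\eta)$ extends $I(\eta)$ by only $O(\delta_j)$, the image $u(\widetilde{I}(\eta))$ is contained in a fixed bounded neighborhood of $[-1,1]$ (using $\delta_j/|\sin\eta|\le 1/2$), so $|E_j(u(t))| \lesssim \delta_j$. Bound (2) is immediate from the mean value theorem: $\|E_j'\|_\infty\le\delta_j$ and $\|u'\|_\infty\lesssim|\sin\eta|^{-1}$ give $|\Delta_h(E_j\circ u)(t)| \le \delta_j\|u'\|_\infty h \lesssim (\delta_j/|\sin\eta|)h \lesssim h$.

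For bound (3), I would use the integral representation
\[
\Delta_h^2 [E_j\circ u](t) = \int_t^{t+h} \Delta_h\bigl[(E_j'\circ u)\cdot u'\bigr](s)\,ds,
\]
expand the integrand via the discrete product rule
\[
\Delta_h\bigl[(E_j'\circ u)\cdot u'\bigr](s) = \Delta_h(E_j'\circ u)(s)\cdot u'(s+h) + (E_j'\circ u)(s)\cdot \Delta_h u'(s),
\]
and estimate both pieces. The first piece uses $\Hol(E_j',\beta-1)\le\delta_j$ and $\|u'\|_\infty\lesssim|\sin\eta|^{-1}$, giving a bound $\lesssim \delta_j h^{\beta-1}|\sin\eta|^{-\beta}$; the second combines $\|E_j'\|_\infty\le\delta_j$ with the estimate $\|\Delta_h u'\|_\infty\lesssim \delta_j h^{\beta-1}|\sin\eta|^{-1-\beta}$ from the preceding lemma, producing a bound $\lesssim \delta_j^2 h^{\beta-1}|\sin\eta|^{-1-\beta}$, which absorbs into the first via $\delta_j/|\sin\eta|\le 1/2$. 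Integrating over $s\in[t,t+h]$ supplies the final factor of $h$, yielding $h^{\beta}\delta_j|\sin\eta|^{-\beta}$.

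The main obstacle is really just spotting the identity $r^\eta(t,a(t))=E_j(u(t))$; once that is in hand, everything reduces to bookkeeping with the already-established properties of $u$ and $E_j$. No new difficulty arises from the extension of $E_j$ past $[\pm 1]$ since the extension preserves the bounds $\|E_j'\|_\infty\le\delta_j$ and $\Hol(E_j',\beta-1)\le\delta_j$ by matching slopes.
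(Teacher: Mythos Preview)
Your proof is correct and takes a genuinely different route from the paper. The paper never writes down the identity $r^\eta(t,a(t))=E_j(u(t))$; instead it works directly with the expression $r(t,a(t))=t\cos\eta-a(t)\sin\eta$, differentiates to obtain $\cos\eta-a'(t)\sin\eta$, and then invokes the bounds on $a'$ and $\Delta_h a'$ from Lemma~\ref{lem:propa}. For the second difference the paper simply observes $\Delta_h^2 r(t,a(t))=-\sin\eta\,\Delta_h^2 a(t)$ and bounds this via $h|\sin\eta|\,\|\Delta_h a'\|_\infty$.

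Your identity $r^\eta(t,a(t))=E_j(u(t))$ is the nicer observation: it makes the geometry explicit (the first rotated coordinate of a point on $\Gamma$ is just its original $x_1$-coordinate $E_j(u)$) and reduces everything to estimates on $E_j\circ u$, for which you can appeal directly to the regularity of $E_j$ and the lemma on $u$, bypassing the repackaged estimates on $a$. The two arguments are of course equivalent at bottom—differentiating your identity recovers $E_j'(u)u'=\cos\eta-a'\sin\eta$—but yours is more transparent and would shorten the surrounding appendix if adopted throughout. The paper's approach has the minor advantage of quoting only Lemma~\ref{lem:propa}, which it has already stated, rather than reaching back to both the $u$-lemma and the properties of $E_j$.
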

\begin{proof}
For every $t\in\R$ the point $(t,a(t))\in\R^2$ in rotated coordinates lies on the (extended) edge curve $\Gamma$.
We know that the function $E_j$ deviates little from zero and obeys $\sup_{|x_2|\le 1}|E_j(x_2)|\le \delta_j \lesssim 2^{-j(1-\alpha)}$
according to \eqref{eq:vertstrip}. Furthermore, the slope of $E_j$ outside of $[-1,1]$ is constant and bounded by $\delta_j$.
This yields the estimate $\sup_{t\in \widetilde{I}(\eta)} |r(t,a(t))|\lesssim \delta_j$.

The other estimates follow from Lemma~\ref{lem:propa}.
In view of this lemma we conclude
\[
\sup_{t\in\R}|\Delta_hr(t,a(t))|\le h\cdot \sup_{t\in\R}|\cos\eta - a^\prime(t)\sin\eta| \lesssim h |\sin\eta|^{-1} |\sin\eta| =h,
\]
and
\[
\sup_{t\in\R}|\Delta^2_hr(t,a(t))|\le h |\sin\eta| \|\Delta_ha^\prime\|_{\infty} \lesssim h^{\beta}\delta_j |\sin\eta|^{-\beta}.
\]
\end{proof}

\subsection{Estimates for $\widetilde{G}^\eta$}

The function $\widetilde{G}^\eta$ is the rotated version of the function
\[
\widetilde{G}(t,a)=r(t,a)^m G(t,a)=r(t,a)^m g_j(t,a)\omega(t,a),\quad (t,a)\in\R^2,
\]
which is a composition of the `elementary functions' discussed before.
Hence we can apply the previous estimates to obtain estimates for $\widetilde{G}^\eta$.

\begin{lemma}\label{lemapp:edgeG}
Let $|\sin\eta|\ge2\delta_j$.
Let $\widetilde{G}^\eta(t,a)=r^\eta(t,a)^m G^\eta(t,a)$ for $(t,a)\in\R^2$, $m\in\N$, $m\neq0$.
Then there are the estimates
\begin{align*}
\sup_{t\in\R}|\widetilde{G}(t,a(t))|&\lesssim \delta_j^m, &
\sup_{t\in\R}|\Delta_h\widetilde{G}(t,a(t))| &\lesssim \delta_j^{m-1}h, \\
\sup_{t\in\R}|\partial_1\widetilde{G}(t,a(t))|&\lesssim \delta_j^{m-1}, &
\sup_{t\in\R}|\partial_2\widetilde{G}(t,a(t))|&\lesssim \delta_j^{m-1}|\sin\eta|.
\end{align*}
\end{lemma}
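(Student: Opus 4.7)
The plan is to derive each bound from the factorization $\widetilde{G}(t,a)=r(t,a)^m\,g_j(t,a)\,\omega(t,a)$ by applying the Leibniz rule (for derivatives and for finite differences) and feeding in the elementary estimates already established in Lemmas~\ref{lemapp:basicg}, \ref{lemapp:basicw}, \ref{lemapp:r}, \ref{lemapp:edgeg}, and \ref{lemapp:edgew}. The decisive input is the first estimate of Lemma~\ref{lemapp:r}, namely $|r(t,a(t))|\lesssim \delta_j$ on $\widetilde{I}(\eta)$, which contains the support of $\omega(\cdot,a(\cdot))$. This small factor is what produces the powers $\delta_j^{m}$ and $\delta_j^{m-1}$ in the conclusions.

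For the pointwise bound I simply write $|\widetilde{G}(t,a(t))|\le |r(t,a(t))|^{m}\|g_j\|_{\infty}\|\omega\|_{\infty}$, and invoke $|r(t,a(t))|\lesssim\delta_j$ on the support of $\omega(\cdot,a(\cdot))$ (and $\widetilde{G}=0$ off that set). For the partial derivatives I expand
\[
\partial_i\widetilde{G}=m\,r^{m-1}(\partial_i r)\,g_j\omega + r^{m}(\partial_i g_j)\,\omega + r^{m}g_j\,(\partial_i\omega),
\]
and use $\partial_1 r=\cos\eta$, $\partial_2 r=-\sin\eta$. The leading terms give $\lesssim \delta_j^{m-1}$ for $i=1$ and $\lesssim \delta_j^{m-1}|\sin\eta|$ for $i=2$. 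The other two terms carry the prefactor $r^{m}\lesssim \delta_j^{m}$; for $i=1$ this is already $\le\delta_j^{m-1}$, and for $i=2$ the assumption $|\sin\eta|\ge 2\delta_j$ yields $\delta_j^{m}\lesssim\delta_j^{m-1}|\sin\eta|$.

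For $\Delta_h\widetilde{G}(t,a(t))$ I combine the finite-difference product rule with the binomial expansion
\[
\Delta_h(r^{m})(t,a(t))=\sum_{k=1}^{m}\binom{m}{k}r(t,a(t))^{m-k}\bigl(\Delta_h r(t,a(t))\bigr)^{k}.
\]
By Lemma~\ref{lemapp:r} the $k=1$ term is $\lesssim \delta_j^{m-1}h$, and the $k\ge 2$ terms contribute $\delta_j^{m-k}h^{k}$, which are absorbed because in every application of the lemma $h$ and $\delta_j$ are comparable (both of order $2^{-(1-\alpha)j}$), so $h/\delta_j$ is a bounded constant. The cross terms $r^{m}\cdot\Delta_h g_j\cdot\omega(\cdot+h,\cdot)$ and $r^{m}g_j\cdot\Delta_h\omega$ inherit the prefactor $r^{m}\lesssim\delta_j^{m}$; using Lemmas~\ref{lemapp:edgeg} and~\ref{lemapp:edgew} the difference factors are $\lesssim h|\sin\eta|^{-1}$, and $|\sin\eta|\ge 2\delta_j$ again converts the extra $\delta_j$ to a bound $\lesssim \delta_j^{m-1}h$.

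No substantial obstacle is anticipated; the whole argument is bookkeeping on the Leibniz and binomial expansions, and the only mildly delicate step is verifying that the higher-order terms $\delta_j^{m-k}h^{k}$ in the expansion of $\Delta_h(r^{m})$ do not dominate, which follows from the standing relation $h\asymp\delta_j$ built into the applications of the lemma.
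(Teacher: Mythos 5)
Your expansion of $\partial_i\widetilde{G}$ via the Leibniz rule, together with $\partial_1 r=\cos\eta$, $\partial_2 r=-\sin\eta$, $|r(t,a(t))|\lesssim\delta_j$ on the support, and $\|G\|_\infty,\|\partial_iG\|_\infty\lesssim 1$, is exactly what the paper does, and your use of $|\sin\eta|\ge 2\delta_j$ to convert the spare $\delta_j$ in the $\partial_2$-term is correct. The paper's own proof stops with the derivative formulas and leaves the $\Delta_h$ estimate implicit; you fill that in, but there is a gap in how you dispose of the higher-order terms of the binomial expansion of $\Delta_h(r^m)$.

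You bound the $k$-th binomial term by $\delta_j^{m-k}h^k$ and absorb the $k\ge 2$ terms by claiming that ``$h$ and $\delta_j$ are comparable, so $h/\delta_j$ is a bounded constant,'' i.e.\ $h\lesssim\delta_j$. That direction is not available: $\delta_j=2^{-j(1-\alpha)}\Hol(E^\prime,\beta-1)$, so the uniform relation over the cartoon class $\cE^\beta(\R^2;\nu)$ is only $\delta_j\lesssim h$; the H\"older constant $\Hol(E^\prime,\beta-1)$ has an upper bound over the class but no lower bound (it vanishes for a locally straight edge), so a constant of size $\asymp 1/\Hol(E^\prime,\beta-1)$ would destroy the uniformity of the implicit constant. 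The fix is to use, for the $k$ factors of $\Delta_h r$, one bound $|\Delta_h r(t,a(t))|\lesssim h$ from Lemma~\ref{lemapp:r} and $k-1$ bounds $|\Delta_h r(t,a(t))|\le |r(t+h,a(t+h))|+|r(t,a(t))|\lesssim\delta_j$ (from $\sup_{\widetilde{I}(\eta)}|r(\cdot,a(\cdot))|\lesssim\delta_j$), giving $|\Delta_h r|^k\lesssim h\,\delta_j^{k-1}$ and hence $\delta_j^{m-k}|\Delta_h r|^k\lesssim h\,\delta_j^{m-1}$ for every $k\ge 1$, with no assumption on $h/\delta_j$. Cleaner still is the telescoping identity
\[
r(t+h,a(t+h))^m-r(t,a(t))^m=\big(r(t+h,a(t+h))-r(t,a(t))\big)\sum_{k=0}^{m-1}r(t+h,a(t+h))^k\,r(t,a(t))^{m-1-k},
\]
which yields $|\Delta_h(r^m)(t,a(t))|\lesssim h\,\delta_j^{m-1}$ in one line. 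The remaining cross terms with $\Delta_h g_j$ and $\Delta_h\omega$, carrying the prefactor $r^m\lesssim\delta_j^m$ and then using $|\sin\eta|\ge 2\delta_j$, you handle correctly.
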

\begin{proof}
We calculate for $(t,a)\in\R^2$
\begin{flalign*}
&&&\partial_1\widetilde{G}(t,a)=\partial_1 \big( r(t,a)^mG(t,a) \big) = (\cos\eta) m r(t,a)^{m-1}  G(t,a) + r(t,a)^m \partial_1G(t,a), & \\
\text{and} &&&\partial_2\widetilde{G}(t,a)=\partial_2 \big( r(t,a)^mG(t,a) \big) = -(\sin\eta) m r(t,a)^{m-1}  G(t,a) + r(t,a)^m \partial_2G(t,a). &
\end{flalign*}
The assertion is then a consequence of the following facts. It holds $\|G\|_\infty\lesssim 1$ and $|r(t,a(t))|\le\delta_j$ for all $t\in I(\eta)$.
Further, for $t\notin I(\eta)$ the expressions $G(t,a(t))$, $\partial_1G(t,a(t))$, and $\partial_2G(t,a(t))$ vanish.
\end{proof}

\subsection{Auxiliary Lemmas}


In this last subsection we prove three important technical lemmas, which are the foundation of the proof of Lemma~\ref{lem:central2}.
We refer to Lemma~\ref{lem:central2} for an explanation of the terminology.

\begin{lemma}\label{lemapp:main1}
Let $\widetilde{G}(x)=r(x)^m G(x)$ for $x\in\R^2$ and $m\in\N_0$. Further, let $h\asymp 2^{-j(1-\alpha)}$.
The function $T:\R\rightarrow\R$ defined by $T(t)=a^\prime(t)\Delta_h \widetilde{G}(t,a(t))$ then admits a decomposition $(T^k_1,T^k_2)_k$
of the form $(\ast)$ of length $(m+1)$ with the
estimates
\begin{align*}
\|T^{k}_1\|_\infty &\lesssim h^{m}h^{\beta} |\sin\eta|^{-1-\beta}, \quad k=0,\ldots,m+1, \\
\|T^{k}_2\|_\infty &\lesssim h^{m}|\sin\eta|^{-1}, \quad k=0,\ldots,m,
\end{align*}
and subject to the condition $\supp T^{k}_i \subset \widetilde{I}(\eta)$.
\end{lemma}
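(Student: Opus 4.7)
The plan is to exploit the key identity $r^\eta(t,a(t))=E_j(u(t))$, which follows by substituting the defining relations $t=E_j(u)\cos\eta+u\sin\eta$ and $a(t)=-E_j(u)\sin\eta+u\cos\eta$ into $r^\eta(t,a)=t\cos\eta-a\sin\eta$. Since $|E_j|\le \delta_j \asymp h$, this gives immediately $|r(t,a(t))|\lesssim h$ and, combining $\|E_j'\|_\infty\le \delta_j$ with $\|u'\|_\infty \lesssim |\sin\eta|^{-1}$ via the mean value theorem, $|\Delta_h r(t,a(t))|\lesssim \delta_j\, h\,|\sin\eta|^{-1}\lesssim h^2 |\sin\eta|^{-1}$. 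A straightforward induction on the exponent then yields $|\Delta_h r^k(t,a(t))|\lesssim h^{k+1}|\sin\eta|^{-1}$. This smallness along the curve is what drives the entire scheme.

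Next I would apply the discrete Leibniz rule $\Delta_h(UV) = \Delta_h U\cdot V(\cdot+h)+U\cdot \Delta_h V$ to $\Delta_h \widetilde{G} = \Delta_h(r^m g_j\omega)$, producing three summands depending on which factor is differenced. Multiplying by $a'(t)$, I would set
\[
T^0_1(t) := a'(t)\, r^m(t,a(t))\, \Delta_h g_j(t,a(t))\, \omega(t+h,a(t+h)),
\]
and let $T^0_2$ be the sum of the other two terms (those carrying $\Delta_h r^m$ or $\Delta_h\omega$). The bound $\|T^0_1\|_\infty \lesssim h^m h^{\beta}|\sin\eta|^{-1-\beta}$ follows from the fractional Hölder estimate $\|\Delta_h g_j\|_\infty\lesssim h^\beta 2^{-\alpha j}$ of Lemma~\ref{lemapp:basicg}, $|r^m(t,a(t))|\lesssim h^m$, $|a'|\lesssim |\sin\eta|^{-1}$, and the absorption of the residual factor $2^{-\alpha j}h^{1-\beta}|\sin\eta|^{\beta-1}\lesssim 1$ valid for $\alpha\ge \tfrac12$. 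Meanwhile $\|T^0_2\|_\infty \lesssim h^m|\sin\eta|^{-1}$ follows directly from the refined estimate on $\Delta_h r^m$ above together with $\|\Delta_h\omega\|_\infty\lesssim h$.

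The inductive step applies $\Delta_h$ once more to $T^k_2$ and expands each summand via Leibniz again. Every new summand contains one extra difference falling on one of $a'$, $r^{m'}$, $g_j$, or $\omega$. A difference landing on $g_j$ produces a gain of $h^\beta 2^{-\alpha j}$ (by Lemma~\ref{lemapp:basicg}) and a difference landing on $a'$ produces a gain of $\delta_j h^{\beta-1}|\sin\eta|^{-1-\beta}$ (by Lemma~\ref{lem:propa}); in either case the summand is placed into $T^{k+1}_1$ and satisfies the required bound. Differences landing on $r^{m'}$ or $\omega$ contribute only an extra factor of $h|\sin\eta|^{-1}$ or $h\delta_j|\sin\eta|^{-1}$ and are carried forward in $T^{k+1}_2$; each $\Delta_h$ applied to $r^{m'}$ consumes one power of $r$ so that after at most $m+1$ rounds the $r^m$ factor has been entirely used up and the chain terminates with $T^{m+1}_2=0$. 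Support in $\widetilde{I}(\eta)$ is preserved throughout since the window $\omega$ localizes all pieces and the successive shifts by $h\lesssim|\sin\eta|$ are absorbed by the $C\delta_j$-padding in the definition of $\widetilde{I}(\eta)$.

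The main obstacle will be the combinatorial bookkeeping: at depth $k$ the Leibniz expansion of $\Delta_h T^k_2$ produces a growing collection of summands, each carrying a different distribution of differences across the four factors $a'$, $r^{m'}$, $g_j$, $\omega$, and one has to verify that every such summand can be cleanly classified as either terminal (with the tight $h^m h^\beta|\sin\eta|^{-1-\beta}$ bound) or as a factor-of-$h$ improvement on the predecessor (with the $h^m|\sin\eta|^{-1}$ bound). The accounting rests on the balance $h\asymp \delta_j\asymp 2^{-(1-\alpha)j}$ together with the condition $\alpha\ge \tfrac12$; outside this regime the factor $2^{-\alpha j}h^{1-\beta}|\sin\eta|^{\beta-1}$ need not be bounded, and the scheme breaks down. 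Once the decomposition is in place, the $L^\infty$ estimates translate into the $L^2$ estimates needed in Lemma~\ref{lem:central2} via $|\supp T^k_i|\lesssim |\widetilde{I}(\eta)|\lesssim |\sin\eta|$.
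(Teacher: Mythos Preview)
Your overall plan (Leibniz-expand $\widetilde G=r^m g_j\omega$, sort the pieces into ``terminal'' and ``surviving'' according to which factor the difference hits) is in the right spirit, and the identity $r^\eta(t,a(t))=E_j(u(t))$ together with the resulting bound $|\Delta_h r^k(t,a(t))|\lesssim h^{k+1}|\sin\eta|^{-1}$ is a nice observation. But there are two problems.

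First, the termination argument is incomplete. You claim the chain ends after $m+1$ rounds because ``each $\Delta_h$ applied to $r^{m'}$ consumes one power of $r$''. But a difference can equally well land on $\omega$, and that does \emph{not} consume any factor of $r$. A term of the form $a'(t)\,r^m(t,a(t))\,g_j(t,a(t))\,\Delta_h^k\omega(t,a(t))$ survives your classification rule at every level, so by your stated mechanism the decomposition never terminates. The repair is to recognize that a \emph{second} difference on either $\omega$ or on a single factor $r$ is already terminal: from Lemma~\ref{lemapp:edgew} one has $|\Delta_h^2\omega(t,a(t))|\lesssim h^2|\sin\eta|^{-2}+\delta_j h^\beta|\sin\eta|^{-1-\beta}$, and $h^2|\sin\eta|^{-2}\le h^\beta|\sin\eta|^{-\beta}$ because $h\lesssim|\sin\eta|$ and $\beta\le 2$; similarly $|\Delta_h^2 r(t,a(t))|\lesssim \delta_j h^\beta|\sin\eta|^{-\beta}$ from Lemma~\ref{lemapp:r}. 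With this, a pigeonhole count (at most $m$ copies of $r$ plus one $\omega$, each absorbing at most one non-terminal difference) does give termination at level $m+1$, but you do not state this and your proposal as written carries the twice-differenced $\omega$ forward.

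Second, a minor slip: the estimate you quote as ``$\|\Delta_h g_j\|_\infty\lesssim h^\beta 2^{-\alpha j}$ of Lemma~\ref{lemapp:basicg}'' is not what that lemma says for a \emph{first} difference of $g_j$; the correct bound along the curve is $|\Delta_h g_j(t,a(t))|\lesssim h|\sin\eta|^{-1}2^{-\alpha j}$ from Lemma~\ref{lemapp:edgeg}. Fortunately the absorption $2^{-\alpha j}h^{1-\beta}|\sin\eta|^{\beta-1}\lesssim 1$ you write down is exactly what is needed to convert this correct bound into the target $h^\beta|\sin\eta|^{-1-\beta}$, so the conclusion for $T^0_1$ survives.

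The paper avoids all of this bookkeeping by inducting on $m$: it peels off one factor $r$ at a time via $\widetilde T=a'\Delta_h r\cdot\widetilde G(\cdot+h)+r(t,a(t))\,T$, invokes the hypothesis on $T$, and shows that multiplication by $r(t,a(t))$ (size $\lesssim\delta_j$) together with the non-terminal $\Delta_h r$ (size $\lesssim h$) and the terminal $\Delta_h^2 r$ adds exactly one level to the decomposition. This sidesteps the combinatorial classification entirely. Your direct approach can be made to work after the two fixes above, but the induction is considerably cleaner.
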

An explanation of the terminology is given in Lemma~\ref{lem:central2}.
\begin{proof}
    We prove this by induction on $m$.
    If $m=0$ we put
    $T^0_1=T_{21}$, $T^0_2=T_{22}$, $T^1_1=\Delta_h T_{22}$, and $T^1_2=0$, with entities $T_{21}$ and $T_{22}$ as defined in the proof of Lemma~\ref{lem:central1}.
    The estimates for $T_{21}$ and $\Delta_h T_{22}$ have been carried out there.
    In view of $h\lesssim \sin\eta$ we can further estimate
    \[
    \|T^0_2\|_\infty=\|T_{22}\|_\infty\lesssim h|\sin\eta|^{-2} \lesssim h^0|\sin\eta|^{-1}.
    \]
    This proves the case $m=0$.

    We proceed with the induction and assume that the lemma is true for $T$, where $m\in\N_0$ is fixed but arbitrary.
    The associated decomposition of length $m+1$ shall be denoted by $(T^k_1,T^k_2)_k$.
    We will show that under this hypothesis also the function $\widetilde{T}(t):=a^\prime(t)\Delta_h \widetilde{G}_+(t,a(t))$,
    where $\widetilde{G}_+(x)=r(x)^{m+1}G(x)$ for $x\in\R^2$, admits a decomposition $(\tilde{T}^k_1,\tilde{T}^k_2)_k$
    of the form $(\ast)$ of length $(m+2)$ with the desired properties.

    First we can decompose as follows,
    \begin{align*}
    \widetilde{T}(t)&=a^\prime(t) \Delta_h\big( r(t,a(t)) \widetilde{G}(t,a(t))\big)  \\
    &= a^\prime(t) \Delta_hr(t,a(t)) \widetilde{G}(t+h, a(t+h)) + r(t,a(t)) a^\prime(t)\Delta_h\widetilde{G}(t, a(t)) \\
    &=  \big[ r(t,a(t)) T^0_1(t) \big] + \big[ a^\prime(t) \Delta_hr(t,a(t)) \widetilde{G}(t+h, a(t+h)) + r(t,a(t)) T^0_2(t)\big] \\
    &=: \tilde{T}^0_1(t)+ \tilde{T}^0_2(t).
    \end{align*}
    In view of the properties of $T^0_1$ and Lemma~\ref{lemapp:r} we see that
    the function $ \tilde{T}^0_1$ satisfies the assertion. The estimate
     \[
     \| \tilde{T}^0_2 \|_\infty \lesssim \|a^\prime\|_\infty  \sup_{t\in\R}|\Delta_h r(t,a(t))| \sup_{t\in\R}|\widetilde{G}(t,a(t))| \lesssim  |\sin\eta|^{-1} \cdot h \cdot \delta_j^{m},
     \]
     where Lemmas~\ref{lem:propa}, \ref{lemapp:r} and \ref{lemapp:edgeG} were used, shows the claim also for $\tilde{T}^0_2 $.

    We take another forward difference of the component $\tilde{T}^0_2$ and obtain
    \beqn
    \Delta_h \tilde{T}^0_2(t) &=&  \Delta_h a^\prime(t) \Delta_hr(t+h,a(t+h)) \widetilde{G}(t+2h, a(t+2h))   +  a^\prime(t) \Delta^2_hr(t,a(t)) \widetilde{G}(t+2h, a(t+2h)) \\
    && + \Delta_hr(t,a(t)) a^\prime(t) \Delta_h\widetilde{G}(t+h, a(t+h))
    + \Delta_hr(t,a(t)) T^0_2(t+h) + r(t,a(t)) \Delta_hT^0_2(t) \\
    &=&  \Delta_h a^\prime(t) \Delta_hr(t+h,a(t+h)) \widetilde{G}(t+2h, a(t+2h))   +  a^\prime(t) \Delta^2_hr(t,a(t)) \widetilde{G}(t+2h, a(t+2h)) \\
    && + \Delta_hr(t,a(t))( T^0_1(t+h) + T^0_2(t+h) ) - \Delta_hr(t,a(t))\Delta_ha^\prime(t)\Delta_h\widetilde{G}(t+h, a(t+h)) \\
    && + \Delta_hr(t,a(t)) T^0_2(t+h) + r(t,a(t)) T^1_1(t) + r(t,a(t)) T^1_2(t) \\
    &=& \big[ \Delta_h a^\prime(t) \Delta_hr(t+h,a(t+h)) \widetilde{G}(t+2h, a(t+2h))   +  a^\prime(t) \Delta^2_hr(t,a(t)) \widetilde{G}(t+2h, a(t+2h))\\
    && - \Delta_hr(t,a(t))\Delta_ha^\prime(t)\Delta_h\widetilde{G}(t+h, a(t+h)) + r(t,a(t)) T^1_1(t)  +  \Delta_hr(t,a(t)) T^0_1(t+h) \big] \\
    && + \big[ 2\Delta_hr(t,a(t)) T^0_2(t+h)  + r(t,a(t)) T^1_2(t) \big] \\
    &=:&\tilde{T}^1_1(t)+ \tilde{T}^1_2(t).
    \eeqn
    For $\tilde{T}^1_1$ we check directly
    \begin{align*}
    &\sup_{t\in\R}|\Delta_h a^\prime(t) \Delta_hr(t,a(t)) \widetilde{G}(t+h, a(t+h))| \lesssim h^{\beta-1}\delta_j |\sin\eta|^{-1-\beta} h h^{m}
    = h^{m+1} h^{\beta} |\sin\eta|^{-1-\beta}, \\
    &\sup_{t\in\R}|a^\prime(t) \Delta^2_hr(t,a(t)) \widetilde{G}(t+2h, a(t+2h))| \lesssim |\sin\eta|^{-1}   h^{\beta}\delta_j |\sin\eta|^{-\beta}   h^{m}
    = h^{m+1} h^{\beta} |\sin\eta|^{-1-\beta}, \\
    &\sup_{t\in\R}| \Delta_hr(t,a(t))\Delta_ha^\prime(t)\Delta_h\widetilde{G}(t+h, a(t+h))| \lesssim \delta_j^m h^\beta h |\sin\eta|^{-1-\beta} \lesssim h^{m+1} h^{\beta} |\sin\eta|^{-1-\beta}.
    \end{align*}
    The estimates for the remaining two terms are obvious. Hence $\tilde{T}^1_1$ fulfills the desired properties.

    For $\tilde{T}^1_2$ we use the induction hypothesis and Lemma~\ref{lemapp:r} to obtain
    \[
    \|\tilde{T}^1_2\|_\infty \lesssim \sup_{t\in\R} |r(t,a(t))T^{1}_2(t)| + \sup_{t\in\R}|\Delta_hr(t,a(t))T^{0}_2(t+h)| \lesssim h^{m+1}|\sin\eta|^{-1}.
    \]
    Moving forward, this procedure yields terms for $k=1,\ldots,m+1$,
    \beqn
    \tilde{T}^{k+1}_1(t)&=& r(t,a(t))T^{k+1}_1(t) + (k+1)\Delta_hr(t+h,a(t+h))T^{k}_1(t+h)  \\
    && +  (k+1)\Delta^2_hr(t,a(t))T^{k-1}_2(t+h) +  (k+1)\Delta^2_hr(t,a(t))T^{k}_2(t+h),   \\
    \tilde{T}^k_2(t)&=& r(t,a(t))T^{k}_2(t) + (k+1)\Delta_hr(t,a(t))T^{k-1}_2(t+h),
    \eeqn
    which satisfy the desired estimates. Here we put $T_1^{m+2}=T_2^{m+2}=0$ for convenience. Indeed, using the induction assumptions, we obtain
    \beqn
    \|\tilde{T}^{k+1}_1\|_\infty &\lesssim&  \sup_{t\in\R}|r(t,a(t))T^{k+1}_1(t)| +  \sup_{t\in\R}|\Delta_hr(t+h,a(t+h))T^{k}_1(t+h)|  \\
    &&+  \sup_{t\in\R}|\Delta^2_hr(t,a(t))T^{k-1}_2(t+h)| +  \sup_{t\in\R}|\Delta^2_hr(t,a(t))T^{k}_2(t+h)| \lesssim h^{m+1}h^{\beta} |\sin\eta|^{-1-\beta}     , \\
    \|\tilde{T}^k_2\|_\infty &\lesssim&   \sup_{t\in\R}|r(t,a(t))T^{k}_2(t)| +  \sup_{t\in\R}|\Delta_hr(t,a(t))T^{k-1}_2(t+h)| \lesssim h^{m+1}|\sin\eta|^{-1}.
    \eeqn
    Note, that $T_2^{m+1}=T_1^{m+2}=T_2^{m+2}=0$. Hence, for $k=m+1$ these expressions read
    \begin{align*}
    \tilde{T}^{m+2}_1(t)&= (m+2)\Delta_hr(t+h,a(t+h))T^{m+1}_1(t+h) +  (m+2)\Delta^2_hr(t,a(t))T^{m}_2(t+h),   \\
    \tilde{T}^{m+1}_2(t)&= (m+2)\Delta_hr(t,a(t))T^{m}_2(t+h).
    \end{align*}
    Since $\Delta_h \tilde{T}^{m+1}_2=\tilde{T}^{m+2}_1$ we have $\tilde{T}^{m+2}_2=0$ and the proof is finished.
\end{proof}

The following Lemma~\ref{lemapp:main2} is in the same spirit as Lemma~\ref{lemapp:main1}.

\begin{lemma}\label{lemapp:main2}
Let $\widetilde{G}(x)=r(x)^m G(x)$ for $x\in\R^2$, $m\in\N_0$, and $h\asymp2^{-j(1-\alpha)}$.
Then the function $S:\R\rightarrow\R$ defined by $S(t)=a^\prime(t) \Delta_h\partial_1\widetilde{G}(t,a(t)) $ 
admits a decomposition $(S^k_1,S^k_2)_k$ 
of the form $(\ast)$ of length $m+1$ with estimates
\begin{align*}
    \|S^{k}_1\|_\infty&\lesssim h^{m-1}h^{\beta} |\sin\eta|^{-1-\beta}, \quad k=0,\ldots,m+1, \\
    \|S^{k}_2\|_\infty&\lesssim h^{m-1}|\sin\eta|^{-1}, \quad k=0,\ldots,m.
\end{align*}
Moreover, these functions can be chosen such that $\supp S^{k}_i \subset\widetilde{I}(\eta)$.
\end{lemma}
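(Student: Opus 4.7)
The plan is to follow the exact same scheme as in Lemma~\ref{lemapp:main1}, but with $\partial_1\widetilde{G}$ in place of $\widetilde{G}$. First I would set up the induction on $m$.

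For the base case $m=0$, we have $\widetilde{G}=G=g_j\omega$, so by the product rule
\[
S(t)=a'(t)\Delta_h\bigl(\partial_1 g_j\cdot\omega+g_j\cdot\partial_1\omega\bigr)(t,a(t)).
\]
Expanding each $\Delta_h$ of a product in the usual way, we collect the summands that only involve a single application of $\Delta_h$ on a component into $S^0_1$ and call the remaining (cleaner) pieces $S^0_2$. The estimates from Lemmas~\ref{lemapp:basicg}, \ref{lemapp:basicw}, \ref{lemapp:edgeg}, \ref{lemapp:edgew}, together with $\|a'\|_\infty\lesssim|\sin\eta|^{-1}$ and $\|\Delta_h a'\|_\infty\lesssim\delta_j h^{\beta-1}|\sin\eta|^{-1-\beta}$ from Lemma~\ref{lem:propa}, give exactly the target bounds $\|S^0_1\|_\infty\lesssim h^{-1}h^\beta|\sin\eta|^{-1-\beta}$ and $\|S^0_2\|_\infty\lesssim h^{-1}|\sin\eta|^{-1}$ (recall $\delta_j\lesssim h$ and $h\lesssim|\sin\eta|$). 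The further decomposition of $\Delta_h S^0_2$ is then carried out by the same product-rule expansion, closing the chain after $m+1=1$ steps.

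For the inductive step, assume the claim holds for some $m\geq 0$ with decomposition $(S^k_1,S^k_2)_k$, and consider $\widetilde{G}_+(x)=r(x)^{m+1}G(x)=r(x)\widetilde{G}(x)$. The key algebraic identity is
\[
\partial_1\widetilde{G}_+ = (\cos\eta)\,m r^{m}\,G\cdot\mathbf{1}+r\cdot\partial_1\widetilde{G},
\]
so that $\Delta_h\partial_1\widetilde{G}_+$ splits into a piece governed by Lemma~\ref{lemapp:main1} (applied to $r^m G$, which gains an extra $h^m$) and a piece of the form $\Delta_h(r\cdot\partial_1\widetilde{G})$. Evaluating at $(t,a(t))$ and multiplying by $a'(t)$, the latter expands via the discrete Leibniz rule into the induction term $r(t,a(t))S(t)$ plus a correction $a'(t)\Delta_h r(t,a(t))\cdot\partial_1\widetilde{G}(t+h,a(t+h))$. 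The factor $r(t,a(t))$ contributes $\delta_j\lesssim h$ on the support $\widetilde I(\eta)$ (Lemma~\ref{lemapp:r}), and $\Delta_h r$ contributes $h$ (Lemma~\ref{lemapp:r}), so at every stage an additional $h$ is extracted, upgrading the previous bounds by exactly one power of $h$. Iterating $\Delta_h$ on the new $S^k_2$, each step either produces a fresh factor of $r$ (again contributing $\delta_j$) or a factor of $\Delta_h r$ or $\Delta^2_h r$ (contributing $h$ or $h^\beta\delta_j|\sin\eta|^{-\beta}$, which is absorbed into $S^k_1$), mirroring the bookkeeping in Lemma~\ref{lemapp:main1}.

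The main obstacle will be the same as in Lemma~\ref{lemapp:main1}: keeping track of the many product-rule terms that appear under repeated applications of $\Delta_h$, and being careful to place each into the correct slot ($S^k_1$ or $S^k_2$) so that (i) every $S^k_1$ has the claimed $L^\infty$ bound, (ii) the chain $\Delta_h S^k_2=S^{k+1}_1+S^{k+1}_2$ terminates after exactly $m+1$ steps with $S^{m+1}_2\equiv 0$, and (iii) support is retained in $\widetilde I(\eta)$. The only genuinely new ingredient compared to Lemma~\ref{lemapp:main1} is that the starting point carries a derivative $\partial_1$, which costs a factor $2^{-\alpha j}\lesssim h^\beta\cdot h^{-1}$ on $g_j$ (via Lemma~\ref{lemapp:basicg}) and a factor $1$ on $\omega$; this shift is exactly what converts the $h^m$ factor of Lemma~\ref{lemapp:main1} into the $h^{m-1}$ factor here.
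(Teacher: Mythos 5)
Your overall strategy matches the paper's: induction on $m$, writing $\widetilde{G}_+=r\widetilde{G}$, expanding $\partial_1\widetilde{G}_+$ via the product rule, applying the discrete Leibniz rule at $(t,a(t))$, and invoking Lemma~\ref{lemapp:main1} for the $\cos\eta\,\widetilde{G}$ piece. However, there are two issues worth flagging.

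First, your ``key algebraic identity'' has a spurious factor of $m$. Since $\widetilde{G}_+=r\cdot\widetilde{G}$ and $\partial_1 r=\cos\eta$, the correct formula is simply
$\partial_1\widetilde{G}_+=\cos\eta\cdot\widetilde{G}+r\cdot\partial_1\widetilde{G}$,
which is what the paper uses; the $m$-factor version you wrote corresponds to expanding $\partial_1(r^m)$ directly, which breaks the inductive structure, and is also missing the $+1$. Relatedly, your explanation of where the $h^{m-1}$ (rather than $h^m$) comes from -- that ``$\partial_1$ costs a factor $2^{-\alpha j}$ on $g_j$'' -- is backwards: $2^{-\alpha j}\lesssim h^{\beta-1}$ is a \emph{small} factor, so it cannot account for the bound being worse by a factor $h^{-1}$. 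The loss actually comes from the $\cos\eta\,\widetilde{G}$ term above: differentiating $r^{m+1}$ drops one power of $r$, and on $\widetilde{I}(\eta)$ each $r$ is worth $\delta_j\lesssim h$, so losing one $r$ is worth $h^{-1}$.

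Second, and more substantively, your treatment of the iterated chain only tracks the $r\cdot\partial_1\widetilde{G}$ branch (``each step either produces a fresh factor of $r$\ldots''), but says nothing about how the $\cos\eta\,T$ piece propagates through the decomposition. Since $a'(t)\cos\eta\,\Delta_h\widetilde{G}(t,a(t))=\cos\eta\,T(t)$ is placed in $\tilde{S}^0_2$, each subsequent $\Delta_h$ produces $\cos\eta\,\Delta_hT$ terms which must be resolved using the full decomposition $(T^k_1,T^k_2)_k$ from Lemma~\ref{lemapp:main1}; these $T^k$-pieces then appear at \emph{every} level of the new chain and must be slotted into $\tilde{S}^k_1$ and $\tilde{S}^k_2$ with the correct bounds. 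The paper makes this precise via the identity
$a'(t+h)\Delta_h^2\widetilde{G}(t,a(t))=\Delta_hT(t)-\Delta_ha'(t)\Delta_h\widetilde{G}(t,a(t))$,
which converts the otherwise intractable second difference $\Delta^2_h\widetilde{G}$ into $\Delta_h T$. Without this step (or an equivalent mechanism such as decomposing $\cos\eta\,T=\cos\eta\,T^0_1+\cos\eta\,T^0_2$ at the outset and threading the $T^k$'s through), the chain does not close, and one also needs to verify that $\tilde S^{m+2}_2\equiv 0$ so that the decomposition has the required length. Your proposal waves this away as ``mirroring the bookkeeping in Lemma~\ref{lemapp:main1},'' but the presence of the $T$-terms is precisely what makes this lemma more delicate than that one.
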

\begin{proof}
The proof is by induction on $m$. The assumptions are clearly true for $m=0$. 

For the induction we let $m\in\N_0$ be fixed and let $S$ be the function defined in the setting. Further, let us assume that we have a decomposition
$(S^k_1,S^k_2)_k$ of length $m+1$ with the desired properties for $S$.
We put $S_2^{m+1}=0$ and for convenience we also define $S_1^{m+2}=S_2^{m+2}=0$. 
We will show that under these assumptions the function $\widetilde{S}:\R\rightarrow\R$ given by $\widetilde{S}(t):=a^\prime(t)\Delta_h\partial_1\widetilde{G}_+(t,a(t))$,
where $\widetilde{G}_+(x)=r(x)^{m+1} G(x)$ for $x\in\R^2$,
admits a decomposition $(\tilde{S}^k_1,\tilde{S}^k_2)_k$ of length $m+2$ of the same form.
First we calculate
\begin{align*}
\widetilde{S}(t)&=a^\prime(t) \Delta_h\partial_1\widetilde{G}_+(t,a(t))  =a^\prime(t)\Delta_h\big( \cos\eta \widetilde{G}(t,a(t)) + r(t,a(t)) \partial_1\widetilde{G}(t,a(t)) \big) \\
&= a^\prime(t)\cos\eta\Delta_h \widetilde{G}(t,a(t)) + a^\prime(t) \Delta_hr(t,a(t)) \partial_1\widetilde{G}(t,a(t)) + r(t+h,a(t+h))a^\prime(t)\Delta_h\partial_1\widetilde{G}(t,a(t)).
\end{align*}
%
Using the induction hypothesis we can proceed,
\beqn
\widetilde{S}(t)&=& \big[ r(t+h,a(t+h)) S^0_1(t) \big] \\
&& + \big[ r(t+h,a(t+h)) S^0_2(t) + a^\prime(t) \cos\eta \Delta_h \widetilde{G}(t,a(t)) + a^\prime(t) \Delta_hr(t,a(t)) \partial_1\widetilde{G}(t,a(t))  \big] \\
&=:&\tilde{S}^0_1(t) + \tilde{S}^0_2(t).
\eeqn
The terms $\tilde{S}^0_1$ and $\tilde{S}^0_2$ have the desired properties, which follows from the estimates
\begin{align*}
\sup_{t\in\R}|r(t+h,a(t+h))S^0_1(t)| &\lesssim h h^{m-1} h^{\beta} |\sin \eta|^{-1-\beta}, \\
\sup_{t\in\R}|r(t+h,a(t+h))S^0_2(t)| &\lesssim h h^{m-1} |\sin \eta|^{-1}, \\
\sup_{t\in\R}|a^\prime(t) \cos\eta \Delta_h\widetilde{G}(t,a(t))| &\lesssim |\sin \eta|^{-1} \delta_j^{m-1} h, \\
\sup_{t\in\R}|a^\prime(t) \Delta_hr(t,a(t)) \partial_1\widetilde{G}(t,a(t))| &\lesssim |\sin \eta|^{-1} h \delta_j^{m-1}.
\end{align*}
%
%
Taking another forward difference of $\tilde{S}^0_2$ yields
\beqn
\Delta_h \tilde{S}^0_2(t)&=&\Delta_hr(t+h,a(t+h))S^0_2(t) + r(t+2h,a(t+2h))\Delta_hS^0_2(t) +\Delta_ha^\prime(t)\cos\eta \Delta_h \widetilde{G}(t,a(t)) \\
&& + a^\prime(t+h) \cos\eta \Delta_h^2 \widetilde{G}(t,a(t)) + \Delta_ha^\prime(t) \Delta_h r(t,a(t)) \partial_1\widetilde{G}(t,a(t)) \\
&& + a^\prime(t+h)\Delta_h^2r(t,a(t)) \partial_1\widetilde{G}(t,a(t)) + a^\prime(t+h)\Delta_hr(t+h,a(t+h))\Delta_h\partial_1\widetilde{G}(t,a(t)).
\eeqn
Let $T$ denote the function from Lemma~\ref{lemapp:main1}. We observe,
\begin{align*}
a^\prime(t+h) \Delta_h^2 \widetilde{G}(t,a(t)) &= a^\prime(t+h) \big( \Delta_h \widetilde{G}(t+h,a(t+h)) - \Delta_h \widetilde{G}(t,a(t))  \big) \\
&= a^\prime(t+h) \Delta_h\widetilde{G}(t+h,a(t+h)) -a^\prime(t) \Delta_h\widetilde{G}(t,a(t)) + (a^\prime(t) - a^\prime(t+h)) \Delta_h\widetilde{G}(t,a(t))  \\
&= a^\prime(t+h) \Delta_h\widetilde{G}(t+h,a(t+h)) -a^\prime(t) \Delta_h\widetilde{G}(t,a(t)) - \Delta_h a^\prime(t) \Delta_h\widetilde{G}(t,a(t)) \\
&= T(t+h) - T(t) -  \Delta_h a^\prime(t) \Delta_h\widetilde{G}(t,a(t)) = \Delta_hT(t)- \Delta_h a^\prime(t) \Delta_h\widetilde{G}(t,a(t)).
\end{align*}
Now we know by Lemma~\ref{lemapp:main1} that there is a decomposition $(T^k_1,T^k_2)_k$ of $T$ of length $m+1$ with
the specific properties given there. This allows to decompose $\Delta_hT=\Delta_hT^0_1+T^1_1+T^1_2$ and we obtain
\[
a^\prime(t+h) \Delta_h^2 \widetilde{G}(t,a(t)) = \Delta_hT^0_1(t)+T^1_1(t)+T^1_2(t)- \Delta_h a^\prime(t) \Delta_h\widetilde{G}(t,a(t)).
\]
Using this observation we obtain
\beqn
\Delta_h \tilde{S}^0_2(t)
&=& \Delta_hr(t+h,a(t+h))S^0_2(t)+ r(t+2h,a(t+2h))S^1_1(t)+ r(t+2h,a(t+2h))S^1_2(t) \\
&& + \Delta_ha^\prime(t)\cos\eta \Delta_h \widetilde{G}(t,a(t))
 +\cos\eta \Delta_hT^0_1(t) + \cos\eta(T^1_1(t)+T^1_2(t))  -  \cos\eta\Delta_h a^\prime(t) \Delta_h\widetilde{G}(t,a(t)) \\
&& + \Delta_ha^\prime(t) \Delta_h r(t,a(t)) \partial_1\widetilde{G}(t,a(t))  + a^\prime(t+h)\Delta_h^2r(t,a(t)) \partial_1\widetilde{G}(t,a(t))  \\
&& + a^\prime(t)\Delta_hr(t+h,a(t+h))\Delta_h\partial_1\widetilde{G}(t,a(t))
 + (a^\prime(t+h)-a^\prime(t))\Delta_hr(t+h,a(t+h))\Delta_h\partial_1\widetilde{G}(t,a(t))
\eeqn
and further
\beqn
\Delta_h \tilde{S}^0_2(t)
&=& \big[ r(t+2h,a(t+2h))S^1_1(t) + \Delta_ha^\prime(t)\cos\eta \Delta_h \widetilde{G}(t,a(t)) \\
&& + \cos\eta \Delta_hT^0_1(t) - \cos\eta\Delta_h a^\prime(t) \Delta_h\widetilde{G}(t,a(t)) + \cos\eta T^1_1(t) \\
&& + \Delta_ha^\prime(t) \Delta_h r(t,a(t)) \partial_1\widetilde{G}(t,a(t)) + a^\prime(t+h)\Delta_h^2r(t,a(t)) \partial_1\widetilde{G}(t,a(t)) \\
&& + \Delta_ha^\prime(t)\Delta_hr(t+h,a(t+h))\Delta_h\partial_1\widetilde{G}(t,a(t)) + \Delta_hr(t+h,a(t+h))S^0_1(t) \big] \\
&& + \big[  r(t+2h,a(t+2h))S^1_2(t) +  \cos\eta T^1_2(t) + 2\Delta_hr(t+h,a(t+h))S^0_2(t) \big] \\
&=:& \tilde{S}^1_1(t)+\tilde{S}^1_2(t).
\eeqn
Now we can split $\Delta_h \tilde{S}^0_2=\tilde{S}^1_1+\tilde{S}^1_2$ with
\beqn
\tilde{S}^1_1(t)&=& r(t+2h,a(t+2h))S^1_1(t) +\cos\eta\Delta_ha^\prime(t)\Delta_h\widetilde{G}(t,a(t))  + \Delta_ha^\prime(t) \Delta_hr(t,a(t)) \partial_1\widetilde{G}(t,a(t)) \\
&& + a^\prime(t+h)\Delta_h^2r(t,a(t)) \partial_1\widetilde{G}(t,a(t))
   + \Delta_hr(t+h,a(t+h)) S^0_1(t) + \cos\eta\Delta_hT^0_1(t) \\
&&+ \Delta_ha^\prime(t)\Delta_hr(t+h,a(t+h))\Delta_h\partial_1\widetilde{G}(t,a(t))
  - \cos\eta\Delta_h a^\prime(t) \Delta_h\widetilde{G}(t,a(t)) + \cos\eta T^1_1(t) , \\
\tilde{S}^1_2(t)&=&2\Delta_hr(t+h,a(t+h))S^0_2(t) + r(t+2h,a(t+2h))S^1_2(t) +   \cos\eta T^1_2(t).
\eeqn
These terms have the desired properties. To see this, we calculate
\begin{align*}
\sup_{t\in\R}|r(t+2h,a(t+2h))S^1_1(t)| &\lesssim h  h^{m-1} h^{\beta} |\sin \eta|^{-1-\beta},   \\
\sup_{t\in\R}|\Delta_ha^\prime(t)\Delta_h\widetilde{G}(t,a(t))|&\lesssim  \delta_j h^{\beta-1} |\sin\eta|^{-1-\beta} \cdot \delta_j^{m-1}h, \\
\sup_{t\in\R}|\Delta_ha^\prime(t) \Delta_h r(t,a(t)) \partial_1\widetilde{G}(t,a(t))| &\lesssim \delta_j h^{\beta-1}|\sin\eta|^{-1-\beta} \cdot h \cdot \delta_j^{m-1},  \\
\sup_{t\in\R}|a^\prime(t+h)\Delta_h^2r(t,a(t)) \partial_1\widetilde{G}(t,a(t))| &\lesssim |\sin\eta|^{-1} \cdot  h^{\beta} \delta_j |\sin\eta|^{-\beta}  \cdot   \delta_j^{m-1}, \\
\sup_{t\in\R}|\Delta_hr(t+h,a(t+h)) S^0_1(t)| &\lesssim h  h^{m-1} h^{\beta} |\sin \eta|^{-1-\beta}, \\
\sup_{t\in\R}|\Delta_hT^0_1(t)| &\lesssim  h^{m} h^{\beta} |\sin \eta|^{-1-\beta},  \\
\sup_{t\in\R}|\Delta_h a^\prime(t) \Delta_h\widetilde{G}(t,a(t))| &\lesssim \delta_j h^{\beta-1} |\sin\eta|^{-1-\beta} \cdot \delta_j^{m-1}h, \\
\sup_{t\in\R}|\Delta_ha^\prime(t)\Delta_hr(t+h,a(t+h))\Delta_h\partial_1\widetilde{G}(t,a(t)) |&\lesssim \delta_j h^{\beta-1} |\sin\eta|^{-1-\beta} \cdot h \cdot \delta_j^{m-1}, \\
\sup_{t\in\R}|T^1_1(t)|&\lesssim  \delta_j^m h^{\beta} |\sin\eta|^{-1-\beta},
\end{align*}
and
\begin{align*}
\sup_{t\in\R}|2\Delta_hr(t+h,a(t+h))S^0_2(t)| &\lesssim h h^{m-1} |\sin\eta|^{-1}, \\
\sup_{t\in\R}|r(t+2h,a(t+2h))S^1_2(t)| &\lesssim  h h^{m-1} |\sin\eta|^{-1},   \\
\sup_{t\in\R}|T^1_2(t)| &\lesssim  h h^{m-1} |\sin\eta|^{-1}.
\end{align*}
We proceed with
\beqn
\Delta_h\tilde{S}^1_2(t)&=&\big[ \cos\eta T^2_1(t)+  r(t+3h,a(t+3h))S^2_1(t) + 2\Delta_hr(t+2h,a(t+2h))S^1_1(t) \\
&&+ 2\Delta_h^2r(t+h,a(t+h)) S^0_2(t) \big] + \big[ \cos\eta T^2_2(t) + r(t+3h,a(t+3h))S^2_2(t) \\
&&+ 3\Delta_hr(t+2h,a(t+2h))S^1_2(t)  \big] =: \tilde{S}^2_1(t) + \tilde{S}^2_2(t).
\eeqn
Inductively, we put for $k=1,\ldots,m+1$, where for convenience $T^{m+2}_1=0$,
\beqn
\tilde{S}^{k+1}_1(t)&:=& \cos\eta T^{k+1}_1(t) +  r(t+(k+2)h,a(t+(k+2)h))S^{k+1}_1(t) \\
&& + (k+1)\Delta_hr(t+(k+1)h,a(t+(k+1)h))S^k_1(t) + (k+1)\Delta_h^2r(t+kh,a(t+kh)) S^{k-1}_2(t),  \\
\tilde{S}^k_2(t)&:=& \cos\eta T^{k}_2(t) + r(t+(k+1)h,a(t+(k+1)h))S^{k}_2(t) + (k+1)\Delta_hr(t+kh,a(t+kh))S^{k-1}_2(t).
\eeqn
These terms clearly satisfy $\Delta_h \tilde{S}^k_2 = \tilde{S}^{k+1}_1 + \tilde{S}^{k+1}_2$. They also have the desired properties since
\begin{align*}
\sup_{t\in\R}|T_1^{k+1}(t)| &\lesssim  h^{m} h^{\beta}   |\sin\eta|^{-1-\beta}, \\
\sup_{t\in\R}| r(t+(k+2)h,a(t+(k+2)h))S^{k+1}_1(t)| &\lesssim  h\cdot h^{m-1} h^{\beta}   |\sin\eta|^{-1-\beta}, \\
\sup_{t\in\R}|\Delta_hr(t+(k+1)h,a(t+(k+1)h))S^k_1(t)| &\lesssim  h\cdot h^{m-1} h^{\beta}   |\sin\eta|^{-1-\beta}, \\
\sup_{t\in\R}|\Delta_h^2r(t+kh,a(t+kh)) S^{k-1}_2(t)| &\lesssim h^{\beta} \delta_j  |\sin\eta|^{-\beta}  \cdot h^{m-1} |\sin\eta|^{-1},
\end{align*}
and
\begin{align*}
\sup_{t\in\R}|T^{k}_2(t)| &\lesssim h^{m} |\sin\eta|^{-1},\\
\sup_{t\in\R}|r(t+(k+1)h,a(t+(k+1)h))S^{k}_2(t)| &\lesssim h \cdot h^{m-1} |\sin\eta|^{-1},\\
\sup_{t\in\R}|\Delta_hr(t+kh,a(t+kh))S^{k-1}_2(t)| &\lesssim h \cdot h^{m-1} |\sin\eta|^{-1}.
\end{align*}

Since $S_2^{m+1}=S_1^{m+2}=S_2^{m+2}=T_2^{m+1}=T_1^{m+2}=0$, for $k=m+1$ these expressions read
\beqn
    \tilde{S}^{m+2}_1(t)&=& (m+2)\Delta_hr(t+(m+2)h,a(t+(m+2)h)) S^{m+1}_1(t)  \\
                        &&+(m+2)\Delta^2_hr(t+(m+1)h,a(t+(m+1)h)) S^{m}_2(t), \\
    \tilde{S}^{m+1}_2(t)&=& (m+2)\Delta_hr(t+(m+1)h,a(t+(m+1)h)) S^{m}_2(t).
\eeqn
We see that $\Delta_h\tilde{S}^{m+1}_2=\tilde{S}^{m+2}_1$. Therefore $\tilde{S}^{m+2}_2=0$ and the proof is finished.

\end{proof}

A slight modification of the previous proof leads to the following lemma.

\begin{lemma}\label{lemapp:main3}
Let $\widetilde{G}(x)=r(x)^m G(x)$ for $x\in\R^2$ and $m\in\N_0$ and $h\asymp 2^{-j(1-\alpha)}$.
The function $\widetilde{S}:\R^2\rightarrow\R$ given by
$\widetilde{S}(t,\tau)=a^\prime(\tau) \Delta_h\partial_1\widetilde{G}(t,a(\tau)) $ for $(t,\tau)\in\R^2$ 
admits a decomposition $(\tilde{S}^k_1,\tilde{S}^k_2)_k$ 
of the form $(\ast)$ of length $m+1$ with estimates
\begin{align*}
\sup_{t\in\R}\sup_{\tau\in[t-h,t+h]} |\tilde{S}^{k}_1(t,\tau)|&\lesssim h^{m-1}h^{\beta} |\sin\eta|^{-1-\beta}, \quad k=0,\ldots,m+1, \\
\sup_{t\in\R}\sup_{\tau\in[t-h,t+h]} |\tilde{S}^{k}_2(t,\tau)|&\lesssim h^{m-1}|\sin\eta|^{-1}, \quad k=0,\ldots,m.
\end{align*}
\end{lemma}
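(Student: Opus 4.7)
The statement is a two-parameter version of Lemma~\ref{lemapp:main2}, in which the single variable on which the forward differences act has been split into a difference variable $t$ and a parameter $\tau\in[t-h,t+h]$ at which the implicit function $a$ is evaluated. My plan is to run the induction on $m$ exactly as in the proof of Lemma~\ref{lemapp:main2}, with $\tau$ substituted for $t$ in every occurrence of $a$ and $a'$ and with the outer supremum understood as $\sup_{t\in\R}\sup_{\tau\in[t-h,t+h]}$.

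For the base case $m=0$, I would put $\tilde{S}^0_1=\widetilde{S}$ and $\tilde{S}^0_2=0$. Lemma~\ref{lem:propa} gives $|a'(\tau)|\lesssim|\sin\eta|^{-1}$ uniformly in $\tau$, while Lemmas~\ref{lemapp:basicg} and~\ref{lemapp:basicw}, applied to the product $G=\omega g_j$ together with the Leibniz rule for $\Delta_h\partial_1$, yield $\|\Delta_h\partial_1 G\|_\infty\lesssim h^{\beta-1}$. Since $|\sin\eta|\le 1$, one can absorb the additional factor $|\sin\eta|^{-\beta}$ for free, giving the required bound $|\tilde{S}^0_1(t,\tau)|\lesssim h^{-1}h^{\beta}|\sin\eta|^{-1-\beta}$.

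For the inductive step, I would consider $\widetilde{G}_+=r\widetilde{G}$ and expand via the product rule
\[
\partial_1\widetilde{G}_+(t,u)=\cos\eta\cdot\widetilde{G}(t,u)+r(t,u)\partial_1\widetilde{G}(t,u).
\]
Applying $\Delta_h$ (acting on the first slot), multiplying by $a'(\tau)$, and setting $u=a(\tau)$ splits $\widetilde{S}_+(t,\tau)$ into three natural pieces. The first two, namely $a'(\tau)\cos\eta\,\Delta_h\widetilde{G}(t,a(\tau))$ and $a'(\tau)\Delta_h r(t,a(\tau))\cdot\partial_1\widetilde{G}(t+h,a(\tau))$, can be absorbed into the new $\tilde{S}^0_1$ contribution using Lemmas~\ref{lemapp:edgeG} and~\ref{lemapp:r} (whose estimates depend only on $a$ through global quantities and therefore hold uniformly in $\tau$). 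The third piece is $r(t+h,a(\tau))\cdot\widetilde{S}_m(t,\tau)$, where $\widetilde{S}_m$ is the $m$-th level function; the inductive hypothesis opens it into pieces of the correct form, supplying the initial data for the new decomposition tree.

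I would then take successive forward differences of the resulting $\tilde{S}^k_2$ components and collect terms in the same pattern as in Lemma~\ref{lemapp:main2}. The cross-term $\Delta_h^2\widetilde{G}(t,a(\tau))$ that appears along the way is rewritten by the same telescoping trick used there, invoking a trivially parameterized variant of Lemma~\ref{lemapp:main1}; the condition $\tau\in[t-h,t+h]$ ensures that the inner and outer evaluation points stay close enough for the telescope to cost only $O(h)$ extra factors. The main obstacle is pure bookkeeping: many terms appear at each level, and each must be checked against its targeted bound. However, every such check reduces to an application of Lemmas~\ref{lem:propa},~\ref{lemapp:edgeG}, and~\ref{lemapp:r} which depends only on $\|\cdot\|_\infty$-type global norms of $a$ and $a'$, and is therefore insensitive to whether the argument of $a$ is $t$ or $\tau\in[t-h,t+h]$. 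Consequently, rereading the proof of Lemma~\ref{lemapp:main2} verbatim with $a(t),a'(t)$ replaced by $a(\tau),a'(\tau)$ yields the desired decomposition of $\widetilde{S}$.
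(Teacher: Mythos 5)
Your strategy — running the induction of Lemma~\ref{lemapp:main2} with $a(\tau)$, $a'(\tau)$ substituted for $a(t)$, $a'(t)$ and the supremum taken over $\tau\in[t-h,t+h]$ — is exactly the paper's approach, and both the base case and the skeleton of the inductive step are fine. However, your stated reason for why the estimates survive the substitution is inaccurate precisely where the lemma differs from Lemma~\ref{lemapp:main2}. You claim that Lemmas~\ref{lemapp:edgeG} and~\ref{lemapp:r} "depend only on $\|\cdot\|_\infty$-type global norms of $a$ and $a'$, and [are] therefore insensitive to whether the argument of $a$ is $t$ or $\tau$." That is false for the central bound $\sup_{t\in\widetilde I(\eta)}|r(t,a(t))|\lesssim\delta_j$ of Lemma~\ref{lemapp:r}, and hence also for the $\widetilde G$-bounds of Lemma~\ref{lemapp:edgeG} that inherit it: these are not $L^\infty$-norms of $a$, they use that the point $(t,a(t))$ lies on the rotated edge curve, which is no longer the case once the second argument is $a(\tau)$ with $\tau\ne t$.

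The reason the estimates nevertheless go through is the elementary identity $r(t,a(\tau))=r(t,a(t))+(a(t)-a(\tau))\sin\eta$, which combined with $\|a'\|_\infty\lesssim|\sin\eta|^{-1}$ gives $\sup_{\tau\in[t-h,t+h]}|r(t,a(\tau))|\le|r(t,a(t))|+h\,\|a'\|_\infty|\sin\eta|\lesssim|r(t,a(t))|+h$. Since $h\asymp2^{-j(1-\alpha)}\asymp\delta_j$, the perturbation is of the same order as $\delta_j$ and costs only a constant; this one observation is the entire content of the paper's proof, and it is what makes Lemmas~\ref{lemapp:r} and~\ref{lemapp:edgeG} applicable with $a(\tau)$ in the second slot. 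You do touch the right point once — "$\tau\in[t-h,t+h]$ ensures … $O(h)$ extra factors" — but you restrict it to the telescoping cross-term, where it is not the main issue; it should be stated up front as the single new ingredient and applied uniformly, replacing the incorrect global-norm rationale.
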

\begin{proof}
A small adaption of the previous proof is required to account for the little deviation of $\tau$ from $t$.
We just make the following remark. For $t,\,\tau\in\R$ we have
$
r(t,a(\tau))=r(t,a(t))+(a(t)-a(\tau))\sin\eta.
$
It follows for $h\in\R$
\[
\sup_{\tau\in[t-h,t+h]}|r(t,a(\tau))| \le |r(t,a(t))| + |h\sin\eta|\|a^\prime\|_\infty  \lesssim  |r(t,a(t))| + |h|.
\]
Since $h\asymp 2^{-j(1-\alpha)}$ this additional term poses no problem in the estimations.
\end{proof}

\end{appendix}

%
%
%
%
%
%

\end{document}